\setlist[enumerate]{label={\rm(\roman*)}}
\newtheorem*{theorem*}{Theorem}
\newtheorem{theorem}{Theorem}[section]
\theoremstyle{definition}
\newtheorem{definition}[theorem]{Definition}
\theoremstyle{definition}
\newtheorem{example}[theorem]{Example}
\theoremstyle{plain}
\newtheorem{proposition}[theorem]{Proposition}
\theoremstyle{plain}
\newtheorem{lemma}[theorem]{Lemma}
\theoremstyle{plain}
\newtheorem{corollary}[theorem]{Corollary}
\theoremstyle{definition}
\newtheorem{remark}[theorem]{Remark}
\theoremstyle{plain}
\newtheorem{observation}[theorem]{Observation}
\numberwithin{equation}{section}
\def\diff{\;\mathrm{d}}
\def\reals{\mathbb{R}}
\def\nat{\mathbb{N}}
\def\esssup{\operatornamewithlimits{ess\,\sup}}
\def\diam{\operatorname{diam}}
\def\dist{\operatorname{{dist}}}
\def\vol{\operatorname{vol}}
\def\H{\mathcal{H}}
\def\Hn{\mathcal{H}^n}
\def\sp{\operatorname{sp}}
\def\mv{\operatorname{mv}}
\def\metric{d}
\newcommand{\tLip}{\textnormal{Lip}}
\newcommand{\tspan}{\operatorname{span}}
\newcommand{\infl}[2]{(|\cdot|_{#1}, |\cdot|_{#2})}
\newcommand{\factorlambda}{\lambda}
\newcommand{\numberdelta}{\Delta}
\title{Typical Lipschitz images of rectifiable metric spaces}
\date{\today}
\author[$\dagger$,$\star$]{David Bate}
\author[$\ddag$,$\star$]{Jakub Tak\'a\v{c}}
\affil[$\star$]{Zeeman building, University of Warwick, Coventry CV47AL}
\affil[$\dagger$]{david.bate@warwick.ac.uk}
\affil[$\ddag$]{jakub.takac@warwick.ac.uk}
\begin{document}
\maketitle
\renewcommand{\thefootnote}{\fnsymbol{footnote}} 
\footnotetext{MSC2010 Classification: 30L99 (Primary), 28A75}     
\renewcommand{\thefootnote}{\arabic{footnote}} 
\begin{abstract}
    This article studies typical 1-Lipschitz images of $n$-rectifiable metric spaces $E$ into $\reals^m$ for $m\geq n$.
    For example, if $E\subset \reals^k$, we show that the Jacobian of such a typical 1-Lipschitz map equals 1 $\Hn$-almost everywhere and, if $m>n$, preserves the Hausdorff measure of $E$.
    In general, we provide sufficient conditions, in terms of the tangent norms of $E$, for when a typical 1-Lipschitz map preserves the Hausdorff measure of $E$, up to some constant multiple.
    Almost optimal results for strongly $n$-rectifiable metric spaces are obtained.

    On the other hand, for any norm $|\cdot|$ on $\reals^m$, we show that, in the space of 1-Lipschitz functions from $([-1,1]^n,|\cdot|_\infty)$ to $(\reals^m,|\cdot|)$, the $\Hn$-measure of a typical image is not bounded below by any $\numberdelta>0$.
\end{abstract}
\section{Introduction}
Recall that an $\Hn$-measurable subset $E\subset X$ of a (complete) metric space is $n$-rectifiable if there exists countably many Lipschitz $f_i\colon A_i\subset \reals^n \to X$ such that
\begin{equation}
    \label{def-rect}
    \Hn\left(E\setminus \bigcup_{i\in\nat}f_i(A_i)\right)=0.
\end{equation}
Here and throughout this article, $\Hn$ denotes the $n$-dimensional Hausdorff measure on $X$.
% The result of Kirchheim \cite[Lemma 4]{Kirch} states the following. 
% A metric space $E$ is $n$-rectifiable if and only if, for every $\theta>0$, it can be, up to an $\Hn$-null set, written as a countable union of sets each of which is $(1+\theta)$-biLipschitz to a subset of $\reals^n$ equipped with a norm (the norm is allowed to vary from set to set).
% Thus, for many specific tasks, the full complexity of such spaces can be reduced to the complexity of subsets of $\reals^n$ equipped with various norms and the possible ways in which these pieces may be glued together.
Rectifiable subsets of a metric space were studied by Ambrosio \cite{Amb90}, Kirchheim \cite{Kirch} and Ambrosio--Kirchheim \cite{AKrec}.
In particular, \cite{AKrec} gives a description of a rectifiable set $E\subset X$ in terms of weak* tangent spaces, after isometrically embedding $E$ into a dual space of a separable space such as $\ell^\infty$.
Area and coarea formulas are also obtained in terms of the weak* tangent structure.

An $\Hn$-measurable set $S\subset X$ is called $n$-purely unrectifiable if it intersects every $n$-rectifiable set $E$ in an $\Hn$-null set.
The following characterisation of rectifiability in metric spaces has been obtained by the first named author in \cite{B} in terms of non-linear Lipschitz projections on $X$.
We denote by $\tLip_1(X,\reals^m)$ the set of all bounded $1$-Lipschitz functions $f\colon X \to \reals^m$ equipped with the supremum distance, a complete metric space.
Recall that a \emph{typical} element of $\tLip_1(X,\reals^m)$ satisfies some property, if the set of the elements satisfying said property is residual (that is, it contains a countable intersection of open dense sets).
Since residual sets are closed under countable intersections and are dense, they form a suitable notion of ``large'' sets.
\begin{theorem*}[\cite{B}]
    Let $X$ be a complete metric space.
    \begin{enumerate}
        \item If $S\subset X$ is purely $n$-purely unrectifiable, $\Hn(S)<\infty$ and
        \begin{equation*}
            \liminf_{r\to 0} \frac{\Hn(B(x,r)\cap S)}{r^n}>0 \quad \text{for $\Hn$-a.e.~$x\in S$,}
        \end{equation*}
        then a typical $f \in \tLip_1(X,\reals^m)$ satisfies $\Hn(f(S))=0$.
        \item If $E\subset X$ is $n$-rectifiable, $\Hn(E)>0$ and $m\geq n$, a typical $f \in \tLip_1(X,\reals^m)$ satisfies $\Hn(f(E))>0$.
    \end{enumerate}
\end{theorem*}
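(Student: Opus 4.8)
\emph{Reductions and a semicontinuity remark.} For a \emph{compact} set $A$ the Hausdorff content $f\mapsto\Hn_\infty(f(A))$ is upper semicontinuous on $\tLip_1(X,\reals^m)$, since a finite cover of the compact set $f(A)$ by open balls still covers $g(A)$ for $g$ near $f$; consequently $\{f:\Hn(f(A))=0\}=\{f:\Hn_\infty(f(A))=0\}=\bigcap_{k}\{f:\Hn_\infty(f(A))<1/k\}$ is a $G_\delta$. I would first reduce to compact sets. For $S$, write $S$ as a countable union of compact pieces $K_j$ together with an $\Hn$-null set; the density hypothesis passes $\Hn$-a.e.\ to the $K_j$, one has $\Hn(f(S))=0$ whenever $\Hn(f(K_j))=0$ for all $j$, and a countable intersection of residual sets is residual, so one may take $S$ compact. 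For $E$, pick a compact subset of positive measure and, by the metric rectifiability structure of \cite{Kirch,AKrec}, a bi-Lipschitz map $\varphi\colon K\to X$ with $K\subset\reals^n$ compact, $\Hn(K)>0$ and $\varphi(K)\subset E$; it then suffices to prove (ii) with $E$ replaced by $\varphi(K)$.

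\emph{Part (ii), density.} Put $\mathcal G:=\{f\in\tLip_1(X,\reals^m):\Hn(f(\varphi(K)))>0\}$; I claim $\mathcal G$ is dense and open, hence residual, and therefore so is the larger set in (ii). By the area formula and Rademacher's theorem, $f\in\mathcal G$ if and only if $\mathrm D(f\circ\varphi)(x)$ has rank $n$ on a set of positive $\Hn$-measure. For density, given $f_0$ and $\epsilon>0$ set $f_\eta=(1-\eta)f_0+\eta\psi$, where $\psi\in\tLip_1(X,\reals^m)$ is a bounded $1$-Lipschitz extension of a suitable rescaling of $\varphi^{-1}$, so $\psi\circ\varphi=c_0\,\mathrm{id}_K$ and $\mathrm D(f_\eta\circ\varphi)(x)=(1-\eta)\mathrm D(f_0\circ\varphi)(x)+\eta c_0\iota$ with $\iota\colon\reals^n\hra\reals^m$ the inclusion. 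For each fixed $x$ this derivative is rank-deficient only when $-\eta c_0/(1-\eta)$ solves an eigenvalue relation attached to $\mathrm D(f_0\circ\varphi)(x)$, i.e.\ for at most $n$ values of $\eta$; Fubini then gives that for a.e.\ $\eta$ the rank is $n$ a.e.\ on $K$, so $f_\eta\in\mathcal G$, and small such $\eta$ produces a point of $\mathcal G$ within $\epsilon$ of $f_0$.

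\emph{Part (ii), openness (the key lemma).} If $f\in\mathcal G$, then by Kirchheim $f\circ\varphi$ is bi-Lipschitz on a compact $K''\subset K$ with $\Hn(K'')>0$. Fix a density point $x_0$ of $K''$ at which $\mathrm D(f\circ\varphi)(x_0)$ is injective with a left inverse of norm $\le M_0$, and a small scale $r$ at which $f\circ\varphi$ is within $\epsilon' r$ of its affine approximation at $x_0$ on $K''\cap B(x_0,r)$ and $\Hn(K''\cap B(x_0,r))\ge(1-\tau)\omega_n r^n$, with $\tau$ small relative to $M_0,L,n$. For $g$ close to $f$, composing $g\circ\varphi$ with a left inverse of $\mathrm D(f\circ\varphi)(x_0)$ and a translation gives a map on $K''\cap B(x_0,r)$ uniformly close to the identity; a truncated McShane extension to $B(x_0,r)$ stays close to the identity with a controlled Lipschitz constant $C=C(M_0,L,n)$, so a degree/Rouch\'e argument makes the extension onto a ball $B(x_0,r')$ of definite radius, while by the density bound its image of $B(x_0,r)\setminus K''$ has measure $\le C^n\tau\omega_n r^n<\tfrac12\omega_n(r')^n$; subtracting gives $\Hn(g(\varphi(K)))\ge\delta>0$ with $\delta$ independent of $g$. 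I expect this step to be the crux: a map merely close to $f$ may fold badly at scales below $\|g-f\|$, so no continuity of $\Hn$ is available — one must instead exploit that $f\circ\varphi$ is genuinely bi-Lipschitz near a point of full density, where the degree argument produces a robust surjection and the density bound absorbs the leftover.

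\emph{Part (i).} Here $\{f:\Hn(f(S))=0\}=\bigcap_k U_{1/k}$ with each $U_\delta=\{f:\Hn_\infty(f(S))<\delta\}$ open, so by Baire's theorem it suffices to prove each $U_\delta$ dense: every $f_0\in\tLip_1(X,\reals^m)$ can be perturbed by less than a prescribed $\epsilon>0$ to a map whose image of $S$ has content $<\delta$. Passing to a compact subset of almost full measure I would assume a two-sided bound $\theta r^n\le\Hn(B(x,r)\cap S)\le\Theta r^n$ for $x\in S$, $r\le r_0$ (the lower bound from the hypothesis, the upper from the standard density estimate for $\Hn$-finite sets). The plan is an iteration $f_0=g_0,g_1,g_2,\dots$ with $\sum\|g_{j+1}-g_j\|<\epsilon$, where $g_j$ maps $S$ into a finite family of balls whose radii satisfy $\sum(\mathrm{radius})^n\le c^{\,j}\cdot(\text{initial value})$ for a fixed $c<1$; after finitely many steps the content is $<\delta$, and $f=\lim g_j$ lies within $\epsilon$ of $f_0$. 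The engine is a one-step folding lemma: given a $1$-Lipschitz $h$ and a ball $B$ meeting $h(S)$, one perturbs $h$ by at most a prescribed amount, with support near $h^{-1}(B)$, so that $h^{-1}(B)\cap S$ is carried into balls of total $\sum(\mathrm{radius})^n\le c\,(\mathrm{radius}\,B)^n$. This is where pure unrectifiability is used: no piece of $S$, at any small scale, contains a rectifiable part, so a suitable characterisation of pure unrectifiability (absence of approximate tangent $n$-planes / Besicovitch--Federer type projection estimates, as in \cite{B}) furnishes, inside each ball, a $1$-Lipschitz map collapsing $S$ there by a definite amount, while the density bound forces every relevant piece of $S$ to be substantial and so makes the ratio $c$ uniform over balls and scales. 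The main obstacle I anticipate is exactly this one-step lemma — turning ``contains no rectifiable set'' into a quantitative, scale-uniform collapse estimate compatible with the density regularity — together with the bookkeeping that keeps the iterated perturbations $1$-Lipschitz, small in total, and with genuinely geometric decay of the content.
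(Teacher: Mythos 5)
Your treatment of part (ii) is essentially sound and, for the openness half, coincides with the route this paper itself takes: the reduction to a compact piece on which $f\circ\varphi$ is bi-Lipschitz (Federer 3.2.2 / Kirchheim), the choice of a density point where the differential has a bounded left inverse, and the Brouwer/degree argument showing that any uniformly close $g$ must cover a ball of definite radius while the density bound absorbs the image of $B(x_0,r)\setminus K''$ — this is precisely the mechanism of Lemma \ref{C:LA-Brower}, Theorem \ref{T:LA-semicontinuity-area} and Lemma \ref{L:lsc-area-general-Euclidean-bilipschitz} (all modelled on \cite[Lemma 7.3]{B}). Your density argument via $f_\eta=(1-\eta)f_0+\eta\psi$ with $\psi$ a truncated McShane extension of a rescaled chart inverse, ruling out rank deficiency for all but countably many $\eta$ by an eigenvalue count and Fubini, is a correct and rather clean alternative to the perturbation constructions used here (Section \ref{S:density-general}); just make sure the rescaling constant $c_0$ also compensates the $\sqrt{n}$ loss in the coordinatewise extension so that $\psi$ is genuinely $1$-Lipschitz.

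Part (i), however, is not a proof but a programme, and the missing step is exactly the theorem's substance. Your reduction to compact $S$ with two-sided density bounds and the Baire/content bookkeeping ($U_\delta$ open for compact $S$, geometric decay of content under iteration) are fine, but everything hinges on the ``one-step folding lemma'': given a $1$-Lipschitz $h$ and a ball $B$ meeting $h(S)$, produce a perturbation of $h$, smaller than a prescribed amount and supported near $h^{-1}(B)$, carrying $h^{-1}(B)\cap S$ into balls of total content at most $c(\operatorname{radius}B)^n$ with $c<1$ uniform over scales and locations. You correctly identify this as the crux and offer no proof of it; converting ``$S$ meets every rectifiable set in a null set'' into such a quantitative, scale-uniform collapse — compatible with keeping the perturbation $1$-Lipschitz in the \emph{metric space} $X$, where no linear projections are available — is the long technical core of \cite{B} (it requires an intermediate characterisation of pure unrectifiability in terms of the behaviour of typical, or suitably constructed, Lipschitz images, not merely the Besicovitch--Federer statement, which is false in general metric spaces). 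As written, part (i) therefore has a genuine gap: the argument would not close without supplying this lemma, and nothing in your proposal indicates how the unrectifiability hypothesis actually enters the construction of the collapsing perturbation.
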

This should be viewed as an analogue of the Besicovitch--Federer projection theorem \cite[Theorem 18.1]{Mat}.

In this article we give a finer description of rectifiable subsets of a metric space.
Namely we answer the question, under what conditions is it possible to ensure that a typical $f\in \tLip_1(X,\reals^m)$ satisfies $\Hn(f(E))\geq\numberdelta$ for some $\numberdelta=\numberdelta(X,E)>0$.
The answer depends on the local geometry of $E$ and in particular its tangent spaces.
To illustrate our results, we first mention that when the ambient metric space is Euclidean, the strongest possible result holds.
\begin{theorem}\label{T:main-Euclidean}
    Suppose $E\subset \reals^k$ is $n$-rectifiable and $m\geq n$. Then the set of functions $f\in \tLip_1(\reals^k,\reals^m)$ satisfying
    \begin{equation*}
        \int_E J_E f \diff \Hn = \Hn(E)
    \end{equation*}
    is residual.
    Moreover, if $m>n$, the set of functions $f\in \tLip_1(\reals^k,\reals^m)$ satisfying
    \begin{equation*}
        \Hn(f(E)) = \Hn(E)
    \end{equation*}
    is residual.
\end{theorem}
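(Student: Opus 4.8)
The plan is to prove the two assertions by establishing, in each case, that the target set is a countable intersection of open dense subsets of $\tLip_1(\reals^k, \reals^m)$. The natural strategy is to reduce to a "lower bound" statement and an "upper bound" statement separately. The upper bound, $\int_E J_E f \diff \Hn \le \Hn(E)$, holds for \emph{every} $1$-Lipschitz $f$ by the metric area formula (the Jacobian of a $1$-Lipschitz map into Euclidean space is at most $1$ pointwise, since the differential is a linear contraction and hence has singular values $\le 1$). So the content is entirely in the lower bound: a typical $f$ satisfies $\int_E J_E f \diff \Hn \ge \Hn(E) - \varepsilon$ for every $\varepsilon > 0$, which I would obtain by showing that for each fixed $\varepsilon > 0$ the set $\mathcal{U}_\varepsilon = \{ f : \int_E J_E f \diff\Hn > \Hn(E) - \varepsilon \}$ is open and dense. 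Openness should follow from lower semicontinuity of $f \mapsto \int_E J_E f \diff \Hn$ under uniform convergence (this is where one uses that $E \subset \reals^k$ and the approximate differentiability of Lipschitz maps via Rademacher/area formula; one can also phrase it through the fact that $J_E f$ at $\Hn$-a.e.\ point of $E$ depends only on the restriction of $df_x$ to the approximate tangent plane $T_x E$, and perturbing $f$ slightly cannot decrease a full-measure average by more than a controlled amount — more carefully, openness is easiest to see by writing $\mathcal U_\varepsilon$ as a union over finite approximations). Density is the heart of the matter.

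For density I would argue as follows. Fix $f_0 \in \tLip_1(\reals^k,\reals^m)$ and $\eta > 0$; I want $f \in \mathcal{U}_\varepsilon$ with $\|f - f_0\|_\infty < \eta$. Cover $\Hn$-almost all of $E$, up to a set of small measure, by finitely many pieces on each of which $E$ is close (in the Lipschitz/bi-Lipschitz sense) to a piece of an affine $n$-plane $V_i \subset \reals^k$, using the rectifiability of $E$ together with a density/Lusin-type argument. On each such piece it suffices to perturb $f_0$ so that its differential, restricted to the direction of $V_i$, becomes very close to an isometric embedding of $V_i$ into $\reals^m$ while remaining globally $1$-Lipschitz on $\reals^k$ and within $\eta$ of $f_0$. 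The key elementary fact is that, given a linear contraction $A \colon \reals^k \to \reals^m$ and an $n$-plane $V$, one can find another linear contraction $A'$ with $A'|_V$ an isometry and $\|A' - A\|$ not too large — but since we are free to choose the perturbation on a small-diameter patch and only need $\|f - f_0\|_\infty < \eta$, we do not need $A'$ close to $A$; we only need the patch small enough that an $O(\mathrm{diam})$-sized perturbation of the map stays within $\eta$. So on each patch we replace $f_0$ by (a smoothed, cut-off version of) an affine isometry on $V_i$, glued to $f_0$ outside a neighbourhood via a $1$-Lipschitz interpolation (e.g.\ an inf-convolution / McShane-type extension controlled by the slopes). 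This makes $J_E f \approx 1$ on a set of measure $\ge \Hn(E) - \varepsilon/2$, and since $J_E f \ge 0$ everywhere, $\int_E J_E f \diff \Hn \ge \Hn(E) - \varepsilon/2 \ge \Hn(E) - \varepsilon$. Intersecting the $\mathcal{U}_{1/j}$ over $j \in \nat$ gives the first residual set.

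For the second, stronger conclusion when $m > n$: $\int_E J_E f \diff \Hn = \Hn(E)$ only forces $J_E f = 1$ a.e., which controls the \emph{multiplicity-weighted} image measure $\int_{\reals^m} \#(f^{-1}(y)\cap E) \diff\Hn(y) = \Hn(E)$, not $\Hn(f(E))$ directly; equality $\Hn(f(E)) = \Hn(E)$ additionally requires $f$ to be essentially injective on $E$. Here the extra codimension $m - n \ge 1$ is exactly what is needed: a generic linear contraction that is isometric on an $n$-plane $V$ can be taken to also separate the different patches (and to be injective on each patch), because the "bad" set of maps failing injectivity is of infinite codimension / first category — concretely, for each pair of patches $i \ne j$ one shows $\{ f : f \text{ identifies a positive-measure subset of patch } i \text{ with patch } j\}$ is nowhere dense, using that one can always add a small linear functional valued in the extra $\reals^{m-n}$ direction to pull the two images apart while not affecting the Jacobian on the tangent planes (since that direction can be chosen orthogonal to the relevant tangent spaces). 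Combining "$J_E f = 1$ a.e." (residual, from part one) with "$f|_E$ injective" (residual, from the separation argument) gives via the area formula $\Hn(f(E)) = \int_E J_E f \diff \Hn = \Hn(E)$ on a residual set.

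The main obstacle I anticipate is the density step done \emph{compatibly across all patches at once}: perturbing $f_0$ on one patch to be near-isometric there must not destroy the gains already made on other patches, and must keep the global map $1$-Lipschitz on all of $\reals^k$ (not just on $E$). This is handled by choosing the patches to have disjoint small neighbourhoods and performing the perturbations independently with $1$-Lipschitz inf-convolution gluings, but verifying the global Lipschitz bound and the $\|\cdot\|_\infty$ estimate simultaneously — especially reconciling that the affine-isometry model on $V_i$ may have slope $1$ in the $V_i$ directions, leaving no room to also decay the perturbation to zero transversally without exceeding Lipschitz constant $1$ — requires care, and is presumably where the hypothesis $E \subset \reals^k$ (Euclidean ambient space, where tangent norms are Euclidean so isometric embeddings into $\reals^m$ actually exist as linear contractions) is essential, in contrast to the general metric setting treated later in the paper.
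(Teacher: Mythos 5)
Your high-level architecture --- the trivial upper bound $\int_E J_E f\diff\Hn\le\Hn(E)$, reduction to open dense sets $\mathcal U_\varepsilon$, density via a decomposition of $E$ into nearly flat patches carrying affine local models glued back to $f_0$, and extra codimension to kill overlaps --- is the same as the paper's. But two of the steps you treat as routine are where the actual work lies, and as sketched they do not go through. First, openness of $\mathcal U_\varepsilon$: lower semi-continuity of $f\mapsto\int_E J_E f\diff\Hn$ under \emph{uniform} convergence is genuinely nontrivial, and your heuristic that ``perturbing $f$ slightly cannot decrease a full-measure average by more than a controlled amount'' is false at the level of differentials --- a uniformly small perturbation inside $\tLip_1$ can change $f'$ completely on a set of full measure. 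The paper's argument (Section \ref{S:openness}) goes through the image instead: by Brouwer's fixed point theorem (Lemma \ref{C:LA-Brower}), any continuous $g$ uniformly close to $f$ on a small ball covers most of the image of the linearisation of $f$, giving $\Hn(g(B))\ge\eta\vol f'(x)\Hn(B)$; this is summed over a Vitali family and converted back to the Jacobian integral via the area formula. Some degree-theoretic input of this kind is unavoidable. Second, the gluing: you correctly identify that an exact affine isometry on the patch has slope $1$ and leaves no room for a $1$-Lipschitz interpolation back to $f_0$, but you do not resolve this. The paper's resolution is to first replace $f_0$ by $\lambda f_0$ with $\lambda<1$, use local models of Lipschitz norm $L_0<1$ whose volume is only bounded below by a quantity tending to $1$ as $L_0\to1$ (which suffices for $\mathcal U_\varepsilon$; the exact equality is recovered only in the countable intersection), and to make the uniform error of each local model proportional to the width of the gluing annulus rather than merely to the patch diameter, so that the interpolation (Lemma \ref{L:Lipschitz-extension-lemma}) adds only $O(\delta)$ to the Lipschitz constant.

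For the second assertion your plan intersects two residual sets, ``$J_E f=1$ a.e.'' and ``$f|_E$ essentially injective,'' but residuality of the latter in $(\tLip_1,\lVert\cdot\rVert_\infty)$ is exactly what the paper does \emph{not} prove and deliberately circumvents: essential non-overlapping is residual only in the stronger topology $\tLip_1^{\textnormal{str}}$ (Corollary \ref{C:no-overlaps}) and merely \emph{dense} in the sup topology (Corollary \ref{C:no-overlap-density}). Your claim that for each pair of patches the set of maps identifying positive-measure subsets is nowhere dense would require, near every map, entire sup-norm balls of non-identifying maps, and nothing in your sketch produces that; note also that overlaps \emph{within} a single patch must be excluded, which a pairwise separation argument does not address. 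The fix is structural: write $\{\Hn(f(E))=\Hn(E)\}=\bigcap_j\{\Hn(f(E))>\Hn(E)-\tfrac1j\}$, prove each of these is open using lower semi-continuity of $f\mapsto\Hn(f(E))$ on $\tLip_1$ (Theorem \ref{T:lsc-area-general}, again Brouwer-based), and prove density by composing the Jacobian-density statement with the density of non-overlapping maps; this is the content of Theorem \ref{T:char-area-area-formula}, after which part two follows from part one with no residual injectivity set needed.
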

Here $J_E f$ denotes the Jacobian of $f$ with respect to the rectifiable set $E$ (see Definition \ref{d:jacobian}).
In other words, for a typical $f\in \tLip_1(\reals^k,\reals^m)$, the (approximate) tangential Frech\'et differential $f'(x)$ is a linear isometry for $\Hn$-a.e.~$x\in E$. Via the area formula (see Theorem \ref{E:Prel-area-formula-metric-eucl}), the second statement  asserts that a typical $f$ does not lose measure by overlapping, provided $m>n$. This is false in the case $m=n$; for example, if $n=m=1$ then the measure of the image of any function converging to a constant function must converge to 0. The result of
Theorem \ref{T:main-Euclidean} is new even if one assumes $E$ to be the unit $n$-dimensional cube in $\reals^n$.
In particular, we see that a typical element of $\tLip_1(\reals^k,\reals^m)$ preserves the measure of a given $n$-rectifiable set, whilst destroying the measure of a given purely $n$-unrectifiable set.

On the other hand, this result may fail in the strongest possible way whenever the ambient space is not Euclidean.
In what follows, we work with general norms on $\reals^n$ and these shall be denoted by $|\cdot|_a$, $|\cdot|_b$ and similar, without the letters $a$, $b$ having any separate meaning. Using an abuse of notation we will also denote by $|\cdot|_2$ the Euclidean norm.
Recall that a point $u$ in a convex subset $K$ of a vector space $X$ is an extremal point if, for any $v\in X$, $u+v\in K$ and $u-v\in K$ imply $v=0$.
\begin{theorem}\label{T:main-general-failure}
    Suppose $n\in\nat$ and let $|\cdot|_a$ be any norm on $\reals^n$ such that the unit sphere of $|\cdot|_a$ contains a non-extremal point of the unit ball of $|\cdot|_a$. Let $X=([-1,1]^n, |\cdot|_{a})$ and, for $m\geq n$, let $|\cdot|_b$ be an arbitrary norm on $\reals^m$. The set
    \begin{equation*}
        \{f\in \tLip_1(X,(\reals^m, |\cdot|_b)): \Hn(f(X))> \numberdelta\}
    \end{equation*}
    is residual in $\tLip_1{(X,(\reals^m, |\cdot|_b))}$ if and only if $\numberdelta = 0$.
\end{theorem}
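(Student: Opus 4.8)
The plan is to prove the two implications separately; the substantial one is $\numberdelta>0$. For $\numberdelta=0$ one needs $\{f:\Hn(f(X))>0\}$ to be residual in $\tLip_1(X,(\reals^m,|\cdot|_b))$; since the identity $([-1,1]^n,|\cdot|_2)\to X$ is bi-Lipschitz, $X$ is $n$-rectifiable with $\Hn(X)>0$, and as $m\ge n$ this is part (ii) of the quoted theorem of Bate, applied with the target norm $|\cdot|_b$. For $\numberdelta>0$, I will exhibit a single $\mu\in\tLip_1(X,(\reals^m,|\cdot|_b))$ such that $\Hn(h(X))\le\omega(\rho)$ for \emph{every} $h\in\tLip_1(X,(\reals^m,|\cdot|_b))$ with $\|h-\mu\|_\infty\le\rho$, where $\omega(\rho)\to0$ as $\rho\to0^+$; then for $\rho$ with $\omega(\rho)<\numberdelta$ the ball $B(\mu,\rho)$ is disjoint from $\{f:\Hn(f(X))>\numberdelta\}$, so that set is not dense and hence not residual. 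To construct $\mu$: by hypothesis there are $u\in\reals^n$ with $|u|_a=1$ and $v\ne0$ with $u\pm v\in B_a$ (the unit ball of $|\cdot|_a$), and the triangle inequality forces $|u+v|_a=|u-v|_a=1$; any supporting functional $\phi$ at $u$ satisfies $\phi(u)=1$, $|\phi(w)|\le|w|_a$ for all $w$, and automatically $\phi(v)=0$. Choosing an extreme point $\xi$ of the unit ball $B_b$ of $|\cdot|_b$ (one always exists), set $\mu(x):=\phi(x)\,\xi$; this is $1$-Lipschitz, and it is constant in the direction $v$.

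Now fix $h$ with $\|h-\mu\|_\infty\le\rho$; by Rademacher $L(x):=Dh(x)$ exists a.e., with $\|L(x)\|_{|\cdot|_a\to|\cdot|_b}\le1$. By the area formula (Theorem~\ref{E:Prel-area-formula-metric-eucl}) together with Definition~\ref{d:jacobian}, $\Hn(h(X))\le\int_X J_X h\diff\Hn$, and $J_X h(x)$ is comparable — with constants depending only on $n,m$ and the two norms — to $\sqrt{\det L(x)^\top L(x)}=\prod_i s_i(L(x))$: the Jacobian of a linear map between the $n$-dimensional normed tangent and an $n$-plane of $(\reals^m,|\cdot|_b)$ is this determinant up to a normalisation which, uniformly over the Grassmannian (here $m\ge n$ and John's theorem are used), lies in a fixed compact subinterval of $(0,\infty)$. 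Since $\|L(x)\|_{|\cdot|_a\to|\cdot|_b}\le1$, all singular values $s_i(L(x))$ are uniformly bounded, while testing $s_n$ on $v/|v|_2$ gives $s_n(L(x))\lesssim|L(x)v|_b$. Hence $\Hn(h(X))\lesssim\int_{[-1,1]^n}|\partial_v h(x)|_b\diff x$, and the task reduces to showing this integral tends to $0$ as $\rho\to0$.

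For that, foliate $[-1,1]^n$ by the lines of direction $u$. On such a chord $\ell$, the curve $s\mapsto h(\cdot+su)$ is $|u|_a$-Lipschitz, so $\partial_u h=L(\cdot)u$ exists $\mathcal H^1$-a.e.\ on $\ell$ with $|L(\cdot)u|_b\le1$, and its mean over $\ell$ — the endpoint difference of $h$ divided by the length of $\ell$ — is within $2\rho/\mathrm{length}(\ell)$ of the corresponding mean for $\mu$, namely $\phi(u)\,\xi=\xi$. Because $\xi$ is an extreme point of the compact set $B_b$, a soft compactness argument yields a modulus $\Omega$ with $\Omega(0^+)=0$ bounding $\int|\zeta-\xi|_b\diff\nu(\zeta)$ over probability measures $\nu$ on $B_b$ with barycentre within $t$ of $\xi$. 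Applying it along each chord and integrating — treating separately the chords shorter than $\sqrt\rho$, whose union has $\mathcal L^n$-measure $O(\sqrt\rho)$ and on which $|L(\cdot)u-\xi|_b$ is bounded — shows $\partial_u h\to\xi$ in $L^1([-1,1]^n)$ as $\rho\to0$. Now the flat face re-enters: $|u\pm v|_a=1$ forces $L(x)(u+v),L(x)(u-v)\in B_b$, of which $L(x)u$ is the midpoint; since that midpoint is $L^1$-close to the extreme point $\xi$, a second application of $\Omega$ (to a two-point average) places both $L(x)(u\pm v)$ close to $\xi$, hence their difference $2L(x)v=2\partial_v h(x)$ close to $0$, in $L^1$ — precisely what the foregoing requires.

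The step I expect to be the crux is this last one: one must pass from uniform closeness $\|h-\mu\|_\infty\le\rho$ to $L^1$-smallness of the \emph{derivative} $\partial_v h$, which is false for general Lipschitz maps and holds here only because the flat face of $|\cdot|_a$ confines $L(x)(u\pm v)$ to $B_b$ while the extremality of $\xi$ supplies the modulus $\Omega$. As $\Omega$ comes from compactness and carries no quantitative rate, turning the chordwise and pointwise estimates into smallness of $\int|\partial_v h|_b$, and disposing of the short chords near $\partial[-1,1]^n$, is the genuinely technical part; a minor additional point is the uniform comparison of the metric Jacobian $J_X h$ with $\sqrt{\det L^\top L}$ across the tangent $n$-planes of the image.
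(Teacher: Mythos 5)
Your proof is correct, and its skeleton is the one the paper uses: exhibit a single degenerate map $\mu$ around which \emph{every} $1$-Lipschitz $h$ has small image, by combining the flat face of $B_a$ at the non-extremal point with an extreme point of $B_b$ and the fundamental theorem of calculus along chords in the distinguished direction. The implementations differ in ways worth recording. The paper works with a \emph{strongly} extremal (exposed) point $u$ of $B_b$ (obtained by maximising the Euclidean norm over $\partial B_b$), so that the exposing projection $P$ together with the elementary averaging Lemma \ref{L:balls} converts the chord-wise barycentre information into concentration of $\partial g/\partial e_1$ near $u$ on most of the cube; you use an arbitrary extreme point $\xi$ and a compactness modulus $\Omega$ (in effect Bauer's theorem: the only probability measure on the compact convex set $B_b$ with barycentre an extreme point is the Dirac mass, plus weak* compactness of the measures). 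Your route is non-quantitative but dispenses with strong extremality entirely. Likewise, the paper passes from ``$\partial g/\partial e_1\approx u$'' to ``small Jacobian'' via the functional $\mv$, its upper semicontinuity (Observation \ref{O:mv-semicont}), and the fact $\mv u=0$ — the proof of which is exactly your midpoint argument in disguise — whereas you run the midpoint argument pointwise, applying $\Omega$ to the two-point measure $\tfrac12(\delta_{L(u+v)}+\delta_{L(u-v)})$ and bounding $\vol L\lesssim s_n(L)\lesssim|Lv|_b$; this is equivalent and arguably more transparent. Your boundary-layer treatment of short chords and the comparability of $J_Xh$ with $\sqrt{\det L^\top L}$ are fine (the latter needs no uniformity over the Grassmannian if one uses the Euclidean Hausdorff measure on the target, which suffices since all measures in play are comparable). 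The only loose end is the $\numberdelta=0$ direction: the quoted theorem of \cite{B} is stated for the Euclidean target norm, so one should either note that its proof is norm-independent or derive residuality of $\{f:\Hn(f(X))>0\}$ from the paper's own Theorem \ref{T:lsc-area-general} (openness, valid for any $|\cdot|_b$) combined with Theorem \ref{T:density-construction-vol-and-overlap-metric} and the area formula (density); this is a formality, not a gap.
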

A particular example of $|\cdot|_a$ with a non-extremal point in the boundary is the maximum norm.

In general, for $m\geq n$, we provide sufficient conditions on a pair of normed spaces $(\reals^n,|\cdot|_a)$ and $(\reals^m,|\cdot|_b)$ for when it is possible to find a $\factorlambda>0$ such that a typical $f\in \tLip_1{((\reals^m, |\cdot|_a),(\reals^m, |\cdot|_b))}$ preserves the measure of any rectifiable $E\subset \reals^n$ up to a multiplicative factor of $\factorlambda$.
Indeed, in Definition \ref{D:inflation} we introduce the notion of a \emph{$\lambda$-inflating pair} of normed spaces, for $\lambda>0$. Intuitively, this holds whenever any linear map $A\colon (\reals^n, |\cdot|_a) \to (\reals^m, |\cdot|_b)$ of operator norm at most $1$ and full rank can be \emph{inflated} in a linear way so that the operator norm of the resulting inflated map is still at most $1$, but the volume (Jacobian) of the inflated map is at least $\lambda$ and, moreover, the ``inflation'' in question does not shrink in any direction. This can be viewed as a geometric condition relating the unit ball of the $|\cdot|_a \to |\cdot|_b$ operator norm in $\reals^{n\times m}$ to level sets of the Jacobian functional. 
In Theorem \ref{T:positive-result-cubes}, we show that a typical $f\in \tLip_1((\reals^n,|\cdot|_a),(\reals^m,|\cdot|_b))$ preserves the Hausdorff measure of a given rectifiable set by a factor of $\lambda$, whenever $(\reals^n,|\cdot|_a)$ and $(\reals^m,|\cdot|_b)$ are $\lambda$-inflating.

Theorem \ref{T:positive-result-cubes} can be extended to a rectifiable subset $E$ of a metric space as follows by considering the (equivalence classes of) \emph{approximate tangent norms} $T(E,\cdot)$ of $E$ (see Definition \ref{tangent-norm}).
For a fixed normed space $(\reals, |\cdot|_b)$, we write $\mathcal{N}^b_{\textnormal{infl}(\lambda)}(n)$ for the set of equivalence classes of norms on $\reals^n$ for which $(\reals^n,|\cdot|_a)$ and $(\reals^m,|\cdot|_b)$ are $(\vol(|\cdot|_a)\lambda)$-inflating (see Definition \ref{D:inflating-space} and formula \eqref{E:vol-of-norm}).
\begin{theorem}\label{T:intro-MSR-residuality}
    Suppose that $n,m\in \nat$, $n\leq m$, $X$ is a complete metric space and $E\subset X$ an $n$-rectifiable subset. Suppose $|\cdot|_b$ is a norm on $\reals^m$.
    Let $\lambda>0$ and assume that for $\Hn$-a.e.~$x\in E$, one has
    \begin{equation*}
        T(E,x)\in \mathcal{N}^b_{\textnormal{infl}(\lambda)}(n).
    \end{equation*}
    Then for each $\varepsilon>0$, there is a set $\widetilde{E}\subset E$ with $\Hn(E\setminus \widetilde{E})<\varepsilon$ and such that the set
    \begin{equation*}
        \{f\in\tLip_1(\widetilde E, (\reals^m, |\cdot|_b)): \int_{\widetilde E} J_{\widetilde E} f \diff \Hn \geq \lambda \Hn(\widetilde E)\}
    \end{equation*}
    is residual in $\tLip_1(\widetilde E, (\reals^m, |\cdot|_b))$.
    Moreover, if $m>n$, then the set
    \begin{equation*}
        \{f\in\tLip_1(\widetilde E, (\reals^m, |\cdot|_b)): \Hn(f(\widetilde E)) \geq \lambda \Hn(\widetilde E)\}
    \end{equation*}
    is residual in $\tLip_1(\widetilde E, (\reals^m, |\cdot|_b))$.
\end{theorem}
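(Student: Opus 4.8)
The plan is to reduce the statement, after discarding an $\varepsilon$-portion of $E$, to finitely many instances of the local construction behind Theorem \ref{T:positive-result-cubes}, and then to upgrade the resulting ``dense up to an arbitrarily small error'' assertions to residuality by the standard countable-intersection argument. We may assume $\Hn(E)<\infty$ (otherwise $\Hn(E\setminus\widetilde E)<\varepsilon$ is impossible) and $\Hn(E)>0$ (otherwise the statement is trivial). First I would carry out a measure-theoretic reduction: by inner regularity, fix a $\sigma$-compact $\widetilde E\subseteq E$ with $\Hn(E\setminus\widetilde E)<\varepsilon$; since the tangent norms of Definition \ref{tangent-norm} are intrinsic, $\widetilde E$ is still $n$-rectifiable and $T(\widetilde E,x)=T(E,x)\in\mathcal{N}^b_{\textnormal{infl}(\lambda)}(n)$ for $\Hn$-a.e.\ $x$. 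Because $x\mapsto T(E,x)$ is $\Hn$-measurable, Lusin's theorem together with the structure theory of rectifiable metric spaces (\cite{Kirch,AKrec}; Kirchheim's metric differentiability) shows that for every $\delta>0$ one can write $\widetilde E$, modulo an $\Hn$-null set, as a countable disjoint union of compact pieces $E_i$, each carrying a $(1+\delta)$-bi-Lipschitz chart $\varphi_i\colon E_i\to A_i\subseteq(\reals^n,|\cdot|_{a_i})$ for a norm $|\cdot|_{a_i}$ that stays within Banach--Mazur distance $1+\delta$ of $T(E,x)$ for all $x\in E_i$; in particular, through $\varphi_i$, small portions of $E_i$ are $(1+o(1))$-bi-Lipschitz to balls of the normed spaces $(\reals^n,T(E,x))$, $x\in E_i$. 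The $\vol$-normalisation built into the definition of $\mathcal{N}^b_{\textnormal{infl}(\lambda)}(n)$ (through \eqref{E:vol-of-norm} and Definitions \ref{D:inflation}, \ref{D:inflating-space}) is calibrated exactly so that membership $T(E,x)\in\mathcal{N}^b_{\textnormal{infl}(\lambda)}(n)$ says, through the area formula, that a linear map of operator norm $\le1$ can be inflated --- without shrinking in any direction --- to one whose Jacobian in the sense of Definition \ref{d:jacobian} is at least $\lambda$; this is why the target bound in the conclusion is the clean $\lambda\Hn(\widetilde E)$.

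Next I would prove the first conclusion. The set in question equals $\bigcap_{k\in\nat}\{f:\int_{\widetilde E}J_{\widetilde E}f\diff\Hn>\lambda\Hn(\widetilde E)-1/k\}$, and each of these sets is open --- either from a lower semicontinuity property of the area functional on $\tLip_1(\widetilde E,(\reals^m,|\cdot|_b))$, or directly as in the proof of Theorem \ref{T:positive-result-cubes} --- so it suffices to show each is dense. Given $f_0\in\tLip_1(\widetilde E,(\reals^m,|\cdot|_b))$ and $\rho>0$: after replacing $f_0$ by a convex combination with a small amount of a generic $1$-Lipschitz map, we may assume the approximate tangential differential of $f_0$ has full rank $\Hn$-a.e.\ on $\widetilde E$. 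Now choose $\delta$ small, take a finite subfamily $E_1,\dots,E_N$ of the pieces above exhausting all but a small fraction of $\Hn(\widetilde E)$ (being finitely many disjoint compact sets, they are pairwise at positive distance), subdivide each $E_i$ by a Whitney-type partition so fine that on each cell both $\varphi_i$ and $f_0$ are affine up to a negligible error, and on each cell apply the local, norm-intrinsic inflation construction of Theorem \ref{T:positive-result-cubes}, replacing $f_0$ there by an inflation of its tangential differential whose operator norm is still $\le1$ but whose Jacobian is at least $\lambda-o(1)$ (here and below, $o(1)$ denotes a quantity made arbitrarily small by shrinking $\delta$ and the partition mesh). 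The decisive feature is that the inflation of Definition \ref{D:inflation} does not shrink in any direction: this is what lets the inflations on adjacent cells --- and, using the positive separation, across the pieces --- be assembled into a single map $f$ that is globally $(1+o(1))$-Lipschitz and within $o(1)$ of $f_0$ in the supremum metric. Rescaling $f$ by $(1+o(1))^{-1}$ to restore the $1$-Lipschitz constraint costs a further $o(1)$ in the Jacobian, and since the good cells cover all but an $o(1)$-fraction of $\widetilde E$, we get $\int_{\widetilde E}J_{\widetilde E}f\diff\Hn\ge(\lambda-o(1))\Hn(\widetilde E)$, which for suitable choices exceeds $\lambda\Hn(\widetilde E)-1/k$ while keeping $\|f-f_0\|_\infty<\rho$. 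Thus each open set is dense, and the first conclusion follows.

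For the second conclusion, when $m>n$, I would intersect the residual set just obtained with the residual set $\{f\in\tLip_1(\widetilde E,(\reals^m,|\cdot|_b)):f\text{ is }\Hn\text{-a.e.\ injective on }\widetilde E\}$, an easier companion residuality statement (valid because $m>n$, and already needed for the second part of Theorem \ref{T:main-Euclidean}). For $f$ in the intersection, applying the area formula (Theorem \ref{E:Prel-area-formula-metric-eucl}) to the restriction of $f$ to a full-measure subset of $\widetilde E$ on which it is injective --- and noting that the Jacobian is unchanged and null sets are negligible --- gives $\Hn(f(\widetilde E))\ge\int_{\widetilde E}J_{\widetilde E}f\diff\Hn\ge\lambda\Hn(\widetilde E)$; a countable intersection of residual sets is residual. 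The hard part, as already in the Euclidean setting, is the density step of the first conclusion: carrying out the local inflation of the tangential differential of $f_0$ simultaneously over all of $\widetilde E$ --- dealing with the cells where that differential degenerates, and above all patching the inflations across cells and pieces into a map that is genuinely $1$-Lipschitz after rescaling and stays close to $f_0$. That difficulty has been overcome in Theorem \ref{T:positive-result-cubes}; the extra labour here is the bookkeeping feeding the almost-isometric charts $\varphi_i$ and the measurable dependence $x\mapsto T(E,x)$ into that machinery, which is precisely why one first passes to finitely many pieces of almost-constant tangent norm at the cost of an $\varepsilon$-portion of $E$.
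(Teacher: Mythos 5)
Your proof of the first conclusion is essentially the paper's own argument: openness of the sets $\{f:\int_{\widetilde E}J_{\widetilde E}f\diff\Hn>\lambda\Hn(\widetilde E)-1/k\}$ via lower semi-continuity of the area-formula functional (Theorem \ref{T:lsc-area-formula-general}), and density via a Kirchheim-type decomposition of $\widetilde E$ into finitely many positively separated compact pieces carrying $(1+\theta)$-biLipschitz charts onto subsets of $(\reals^n,|\cdot|_{a_j})$ with $[|\cdot|_{a_j}]\in\mathcal{N}^b_{\textnormal{infl}(\lambda)}(n)$, followed by the local inflation of Theorem \ref{T:positive-result-cubes} on each piece and gluing across the separation (the paper packages this as Lemma \ref{L:MSR-open-decomposition} plus Lemma \ref{L:Lipschitz-extension-lemma} in Theorem \ref{T:MSR-density}). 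Minor points aside (full rank is arranged by perturbing the affine approximations on each cell, not by a convex combination of $f_0$ with a ``generic'' map, and the Lipschitz slack is created by first scaling $f_0$ so that $\tLip(f_0)<1$), this part is sound.

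The second conclusion has a genuine gap. You invoke the residuality in $\tLip_1(\widetilde E,(\reals^m,|\cdot|_b))$ of the set of $\Hn$-a.e.\ injective maps, calling it an easier companion statement already needed for Theorem \ref{T:main-Euclidean}. No such residuality is established in the paper, and it is not clear that it holds: the paper proves that the non-overlapping condition $\int_E J_E f\diff\Hn=\Hn(f(E))$ is residual only in the \emph{strong} space $\tLip_1^{\textnormal{str}}$ (Corollary \ref{C:no-overlaps}), where $f\mapsto\int_E J_E f\diff\Hn$ is continuous; in the supremum-norm space this functional is merely lower semi-continuous, so the complements of the candidate open sets are not obviously closed, and the paper explicitly settles for \emph{density} of the non-overlapping maps there (Corollary \ref{C:no-overlap-density}). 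The correct route, used both for Theorem \ref{T:main-Euclidean} and here, is the implication (iv)$\Rightarrow$(ii) of Theorem \ref{T:char-area-area-formula}: given $f$ and $\widetilde C<\lambda\Hn(\widetilde E)$, first approximate $f$ by $g$ with $\int_{\widetilde E}J_{\widetilde E}g\diff\Hn>\widetilde C$, use lower semi-continuity to find $\delta>0$ so that this persists on the $\delta$-ball around $g$, and then choose inside that ball an $h$ from the dense set satisfying $\int_{\widetilde E}J_{\widetilde E}h\diff\Hn=\Hn(h(\widetilde E))$; this yields density of $\{\Hn(f(\widetilde E))>\widetilde C\}$, and residuality of $\{\Hn(f(\widetilde E))\geq\lambda\Hn(\widetilde E)\}$ then follows from lower semi-continuity of $f\mapsto\Hn(f(\widetilde E))$ (Theorem \ref{T:lsc-area-general}). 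As written, your argument either needs this substitution or a proof of the unsupported residuality claim.
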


The simplest example of an $n$-rectifiable metric space, whose $\Hn$-a.e.~approximate tangent lies in $\mathcal{N}^b_{\textnormal{infl}(\lambda)}(n)$ is obtained simply via choosing any representative $|\cdot|_a$ of any equivalence class $[|\cdot|_a]\in \mathcal{N}^b_{\textnormal{infl}(\lambda)}(n)$ (provided the set is non-empty) and letting $E$ be an $\Hn$-measurable subset of $(\reals^n, |\cdot|_a)$. In fact, if $(\reals^n, |\cdot|_a)$, we are able to extend the relevant functions onto the whole space and may take $\widetilde E=E$, see Theorem \ref{T:positive-result-cubes}.
Since any pair of Euclidean norms are 1-inflating (see Example \ref{Ex:Euclidean-inflation}), Theorem \ref{T:main-Euclidean} follows from Theorem \ref{T:positive-result-cubes}.
In fact, this observation allows us to prove results in the spirit of Theorem \ref{T:main-Euclidean} for strongly $n$-rectifiable subsets of a metric space.

A set $E\subset X$ is \emph{strongly $n$-rectifiable} if, for any $\varepsilon>0$, we may find functions $f_i$ as in \eqref{def-rect} that are $(1+\varepsilon)$-biLipschitz (see Definition \ref{d:strong-rect}).
In Lemma \ref{l:strong-rect}, we will show that this is equivalent, for $n$-rectifiable sets $E$, to the condition that $T(E,x)$ contains the Euclidean norm for $\H^n$-a.e.\ $x\in E$. (See also Remark \ref{strong-rect-rmk} for the case that $E$ is not assumed to be $n$-rectifiable.)
An achievement of recent analysis on metric spaces is that any RCD metric space satisfies this condition \cite{MonNa,BruEli,Amb,KellMon, GiPas}, see Remark \ref{RCD}.
\begin{theorem}\label{T:intro-k-Euclidean-subspace}
    Suppose $n\in \nat$ and let $E$ be an $n$-rectifiable subspace of a complete metric space $X$. Denote by $|\cdot|_2$ the Euclidean norm on $\reals^n$ and
    let $k\in \nat$ , $k\leq n$ and
    \begin{equation*}
        E^*=\{x\in E: T(E,x)=[|\cdot|_2]\}.
    \end{equation*}
    Then, for any $k$-rectifiable subset $K$ of $E^*$
     we have the following. To each $\varepsilon>0$, there is a set $\widetilde K \subset K$ with $\H^k(K\setminus \widetilde K)<\varepsilon$ such that for every 
     $m\geq k$ a typical $f\in \tLip_1(\widetilde{K}, \reals^m)$ satisfies $J_{\widetilde K} f=1$ $\H^k$-a.e.~in $\widetilde K$.
    Moreover, for any $m>k$, the set
    \begin{equation*}
        \{f\in \tLip_1(\widetilde{K}, \reals^m): \H^k(f(\widetilde K)) = \H^k(\widetilde K)\}
    \end{equation*}
    is residual in $\tLip(\widetilde{K}, \reals^m)$. 
\end{theorem}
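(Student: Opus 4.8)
The plan is to deduce the theorem from Theorem~\ref{T:intro-MSR-residuality}, applied to the $k$-rectifiable set $K\subset X$ (with $k$ in the role of $n$), with $\lambda=1$ and with target norm the Euclidean norm $|\cdot|_2$ on $\reals^m$. For this one must check that $T(K,x)\in\mathcal{N}^{|\cdot|_2}_{\textnormal{infl}(1)}(k)$ for $\H^k$-a.e.\ $x\in K$ (Definition~\ref{D:inflating-space}, with target norm $|\cdot|_2$ on $\reals^m$). Since any pair of Euclidean norms is $1$-inflating (Example~\ref{Ex:Euclidean-inflation}) -- the very observation by which Theorem~\ref{T:main-Euclidean} follows from Theorem~\ref{T:positive-result-cubes} -- one has $[|\cdot|_2]\in\mathcal{N}^{|\cdot|_2}_{\textnormal{infl}(1)}(k)$, so it suffices to prove the claim that $T(K,x)=[|\cdot|_2]$ for $\H^k$-a.e.\ $x\in K$. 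Granting the claim, Theorem~\ref{T:intro-MSR-residuality} furnishes, for each $\varepsilon>0$, a set $\widetilde K\subset K$ with $\H^k(K\setminus\widetilde K)<\varepsilon$ -- here depending only on the rectifiable structure of $K$, and so uniform in $m$, since $T(K,\cdot)=[|\cdot|_2]$ exactly $\H^k$-a.e.\ (alternatively, one intersects over $m\geq k$) -- such that $\{f\in\tLip_1(\widetilde K,\reals^m):\int_{\widetilde K}J_{\widetilde K}f\diff\H^k\geq\H^k(\widetilde K)\}$ is residual and, for $m>k$, $\{f\in\tLip_1(\widetilde K,\reals^m):\H^k(f(\widetilde K))\geq\H^k(\widetilde K)\}$ is residual. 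Passing to the equalities in the statement is routine, exactly as for Theorem~\ref{T:main-Euclidean}: any $f\in\tLip_1(\widetilde K,\reals^m)$ has $\H^k(f(\widetilde K))\leq\H^k(\widetilde K)$, since $1$-Lipschitz maps do not increase Hausdorff measure, and $J_{\widetilde K}f\leq1$ $\H^k$-a.e., since the tangential differential of a $1$-Lipschitz map has operator norm at most $1$ and hence metric Jacobian at most $1$; combined with the area formula (Theorem~\ref{E:Prel-area-formula-metric-eucl}) the latter yields $\int_{\widetilde K}J_{\widetilde K}f\diff\H^k\leq\H^k(\widetilde K)$, so both inequalities above are forced to be equalities, and $\int_{\widetilde K}(1-J_{\widetilde K}f)\diff\H^k=0$ with non-negative integrand forces $J_{\widetilde K}f=1$ $\H^k$-a.e.

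To prove the claim, fix a point $x\in K$ at which $K$ admits an approximate tangent in the sense of Definition~\ref{tangent-norm} -- this holds for $\H^k$-a.e.\ $x\in K$ -- so that, after isometrically embedding the separable space $E$ into $\ell^\infty$, the rescalings of $K$ centred at $x$ converge, locally in the Hausdorff metric, to the approximate tangent plane $W_x$, a $k$-dimensional affine subspace of $\ell^\infty$ isometric to $(\reals^k,\nu_x)$ with $[\nu_x]=T(K,x)$. Because $x\in E^*$, the tangent norm $T(E,x)$ is likewise defined and equals $[|\cdot|_2]$, so the rescalings of $E$ centred at $x$ converge, again locally in the Hausdorff metric, to an $n$-dimensional affine subspace $V_x$ of $\ell^\infty$ isometric to $(\reals^n,|\cdot|_2)$ (cf.~\cite{Kirch,AKrec}; at such a point $E$ has density one, so the blow-up is of the sets and not merely of the associated measures). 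As $K\subset E$, the rescalings of $K$ about $x$ are contained in those of $E$, and local Hausdorff limits respect inclusion; hence $W_x\subset V_x$. Thus $W_x$ is a $k$-dimensional affine subspace of the Euclidean space $V_x$, so $(\reals^k,\nu_x)\cong W_x$ carries an inner-product norm and $T(K,x)=[|\cdot|_2]$, which is the claim. (If $\H^k(K)=\infty$, first decompose $K$ into countably many pieces of finite measure; this affects neither the tangent norms nor the conclusion.)

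The one genuinely delicate point is this compatibility of the two blow-ups -- that the local Hausdorff tangent of the subset $K$ lies inside that of $E$ -- and it is also the reason the hypothesis is phrased pointwise, as $K\subset E^*$, rather than as ``$T(E,\cdot)=[|\cdot|_2]$ for $\H^n$-a.e.\ $x\in E$''. A property holding $\H^n$-a.e.\ on the $n$-rectifiable set $E$ need not survive $\H^k$-a.e.\ on a $k$-rectifiable subset $K$ that happens to be $\H^n$-null, so the tangent plane of $E$ would not be available at $\H^k$-typical points of $K$; requiring $K\subset E^*$ makes the Euclidean tangent of $E$ available at \emph{every} point of $K$, after which the blow-up inclusion, together with the triviality that an affine subspace of a Euclidean space is Euclidean, finishes the argument. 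Everything else -- the reduction to Theorem~\ref{T:intro-MSR-residuality} via $1$-inflation of Euclidean pairs, and the passage from ``$\geq$'' to ``$=$'' -- is routine.
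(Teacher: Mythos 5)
Your overall strategy coincides with the paper's: reduce to Theorem \ref{T:intro-MSR-residuality} with $\lambda=1$ via the fact that Euclidean pairs are $1$-inflating (Example \ref{Ex:Euclidean-inflation}), and upgrade the inequalities to equalities by observing that a $1$-Lipschitz map can neither increase $\H^k$ nor have tangential Jacobian exceeding $1$. That reduction is sound (though note that residuality is not inherited by subsets, so your parenthetical ``intersect over $m$'' fallback does not work; the correct justification is the one you give first, that $\widetilde K$ depends only on the decomposition of $K$ and not on $m$). The entire content of the theorem is therefore your Claim that $T(K,x)=[|\cdot|_2]$ for $\H^k$-a.e.\ $x\in K$, and it is exactly there that your argument has a genuine gap.

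You derive the Claim by asserting that the rescalings of $K$ (resp.\ $E$) about $x$ converge in the local Hausdorff metric to $W_x$ (resp.\ $V_x$) and then passing the inclusion $K\subset E$ to the limit. But approximate tangent norms are purely measure-theoretic: Definition \ref{tangent-norm} controls only a subset $\widetilde E\subset E$ of full $\H^n$-density at $x$, and the exceptional set $(E\setminus\widetilde E)\cap B(x,r)$, though of $\H^n$-measure $o(r^n)$, may contain points at distance comparable to $r$ from the tangent sheet; already $E=\reals^n\cup\{p_j\}$ with $p_j\to x$ non-tangentially has density one at $x$ yet its set-theoretic blow-up strictly contains the tangent plane. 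Density one bounds measure, not Hausdorff distance, so neither convergence holds and ``local Hausdorff limits respect inclusion'' has nothing to act on. This is not a removable technicality: for $k<n$ the set $K$ is $\H^n$-null, so $K\cap B(x,r)$ may sit entirely inside the uncontrolled exceptional set $E\setminus\widetilde E_x$ --- which is precisely the difficulty you identify in your final paragraph, and which the pointwise hypothesis $x\in E^*$ does not by itself resolve, since it again only yields a full-$\H^n$-density subset of $E$ near $x$ with no information on where $K$ lies relative to it. The paper's proof (the unnumbered theorem preceding Corollary \ref{C:k-Euclidean-subspace}) avoids blow-ups entirely: it takes $(1+\theta)$-biLipschitz charts $I_r$ of a full-$\H^k$-density subset $\widetilde K$ of $K$ and $J_r$ of a full-$\H^n$-density subset $\widetilde E$ of $E$ near $x$, arranges $\widetilde K\subset\widetilde E$, differentiates the transition map at a suitable density point to obtain a linear $(1+\theta)^2$-biLipschitz embedding of $(\reals^k,|\cdot|_a)$ into Euclidean $\reals^n$, and sends $\theta\to 0$. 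The missing step in your argument is exactly this compatibility of the two witnessing sets; it must be handled at the level of full-density subsets and differentials, not set-theoretic limits.
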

Note that, if $E$ is strongly $n$-rectifiable, $\Hn(E\setminus E^*)=0$ and so, in the case $k=n$, Theorem \ref{T:intro-k-Euclidean-subspace} holds for any positive measure subset of $E$.

Note that our most general results do not apply to the entire rectifiable set $E$. The principal difficulty of obtaining results on the whole of $E$ lies in the lack of a useful Lipschitz extension result. Recall that in a general metric space an $L$-Lipschitz function into $\reals^m$ may be extended to any larger domain, as a $(\sqrt{m}L)$-Lipschitz function, and this constant is sharp \cite[7.2 Theorem]{Mat}, \cite[2.10.44]{Federer}.
Consequently, when $m=1$, we do obtain residuality results for 1-rectifiable metric spaces, see Theorem \ref{1-rect}.

Within the study of these objects, we naturally arrive at the question whether a typical function $f\in \tLip_1(X,\reals^m)$ satisfies $f_\# \Hn_{|E}\ll \Hn$. Here $f_\# \Hn_{|E}$ denotes the pushforward of $\Hn_{|E}$, which is the restriction of $\Hn_X$ onto $E$. It is an interesting question, whether the set of these functions is residual in $\tLip_1(X,\reals^m)$.
It follows from Theorem \ref{T:main-Euclidean} and the area formula, that in the Euclidean case, this is true.
Unfortunately, in general we cannot answer this. We are able to get residuality in the strong space however. That is, we are able to show that a typical element of $\tLip^{\textnormal{str}}_1(X,\reals^m)$ satisfies $f_\#\Hn_E \ll \Hn$ (Corollary \ref{C:pushforward}), for
\begin{equation*}
    \tLip^{\textnormal{str}}_1(X,\reals^m)=(\tLip_1(X,\reals^m), \lVert \cdot \rVert_{\ell^\infty} + \tLip(\cdot)).    
\end{equation*}

The paper is structured as follows.
Section \ref{S:prel} contains preliminaries needed for the rest of the text.

In order to prove our residuality statements, we will show that, under various conditions, a set of the form
\begin{equation}\label{E:intro-thesets2}
    \{f\in \tLip_1(X,\reals^m): \Hn(f(E))> \factorlambda \Hn(E)\}
\end{equation}
is open and dense.

The openness statements, which hold in any metric space, are contained in Section \ref{S:openness}. Openness of the sets in \eqref{E:intro-thesets2} is equivalent to lower semi-continuity of the ``area'' functional
\begin{equation*}
    f\mapsto\Hn(f(E))
\end{equation*}
and the theorems are stated in this form.
We also prove lower semi-continuity results for the ``area formula'' functional
\begin{equation*}
    f\mapsto \int_E J_E f\diff\Hn.
\end{equation*}

The density statements are harder, and most require additional hypotheses. Section \ref{S:density-general} concentrates on the few that actually hold in any metric space. In particular, we show that sets of functions which essentially do not overlap on a fixed $n$-rectifiable set $E$ is dense even in the stronger space $\tLip_1^{\textnormal{str}}(X,\reals^m)$ (see Corollary \ref{C:no-overlaps}). Such functions satisfy the stronger area formula
\begin{equation*}
    \int_E J_E f \diff\Hn= \Hn(f(E)).
\end{equation*}

The rest of the paper deals with proving or disproving the density of the sets in \eqref{E:intro-thesets2}. Sections \ref{S:residuality-positive} and \ref{S:residuality-negative} give sufficient and necessary conditions, respectively, for the case $E\subset (\reals^n, |\cdot|_a)$ for some norm $|\cdot|_a$ on $\reals^n$.
In Section \ref{S:residuality-positive} we show that a $\lambda$-inflating pair of norms is sufficient to deduce the density of \eqref{E:intro-thesets2}, see Theorem \ref{T:positive-result-cubes}. In particular, this proves Theorem \ref{T:main-Euclidean}.
On the other hand, in Section \ref{S:residuality-negative} we give necessary conditions, in terms of extremal points of the unit ball, for \eqref{E:intro-thesets2} to be dense, see Theorem \ref{T:mv-negative}.
This section contains the proof of Theorem \ref{T:main-general-failure}. 

Finally, Section \ref{S:results-metric} provides residuality results for $n$-rectifiable metric spaces with suitable tangent spaces. It is there that we prove Theorems \ref{T:intro-MSR-residuality} and \ref{T:intro-k-Euclidean-subspace} through a combination of theory developed in preceding sections and a modified version of Kirchheim's decomposition result.

\subsection{Acknowledgements}
J.T. is supported by the Warwick Mathematics Institute Centre for Doctoral Training.
Both D.B. and J.T. are supported by the European Union's Horizon 2020 research and innovation programme (Grant agreement No. 948021).

\section{Preliminaries}\label{S:prel}

\subsection{Spaces of Lipschitz functions}
Let $(X,d)=(X,\metric_X)$ be a metric space.
For $x\in X$ and $r>0$, we shall denote by $B_X(x,r)=\{z\in X: \metric_X(x,z)\leq r\}$ the closed ball of radius $r$ in $X$.
Open balls will be denoted by $B^\circ_X(x,r)=\{z\in X: \metric(x,z)< r\}$.

Given a set $S\subset X$ and $r\geq 0$, we denote by $B_X(S,r)=\{z\in X: \dist(z,X)\leq r\}$ its $r$-neighbourhood. For $r=0$ this coincides with the topological closure and we shall use the notation $\overline{S}=B_X(S,0)$.

Let $(Y, \metric_Y)$ be another metric space.
Recall that a function $f\colon X \to Y$ is called $L$-Lipschitz for some $L\in [0,\infty)$ if
\begin{equation*}
    \metric_Y(f(x),f(y))\leq L \metric_X(x,y)\quad \text{for all $x,y\in X$.}
\end{equation*}
The least such $L$ is called the Lipschitz constant of $f$ and is denoted by $\tLip(f)$ or, if we need to be more specific, $\tLip_{X\to Y}(f)$.
A function $f\colon X \to Y$ is called Lipschitz if $\tLip(f)<\infty$. 
A function $f\colon X \to Y$ is called biLipschitz, if it is Lipschitz, injective and the inverse $f^{-1}\colon f(X) \to X$ is Lipschitz. In this case, if both $f$ and $f^{-1}$ are $L$-Lipschitz, we say that $f$ is $L$-biLipschitz.

The set of all \emph{bounded} Lipschitz functions from $X$ to $Y$ will be denoted by $\tLip(X,Y)$.
Given a fixed $L\in[0,\infty)$, we denote
\begin{equation*}
    \tLip_L(X,Y)=\{f\in \tLip(X,Y): \tLip(f)\leq L\}.
\end{equation*}
Given a set $\Gamma$ and a normed space $(Y, \lVert \cdot \rVert_Y)$, we denote $\lVert \varphi \rVert_{\infty}=\lVert \varphi \rVert_{\ell^\infty(\Gamma, Y)}= \sup_{\gamma\in \Gamma} \lVert\varphi(\gamma)\rVert_{Y}$ for any $\varphi\colon \Gamma \to Y$.

In the case $Y$ is a normed space as above, we consider the sets $\tLip(X,Y)$ and $\tLip_L(X,Y)$ to be equipped with metrics induced by the supremum norm $\lVert \cdot \rVert_{\ell^\infty(X,Y)}$.
With these metrics, the space $\tLip(X,Y)$ is a normed linear space, which needs not be complete. 
However, if $Y$ is complete, then the space $\tLip_L(X,Y)$ is complete for any $L\in [0,\infty)$.

Occasionally, it will be useful to consider this space equipped with a stronger norm denoted by
\begin{equation*}
    \tLip^{\textnormal{str}}(X,Y)= (\tLip(X,Y), \lVert \cdot \rVert_{\ell^{\infty}(X,Y)} + \tLip(\cdot)).
\end{equation*}
The symbol $\tLip_L^{\textnormal{str}}(X,Y)$ denotes the space $\tLip_L(X,Y)$ equipped with the metric inherited from $\tLip^{\textnormal{str}}(X,Y)$.
If $Y$ is complete, then both $\tLip^{\textnormal{str}}(X,Y)$ and $\tLip_L^{\textnormal{str}}(X,Y)$ are complete.

We shall routinely use the following two classical Lipschitz extension results. Firstly,
Kirszbraun's theorem (see e.g.~\cite[2.10.43]{Federer})  asserts that if $H_1$ and $H_2$ are Hilbert spaces, $S\subset H_1$ and $f\colon S \to H_2$ is $L$-Lipschitz, then $f$ admits an extension $f\colon H_1 \to H_2$ which is $L$-Lipschitz.
Secondly, we recall McShane's extension theorem \cite[7.2 Theorem]{Mat} asserting that given any metric space $X$, $S\subset X$ and $f\colon S \to \reals$ an $L$-Lipschitz function, there is an $L$-Lipschitz extension $\widetilde f\colon X \to \reals$. 
In particular, if $f$ is bounded on its domain and, say, $C= \sup_{x\in S} |f(x)|$, then the Lipschitz extension can be also assumed to be bounded by $C$. Indeed, the function $f_0\colon X \to \reals$ given by
\begin{equation*}
    f_0=\widetilde f \chi_{\{|\widetilde f|\leq C\}} + C\chi_{\{\widetilde f >C\}} - C \chi_{\{\widetilde f <-C\}}
\end{equation*}
is easily observed to be bounded by $C$, an extension of $f$ and $L$-Lipschitz.
Also note that by extending coordinate-wise, if $f\colon S \to \reals^m$ is $L$-Lipschitz, then there is an $\sqrt{m}L$-Lipschitz extension $f\colon X \to \reals^m$. If $f$ is bounded on its domain with, say, $\sup_{x\in S} |f(x)|_2= C$ then the extension can also be assumed to be bounded by $\sqrt{m}C$, i.e. $\sup_{x\in X} |f(x)|_2\leq \sqrt{m}C$.
Here $|\cdot|_2$ denotes the Euclidean norm on $\reals^m$.
Moreover if $f\colon S \to \reals^m_\infty$ is $L$-Lipschitz, then there is an $L$-Lipschitz extension $f\colon X \to \reals^m_\infty$. Here $\reals^m_\infty$ stands for $\reals^m$ equipped with the maximum norm. 
The result remains true after replacing $\reals^m_\infty$ with the Banach space
\begin{equation*}
    \ell^\infty(\Gamma)=(\{\varphi\colon \Gamma \to \reals: \sup_{\gamma\in \Gamma} |\varphi(\gamma)|<\infty\}, \lVert \cdot \rVert_{\ell^\infty(\Gamma, \reals)}),
\end{equation*}
for any set $\Gamma$.

If $X$ is a topological space and $H\subset X$, we call $H$ \emph{residual} if $H$ contains an intersection of countably many dense open sets.
Baire's theorem asserts that if $X$ is a complete metric space, then all of its residual subsets are dense in $X$.
This means that the family of residual subsets of $X$ is closed under countable intersections, supersets and contains only dense sets.
Therefore it is a suitable notion of ``large'' sets.
We say that a typical element of $X$ satisfies some property (P), if the set of its elements satisfying the property (P) is residual in $X$.

Recall that if $Y$ is a Banach space and $X$ is a metric space, the spaces $\tLip_L(X,Y)$, $\tLip_L^\textnormal{str}(X,Y)$ and $\tLip^{\textnormal{str}}(X,Y)$ are all complete, so residual subsets of these spaces are dense.

It should be also stated that if $H\subset \tLip_L(X,Y)$ is residual in $\tLip^{\textnormal{str}}_L(X,Y)$, then it must be dense in $\tLip_L(X,Y)$.
However, it might not need to be residual in $\tLip_L(X,Y)$.
In particular, the family of sets which are residual in $\tLip^{\textnormal{str}}_L(X,Y)$ forms a ``reasonable'' notion of large sets in $\tLip_L(X,Y)$ as the family is again closed under countable intersection, supersets and contains only dense sets.

As a nice illustrative example, it can be easily checked that $\{f\in \tLip_L(X,Y): \tLip(f)=L\}$ is residual in $\tLip_L(X,Y)$.

\subsection{Norms on finite dimensional spaces}
Given $n\in\nat$ a norm on $\reals^n$ will generally be denoted by symbols such as $|\cdot|_a$, $|\cdot|_b$ etc.
Note that the letters $a$, $b$ on their own do not have any meaning.
Using abuse of notation, we will also denote by $|\cdot|_2$ the Euclidean norm and by $|\cdot|_\infty$ 
the supremum norm. If the particular space (dimension) needs to be specified, we write
$|\cdot|_{\reals^n_a}$ instead.

The ball of a norm $|\cdot|_a$ of radius $r>0$ centred at $x$ is denoted by $B_a(x,r)$.
We write $B_a=B_a(0,1)$.
In particular, the unit Euclidean ball will be denoted by $B_2$ or $B_{\reals^n_2}$ if the dimension is relevant.

The symbol $\lVert \cdot \rVert$ is reserved for operator norms and norms on infinite dimensional spaces.
If $n,m\in \nat$, we denote by $\mathcal{L}(\reals^n,\reals^m)=\reals^{n\times m}$ the space of linear operators from $\reals^n$ to $\reals^m$.
If $|\cdot|_a$, $|\cdot|_b$ are norms on $\reals^n$ and $\reals^m$, we denote by $\lVert \cdot \rVert_{a\to b}$ or $\lVert \cdot \rVert_{\reals^n_a\to \reals^m_b}$ the operator norm induced by $|\cdot|_a$ and $|\cdot|_b$, i.e.
\begin{equation*}
    \lVert A \rVert_{a\to b} = \sup_{s\in B_a} |A(x)|_{b} \quad \text{for $A\in \reals^{n\times m}$.}
\end{equation*}
The symbol $B_{a\to b}$ or, if more clarity is needed, the symbol $B_{\reals^n_a \to\reals^m_b}$ will be used to denote the unit ball of $\lVert \cdot \rVert_{a\to b}$ in the space of linear operators $\mathcal{L}(\reals^n_a,\reals^m_b)$.

Recall the following sufficient condition for a function to be Lipschitz in convex subsets of normed spaces.

\begin{lemma}\label{L:prel-Lip}
    Assume $|\cdot|_a$ and $|\cdot|_b$ are norms on $\reals^n$ and $\reals^m$ respectively. If $f\in\tLip(\reals^n,\reals^m)$ and there is some $L\in[0,\infty)$ such that for $\Hn$-a.e.~$x$, the Frech\'et differential $f'(x)\in \mathcal{L}(\reals^n,\reals^m)$ exists and satisfies
    \begin{equation*}
        \lVert f'(x) \rVert_{a\to b}\leq L,
    \end{equation*}
    then $f\in \tLip_L(\reals^n_a,\reals^m_b)$.
\end{lemma}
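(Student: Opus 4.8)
The plan is to prove Lemma~\ref{L:prel-Lip} by reducing it, via a standard mollification argument, to the elementary fact that a $C^1$ function whose derivative has operator norm at most $L$ everywhere is $L$-Lipschitz. First I would observe that the conclusion is local in the following sense: since $\reals^n_a$ and $\reals^m_b$ are normed spaces and every pair of points lies on the segment joining them, it suffices to prove $|f(x)-f(y)|_b \le L|x-y|_a$ for an arbitrary pair $x,y$. Fix such a pair and let $v = y-x$.

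The main step is to exploit that $f$ is already Lipschitz (with respect to, say, the Euclidean norms, which are equivalent to $|\cdot|_a,|\cdot|_b$), hence bounded on bounded sets, so we may convolve: for $\varepsilon>0$ let $f_\varepsilon = f * \rho_\varepsilon$ with $\rho_\varepsilon$ a standard smooth mollifier on $\reals^n$. Then $f_\varepsilon$ is smooth, $f_\varepsilon \to f$ locally uniformly as $\varepsilon\to 0$, and $f_\varepsilon'(z) = \int f'(z-w)\rho_\varepsilon(w)\diff w$ for every $z$, because $f$ is differentiable a.e.\ with $f'\in L^\infty_{\mathrm{loc}}$ and, being Lipschitz, $f$ is in $W^{1,\infty}_{\mathrm{loc}}$ so its distributional derivative coincides a.e.\ with its pointwise derivative $f'$. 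Since $\|f'(z-w)\|_{a\to b}\le L$ for a.e.\ $w$ and $\rho_\varepsilon$ is a probability density, convexity of the operator norm ball $B_{a\to b}$ gives $\|f_\varepsilon'(z)\|_{a\to b}\le L$ for every $z$. Now for the smooth function $f_\varepsilon$ we compute along the segment:
\begin{equation*}
    f_\varepsilon(y) - f_\varepsilon(x) = \int_0^1 f_\varepsilon'(x+tv)\,v \diff t,
\end{equation*}
so $|f_\varepsilon(y)-f_\varepsilon(x)|_b \le \int_0^1 \|f_\varepsilon'(x+tv)\|_{a\to b}\,|v|_a\diff t \le L|v|_a = L|x-y|_a$. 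Letting $\varepsilon\to 0$ and using $f_\varepsilon\to f$ pointwise yields $|f(x)-f(y)|_b\le L|x-y|_a$, and since $x,y$ were arbitrary, $f\in\tLip_L(\reals^n_a,\reals^m_b)$.

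The only genuinely delicate point is justifying the identity $f_\varepsilon' = f'*\rho_\varepsilon$ together with the a.e.\ bound $\|f'\|_{a\to b}\le L$ being inherited by the convolution; this is where the hypothesis that $f$ is Lipschitz (hence its pointwise a.e.\ derivative agrees with its weak derivative, so Rademacher plus the fundamental theorem of calculus for mollifications applies cleanly) is used, and where one must be slightly careful that ``$f'(x)$ exists and $\|f'(x)\|_{a\to b}\le L$ for a.e.\ $x$'' indeed controls the convolved derivative at \emph{every} point. Everything else is routine: equivalence of finite-dimensional norms guarantees the mollification and the local uniform convergence make sense, and the convexity argument for the operator-norm ball is immediate since $\|\cdot\|_{a\to b}$ is itself a norm.
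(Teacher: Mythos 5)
Your proof is correct, but it takes a genuinely different route from the paper's. The paper handles the obstruction that the exceptional null set (where $f'$ fails to exist or the bound fails) could contain the entire segment $[x,y]$ by a Fubini argument: it considers a family of piecewise affine curves from $x$ to $y$, bent through a perpendicular bisector hyperplane, of $|\cdot|_a$-length at most $|x-y|_a+\varepsilon$, selects one along which $f'$ exists $\H^1$-a.e., and applies the fundamental theorem of calculus and Jensen's inequality along that curve before letting $\varepsilon\to 0$. You instead mollify: since $f$ is Lipschitz its distributional derivative lies in $L^\infty_{\mathrm{loc}}$ and agrees a.e.\ with the Rademacher derivative, so $f_\varepsilon'=f'*\rho_\varepsilon$ everywhere, and the sub-averaging inequality $\lVert \int f'(z-w)\rho_\varepsilon(w)\diff w\rVert_{a\to b}\leq \int \lVert f'(z-w)\rVert_{a\to b}\rho_\varepsilon(w)\diff w\leq L$ upgrades the a.e.\ bound to an everywhere bound for the smooth approximants; the straight segment then works and the estimate survives the limit $\varepsilon\to 0$. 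Your argument is the standard $W^{1,\infty}$ characterization and is arguably cleaner, at the cost of invoking weak derivatives and the coincidence of distributional and pointwise derivatives; the paper's argument is more self-contained and purely measure-theoretic, and its curve-selection trick adapts more readily to domains that are convex but not all of $\reals^n$, where mollification near the boundary would require extra care. Both approaches use the same two essential inputs (the fundamental theorem of calculus along a path and the pointwise operator-norm bound), and your handling of the one delicate point --- why the a.e.\ hypothesis controls the convolved derivative at every point --- is adequate.
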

\begin{proof}
    Let $x,y\in \reals^n$ and
    let $H\subset \reals^n$ be the unique hyperplane for which the Euclidean distance to $x$ equals the Euclidean distance to $y$. Let $\varepsilon>0$ and denote by $\Gamma_\varepsilon$ the set of Lipschitz curves $\gamma\colon [0,1] \to \reals^n$ such that $\Hn_a(\gamma([0,1]))\leq |x-y|_a+ \varepsilon$ and there exists some $z\in H$ such that $\gamma_{|[0,\frac{1}{2}]}$ is the affine curve connecting $x$ to $z$ and $\gamma_{|[\frac{1}{2},1]}$ is the affine curve connecting $z$ to $y$.

    Since $\int_0^1 |\gamma'(t)|_a\diff t = \Hn_a(\gamma([0,1]))$, any curve $\gamma \in \Gamma_\varepsilon$ satisfies
    \begin{equation*}
        \int_0^1 |\gamma'(t)|_a\diff t \leq |x-y|_a + \varepsilon.
    \end{equation*}
    Moreover, by Fubini's theorem, there exists some curve $\gamma \in \Gamma_\varepsilon$ such that $f'(\gamma(t))$ exists for $\H^1$-a.e.~$t\in[0,1]$. Using the fundamental theorem of calculus, Jensen's inequality, the estimate on the operator norm and the above inequality, we may calculate
    \begin{equation*}
        \begin{split}
            |f(x)-f(y)|_b
            &=\left\lvert\int_0^1 f'(\gamma(t))(\gamma'(t)) \diff t \right\rvert_b
            \leq \int_0^1 |f'(\gamma(t))(\gamma'(t))|_b \diff t\\
            &\leq \int_0^1 \lVert f'(\gamma(t))\rVert_{a\to b} |\gamma'(t)|_a\diff t
            \leq L\int_0^1 |\gamma'(t)|_a \diff t
            \leq L(|x-a|_a + \varepsilon).
        \end{split}
    \end{equation*}
    By sending $\varepsilon\to 0$, we obtain $|f(x)-f(y)|_b\leq L |x-y|_a$. Since $x$ and $y$ were arbitrary, $f$ is $L$-Lipschitz.
    % Suppose $x,y\in K$ satisfy $|x-y|_a=1$.
    % Let $\varphi\colon [0, 1] \to \reals^m$ be given by $\varphi(t)=f(ty+(1-t)x)$.
    % Then, by the fundamental theorem of calculus
    % \begin{equation*}
    %     f(x)-f(y)=\int_0^1 \varphi'(t)\diff t.
    % \end{equation*}
    % and so
    % \begin{equation*}
    %     |f(x)-f(y)|_b \leq \int_0^1 |\varphi'(t)|_b \diff t= 
    %     \int_0^1 |f'(\varphi(t))(x-y)|_b
    %     \leq \int_0^1 ||f'(\varphi(t))||_{a\to b}
    %     \leq L.
    % \end{equation*}
    % The general case follows by a simple scaling argument. 
\end{proof}

If $|\cdot|_a$ is a norm on $\reals^n$ and $W\colon \reals^n \to \reals^n$ is an invertible linear map, we denote by $|\cdot|_{W(a)}$ the norm on $\reals^n$ given by
\begin{equation*}
    |x|_{W(a)}=|W^{-1}x|_a.
\end{equation*}
Observe that with this notation, one has $W(B_a)=B_{W(a)}$.

Finally, for a fixed $n\in\nat$ we define an equivalence relation $\sim$ on the set of all norms on $\reals^n$. This relation is given by $|\cdot|_{a_1}\sim |\cdot|_{a_2}$ if and only if there is an invertible linear map $A\colon \reals^n \to \reals^n$ such that $A(B_{a_1})=B_{a_2}$. Classes of equivalence will be denoted in the standard way by
\begin{equation*}
    [|\cdot|_a]=\{|\cdot|_{a'}: |\cdot|_{a'}\sim |\cdot|_a\}.    
\end{equation*}
Note that $|\cdot|_{a_1}\sim |\cdot|_{a_2}$ if and only if $\reals^n_{a_1}$ is isometrically isomorphic to $\reals^n_{a_2}$.

% We shall recall a version of the classical result of John about inscribing ellipses to general convex sets. The statement we need is the following lemma, see e.g.~\cite[page 2]{Ball}.

% \begin{lemma}[John]\label{L:prel-John}
%     For any norm $|\cdot|_a$ on $\reals^n$ there is some norm $|\cdot|_{a'}\in [|\cdot|_a]$ such that $\frac{1}{\sqrt{n}}B_2 \subset B_{a'}\subset B_2$.
% \end{lemma}

\begin{definition}
    \label{extremal-point}
Let $X$ be a vector space and $K\subset X$ a convex set. A point $u\in K$ is called an \textit{extremal} point of $K$ if for any $v\in X$
    \begin{equation*}
        u+v\in K \quad \text{and} \quad 
        u-v\in K, \quad \text{implies} \quad v=0.
    \end{equation*}
Suppose $X$ is equipped with a norm $|\cdot|$. Then the unit ball $B=\{x\in X: |x|\leq 1\}$ is a convex set. If $X$ is a finite-dimensional Banach space, then $B$ has an extremal point. If $x$ is an extremal point of $B$, then $x\in\partial B$.
\end{definition}

Suppose now that $X$ is a Banach space and $x\in\partial B$. It follows from the Hahn-Banach theorem, that there exists a \emph{supporting hyperplane} of $B$ containing $x$, i.e.~by definition, there is $x^*\in X^*$ such that $x^*(x)=1$ and $B\subset \{y\in X: x^*(y)\leq 1\}$.
We say that $x$ is \emph{strongly extremal} if there exists $x^*\in X^*$ such that for $y\in B$, $x^*(y)=1$ if and only if $y=x$, and $B\subset \{y\in X: x^*(y)\leq 1\}$.

If $X$ is finite-dimensional, then there always exists a strongly extremal point $x\in\partial B$. Indeed, as $\partial B$ is compact, find $x\in \partial B$ which maximizes the Euclidean distance from $0$. Then consider the tangent (affine) hyperplane $T$ to the Euclidean ball of the corresponding radius at $x$. This is a supporting hyperplane of $B$ and $y\in B\cap T$ if and only if $y=x$. Whence $x$ is strongly extremal.

It is easily verified that a strongly extremal point is also an extremal point.

Suppose $n\in \nat$ and $K\subset \reals^n$ is a convex set.
An affine hyperplane $T\subset \reals^n$ (i.e.~an affine subspace of dimension $\dim T = \dim X - 1$), is called an \emph{affine tangent} to $K$ at $x$ if $T$ is a supporting hyperplane of $K$ and $x\in T$ (this is just a change of name in the finite dimensional case).

Suppose $n,m\in \nat$, $n\leq m$. We define the functional $\vol \colon \reals^{n\times m} \to [0,\infty)$ by
\begin{equation*}
    \vol A = \sqrt{\det A^T A}.
\end{equation*}
Recall that, for $n$-dimensional Hausdorff measure (see Section \ref{s:rectifiability} below) one has 
\begin{equation}\label{E:vol-geom-meaning}
    \Hn(A(E))= \vol(A) \Hn(E) \quad \text{for any $E\subset \reals^n$, $\Hn$-measurable set.}
\end{equation}
We extend this definition to norms. For a norm $|\cdot|_a$ on $\reals^n$, we let
\begin{equation}\label{E:vol-of-norm}
    \vol |\cdot|_a = \frac{2^n}{\Hn(B_a)}.
\end{equation}
Recalling the fact that one always has $\Hn_{a}(B_a)=2^n$ together with Haar's theorem, it follows that 
\begin{equation*}
    \Hn_{a}(E)=\vol(|\cdot|_a)\Hn(E) \quad \text{for any $E\subset \reals^n$, $\Hn$-measurable set.}
\end{equation*}

\subsection{Rectifiable metric spaces}\label{s:rectifiability}
Suppose $X$ is a metric space. For each $s\in(0,\infty)$, $\delta>0$ and $E\subset X$ we define the quantity
\begin{equation*}
    \H^s_\delta(E)=\inf\{\sum_{i\in\nat} (\diam E_i)^s: E\subset \bigcup_{i\in\nat} E_i, \; \diam E_i\leq \delta\}.
\end{equation*}
The $s$-dimensional Hausdorff measure of $E$ is the quantity
\begin{equation*}
    \H^s(E)=\sup_{\delta>0} \H^s_\delta(E).
\end{equation*}
This constitutes an outer measure, which can be restricted to a Borel measure. 
If the underlying metric space needs to be specified, we use notation such as $\H^s_X$ and similar.

Note that if $X$ is complete and $E\subset X$ is $\Hn$-measurable with $\sigma$-finite $\Hn$-measure, then $\Hn_{|E}$ is inner regular by compact sets. Indeed, it is not necessary to assume separability of $X$ as $\overline{E}$ is separable and complete.

If $X=\reals^n$ and $|\cdot|_a$ is a norm on $\reals^n$, then for $k\leq n$, we shall use the conventions
\begin{equation*}
    \H^k=\H^k_{|\cdot|_2} \quad \text{and} \quad \H^k_a= \H^k_{|\cdot|_a}.
\end{equation*}
Note that in the above situation one always has \cite[Lemma 6 (i)]{Kirch}
\begin{equation*}
    \Hn_{a}(B_a)=2^n.
\end{equation*}

\begin{definition}
    Let $X$ be a complete metric space. An $\Hn$-measurable set $E\subset X$ is \emph{$n$-rectifiable} is there exists a countable number of sets $F_i\subset \reals^n$ and Lipschitz maps $f_i\colon F_i \to X$ such that
    \begin{equation*}
        \Hn(E\setminus \bigcup_{i} f_i(F_i))=0.
    \end{equation*}
     If one has $S\subset X$ such that $\Hn(S\cap E)$ for any $n$-rectifiable set $E\subset X$, then $S$ is called \emph{$n$-purely unrectifiable}.
\end{definition}

Given a metric space $X$ and its subset $E$, we say that $x\in X$ is an $\Hn$-density point of $E$ if
\begin{equation*}
    \lim_{r\to 0} \frac{1}{(2r)^n}\Hn(E\cap B(x,r))=1.
\end{equation*}
If $E$ if $n$-rectifiable, than its $\Hn$-a.e.~point is a density point of $E$ \cite[Theorem 9]{Kirch}.

Suppose $F\subset \reals^n$ and $f\colon F \to \reals^m$ for some $m,n\in \nat$. Given $u\in F$ an $\Hn$-density point of $F$, we say that a linear map $f'(u)\colon \reals^n \to \reals^m$ is the \emph{(approximate) Frech\'et differential of $f$ at $u$} if
\begin{equation*}
    \lim_{v\in F, v\to u} \frac{f(u)-f(v)-f'(u)(u-v)}{|u-v|_2} = 0.
\end{equation*}
It is a consequence of the Rademacher's differentiation theorem \cite[3.1.6]{Federer} combined with Kirszbraun's extension theorem \cite[2.10.43]{Federer} that at $\Hn$-a.e.~$u\in F$, the Frech\'et differential of $f$ exists uniquely.

Suppose $|\cdot|_a$ is a norm on $\reals^n$ and $|\cdot|_b$ is a norm on $\reals^m$. It follows from the definition that if $f\colon F_a \to\reals^m_b$ is $L$-Lipschitz for some $L\in [0,\infty)$, then
\begin{equation*}
    \lVert f'(u) \rVert_{a\to b}\leq L
\end{equation*}
for every $u\in F$ for which $f'(u)$ exists.

Now let $X$ be a metric space and $f\colon F \to X$. A semi-norm $s(\cdot)$ on $\reals^n$ is called a \emph{metric differential} of $f$ at $u\in \reals^n$ if one has
\begin{equation*}
    \lim_{v\in F, v\to u}\frac{d(f(v),f(u))-s(v-u)}{|v-u|_2}=0.
\end{equation*}
It is the classical result of Kirchheim \cite[Theorem 2]{Kirch} that if $f$ is Lipschitz, then for $\Hn$-a.e.~$u\in \reals^n$ the metric differential $s$ of $f$ at $u$ exists uniquely. In that case, we shall denote $|f'|(u)=s$.
Note that if $X$ is a Euclidean space of dimension $m$, we denote by $f'(u)$ the classical (approximate) Frech\'et differential of $f$ at $x\in F$.
In this case, according to the conventions above, for any $w \in \reals^n$ one has $|f'(u)(w)|_{\reals^m_2}=|f'|(u)(w)$, provided the left hand side is defined.

\begin{definition}[\cite{Kirch}, Definition 10]
    \label{tangent-norm}
    Let $n\in\nat$, let $E$ be a metric space and let $x\in E$. A norm $|\cdot|_a$ on $\reals^n$ is called an \emph{approximate tangent norm} \emph{to $E$ at $x$}, if there is a set $\widetilde{E}\subset E$ such that $x$ is an $\Hn$-density point of $\widetilde E$ and to each $r>0$, there is a set $F_r\subset \reals^n$ and a map $I_r\colon (F_r, |\cdot|_a) \to \widetilde{E}\cap B(x,r)$, which is a biLipschitz bijection satisfying
    \begin{equation*}
        \lim_{r\to 0} \max\{\tLip(I_r), \tLip(I^{-1}_r)\}=1.
    \end{equation*}  
\end{definition}

From \cite[Theorem 9]{Kirch}, it immediately follows that if $X$ is a complete metric space and $E$ is an $n$-rectifiable subset with $\Hn(E)<\infty$, then $E$ admits an approximate tangent norm at $\Hn$-a.e.~point of $E$.
Moreover, the approximate tangent norm is unique, up to linear isometry, at $\Hn$-a.e.~point of $E$. Finally, it also follows from the proof of \cite[Theorem 9]{Kirch} that if $F\subset \reals^n$, $f\colon F \to E$ and $u\in F$ is such that $|f'|(u)$ is a norm, then $|f'|(u)$ is a tangent norm to $E$ at $f(u)$ and it is unique up to linear isometry. We write
\begin{equation*}
    T(E,x)=[|\cdot|_a],
\end{equation*}
provided $|\cdot|_a$ is an approximate tangent norm to $E$ at $x\in E$, which is unique up to a linear isometry. Note that $T(E,x)$ is defined for $\Hn$-a.e.~$x\in E$.

\begin{remark}
    By \cite[Proposition 5.8]{AKrec}, tangent norms agree with the tangent spaces of Ambrosio and Kirchheim.

    A notion of a tangent metric measure space was recently introduced in \cite{Bate2} that is applicable to our setting.
    One easily verifies that, for $n$-rectifiable $E\subset X$ and $\H^n$-a.e.~$x\in E$, $|\cdot|_a$ is an approximate tangent norm to $E$ at $x$ if and only if $(\reals^n,|\cdot|_a,0)$ is a tangent metric measure space of $(E,x)$ in the sense of \cite{Bate2}.
\end{remark}

We will need to use \cite[Lemma 4]{Kirch}, which we restate here in an equivalent form for the reader's convenience.

\begin{lemma}[Kirchheim]\label{L:Kirch}
    Let $X$ be a metric space and $E\subset X$ an $n$-rectifiable set. Then, for any $\theta>0$, there exists a countable number of compact sets $F_i\subset \reals^n$, $E_i\subset E$, norms $|\cdot|_{a_i}$ on $\reals^n$ and $(1+\theta)$-biLipschitz maps $I_i\colon (F_i, |\cdot |_{a_i})\to E_i$ such that
    \begin{equation*}
        \Hn(E\setminus \bigcup_i E_i)=0.
    \end{equation*}
\end{lemma}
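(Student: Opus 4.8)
This is Kirchheim's Lemma~4, so the plan is to follow his argument: the two ingredients are the metric differentiation theorem and an Egorov-type uniformisation. First I would use rectifiability to write $\Hn(E\setminus\bigcup_j f_j(A_j))=0$ for countably many Lipschitz $f_j\colon A_j\subset\reals^n\to X$ with $A_j$ $\Hn$-measurable; splitting each $A_j$ into countably many bounded pieces, I may assume $\Hn(A_j)<\infty$, hence also $\Hn(f_j(A_j))<\infty$, and I recall that a Lipschitz map sends $\Hn$-null sets to $\Hn$-null sets. It then suffices to fix one $f=f_j$ on $A=A_j$ with $\Hn(A)<\infty$ and to produce countably many compact $F_i\subset\reals^n$, norms $|\cdot|_{a_i}$ and $(1+\theta)$-biLipschitz maps $I_i=f|_{F_i}\colon(F_i,|\cdot|_{a_i})\to f(F_i)=:E_i$ with $\Hn(f(A)\setminus\bigcup_iE_i)=0$; each $E_i$ is then automatically compact, and the union of all these families over $j$ is the desired collection.

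By Kirchheim's metric differentiation theorem \cite[Theorem 2]{Kirch}, for $\Hn$-a.e.\ $u\in A$ the metric differential $|f'|(u)$ exists, and $u\mapsto|f'|(u)$ is $\Hn$-measurable. Split $A=D\cup G$ modulo an $\Hn$-null set, where on $G$ the seminorm $|f'|(u)$ is a norm and on $D$ it is degenerate. The image $f(D)$ is $\Hn$-null: this is an instance of Kirchheim's area formula \cite{Kirch}, the metric Jacobian of a degenerate seminorm being $0$ — alternatively it follows from an easier variant of the covering argument below, using a slab decomposition in the directions orthogonal to a fixed model kernel. Next, the set of norms on $\reals^n$, metrised by $\varrho(|\cdot|_a,|\cdot|_b)=\log\inf\{t\ge1:t^{-1}|\cdot|_b\le|\cdot|_a\le t|\cdot|_b\}$, is separable; covering it by countably many $\varrho$-balls of radius $\log(1+\theta/3)$ yields a Borel partition $G=\bigcup_kG_k$ such that on $G_k$ one has $(1+\theta/3)^{-1}|\cdot|_{a_k}\le|f'|(u)\le(1+\theta/3)|\cdot|_{a_k}$ for a fixed norm $|\cdot|_{a_k}$.

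Now fix $k$, write $|\cdot|_a=|\cdot|_{a_k}$ and $G'=G_k$, choose $c_a$ with $|\cdot|_2\le c_a|\cdot|_a$, and for $\rho>0$ set
\begin{equation*}
    \Phi_\rho(u)=\sup\Bigl\{\tfrac{1}{|v-u|_2}\bigl|\,|f'|(u)(v-u)-d(f(u),f(v))\,\bigr|:v\in A,\ 0<|v-u|_2\le\rho\Bigr\},
\end{equation*}
which is $\Hn$-measurable and, by metric differentiability, decreases to $0$ as $\rho\downarrow0$ for a.e.\ $u\in A$. Applying Egorov's theorem on the finite-measure set $A$, for every $\eta>0$ there is a measurable $H\subset G'$ with $\Hn(G'\setminus H)<\eta$ and a $\rho_0>0$ with $\Phi_{\rho_0}<\theta/(3c_a)$ on $H$; then for $u,v\in H$ with $|u-v|_2\le\rho_0$,
\begin{equation*}
    \bigl|\,d(f(u),f(v))-|u-v|_a\,\bigr|\le\tfrac{\theta}{3c_a}|u-v|_2+\tfrac{\theta}{3}|u-v|_a\le\tfrac{2\theta}{3}|u-v|_a.
\end{equation*}
Hence (for $\theta\le\tfrac12$, say) the restriction of $f$ to any subset of $H$ of Euclidean diameter at most $\rho_0$ is $(1+\theta)$-biLipschitz from $(\cdot,|\cdot|_a)$ into $X$. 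Cover $H$ by countably many closed Euclidean balls $B_2(u_i,\rho_0/2)$ (by the Lindelöf property), put $F_i=\overline{H\cap B_2(u_i,\rho_0/2)}$ — compact, of diameter $\le\rho_0$ — and note the displayed estimate passes to the closure by continuity of $f$; thus $I_i=f|_{F_i}$ is $(1+\theta)$-biLipschitz onto the compact set $E_i=f(F_i)$, and $f(G')\setminus\bigcup_iE_i\subset f(G'\setminus H)$, of $\Hn$-measure at most $(\tLip f)^n\eta$. Iterating on the successive remainders (say with $\eta=2^{-m}$ at stage $m$) and using that $f$ preserves $\Hn$-nullity, the uncovered part of $f(G')$ becomes $\Hn$-null; taking the union over all $j$, $k$ and stages gives a countable family, proving the lemma. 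The one genuinely delicate step is the passage from pointwise metric differentiability (a one-point statement, comparing $d(f(u),f(v))$ to $|f'|(u)(v-u)$) to the uniform two-point estimate on a macroscopic ball: this is exactly what Egorov's theorem buys, and it is what makes the biLipschitz constant $1+\theta$ rather than merely some constant coming from the equivalence of norms.
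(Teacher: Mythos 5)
The paper does not prove this lemma at all: it is stated explicitly as a restatement of \cite[Lemma 4]{Kirch} and used as a black box, and your argument is a faithful and essentially correct reconstruction of Kirchheim's original proof (metric differentiation, nullity of the image of the degenerate set via the area formula, separability of the space of norms in the multiplicative metric, Egorov, and restriction to pieces of diameter $\le\rho_0$). The one imprecision is setting $F_i=\overline{H\cap B_2(u_i,\rho_0/2)}$: this closure need not be contained in $A$, where $f$ is defined, so you should instead take a compact subset of $H\cap B_2(u_i,\rho_0/2)$ supplied by inner regularity (or choose $H$ closed, hence compact since you arranged $A$ bounded), absorbing the extra measure loss into the iteration you already perform.
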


What follows is a refined version of the lemma above.

\begin{lemma}\label{L:MSR-open-decomposition}
    Let $X$ be a metric space, $F\subset \reals^n$ be $\H^n$-measurable with $\Hn(F)<\infty$ and $f\colon F \to X$ Lipschitz.
    For each $\varepsilon>0$ there is a compact set $K \subset f(F)$ with
    \begin{equation*}
        \Hn(f(F)\setminus K)< \varepsilon,
    \end{equation*}
    possessing the following property.
    For each $\theta>0$ there is a finite collection of sets $G_i\subset K$, $i=1,\dots, i_0$ such that the $G_i$ are pairwise disjoint open subsets of $K$ and
    \begin{enumerate}
        \item \label{Enum:open-decom-a} $K = \bigcup_{i=1}^{i_0}G_i$,
        \item \label{Enum:open-decom-b} to each $i$, there is some $x_i \in K$, $F_i\subset \reals^n$ and $|\cdot|\in T(K, x_i)$ such that $G_i$ is $(1+\theta)$-biLipschitz to $(F_i, |\cdot|)$.
    \end{enumerate}
\end{lemma}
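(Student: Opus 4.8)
The idea is to first produce the compact set $K$ where $\Hn$ behaves well under $f$, and then on $K$ apply Kirchheim's decomposition (Lemma \ref{L:Kirch}) together with a measure-theoretic argument to replace the abstract Borel pieces by genuinely \emph{open} pieces of $K$, at the cost of throwing away a further set of small measure (absorbed into the choice of $K$). First I would note that $f(F)$ is $n$-rectifiable (it is a Lipschitz image of a subset of $\reals^n$) and $\Hn(f(F))\le \tLip(f)^n\Hn(F)<\infty$, so $\Hn_{|f(F)}$ is a finite Borel measure, inner regular by compact sets (as remarked in Section \ref{s:rectifiability}). Thus for the given $\varepsilon>0$ I can choose a compact $K_0\subset f(F)$ with $\Hn(f(F)\setminus K_0)<\varepsilon/2$; the final $K$ will be a compact subset of $K_0$ obtained after the decomposition step, still with $\Hn(f(F)\setminus K)<\varepsilon$.

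Next, apply Lemma \ref{L:Kirch} to the $n$-rectifiable set $K_0$ with some fixed parameter (say $\theta_0=1$, or directly with the target $\theta$ — but see below): this yields countably many compact $F_i'\subset\reals^n$, norms $|\cdot|_{a_i}$, Borel sets $E_i'\subset K_0$ and $(1+\theta)$-biLipschitz maps $I_i\colon (F_i',|\cdot|_{a_i})\to E_i'$ covering $\Hn$-a.e.\ point of $K_0$. By discarding overlaps we may take the $E_i'$ pairwise disjoint. Since $\sum_i\Hn(E_i')=\Hn(K_0)<\infty$, choose $i_0$ so that $\Hn\bigl(K_0\setminus\bigcup_{i\le i_0}E_i'\bigr)<\varepsilon/4$. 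Each $E_i'$ is $\Hn$-measurable; using inner regularity choose compact $\widehat E_i\subset E_i'$ with $\Hn(E_i'\setminus\widehat E_i)$ small enough that $\sum_{i\le i_0}\Hn(E_i'\setminus\widehat E_i)<\varepsilon/4$. The $\widehat E_i$, $i=1,\dots,i_0$, are \emph{pairwise disjoint compact} subsets of $K_0$, hence mutually positively separated; each is still $(1+\theta)$-biLipschitz to a compact piece $F_i\subset F_i'$ of a normed space, and by the last assertion in the discussion preceding Lemma \ref{L:Kirch} (the fact that metric differentials realise tangent norms) the relevant norm lies in $T(\widehat E_i,x_i)$ for a suitable $x_i\in\widehat E_i$ — here one should be slightly careful: $T(K,x_i)$ refers to tangent norms of the final set $K$, but since the $\widehat E_i$ are positively separated, in a small enough ball around $x_i$ the sets $\widehat E_i$ and $K$ coincide up to an $\Hn$-null set, so $T(\widehat E_i,x_i)=T(K,x_i)$; I would record this compatibility explicitly.

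The remaining issue is that $\bigcup_{i\le i_0}\widehat E_i$ is compact but the individual $\widehat E_i$ need not be \emph{open} in it, whereas condition \eqref{Enum:open-decom-a} demands an exact cover of $K$ by pairwise disjoint \emph{open} sets. The fix is: set $K:=\bigcup_{i=1}^{i_0}\widehat E_i$, which is compact and satisfies $\Hn(f(F)\setminus K)<\varepsilon$ by the three budget choices above. Because the $\widehat E_i$ are pairwise disjoint compact subsets of $K$ and finitely many, each $\widehat E_i$ is both closed and (its complement $\bigcup_{j\ne i}\widehat E_j$ being closed) open in $K$; so put $G_i:=\widehat E_i$. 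Then the $G_i$ are pairwise disjoint open subsets of $K$ with $K=\bigcup_i G_i$, giving \eqref{Enum:open-decom-a}, and each $G_i=\widehat E_i$ is $(1+\theta)$-biLipschitz to $(F_i,|\cdot|_{a_i})$ with $|\cdot|_{a_i}\in T(K,x_i)$, giving \eqref{Enum:open-decom-b}. One subtlety: the compact set $K$ must not depend on $\theta$, since $K$ is chosen first and then, for each $\theta$, a decomposition is produced. This is arranged by running the construction of $K$ with a \emph{fixed} auxiliary parameter (e.g.\ $\theta=1$) to get $K$, and then, for a given $\theta>0$, re-applying Lemma \ref{L:Kirch} \emph{to the fixed compact set $K$} with that $\theta$ and repeating the positive-separation/open-piece argument inside $K$ — at that stage no measure is lost because we only need the decomposition to cover $\Hn$-a.e.\ point of $K$, and then absorb the null remainder into, say, $G_{i_0}$ (a null set union an open set is fine, as the biLipschitz requirement is only on the piece; alternatively enlarge one $G_i$ to swallow the leftover, checking the norm condition still holds at its density point).

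\textbf{Main obstacle.} The genuinely delicate point is reconciling ``open in $K$'' with ``biLipschitz to a piece of a normed space with the correct tangent norm''. Kirchheim's lemma natively gives Borel (not open) pieces, and passing to compact subpieces is what buys openness — but only because finitely many pairwise disjoint compact sets are automatically relatively clopen. Making sure that (a) the tangent norm $T(K,x_i)$ of the \emph{final} set $K$ (rather than of $\widehat E_i$ in isolation) is the one appearing, and (b) that $K$ can be fixed before $\theta$ is chosen while still admitting arbitrarily good ($\theta\to 0$) open decompositions, are the two places where care is needed; both are handled by the positive-separation observation and by re-decomposing the fixed $K$ for each $\theta$.
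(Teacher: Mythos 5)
Your first step (finite disjoint family of compact pieces from Lemma \ref{L:Kirch}, inner regularity, disjoint compacts in a finite union being relatively clopen, and the tangent-norm compatibility via positive separation) matches the paper and is fine. The genuine gap is in how you make $K$ independent of $\theta$. You fix $K$ using an auxiliary parameter and then, for each new $\theta$, propose to re-apply Lemma \ref{L:Kirch} \emph{to that fixed $K$}, claiming ``no measure is lost because we only need the decomposition to cover $\Hn$-a.e.\ point of $K$''. This fails for two reasons. First, the lemma demands $K=\bigcup_{i=1}^{i_0}G_i$ exactly, not up to a null set. Second, and more seriously, the leftover is not null: Kirchheim's lemma gives a \emph{countable} family covering $K$ up to a null set, and passing to a finite subfamily (and then to compact subpieces, which you need for relative openness) leaves an uncovered subset of $K$ of small but positive measure. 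Since $K$ is already fixed, this leftover cannot be discarded, and your proposed fix of absorbing it into $G_{i_0}$ destroys both the relative openness of $G_{i_0}$ and its biLipschitz equivalence to a subset of a normed space (a biLipschitz chart on $G_{i_0}$ does not extend to $G_{i_0}$ union an arbitrary positive-measure set).

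The paper resolves exactly this point differently: it runs the finite compact decomposition for \emph{every} $j\in\nat$ simultaneously, with parameters $\theta_j=1/j$ and measure budgets $2^{-j}\varepsilon$, producing families $\{G_i^j\}_{i\le i_j}$, and then defines
\begin{equation*}
    K=\bigcap_{j\in\nat}\;\bigcup_{i=1}^{i_j}G_i^j .
\end{equation*}
By construction every point of $K$ lies in some $G_i^j$ for every $j$, so for a given $\theta$ one picks $j$ with $1/j\le\theta$ and takes $G_i=G_i^j\cap K$: these exactly cover $K$, are relatively open in $K$ (each $G_i^j$ is clopen in the finite disjoint union $\bigcup_i G_i^j\supset K$), and inherit the $(1+\theta_j)$-biLipschitz charts by restriction. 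The total measure lost is $\sum_j 2^{-j}\varepsilon$. You should replace your ``re-decompose the fixed $K$'' step with this intersection construction; the rest of your argument then goes through.
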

\begin{proof}
    First fix $\theta>0$.
    For any $\varepsilon>0$, the existence of a $K\subset f(F)$ satisfying $\Hn(f(F)\setminus K)< \varepsilon$, \ref{Enum:open-decom-a} for compact $G_i$ and \ref{Enum:open-decom-b} for arbitrary norms $|\cdot|$ on $\reals^n$ is precisely given by \cite[Lemma 4]{Kirch} and using the inner regularity of $\H^n$ on $\reals^n$.
    It is evident from the proof of \cite[Lemma 4]{Kirch} that one may in fact take each $|\cdot| \in T(K,x_i)$.

    To obtain relatively open $G_i$, for each $j\in \nat$ apply the established statement for $\varepsilon_j= 2^{-j}\varepsilon$ and $\theta_j=1/j$ to obtain $i_j$ many pairwise disjoint compact sets $G_i^j$.
    Setting
    \[K = \bigcap_{j\in\nat} \bigcup_{i=0}^{i_j} G_i^j\]
    completes the proof, once we show that each $G_i^j\cap K$ is relatively open. To see that, note that for each $j\in \nat$, $\bigcup_{i=1}^{i_j} G^j_i$ is a union of disjoint compact sets $G^j_i$. Therefore each $G^j_i$ is also open in $\bigcup_{i=1}^{i_j} G^j_i$. Since $K\subset \bigcup_{i=1}^{i_j} G^j_i$, we have that $K\cap G_i^j$ is relatively open in $K$, because $G_i^j$ is relatively open in $\bigcup_{i=1}^{i_j} G^j_i$.
\end{proof}

Suppose $X$ is a complete metric space and $E\subset X$ is $n$-rectifiable. 
Let $x\in E$ and suppose there are sets $\widetilde{E}\subset E$, $\widetilde{F}\subset \reals^n$ and a biLipschitz map $I \colon \widetilde F \to \widetilde E$ such that $x$ is a density point of $\widetilde E$. Suppose the metric differential $|I'|(I^{-1}(x))$ exists and is a norm. Suppose $f\colon X \to \reals^m$ is a Lipschitz map such that $(f\circ I)'(I^{-1}(x))$ exists. Then we define the \emph{Jacobian of $f$ at $x$ with respect to $E$} as
\begin{equation}\label{d:jacobian}
    J_E f (x) = \frac{\vol (f\circ I)'(I^{-1}(x)))}{\vol (|I'|(I^{-1}(x)))}.
\end{equation}
This notion is independent of the particular choice of $I$ and $\widetilde E$ and is easily shown to agree with the Jacobian of Ambrosio and Kirchheim, defined via isometric embeddings into separable dual spaces, $\Hn$-a.e.~in $E$ (see \cite[(8.4)]{AKrec}).
In particular we obtain the following metric version of the area formula.
\begin{theorem}[\cite{AKrec}, Theorem 8.2]
    \label{E:Prel-area-formula-metric-eucl}
    For any metric space $X$, $n$-rectifiable $E\subset X$ and Lipschitz $f\colon E \to \reals^m$,
    \begin{equation}\label{E:area-formula}
        \int_E J_E f \diff \Hn=\int_{f(E)}\#f^{-1}(u)\diff \Hn(u).
    \end{equation}
    
\end{theorem}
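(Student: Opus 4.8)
The plan is to reduce \eqref{E:area-formula} to the classical Euclidean area formula: one decomposes $E$ into countably many biLipschitz pieces parametrised by Euclidean domains, applies the Euclidean area formula on each piece, and transports the result back to $E$ using Kirchheim's metric change of variables, the point being that the factor $\vol(|I'|)$ produced by the change of variables cancels exactly against the denominator in the definition \eqref{d:jacobian} of $J_E f$.

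First I would reduce to the case of a single biLipschitz image. Since $f$ is Lipschitz, Lipschitz images of $\Hn$-null sets are $\Hn$-null, so $\Hn$-null subsets of $E$ contribute nothing to either side of \eqref{E:area-formula} (for the right-hand side note that $\#f^{-1}(u)=\sum_i\#(f|_{E_i})^{-1}(u)$ whenever the $E_i$ partition $E$, and use monotone convergence). By Lemma \ref{L:Kirch} (with, say, $\theta=1$), after composing each parametrisation with a suitable linear isomorphism of $\reals^n$ so that its domain carries the Euclidean norm, there are countably many $\Hn$-measurable $F_i\subset(\reals^n,|\cdot|_2)$ and biLipschitz maps $I_i\colon F_i\to E_i\subset E$ with $\Hn(E\setminus\bigcup_iE_i)=0$; replacing $E_i$ by $E_i\setminus\bigcup_{j<i}E_j$ (and restricting $I_i$, a restriction of a biLipschitz map being biLipschitz) we may assume the $E_i$ are pairwise disjoint. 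Both sides of \eqref{E:area-formula} are countably additive over the partition $\{E_i\}$ — the right side by the multiplicativity of $\#f^{-1}$ over the partition together with monotone convergence, the left side since $J_E f=J_{E_i}f$ $\Hn$-a.e.\ on $E_i$, directly from \eqref{d:jacobian} — so it suffices to treat $E=I(F)$ with $I\colon F\to X$ biLipschitz and $F\subset(\reals^n,|\cdot|_2)$ measurable. As $I$ is biLipschitz, its metric differential $|I'|(u)$ exists and is a norm (not merely a seminorm) for $\Hn$-a.e.\ $u\in F$, and $(f\circ I)'(u)$ exists a.e.\ by Rademacher; hence $J_E f$ is defined $\Hn$-a.e.\ on $E$ via \eqref{d:jacobian}.

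Now set $g=f\circ I\colon F\to\reals^m$, Lipschitz as a composition. Extending $g$ to $\reals^n$ and applying the classical area formula for Lipschitz maps $\reals^n\to\reals^m$ \cite{Federer} gives
\begin{equation*}
    \int_F\vol(g'(u))\diff\Hn(u)=\int_{\reals^m}\#(g|_F)^{-1}(y)\diff\Hn(y).
\end{equation*}
Because $I$ is a bijection from $F$ onto $E$, we have $(g|_F)^{-1}(y)=I^{-1}(f^{-1}(y)\cap E)$ and $g(F)=f(E)$, so the right-hand side equals $\int_{f(E)}\#f^{-1}(u)\diff\Hn(u)$, the right-hand side of \eqref{E:area-formula}. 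For the left-hand side I would invoke Kirchheim's metric change of variables for the biLipschitz map $I$ \cite{Kirch}, which in the present normalisation states $\Hn_X(I(A))=\int_A\vol(|I'|(u))\diff\Hn(u)$ for $\Hn$-measurable $A\subset F$. Since \eqref{d:jacobian} reads precisely $J_E f(I(u))=\vol(g'(u))/\vol(|I'|(u))$, the factor $\vol(|I'|)$ cancels:
\begin{equation*}
    \int_E J_E f\diff\Hn=\int_F\frac{\vol(g'(u))}{\vol(|I'|(u))}\,\vol(|I'|(u))\diff\Hn(u)=\int_F\vol(g'(u))\diff\Hn(u),
\end{equation*}
which is the left-hand side of the Euclidean area formula displayed above. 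This establishes \eqref{E:area-formula} for a single piece; summing over $i$ completes the proof.

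All the analytic content of the statement is imported from the two results it is reduced to — the Euclidean area formula and Kirchheim's metric change of variables — so the main point requiring care is bookkeeping rather than a new idea: the decomposition must be arranged so that on each piece the metric differential is an honest norm, making $J_E f$ well defined; and the Hausdorff-measure normalisations on $(\reals^n,|\cdot|_2)$, on $\reals^m$ and on $X$ must be kept consistent so that $\vol(|I'|)$ cancels precisely against the denominator of \eqref{d:jacobian} — this is exactly where the conventions $\vol(|\cdot|_a)=2^n/\Hn(B_a)$ and $\Hn_a=\vol(|\cdot|_a)\Hn$ from \eqref{E:vol-of-norm} enter. (An even shorter route is to note, as observed right after \eqref{d:jacobian}, that $J_E f$ agrees $\Hn$-a.e.\ with the Ambrosio--Kirchheim Jacobian, so that \eqref{E:area-formula} is literally \cite[Theorem 8.2]{AKrec}; the self-contained argument above is preferable as it uses only the elementary local structure of rectifiable sets.)
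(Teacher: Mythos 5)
Your proof is correct, but it is worth noting that the paper does not actually prove this statement: it is imported verbatim from \cite[Theorem 8.2]{AKrec}, the only justification offered being the (unproved) remark preceding the theorem that $J_E f$ as defined in \eqref{d:jacobian} agrees $\Hn$-a.e.\ with the Ambrosio--Kirchheim Jacobian built from isometric embeddings into separable dual spaces. So your parenthetical ``shorter route'' is precisely the paper's route, and your main argument is a genuinely different, self-contained alternative. What it buys is independence from the $w^*$-differentiability machinery of \cite{AKrec}: you only use ingredients already present in the paper, namely Kirchheim's decomposition (Lemma \ref{L:Kirch}), Kirchheim's change of variables for a biLipschitz chart $I$ in the form $\Hn_X(I(A))=\int_A\vol(|I'|)\,d\Hn$, and the classical Euclidean area formula, with the observation that $\vol(|I'|)$ cancels against the denominator of \eqref{d:jacobian}. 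The cost is that you implicitly lean on the well-definedness of $J_E f$ (independence of the chart $I$ and of the ambient set, i.e.\ the paper's unproved claim after \eqref{d:jacobian} together with Lemma \ref{L:prel-ideal}) when you assert $J_E f=J_{E_i}f$ a.e.\ on each piece; and your bookkeeping about normalisations is exactly the point one must get right, which you correctly tie to \eqref{E:vol-of-norm}. All steps check out, including the disposal of null sets on the right-hand side (the integrand may be infinite on $f(N)$ but is integrated over a null set) and the positivity of $\vol(|I'|)$ a.e.\ needed for the cancellation.
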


We remark also that this notion agrees with the classical notion of a Jacobian of a function $\Hn$-a.e.
In particular, if $E\subset \reals^n$, one has $J_E f(x) = \vol f'(x)$ if right hand side is well defined.
Notice that the ``charts'' $I$ above can be obtained from the definition of a tangent norm to $E$ at a given point $x$, provided a tangent exists. If the tangent is also unique up to isomorphism, $J_E f(x)$ depends only on $f$ and $T(E,x)$.

With this definition of the metric Jacobian, we are able to easily obtain the following decomposition lemma.

\begin{lemma}\label{L:prel-decomp-nonzero-Jacobian}
    Suppose $X$ is a complete metric space and $E$ an $n$-rectifiable subset. If $m\geq n$ and $f\colon E \to \reals^m$ is Lipschitz, then there exists a countable number of pairwise disjoint compact sets
    \begin{equation*}
        E_i\subset \{x\in E: J_E(x)>0\},    
    \end{equation*}
    such that
    \begin{equation*}
        \Hn(\{x\in E: J_E(x)>0\}\setminus \bigcup_{i}E_i)=0
    \end{equation*}
    and $f$ is injective on each $E_i$. In particular, the formula
    \begin{equation}\label{E:prel-partial-area-formula}
        \sum_{i} \Hn(f(E_i))= \int_{E} J_E f \diff \Hn
    \end{equation}
    holds.
\end{lemma}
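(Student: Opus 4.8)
The plan is to use Kirchheim's decomposition (Lemma~\ref{L:Kirch}) to reduce the statement to one about Lipschitz maps between Euclidean spaces, to apply there the classical fact that a Lipschitz map is locally biLipschitz wherever its differential is injective, and finally to transfer the resulting pieces back to $X$ and conclude with the metric area formula. All the real content lies in the Euclidean step.

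Write $S=\{x\in E:J_Ef(x)\text{ is defined and }>0\}$, an $\Hn$-measurable, hence $n$-rectifiable, subset of $E$ on which $J_Ef$ is positive and everywhere defined. The core is the following \emph{claim}: \emph{there are countably many compact sets $P_\ell\subset S$, each of finite $\Hn$-measure and with $f|_{P_\ell}$ injective, such that $\Hn(S\setminus\bigcup_\ell P_\ell)=0$.} Granting it, put $Q_\ell=P_\ell\setminus\bigcup_{\ell'<\ell}P_{\ell'}$; these are pairwise disjoint, finite-measure, $f$ is injective on each, and $\bigcup_\ell Q_\ell\supset S$ up to an $\Hn$-null set. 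Set $S^{(1)}=S$ and recursively, given the $\Hn$-measurable set $S^{(k)}\subset S$, use inner regularity of $\Hn$ on the complete space $X$ to choose for each $\ell$ a compact $K^{(k)}_\ell\subset Q_\ell\cap S^{(k)}$ with $\Hn\big((Q_\ell\cap S^{(k)})\setminus K^{(k)}_\ell\big)<2^{-k-\ell}$, and put $S^{(k+1)}=S^{(k)}\setminus\bigcup_\ell K^{(k)}_\ell$. Then $\Hn(S^{(k+1)})\le\Hn(S^{(k)}\setminus\bigcup_\ell Q_\ell)+\sum_\ell 2^{-k-\ell}<2^{-k}$, so the countably many sets $K^{(k)}_\ell$ ($k,\ell\in\nat$) — which are pairwise disjoint, since those from stage $k$ lie in $S^{(k)}$ and hence avoid all earlier ones, and for fixed $k$ the $Q_\ell$ are disjoint — are compact subsets of $S$ on which $f$ is injective, and $\Hn\big(S\setminus\bigcup_{k,\ell}K^{(k)}_\ell\big)=\Hn\big(\bigcap_k S^{(k)}\big)=0$. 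Relabelling them as $E_i$ gives the required family. Since the metric Jacobian is an intrinsic (tangential) quantity — it agrees $\Hn$-a.e.\ with the Jacobian of \cite{AKrec} — we have $J_{E_i}f=J_Ef$ $\Hn$-a.e.\ on $E_i$, so by Theorem~\ref{E:Prel-area-formula-metric-eucl} and injectivity of $f|_{E_i}$, $\Hn(f(E_i))=\int_{E_i}J_{E_i}f\diff\Hn=\int_{E_i}J_Ef\diff\Hn$; summing over $i$ and using disjointness, $\Hn(S\setminus\bigcup_iE_i)=0$, and $J_Ef=0$ on $E\setminus S$, we obtain $\sum_i\Hn(f(E_i))=\int_SJ_Ef\diff\Hn=\int_EJ_Ef\diff\Hn$, i.e.\ \eqref{E:prel-partial-area-formula}.

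To prove the claim, apply Lemma~\ref{L:Kirch} to the $n$-rectifiable set $S$ (with, say, $\theta=1$) to get countably many compact sets $F_i\subset\reals^n$, compact $E'_i\subset S$, norms $|\cdot|_{a_i}$ and $2$-biLipschitz bijections $I_i\colon(F_i,|\cdot|_{a_i})\to E'_i$ with $\Hn(S\setminus\bigcup_iE'_i)=0$. Fix $i$ and let $g_i=f\circ I_i\colon F_i\to\reals^m$, a Lipschitz map on the compact set $F_i\subset\reals^n$. For $\Hn$-a.e.\ $u\in F_i$ the Fréchet differential $g_i'(u)$ and the metric differential $|I_i'|(u)$ both exist, the latter being a genuine norm because $I_i$ is biLipschitz, so $\vol(|I_i'|(u))\in(0,\infty)$; hence by \eqref{d:jacobian}, and since $m\ge n$, $J_Ef(I_i(u))>0$ if and only if $\vol g_i'(u)>0$, i.e.\ if and only if $g_i'(u)$ is injective. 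Since $E'_i\subset S$, it follows that $g_i'(u)$ exists and is injective for $\Hn$-a.e.\ $u\in F_i$.

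Now comes the Euclidean linearisation. For rational $c>0$, $k\in\nat$ and a matrix $L$ with rational entries satisfying $|Lw|_2\ge 2c|w|_2$ for all $w\in\reals^n$, put
\begin{equation*}
    A^i_{c,L,k}=\{u'\in F_i:\ |g_i(v)-g_i(u')-L(v-u')|_2\le c|v-u'|_2\ \text{ for all }v\in F_i\text{ with }|v-u'|_2<1/k\}.
\end{equation*}
Since $g_i$ is continuous and $F_i$ is compact, each $A^i_{c,L,k}$ is closed, hence compact. If $Q\subset\reals^n$ is a closed cube of diameter $<1/k$ and $u_1,u_2\in A^i_{c,L,k}\cap Q$, then taking $v=u_1$, $u'=u_2$ in the defining condition gives $|g_i(u_1)-g_i(u_2)|_2\ge|L(u_1-u_2)|_2-c|u_1-u_2|_2\ge c|u_1-u_2|_2$, so $g_i$ is injective on $A^i_{c,L,k}\cap Q$. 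Conversely, wherever $g_i'(u)$ exists and is injective (an $\Hn$-full subset of $F_i$, on which $u$ is moreover a density point of $F_i$), we have $|g_i'(u)w|_2\ge 3c_0|w|_2$ for all $w$ and some $c_0>0$; picking rational $c\le c_0$ and a rational matrix $L$ with $\lVert L-g_i'(u)\rVert_{2\to2}<c/2$ (so $|Lw|_2\ge 2c|w|_2$), and then $k$ so large that $|g_i(v)-g_i(u)-g_i'(u)(v-u)|_2\le\tfrac c2|v-u|_2$ whenever $v\in F_i$, $|v-u|_2<1/k$, the triangle inequality yields $u\in A^i_{c,L,k}$. Hence the countably many sets $A^i_{c,L,k}$ cover an $\Hn$-full subset of $F_i$. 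Intersecting each with the members of a tiling of $\reals^n$ by closed cubes of diameter $<1/k$ produces countably many compact sets on which $g_i$ is injective, covering an $\Hn$-full subset of $F_i$; their $I_i$-images are compact subsets of $E'_i\subset S$ of finite $\Hn$-measure on which $f$ is injective, and over all $i$ they cover $\bigcup_iE'_i$, hence $S$, up to an $\Hn$-null set. Relabelling these images as $P_\ell$ proves the claim.

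\textbf{Main obstacle.} The crux is the Euclidean linearisation step: although entirely classical (it is, in essence, the decomposition underlying Federer's area formula), some care is needed to run it so that the pieces of the decomposition are simultaneously compact and carry a uniform lower biLipschitz bound — which is why it is performed on the compact charts furnished by Lemma~\ref{L:Kirch}. Passing from these to a \emph{pairwise disjoint} family of \emph{compact} sets covering $S$ up to a null set is then routine, but it genuinely requires the iterative construction above with geometrically decaying errors, since naively subtracting the earlier sets would destroy compactness.
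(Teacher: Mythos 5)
Your proof is correct and follows essentially the same route as the paper's: Kirchheim's decomposition (Lemma \ref{L:Kirch}), the definition of $J_E f$, and the Euclidean linearisation — which is precisely \cite[3.2.2. Lemma]{Federer}, cited rather than reproved in the paper — followed by the area formula. The only addition is your explicit disjointification into pairwise disjoint compact pieces, which the paper leaves implicit.
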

\begin{proof}
    The first part of the assertion follows from the combination of Lemma \ref{L:Kirch}, the definition of $J_E f$ and the Euclidean result \cite[3.2.2. Lemma]{Federer}. The ``in particular'' part then follows immediately from the area formula \eqref{E:area-formula}.
\end{proof}

% Recall also the classical (weighted) area formula in case $E$ is an $n$-rectifiable subset of $\reals^k$ for some $k\geq n$ \cite[3.2.3 Theorem]{Federer}, that is
% \begin{equation}\label{E:Prel-area-formula-eucl-eucl-weight}
%     \int_E w(x) J_E f(x) \diff \Hn(x) = \int_{\reals^m} \sum_{x\in f^{-1}(u)} w(x) \diff \Hn(u)\quad \text{for all $f\in \tLip(E,\reals^m)$.}
% \end{equation}
% A variant of the last formula is also available with metric targets \cite[Corollary 8]{Kirch}
% \begin{equation}\label{E:Prel-area-formula-eucl-metric-weight}
%     \int_E w(x) J_E f(x) \diff \Hn(x) = \int_{X} \sum_{x\in f^{-1}(u)} w(x) \diff \Hn_{X}(u)\quad \text{for all $f\in \tLip(E,X)$.}
% \end{equation}
% Note that in this case $J_E f (x) = \vol (|f'|(x))$ as $f$ has values in a metric space.
% A variant of the weighted formula \eqref{E:Prel-area-formula-eucl-eucl-weight} or \eqref{E:Prel-area-formula-eucl-metric-weight} for $f$ defined on a rectifiable set $E$ in a \emph{metric} space and taking values in $\reals^m$ (or more generally a different rectifiable metric space) seems to be missing in the literature.

% Henceforward, we shall only need to use the formula \eqref{E:Prel-area-formula-metric-eucl}.

We shall often work with the introduced notions on subsets of $E$, therefore we require the following statement.

\begin{lemma}\label{L:prel-ideal}
    Let $X$ be a complete metric space and $E$ an $n$-rectifiable subset with $\Hn(E)<\infty$. Suppose $\widetilde{E}\subset E$ is any $\Hn$-measurable set. Then 
    \begin{equation*}
        T(E,x)= T(\widetilde E, x) \quad \text{and}\quad J_E f(x)=J_{\widetilde E} f(x) \quad \text{for $\Hn$-a.e.~$x\in \widetilde{E}$ and every $f\in \tLip(E, \reals^m).$}
    \end{equation*}
\end{lemma}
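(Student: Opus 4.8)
The plan is to reduce everything to a statement about density points and metric differentials, using the fact that both $T(E,\cdot)$ and $J_Ef(\cdot)$ are, by construction, computed from a biLipschitz chart $I\colon \widetilde F\subset\reals^n\to \widetilde E'\subset E$ around a density point. First I would recall that, since $E$ is $n$-rectifiable with $\Hn(E)<\infty$, Lemma \ref{L:Kirch} gives a countable decomposition of $E$ (up to an $\Hn$-null set) into pieces $E_i$, each biLipschitz to a subset $F_i\subset(\reals^n,|\cdot|_{a_i})$ via some $I_i$. Fix $i$ and set $g_i=I_i^{-1}$; then $g_i(\widetilde E\cap E_i)\subset F_i$ is $\Hn$-measurable, and by the Lebesgue density theorem on $\reals^n$ together with the fact that $I_i$ is biLipschitz (so it and its inverse preserve density points up to the distortion, which tends to $1$ along the relevant blow-ups), $\Hn$-a.e.\ point $x\in\widetilde E\cap E_i$ has the property that $I_i^{-1}(x)$ is an $\Hn$-density point of $g_i(\widetilde E\cap E_i)$ and $x$ is an $\Hn$-density point of $\widetilde E\cap E_i$ within $E$, hence of $\widetilde E$.

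Next, for such a point $x$, the restricted chart $I_i|_{g_i(\widetilde E\cap E_i)}\colon g_i(\widetilde E\cap E_i)\to \widetilde E\cap E_i$ is a biLipschitz map from a subset of $\reals^n$ onto a neighbourhood-in-$\widetilde E$ of $x$, with $I_i^{-1}(x)$ a density point of the domain. This is exactly a chart of the type admissible in the definition of $T(\widetilde E,x)$ and in formula \eqref{d:jacobian} for $J_{\widetilde E}f(x)$. Moreover the metric differential $|I_i'|$ is an intrinsic pointwise object: it exists $\Hn$-a.e.\ on $F_i$ by Kirchheim's theorem, and its value at a point $u$ is unchanged if we restrict $I_i$ to any $\Hn$-measurable subset of $F_i$ containing $u$ as a density point — this follows directly from the definition of metric differential as a limit over $v\to u$ in the domain, since the limit is governed only by the behaviour of $I_i$ near $u$ and density points of the subdomain suffice. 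The same remark applies to the Fréchet differential $(f\circ I_i)'$. Therefore, at $\Hn$-a.e.\ $x\in\widetilde E\cap E_i$ we may use the very same chart $I_i$ (restricted appropriately) to compute both $T(E,x)$ and $T(\widetilde E,x)$, and both $J_Ef(x)$ and $J_{\widetilde E}f(x)$; since the chart is the same and the metric and Fréchet differentials at $I_i^{-1}(x)$ are the same, the two tangent norms coincide as equivalence classes and the two Jacobians coincide as numbers. Finally, summing over $i$ and discarding the $\Hn$-null exceptional set and the $\Hn$-null set $E\setminus\bigcup_i E_i$ (intersected with $\widetilde E$) yields the claim for $\Hn$-a.e.\ $x\in\widetilde E$, for every fixed $f\in\tLip(E,\reals^m)$.

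The main obstacle I anticipate is the bookkeeping around density points under biLipschitz maps: one must check that $I_i^{-1}$ carries $\Hn$-density points of $\widetilde E\cap E_i$ to $\Hn$-density points of its preimage and vice versa, which requires comparing $\Hn$ on $E_i$ with $\Hn_{a_i}$ on $F_i$ via the area formula / the fact that $I_i$ is biLipschitz with Jacobian bounded away from $0$ and $\infty$ (so that the measure ratios defining density points transfer). This is routine but is the one place where the finiteness $\Hn(E)<\infty$ and the rectifiability are genuinely used. Everything else — the intrinsic, local nature of metric and Fréchet differentials, and the independence of $J_{\widetilde E}f$ from the choice of admissible chart noted after Theorem \ref{E:Prel-area-formula-metric-eucl} — is already available from the preliminaries.
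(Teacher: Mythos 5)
Your argument is correct, but it is organised differently from the paper's. The paper's proof of Lemma \ref{L:prel-ideal} is a direct density argument: it cites \cite[Lemma 2.3]{Bate2} for the fact that for $\Hn$-a.e.\ $x\in\widetilde E$ one has $\limsup_{r\to 0_+}\Hn(B(x,r)\cap(E\setminus\widetilde E))/(2r)^n=0$, so that any witnessing subset and chart for $T(E,x)$ may be intersected with $\widetilde E$ without destroying the density-point property (and conversely any witness for $\widetilde E$ is trivially one for $E$); the Jacobian statement is then deduced from the tangent statement and a.e.\ uniqueness. You instead run Kirchheim's decomposition (Lemma \ref{L:Kirch}), fix a chart $I_i$, and exploit two facts: that the metric and Fr\'echet differentials at a point $u$ are unchanged when the domain is restricted to a measurable subset having $u$ as a density point, and that the tangent norm and the Jacobian are computed from any admissible chart via $|I_i'|(u)$ and $(f\circ I_i)'(u)$, so the same chart (restricted) computes both $T(E,x),J_Ef(x)$ and $T(\widetilde E,x),J_{\widetilde E}f(x)$. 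Both routes work; yours is more self-contained (it avoids the external density lemma) at the cost of the chart bookkeeping you flag. One small imprecision: for a fixed $(1+\theta)$-biLipschitz chart the distortion does \emph{not} tend to $1$ along blow-ups, so the transfer of density points is not quite as you phrase it; the cleanest fix is to note that $\widetilde E\cap E_i$ is itself $n$-rectifiable and invoke \cite[Theorem 9]{Kirch} directly to get that $\Hn$-a.e.\ point of $\widetilde E\cap E_i$ is an $\Hn$-density point of it, and likewise apply the Lebesgue density theorem in $\reals^n$ to the preimage; this supplies exactly the density points your restriction argument needs.
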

\begin{proof}
    By \cite[Lemma 2.3]{Bate2}, for $\Hn$-a.e.~$x\in \widetilde E$, we have the density estimate
    \begin{equation*}
        \limsup_{r\to 0_+} \frac{\Hn(B(x,r)\cap (E\setminus \widetilde E))}{(2r)^n}=0.
    \end{equation*}
    From this, the statement about tangents follows easily.
    The statement about the Jacobians then follows from the statement about tangents (together with uniqueness of tangents $\Hn$-a.e.).
\end{proof}

Finally we turn our attention to strongly $n$-rectifiable metric spaces.
\begin{definition}\label{d:strong-rect}
    Let $X$ be a complete metric space. An $\Hn$-measurable set $E\subset X$ is \emph{strongly $n$-rectifiable} if, for any $\varepsilon>0$, there exist a countable number of sets $F_i\subset \reals^n$ and $(1+\varepsilon)$-biLipschitz maps $f_i\colon (F_i,|\cdot|_2)\to E$ such that
    \begin{equation}\label{str-rect}
        \Hn(E\setminus \bigcup_i f_i(F_i))=0.
    \end{equation}

    More generally, given a norm $|\cdot|_a$ on $\reals^n$, an $\Hn$-measurable set $E\subset X$ is \emph{strongly $|\cdot|_a$-rectifiable} if, for any $\varepsilon>0$, there exist a countable number of sets $F_i\subset \reals^n$ and $(1+\varepsilon)$-biLipschitz maps $f_i\colon (F_i,|\cdot|_a)\to X$ such that \eqref{str-rect} holds.
\end{definition}

\begin{lemma}\label{l:strong-rect}
    Let $X$ be a complete metric space, let $E\subset X$ be $n$-rectifiable and $|\cdot|_a$ a norm on $\reals^n$.
    Then $E$ is strongly $|\cdot|_a$-rectifiable if and only if, for $\Hn$-a.e.~$x\in E$, $T(E,x)=[|\cdot|_a]$.
\end{lemma}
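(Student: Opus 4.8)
The plan is to prove both implications via the fact, recorded after Definition \ref{tangent-norm}, that for Lipschitz $g\colon F\subset\reals^n\to E$ and $u\in F$ at which the metric differential $|g'|(u)$ exists and is a norm, one has $T(E,g(u))=[\,|g'|(u)\,]$; combined with the observation that if $g$ is $(1+\delta)$-biLipschitz from $(F,|\cdot|_c)$ and $u$ is a density point of $F$, then $|g'|(u)$ is a norm satisfying $(1+\delta)^{-1}|\cdot|_c\le|g'|(u)\le(1+\delta)|\cdot|_c$ (approach $u$ within $F$ along a full-measure set of directions, using that $u$ is a density point, and invoke continuity of the norms involved). In both directions we may, by passing to the sets $g^{-1}(E)$, assume all charts involved map into $E$.

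For the forward implication, assume $E$ is strongly $|\cdot|_a$-rectifiable and fix, for each $j\in\nat$, $(1+1/j)$-biLipschitz maps $f^j_i\colon(F^j_i,|\cdot|_a)\to E$ with $\Hn(E\setminus\bigcup_i f^j_i(F^j_i))=0$. Then $\Hn$-a.e.\ $x\in E$ is, for every $j$, of the form $f^j_i(u)$ with $u$ a density point of $F^j_i$ at which $|(f^j_i)'|(u)$ exists; for such $x$ and $j$ the norm $s_j:=|(f^j_i)'|(u)$ represents $T(E,x)$ and satisfies $(1+1/j)^{-1}|\cdot|_a\le s_j\le(1+1/j)|\cdot|_a$. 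Fix such an $x$ and a representative $s_1$ of $T(E,x)$; since all $s_j$ lie in the single equivalence class $T(E,x)$, there are $A_j\in\mathrm{GL}(n)$ with $A_j(B_{s_1})=B_{s_j}$. Because $B_{s_1}$ is a fixed convex body with nonempty interior and each $B_{s_j}$ is trapped between $\tfrac12 B_a$ and $2B_a$, the operator norms $\|A_j\|_{2\to2}$ and $\|A_j^{-1}\|_{2\to2}$ are bounded uniformly in $j$; along a convergent subsequence $A_{j_k}\to A$ one gets $A\in\mathrm{GL}(n)$ and
\begin{equation*}
    A(B_{s_1})=\lim_k A_{j_k}(B_{s_1})=\lim_k B_{s_{j_k}}=B_a,
\end{equation*}
the last limit holding since $B_{s_j}$ is caught between $(1+1/j)^{-1}B_a$ and $(1+1/j)B_a$. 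Hence $|\cdot|_a\sim s_1$ and $T(E,x)=[s_1]=[\,|\cdot|_a\,]$; as this holds $\Hn$-a.e.\ on $E$, the forward implication follows.

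For the converse, assume $T(E,x)=[\,|\cdot|_a\,]$ for $\Hn$-a.e.\ $x\in E$, fix $\varepsilon>0$, and choose $\theta>0$ with $(1+\theta)^2\le1+\varepsilon$. Applying Lemma \ref{L:Kirch} with this $\theta$ yields compact $F_i\subset\reals^n$, sets $E_i\subset E$, norms $|\cdot|_{a_i}$ and $(1+\theta)$-biLipschitz bijections $I_i\colon(F_i,|\cdot|_{a_i})\to E_i$ with $\Hn(E\setminus\bigcup_i E_i)=0$; discard those $i$ with $\Hn(E_i)=0$. For each remaining $i$, pick $x\in E_i$ (a.e.\ $x\in E_i$ works) such that $u:=I_i^{-1}(x)$ is a density point of $F_i$ at which $s_i:=|I_i'|(u)$ exists and such that $T(E,x)=[\,|\cdot|_a\,]$. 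Then $s_i$ represents $T(E,x)=[\,|\cdot|_a\,]$, so $s_i=|\cdot|_{A_i(a)}$ for some $A_i\in\mathrm{GL}(n)$, making $A_i\colon(\reals^n,|\cdot|_a)\to(\reals^n,s_i)$ a linear isometry; moreover $(1+\theta)^{-1}|\cdot|_{a_i}\le s_i\le(1+\theta)|\cdot|_{a_i}$, so the identity map $(\reals^n,s_i)\to(\reals^n,|\cdot|_{a_i})$ is $(1+\theta)$-biLipschitz. Hence
\begin{equation*}
    g_i:=I_i\circ A_i|_{A_i^{-1}(F_i)}\colon (A_i^{-1}(F_i),|\cdot|_a)\longrightarrow E_i
\end{equation*}
is a bijection and, as a composition of a linear isometry, a $(1+\theta)$-biLipschitz identity map, and the $(1+\theta)$-biLipschitz $I_i$, it is $(1+\theta)^2$-biLipschitz, hence $(1+\varepsilon)$-biLipschitz. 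Since $\bigcup_i E_i$ covers $E$ up to an $\Hn$-null set, the $g_i$ witness strong $|\cdot|_a$-rectifiability at level $\varepsilon$; letting $\varepsilon\to0$ completes the proof.

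The step I expect to be the main obstacle is the compactness argument in the forward implication — showing that the matrices $A_j$ do not degenerate — which amounts to the elementary but essential fact that each linear-isometry class of norms on $\reals^n$ is closed in the space of norms. Everything else is bookkeeping around Kirchheim's decomposition (Lemma \ref{L:Kirch}) and the metric differential, together with the routine comparison of metric differentials to the chart norms.
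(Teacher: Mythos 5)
Your proof is correct and follows essentially the same route as the paper: the forward implication via the metric differentials of the $(1+\varepsilon)$-biLipschitz charts (which represent $T(E,x)$ and are squeezed between $(1+\varepsilon)^{\pm1}|\cdot|_a$), and the converse via Kirchheim's decomposition composed with linear isometries onto $(\cdot,|\cdot|_a)$. Your compactness argument with the matrices $A_j$ is a welcome explicit justification of the step the paper dispatches with ``by uniqueness,'' namely that a class of norms containing elements $(1+1/j)$-equivalent to $|\cdot|_a$ for every $j$ must contain $|\cdot|_a$ itself.
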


\begin{proof}
    First suppose that $E$ is strongly $|\cdot|_a$-rectifiable.
    Fix $\varepsilon>0$ and, for $F\subset \reals^n$, let $f\colon (F,|\cdot|)\to E$ be $(1+\varepsilon)$-biLipschitz.
    Then for $\Hn$-a.e.\ $u\in F$, $T(E,f(u))=[|f'|(u)]$. However, since $f$ is $(1+\varepsilon)$-biLipschitz,
    \[\frac{|v|_a }{1+\varepsilon} \leq |f'|(u)(v) \leq (1+\varepsilon)|v|_a\]
    for all $v\in \reals^n$.
    As for any $\varepsilon>0$, and $\Hn$-a.e.~$x\in E$, we may find $f$, $F$ and $u$ as above with $x=f(u)$, we have $|\cdot|_a\in T(E,x)$ and so, by uniqueness, $T(E,x)=[|\cdot|_a]$ for $\Hn$-a.e.~$x\in E$.

    % Since $E$ is strongly $|\cdot|$-rectifiable, $\Hn$-a.e. $x\in E$ has the property that, there exists a norm $|\cdot|'$ on $\reals^n$ with $T(E,x)=[|\cdot|']$ and, for every $\varepsilon>0$,
    % \[\frac{|v| }{1+\varepsilon} \leq |v|' \leq (1+\varepsilon)|v|\]
    % for all $v\in\reals^n$.
    % That is, $|\cdot|=|\cdot|'$, as required.

    Conversely, if $E$ is $n$-rectifiable and $T(E,x)=[|\cdot|_a]$ for $\Hn$-a.e $x\in E$, then Lemma \ref{L:MSR-open-decomposition} implies that $E$ is strongly $|\cdot|_a$-rectifiable.
\end{proof}

\begin{remark}\label{strong-rect-rmk}
    A much stronger result is obtained from \cite{Bate2}.
    Indeed, suppose that $E\subset X$ satisfies $\Hn(E)<\infty$ and has positive lower $n$-dimensional Hausdorff density at $\Hn$-a.e.\ point.
    Then $E$ is strongly $|\cdot|_a$-rectifiable whenever, at $\H^n$-a.e.\ $x\in E$, $E$ has a unique ``weak Gromov--Hausdorff tangent'' that equals $(\reals^n,|\cdot|_a)$.
    In fact, it suffices that, for $\Hn$-a.e.~$x\in E$, all such tangents are $K_x$-biLipschitz images of $\reals^n$ and that at least one tangent at $x$ equals $(\reals^n,|\cdot|_a)$.

    Conversely, if $E$ is strongly $|\cdot|_a$-rectifiable, then the weak Gromov--Hausdorff tangents uniquely equal $(\reals^n,|\cdot|_a)$ $\Hn$-a.e., since they agree with $T(E,\cdot)$.
\end{remark}

\section{Lower semi-continuity of some area related functionals}\label{S:openness}

The goal of this section is to study the openness part of the residuality result, i.e.~to study lower semi-continuity of the ``area'' functional given by
\begin{equation*}
    f\mapsto\Hn(f(E))
\end{equation*}
and the ``area formula'' functional given by
\begin{equation*}
    f\mapsto \int_E J_E f\diff\Hn
\end{equation*}
in the relevant settings.
This follows, to some degree, the approach from \cite{B}. Mainly we use a modified version of \cite[Lemma 7.3]{B}.

We structure the section into two subsections. In the first, we study the local behaviour of both of the aforementioned functionals; in the second, we study global behaviour of the area functional and use lower semi-continuity thereof to obtain lower semi-continuity of the area formula functional.

For the entirety of this section, we let $m,n\in\nat$ with $n\leq m$ and denote by $B(x,r)$ the Euclidean ball in $\reals^n$ centred at $x$ of diameter $r$. We equip the spaces $\reals^n$, $\reals^m$ with the Euclidean norms.

\subsection{Local behaviour of area and area formula}\label{SS:local-area}

Firstly, we shall need a result for continuous functions based on the Brouwer's fixed point theorem.
The following lemma is a modified version of \cite[Lemma 7.3]{B} and its proof follows from \cite[Lemma 7.3]{B}.

% \begin{lemma}
% Let $\varepsilon>0$ and $F\colon B(0,\varepsilon)\to B(0,\varepsilon)$ be a continuous function. Let $\delta\in (0,\tfrac{1}{2}\varepsilon)$ and suppose
% \begin{equation*}
%     |F(y)-y|<\delta \quad \text{for all $y\in \partial B(0,\varepsilon)$.}
% \end{equation*}
% Then $F(B(0,\varepsilon))\supset B(0,\varepsilon-\delta)$.
% \end{lemma}

% The above lemma is just a scaled version of \cite[Lemma 7.3]{B}. 

\begin{lemma}\label{C:LA-Brower}
    Let $\varepsilon>0$ and $F\colon B(0,\varepsilon)\to B(0,\varepsilon)$ be a continuous function. Let $\eta\in (\frac{1}{2^n},1)$ and suppose
\begin{equation*}
    |F(y)-y|<\varepsilon(1-\sqrt[n]{\eta}) \quad \text{for all $y\in \partial B(0,\varepsilon)$.}
\end{equation*}
Then $F(B(0,\varepsilon))\supset B(0,\sqrt[n]{\eta}\varepsilon)$.
\end{lemma}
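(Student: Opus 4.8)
The statement is a quantitative surjectivity result: a continuous self-map of the Euclidean ball $B(0,\varepsilon)$ that moves boundary points by less than $\varepsilon(1-\sqrt[n]{\eta})$ must cover the concentric ball of radius $\sqrt[n]{\eta}\varepsilon$. This is a standard consequence of Brouwer's fixed point theorem (or equivalently of degree theory / the no-retraction theorem), and the hypothesis $\eta>1/2^n$ is presumably only there so that $\sqrt[n]{\eta}>1/2$, i.e.\ the target ball is more than half the size of the domain ball — this will matter for where the lemma is \emph{applied}, not for its proof. So the plan is: fix a point $p\in B(0,\sqrt[n]{\eta}\varepsilon)$ and show $p\in F(B(0,\varepsilon))$ by a fixed-point argument.

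\medskip

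\textbf{Main steps.} First, fix $p$ with $|p|\le \sqrt[n]{\eta}\,\varepsilon$ and suppose for contradiction that $p\notin F(B(0,\varepsilon))$. Then the map $y\mapsto F(y)-p$ is continuous and nonvanishing on $B(0,\varepsilon)$, so $G(y)=\dfrac{F(y)-p}{|F(y)-p|}$ is a well-defined continuous map $B(0,\varepsilon)\to \partial B(0,1)$; after rescaling, $\widetilde G(y)=\varepsilon\, G(y)$ maps $B(0,\varepsilon)$ into $\partial B(0,\varepsilon)$. The no-retraction theorem (a corollary of Brouwer) says $\widetilde G$ cannot be the identity on $\partial B(0,\varepsilon)$; I will instead argue directly that $\widetilde G$ has a fixed point and derive a contradiction from it. To do this cleanly, consider instead $H(y)=p + \big(\text{something}\big)$ — more precisely, the standard trick: define $H\colon B(0,\varepsilon)\to B(0,\varepsilon)$ by $H(y) = y - (F(y)-p)\cdot t(y)$ for a suitable positive scalar, or simply apply Brouwer to $y\mapsto y - F(y) + p$ restricted appropriately. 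The cleanest route: apply Brouwer's fixed point theorem to the map $\Phi(y) = y + p - F(y)$. One must check $\Phi$ maps $B(0,\varepsilon)$ into itself. For $y\in \partial B(0,\varepsilon)$ we have $|\Phi(y)| = |y - (F(y)-y) + (p - y) \cdot 0|$ — wait, rather $\Phi(y) - y = p - F(y)$, and on the boundary $|p-F(y)| \le |p| + |F(y)-y| + \cdots$; this does not obviously stay inside. So the correct map to iterate is $\Psi(y) = F(y) - F(y) + \cdots$; I will use the standard degree argument instead: the homotopy $h_s(y) = F(y) - s\,y$ on $\partial B(0,\varepsilon)$ connects $F$ to $F - \mathrm{id}$, and the hypothesis $|F(y)-y|<\varepsilon(1-\sqrt[n]{\eta})\le \varepsilon$ forces $h_1(y)=F(y)-y\ne \cdots$; more usefully, the homotopy $h_s(y)=F(y) - p - s(y-F(y))$ shows $\deg(F-p, B(0,\varepsilon),0) = \deg(\mathrm{id}, B(0,\varepsilon), 0)=1$ provided $F(y)-p$ and $y - p$ never point in opposite directions on the boundary, which holds because $|F(y)-p - (y-p)| = |F(y)-y| < \varepsilon(1-\sqrt[n]{\eta}) \le \varepsilon - |p| \le |y - p|$ when $|y|=\varepsilon$ (here I use $|p|\le \sqrt[n]{\eta}\varepsilon$, hence $|y-p|\ge \varepsilon - |p| \ge \varepsilon(1-\sqrt[n]{\eta}) > |F(y)-y|$). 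Thus $F-p$ and $\mathrm{id}-p$ are homotopic through nonvanishing maps on $\partial B(0,\varepsilon)$, so $\deg(F,B(0,\varepsilon),p)=\deg(\mathrm{id},B(0,\varepsilon),p)=1\ne 0$, and therefore $p\in F(B(0,\varepsilon))$.

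\medskip

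\textbf{The key estimate and the obstacle.} The only real content is the chain of inequalities
\begin{equation*}
    |p| \le \sqrt[n]{\eta}\,\varepsilon \;\Longrightarrow\; |y-p| \ge \varepsilon - |p| \ge \varepsilon\big(1-\sqrt[n]{\eta}\big) > |F(y)-y| \quad\text{for } |y|=\varepsilon,
\end{equation*}
which guarantees that the straight-line homotopy from $y\mapsto F(y)-p$ to $y\mapsto y-p$ stays away from $0$ on the boundary sphere, so the topological degree is preserved and equals $1$. Since a map with nonzero degree at $p$ is surjective onto a neighbourhood of $p$, we get $p\in F(B(0,\varepsilon))$; as $p$ was an arbitrary point of $B(0,\sqrt[n]{\eta}\varepsilon)$, this proves $F(B(0,\varepsilon))\supset B(0,\sqrt[n]{\eta}\varepsilon)$. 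I expect the main (minor) obstacle to be purely bookkeeping: either invoking degree theory on a ball versus justifying it from Brouwer's fixed point theorem directly (the paper's phrasing ``its proof follows from \cite[Lemma 7.3]{B}'' suggests we are meant to cite the analogous argument there rather than redo it), and making sure the non-strict versus strict inequalities line up so the homotopy is genuinely nonvanishing on the closed boundary sphere. No genuinely hard step is involved; the substance is entirely in choosing the right homotopy and reading off the degree.
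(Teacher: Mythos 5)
Your final argument is correct, and the one piece of real content — the boundary estimate
\begin{equation*}
    |y-p|\;\ge\;|y|-|p|\;\ge\;\varepsilon-\sqrt[n]{\eta}\,\varepsilon\;=\;\varepsilon\bigl(1-\sqrt[n]{\eta}\bigr)\;>\;|F(y)-y|
    \qquad\text{for }|y|=\varepsilon,\ |p|\le\sqrt[n]{\eta}\,\varepsilon,
\end{equation*}
which keeps the straight-line homotopy between $F-p$ and $\mathrm{id}-p$ nonvanishing on $\partial B(0,\varepsilon)$ — is exactly the right one; your observation that $\eta>2^{-n}$ plays no role in the proof is also correct. The paper gives no proof of its own (it defers to \cite[Lemma 7.3]{B}), and the argument there is the direct Brouwer version of what you did: assuming $p\notin F(B(0,\varepsilon))$, one applies Brouwer's fixed point theorem to $y\mapsto \varepsilon\,\frac{p-F(y)}{|p-F(y)|}$, obtains a fixed point $y_0\in\partial B(0,\varepsilon)$ with $F(y_0)=p-ty_0$, $t>0$, and then the same estimate gives $|F(y_0)-y_0|\ge(1+t)\varepsilon-|p|\ge\varepsilon(1-\sqrt[n]{\eta})$, a contradiction. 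Your degree-theoretic route buys a cleaner, non-contradictory statement ($\deg(F,B^\circ(0,\varepsilon),p)=1$, hence $p$ is attained) at the cost of invoking degree theory rather than only Brouwer; the two are of course equivalent in strength. Two cosmetic points: the exposition contains several abandoned false starts (the maps $\Phi$, $\Psi$, $h_s(y)=F(y)-sy$) that should simply be deleted, and the homotopy you finally use has a sign typo — it should read $h_s(y)=F(y)-p+s\,(y-F(y))$, so that $h_0=F-p$ and $h_1=\mathrm{id}-p$; likewise $\deg(\mathrm{id},B(0,\varepsilon),0)$ should be $\deg(\mathrm{id},B(0,\varepsilon),p)$. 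Neither affects the validity of the argument.
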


\begin{theorem}[Local lower semi-continuity of area]\label{T:LA-semicontinuity-area}
    Let $\Omega\subset \reals^n$ be an open set, $f\colon \Omega \to \reals^m$ a continuous function, $x\in\Omega$ and assume $f'(x)$ exists. Let $\eta\in(0,1)$. Then there are $\delta>0$ and $r_0>0$ such that for all $r\leq r_0$ and any continuous function $g\colon B(x,r)\to \reals^m$ with
    \begin{equation*}
        \lVert g - f \rVert_{\infty}\leq\delta r,
    \end{equation*}
     it holds that
    \begin{equation}\label{E:LA-goal1}
        \mathcal{H}^n(g(B(x,r)))\geq \eta \vol f'(x) \mathcal{H}^n(B(x,r)).
    \end{equation}
\end{theorem}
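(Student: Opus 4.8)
The plan is to reduce the statement to Lemma \ref{C:LA-Brower} by a linear change of coordinates that turns $f'(x)$ into (a scalar multiple of) an isometric embedding $\reals^n \hookrightarrow \reals^m$, and then to control the error between $g$ and the linearization on the boundary sphere. Without loss of generality assume $x=0$ and $f(0)=0$. Since $f'(0)$ exists, for every $\rho>0$ there is $r_1>0$ such that $|f(y)-f'(0)y| \leq \rho|y|$ for all $|y|\leq r_1$; this is the only place differentiability is used. If $\vol f'(0)=0$ the right-hand side of \eqref{E:LA-goal1} is zero and there is nothing to prove, so assume $f'(0)$ has rank $n$, hence $\vol f'(0)>0$.

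First I would set up the coordinates on the target. Write $A = f'(0)$. Using the polar/singular-value decomposition, $A = U P$ where $P = (A^T A)^{1/2}$ is a symmetric positive-definite map on $\reals^n$ and $U\colon \reals^n \to \reals^m$ has orthonormal columns; note $\vol A = \det P$. Let $Q\colon \reals^m \to \reals^n$ be the orthogonal projection onto the column space of $U$, so $Q U = \mathrm{Id}_{\reals^n}$ and $|Qz|\leq |z|$ for all $z$. Given a competitor $g$ on $B(0,r)$, consider the map
\begin{equation*}
    F_r\colon B(0,\sqrt[n]{\eta}\, r) \to \reals^n, \qquad F_r(y) = P^{-1} Q\, g\big(P (\sqrt[n]{\eta})^{-1} y\big),
\end{equation*}
suitably truncated to map into $B(0,\sqrt[n]{\eta}\, r)$ — more precisely, I would apply Lemma \ref{C:LA-Brower} on a ball adapted to $P$. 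Concretely it is cleanest to argue: $g(B(0,r)) \supset$ (a set whose $P^{-1}Q$-image contains a small Euclidean ball), and then push forward by $P$ and $U$. The point is that $\vol$ behaves multiplicatively: $\Hn\big(U P(B)\big) = \det P \cdot \Hn(B)$ for measurable $B\subset\reals^n$, because $UP$ is an isometric embedding composed with $P$, and $\Hn$ of an isometric image is preserved while $P$ scales $n$-volume by $\det P$. Thus if we can show $Q g(B(0,r)) \supset P(B(0,\sqrt[n]{\eta}\,r'))$ for an appropriate radius, monotonicity of $\Hn$ under the $1$-Lipschitz map $Q$ gives $\Hn(g(B(0,r))) \geq \Hn\big(P(B(0,\sqrt[n]{\eta}\,r'))\big) = \det P\cdot \eta\cdot \Hn(B(0,r'))$, and choosing radii so the comparison closes yields \eqref{E:LA-goal1}.

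To produce that inclusion I would invoke Lemma \ref{C:LA-Brower} applied to the map $\Phi(y) = Q g(y)$ precomposed and postcomposed with the affine normalizations sending the relevant ellipsoid to a round ball. On the boundary sphere we estimate, for $|y| = r$,
\begin{equation*}
    |\Phi(y) - Q A y| = |Q(g(y) - f(y)) + Q(f(y) - Ay)| \leq \lVert g - f\rVert_\infty + \rho |y| \leq (\delta + \rho) r,
\end{equation*}
and $QAy = QUPy = Py$, so $\Phi$ differs from the linear isomorphism $P$ by at most $(\delta+\rho)r$ on the sphere of radius $r$. Conjugating by $P$ (which distorts distances by at most $\lVert P\rVert$ and $\lVert P^{-1}\rVert$), this becomes the hypothesis of Lemma \ref{C:LA-Brower} with $\eta$ replaced by any $\eta' \in (\eta^{1/?}, 1)$ once $\delta$ and $\rho$ are taken small enough depending on $\eta$, $\lVert P\rVert$, $\lVert P^{-1}\rVert$; one then fixes $\rho$ first (determining $r_0 = \min\{r_1, \ldots\}$) and $\delta$ second. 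The conclusion of the lemma then gives the desired ball in the image.

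The main obstacle is bookkeeping the non-roundness of $P$: Lemma \ref{C:LA-Brower} is stated for round Euclidean balls, whereas $f'(0)$ maps balls to ellipsoids, so one must carefully track how the constant $\eta$ degrades under the conjugation by $P$ and ensure the loss can be absorbed by shrinking $\eta$ slightly (replacing the target $\eta$ in \eqref{E:LA-goal1} by an intermediate $\eta'$, then re-choosing). A clean way to avoid worst-case distortion is to note that $F\mapsto P^{-1}\circ F\circ P$ preserves the property "displacement $< c$ on the boundary" only up to the condition number of $P$; so one should instead apply the lemma directly on the ellipsoid $P(B(0,\sqrt[n]{\eta}\,r))$ by an ad hoc repeat of its Brouwer-degree argument, or quote a version of Lemma \ref{C:LA-Brower} for ellipsoids. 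Everything else — the differentiability estimate, the $1$-Lipschitz projection $Q$, the multiplicativity of $\vol$ — is routine.
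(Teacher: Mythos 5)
Your overall strategy is the same as the paper's: use differentiability to compare $g$ with $A=f'(0)$ on $B(0,r)$, project orthogonally onto the image of $A$, invoke Lemma \ref{C:LA-Brower} to capture a ball in the image, and recover the factor $\vol f'(0)$ from the multiplicativity of $\vol$ together with the fact that the projection is $1$-Lipschitz. However, you leave the decisive step unresolved. You set up the map so that $\Phi=Q\circ g$ approximates the linear isomorphism $P$ on the sphere, observe correctly that Lemma \ref{C:LA-Brower} requires closeness to the \emph{identity} on a round ball, and then propose either to conjugate by $P$ (which, as you note, degrades the boundary estimate by the condition number of $P$) or to reprove a Brouwer-type lemma for ellipsoids. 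Neither is carried out, so as written the proof does not close.

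The fix is simpler than either option and is exactly what the paper does: do not precompose with $P$ at all; only \emph{post}-compose with $P^{-1}$. Setting $F=P^{-1}\circ Q\circ g$ on $B(0,r)$, your boundary estimate gives $|F(y)-y|\leq \lVert P^{-1}\rVert(\delta+\rho)r$, so choosing $\rho$ and $\delta$ with $(\delta+\rho)\lVert P^{-1}\rVert<1-\sqrt[n]{\eta}$ (this is where $\delta$ acquires its dependence on $A$ — in the paper, $\delta=\tfrac{1}{2}(1-\sqrt[n]{\eta})/\lVert A^{-1}\rVert$) puts you in the hypotheses of Lemma \ref{C:LA-Brower} verbatim, after composing with the $1$-Lipschitz radial retraction onto $B(0,r)$ so that $F$ is a self-map of the ball (the paper's map $\sigma$; your "suitable truncation"). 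One then concludes $\Hn(P^{-1}Qg(B(0,r)))\geq\eta\Hn(B(0,r))$, hence $\Hn(Qg(B(0,r)))\geq\det P\cdot\eta\,\Hn(B(0,r))=\eta\vol f'(0)\Hn(B(0,r))$, and finally $\Hn(g(B(0,r)))\geq\Hn(Qg(B(0,r)))$ since $Q$ is $1$-Lipschitz. There is no conjugation and no loss in $\eta$; the amplification by $\lVert P^{-1}\rVert$ is absorbed entirely into the choice of $\delta$ and $r_0$, which the statement permits to depend on $f$ and $x$. With this one-line correction your argument coincides with the paper's proof (the paper writes $A^{-1}\circ P_{\mathrm{proj}}$ where you would write $P^{-1}\circ Q$; these agree up to an isometry of $\reals^n$).
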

\begin{proof}
    As Hausdorff measures are invariant under translations, we may assume $x=0$ and $f(0)=0$. Let us denote $v=\vol f'(x)$. If $v=0$, the statement is trivial, so we can assume $v>0$ which is equivalent to stating that $A=f'(x)$ is of full rank.
    % Since Hausdorff measures are also invariant under orthogonal transformations, we may assume that the image of $A$ is $Y=\tspan\{e_1,\dots, e_n\}$, where $e_1,\dots, e_m$ is the canonical basis of $\reals^m$. 
    Thus, the map $A\colon \reals^n \to Y=A(\reals^n)$ is a linear invertible map.
    As $f(0)=0$, we have, by the definition of a Frech\'et derivative,
    \begin{equation}\label{E:LA-limit}
        \lim_{y\to 0} \frac{|f(y)-A(y)|}{|y|}=0.
    \end{equation}
    
    % Choose an $\varepsilon_0>0$. It is enough to show that there is $\varepsilon\in (0,\varepsilon_0)$ and $\delta>0$ such that if $g\colon B(x,\varepsilon)\to \reals^m$ is continuous with $\lVert g - f \rVert_{\ell^\infty(B(x,\varepsilon),\reals^m)}\leq \delta$, then
    % \begin{equation}\label{E:LA-goal1}
    %     \mathcal{H}^n(g(B(x,\varepsilon)))\geq \eta v \mathcal{H}^n(B(x,\varepsilon)).
    % \end{equation}
    % Indeed, the rest will then follow via simple induction.
    
    Let us denote by $P\colon \reals^m \to Y$ the orthogonal projection onto $Y$. Observe the following properties of $P$:
    \begin{enumerate}[label={(P\arabic*)}]
        \item \label{Enum:P1} $P\circ A = A$,
        \item \label{Enum:P2} $P$ is $1$-Lipschitz,
        \item \label{Enum:P3} for any $u\in \reals^m$ and $w\in Y$, it holds that $|Pu-w|\leq |u-w|$.
    \end{enumerate}
    Let $\lVert A^{-1}\rVert$ be the operator norm of $A^{-1}\colon Y \to \reals^n$. By virtue of \eqref{E:LA-limit}, there is an $r_0$ such that for all $r\leq r_0$ it holds that 
    \begin{equation*}
        |f(y)-A(y)|\leq \frac{1}{\lVert A^{-1}\rVert}\frac{1}{2}(1-\sqrt[n]{\eta})r \quad \text{for all $y\in B(0,r)$.}
    \end{equation*}
    This, by \ref{Enum:P3} and by applying $A^{-1}$ to the left hand side yields
    \begin{equation}\label{E:LA-f-with-P-and-A}
        |A^{-1}Pf(y)-y|\leq \frac{1}{2}(1-\sqrt[n]{\eta})r \quad \text{for all $y\in B(0,r)$.}
    \end{equation}
    Let $\delta=\frac{1}{\lVert A^{-1} \rVert}\frac{1}{2}(1-\sqrt[n]{\eta})$. By the property \ref{Enum:P2} above, if a function $g\colon B(0,r)\to \reals^m$ satisfies
    \begin{equation*}
        \lVert g - f \rVert_{\infty}\leq \delta r,    
    \end{equation*}
    then
    \begin{equation*}
        |Pg(y)-Pf(y)|\leq \delta r \quad \text{for all $y\in B(0,r)$.}
    \end{equation*}
    Therefore, for such $y$, we have
    \begin{equation*}
        |A^{-1} P g(y)-A^{-1} P f(y)|\leq
        \lVert A^{-1} \rVert \delta r=\frac{1}{2}(1-\sqrt[n]{\eta})r,
    \end{equation*}
    which in combination with \eqref{E:LA-f-with-P-and-A} gives
    \begin{equation}\label{E:LA-g-with-P-and-A}
        |A^{-1}Pg(y)-y|\leq (1-\sqrt[n]{\eta})r \quad \text{for all $y\in B(0,r)$.}
    \end{equation}

    To use Lemma \ref{C:LA-Brower} we require $A^{-1}Pg(B(0,r))$ to be a subset of $B(0,r)$.
    To this end, let 
    \begin{equation*}
        \sigma\colon B(0, (1+ \sqrt[n]{\eta})r)\to B(0,r)
    \end{equation*}
    be the radial projection onto $B(0,r)$. More precisely, for an element $y\in B(0, (1+ \sqrt[n]{\eta})r)$, we let $\sigma(y)$ be the unique $u\in B(0,r)$ minimizing the distance $|u-y|$. We observe that  $\sigma$ has properties analogous to those of $P$, namely
    \begin{enumerate}[label={(S\arabic*)}]
        \item \label{Enum:S1} $\sigma$ is an identity on $B(0,r)$,
        \item \label{Enum:S2} $\sigma$ is $1$-Lipschitz,
        \item \label{Enum:S3} for any $y\in B(0,r)$ and $z\in B(0, (1+ \sqrt[n]{\eta})r)$, it holds that $|\sigma (z) - y|\leq |z-y|$.
    \end{enumerate}
    From the estimate \eqref{E:LA-g-with-P-and-A}, we infer that $A^{-1}Pg(B(0,r))\subset B(0, (1+ \sqrt[n]{\eta})r)$ and so we may define
    $G=\sigma\circ A^{-1}\circ P \circ g$. By the property \ref{Enum:S3} and the inequality \eqref{E:LA-g-with-P-and-A} we obtain
    \begin{equation*}
        |G(y)-y|\leq (1-\sqrt[n]{\eta})r \quad \text{for all $y\in B(0,r)$.}
    \end{equation*}
    Finally, if $g$ is continuous then so is $G$ and hence Lemma \ref{C:LA-Brower} gives
    \begin{equation*}
        G(B(0,r))\supset B(0,\sqrt[n]{\eta}r)
    \end{equation*}
    Applying $\mathcal{H}^n$ to both sides, we obtain
    \begin{equation*}
        \mathcal{H}^n(\sigma A^{-1} P g(B(0,r)))\geq \eta \mathcal{H}^n(B(0,r)).
    \end{equation*}
    From \ref{Enum:S2}, the last equation implies
    \begin{equation*}
        \mathcal{H}^n(A^{-1} P g(B(0,r)))\geq \eta \mathcal{H}^n(B(0,r)).
    \end{equation*}
    From \eqref{E:vol-geom-meaning} and from the fact that $\vol A^{-1} = \tfrac{1}{v}$, the last equation implies
    \begin{equation*}
        \mathcal{H}^n(P g(B(0,r)))\geq \eta v \mathcal{H}^n(B(0,r)).
    \end{equation*}
    Finally, by \ref{Enum:P2}, we obtain \eqref{E:LA-goal1}.
\end{proof}

\begin{corollary}\label{C:LA-lsc-area-areaformula}
    Let $\Omega\subset \reals^n$ be an open set and let $f\colon \overline{\Omega}\to \reals^m$ be a Lipschitz map. Assume $x\in\Omega$ is a density point of $\vol f'$ with $\vol f'(x)>0$ and let $\eta\in(0,1)$. Then there is $r_0>0$ and $\delta>0$ such that for every $r\leq r_0$ if $g\in C(B(x,r),\reals^m)$ satisfies
    \begin{equation*}
        \lVert g-f \rVert_{\infty}\leq\delta r,
    \end{equation*}
         then
        \begin{equation*}
            \mathcal{H}^n(g(B(x,r)))\geq \eta \mathcal{H}^n(f(B(x,r))).
        \end{equation*}
\end{corollary}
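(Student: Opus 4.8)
The plan is to deduce this from the local area lower bound of Theorem~\ref{T:LA-semicontinuity-area} by running it with a slightly larger exponent, and then converting the ``flat'' quantity $\vol f'(x)\,\mathcal{H}^n(B(x,r))$ into the genuine image measure $\mathcal{H}^n(f(B(x,r)))$ via the area formula, using that $x$ is a density point of $\vol f'$. First I would fix $\eta'$ with $\eta<\eta'<1$ — concretely $\eta'=\sqrt{\eta}$ is convenient — and set $\varepsilon'=\vol f'(x)\,(1/\sqrt{\eta}-1)>0$, chosen so that $\eta\bigl(\vol f'(x)+\varepsilon'\bigr)=\eta'\,\vol f'(x)$. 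Since $f$ is Lipschitz on $\overline{\Omega}$ it is continuous on $\Omega$, and (by the reading of the hypothesis discussed below) $f'(x)$ exists, so Theorem~\ref{T:LA-semicontinuity-area}, applied with $\eta'$ in place of $\eta$, yields $\delta>0$ and $r_1>0$ such that for all $r\leq r_1$ and all continuous $g\colon B(x,r)\to\reals^m$ with $\lVert g-f\rVert_\infty\leq\delta r$,
\[
    \mathcal{H}^n(g(B(x,r)))\geq \eta'\,\vol f'(x)\,\mathcal{H}^n(B(x,r)).
\]
(Only continuity of $g$ is used here, matching the statement; after shrinking $r_1$ we may also assume $B(x,r_1)\subset\Omega$, so that $f|_{B(x,r)}$ is Lipschitz.)

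Next I would bound the right-hand side from below. Since $f|_{B(x,r)}$ is Lipschitz and $B(x,r)\subset\reals^n$ is $\mathcal{H}^n$-measurable, the area formula (Theorem~\ref{E:Prel-area-formula-metric-eucl}), together with the identity $J_{B(x,r)}f=\vol f'$ valid $\mathcal{H}^n$-a.e.\ for subsets of $\reals^n$ and the trivial bound $\#f^{-1}(u)\geq1$ on $f(B(x,r))$, gives $\mathcal{H}^n(f(B(x,r)))\leq\int_{B(x,r)}\vol f'\diff\mathcal{H}^n$. As $\vol f'$ is bounded ($f$ being Lipschitz) and $x$ is a density point of $\vol f'$, the averages $\mathcal{H}^n(B(x,r))^{-1}\int_{B(x,r)}\vol f'\diff\mathcal{H}^n$ converge to $\vol f'(x)$ as $r\to0$, so there is $r_2>0$ with $\int_{B(x,r)}\vol f'\diff\mathcal{H}^n\leq(\vol f'(x)+\varepsilon')\,\mathcal{H}^n(B(x,r))$ for all $r\leq r_2$. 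Taking $r_0=\min\{r_1,r_2\}$ and chaining these inequalities gives, for every $r\leq r_0$ and every continuous $g$ with $\lVert g-f\rVert_\infty\leq\delta r$,
\[
    \mathcal{H}^n(g(B(x,r)))\geq\eta'\,\vol f'(x)\,\mathcal{H}^n(B(x,r))=\eta\bigl(\vol f'(x)+\varepsilon'\bigr)\mathcal{H}^n(B(x,r))\geq\eta\,\mathcal{H}^n(f(B(x,r))),
\]
which is the assertion.

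I do not anticipate a genuine obstacle: the corollary is essentially bookkeeping on top of Theorem~\ref{T:LA-semicontinuity-area}. The two points needing a little care are (i) the reading of ``$x$ is a density point of $\vol f'$'' — I take it to mean that $f'(x)$ exists (so Theorem~\ref{T:LA-semicontinuity-area} applies literally) and that $x$ is a Lebesgue point of the a.e.-defined, bounded function $\vol f'$, which is the generic situation by Rademacher's and Lebesgue's differentiation theorems; and (ii) choosing $\eta'$ and the slack $\varepsilon'$ compatibly, so that the loss incurred in Theorem~\ref{T:LA-semicontinuity-area} and the loss in replacing $\mathcal{H}^n(f(B(x,r)))$ by $\vol f'(x)\,\mathcal{H}^n(B(x,r))$ multiply to exactly $\eta$.
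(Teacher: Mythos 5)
Your proposal is correct and follows essentially the same route as the paper: apply Theorem \ref{T:LA-semicontinuity-area} with the improved constant $\sqrt{\eta}$, use the density-point hypothesis to compare $\vol f'(x)\,\mathcal{H}^n(B(x,r))$ with $\int_{B(x,r)}\vol f'\diff\mathcal{H}^n$, and finish with the area formula bound $\int_{B(x,r)}\vol f'\diff\mathcal{H}^n\geq\mathcal{H}^n(f(B(x,r)))$. The only difference is that you encode the density-point slack additively via $\varepsilon'$ where the paper uses a second multiplicative factor of $\sqrt{\eta}$; this is purely cosmetic.
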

\begin{proof}
    From Theorem \ref{T:LA-semicontinuity-area} we can find $r_0>0$ and $\delta>0$ such that for $g\in C(B(x,r),\reals^m)$ with $\lVert g-f \rVert_{\infty}\leq\delta r$ it holds that
    \begin{equation}\label{E:LA-lsc-1}
        \mathcal{H}^n(g(B(x,r)))\geq \sqrt{\eta} \vol f'(x)\mathcal{H}^n(B(x,r)).
    \end{equation}
    From the fact that $x$ is a density point of $\vol f'$ it follows that we may possibly decrease $r_0>0$ so that for $r<r_0$ we also have
    \begin{equation}\label{E:LA-lsc-2}
        \vol f'(x)\mathcal{H}^n(B(x,r))\geq \sqrt{\eta} \int_{B(x,r)} \vol f' \diff \mathcal{H}^n.
    \end{equation}
    By the area formula, we obtain
    \begin{equation}\label{E:LA-lsc-3}
        \int_{B(x,r)} \vol f' \diff \mathcal{H}^n\geq \mathcal{H}^n(f(B(x,r))).
    \end{equation}
    Combining \eqref{E:LA-lsc-1}, \eqref{E:LA-lsc-2} and \eqref{E:LA-lsc-3} yields the result.
\end{proof}

\subsection{Global lower semi-continuity of area in rectifiable metric spaces}

We shall continue our previous conventions and assume $n\leq m$ are natural numbers and $B(x,r)$ denotes the Euclidean ball in $\reals^n$. We consider the spaces $\reals^n$ and $\reals^m$ to be equipped with the Euclidean norms, unless stated otherwise.

\begin{lemma}\label{L:lsc-area-general-Euclidean-bilipschitz}
    Let $E\subset \reals^n$ be a compact set and let $f\colon E \to \reals^m$ be a Lipschitz injection. Let $L\in[0,\infty)$. Then, for every $\eta\in(0,1)$ there exists $\delta>0$ such that if $g\in \tLip_L(E,\reals^m)$ satisfies $\lVert g - f \rVert_{\infty}\leq \delta$, then
    \begin{equation*}
        \Hn(g(E))\geq \eta \Hn(f(E)).
    \end{equation*}
\end{lemma}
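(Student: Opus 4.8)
The plan is to reduce, via the area formula and a classical $C^1$\nobreakdash-Lusin approximation, to a setting in which the local lower semi-continuity of Corollary \ref{C:LA-lsc-area-areaformula} can be glued along a Vitali family \emph{with exact additivity of image measures}. We may assume $\Hn(f(E))>0$ (otherwise the claim is trivial), and note that compactness of $E$ gives $\Hn(E),\Hn(f(E))<\infty$. Let $E^*\subset E$ be the set of $\Hn$\nobreakdash-density points of $E$ at which the approximate Fréchet differential $f'$ exists with $\vol f'>0$. By Rademacher's theorem and the Lebesgue density theorem, $E\setminus E^*$ equals, up to an $\Hn$\nobreakdash-null set, the set $Z=\{x: f'(x)\text{ exists},\ \vol f'(x)=0\}$; applying the area formula (Theorem \ref{E:Prel-area-formula-metric-eucl}, with $J_Ef=\vol f'$ since $E\subset\reals^n$) to $Z$ gives $\Hn(f(Z))\le\int_Z\vol f'\diff\Hn=0$, so $\Hn(f(E))=\Hn(f(E^*))$. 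Fix $\eta\in(0,1)$ and an auxiliary $\eta_1\in(\sqrt\eta,1)$. Given $\varepsilon>0$, by the $C^1$\nobreakdash-Lusin theorem (a standard consequence of Whitney's extension theorem) I would choose a compact $K\subset E^*$ with $\Hn(E^*\setminus K)<\varepsilon$ and $F\in C^1(\reals^n,\reals^m)$ with $F|_K=f|_K$ and $F'|_K=f'|_K$; then $\vol F'=\vol f'>0$ on $K$, hence $\vol F'\ge c>0$ on an open set $U\supset K$. Since $F|_K$ is injective (it equals $f|_K$) and $F$ is a $C^1$ immersion on $U$, a routine compactness argument shows $F$ is biLipschitz on the compact neighbourhood $V:=\{x:\dist(x,K)\le s\}$ of $K$ for small $s>0$; in particular $\Hn(f(K))=\Hn(F(K))\ge\Hn(f(E))-(\tLip f)^n\varepsilon$.

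Next, apply Corollary \ref{C:LA-lsc-area-areaformula} to $F$ at each $x\in K$: as $\vol F'$ is continuous and positive near $K$, $x$ is a density point of $\vol F'$ with $\vol F'(x)>0$, so there are $r_x,\delta_x>0$ with $\Hn(h(B(x,r)))\ge\eta_1\Hn(F(B(x,r)))$ for all $r\le r_x$ and all continuous $h$ on $B(x,r)$ with $\|h-F\|_\infty\le\delta_x r$. Using the Vitali covering theorem for $\Hn$ on $\reals^n$, extract a countable disjoint family of balls $B_i=B(x_i,r_i)$ with $x_i\in K$, $B_i\subset V$, $r_i\le r_{x_i}$, $\sup_i r_i$ as small as desired, and $\Hn(K\setminus\bigcup_i B_i)=0$. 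Because $F$ is injective on $V\supset\bigcup_i B_i$, the sets $F(B_i)$ are pairwise disjoint, so $\Hn(F(K))=\sum_i\Hn(F(B_i\cap K))$; pick a finite $I$ with $\sum_{i\in I}\Hn(F(B_i\cap K))\ge\sqrt\eta\,\Hn(F(K))$ and put $d_0:=\min_{i\ne j\in I}\dist(F(B_i),F(B_j))>0$.

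Now set $\delta:=\min\bigl(d_0/3,\ m^{-1/2}\min_{i\in I}\delta_{x_i}r_i\bigr)>0$, which depends only on $E,f,\eta,L$, and take $g\in\tLip_L(E,\reals^m)$ with $\|g-f\|_\infty\le\delta$. If $g(x)=g(y)$ with $x\in B_i\cap K$, $y\in B_j\cap K$, $i\ne j\in I$, then $|f(x)-f(y)|\le2\delta<d_0$, contradicting $f(x)\in F(B_i)$, $f(y)\in F(B_j)$; hence the sets $g(B_i\cap K)$, $i\in I$, are pairwise disjoint and $\Hn(g(E))\ge\Hn(g(K))\ge\sum_{i\in I}\Hn(g(B_i\cap K))$. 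For each $i$ I would extend the Lipschitz map $(g-f)|_{K\cap B_i}$ coordinatewise by McShane's theorem to $w_i$ on $B_i$ with $\|w_i\|_\infty\le m^{1/2}\delta\le\delta_{x_i}r_i$ and $\tLip(w_i)\le m^{1/2}(L+\tLip f)$; then $h_i:=F+w_i$ is continuous on $B_i$, agrees with $g$ on $K\cap B_i$, and satisfies $\|h_i-F\|_\infty\le\delta_{x_i}r_i$, so $\Hn(h_i(B_i))\ge\eta_1\Hn(F(B_i))$. Since $h_i(B_i)=g(B_i\cap K)\cup h_i(B_i\setminus K)$ and $h_i$ is $C$\nobreakdash-Lipschitz with $C:=\tLip(F|_V)+m^{1/2}(L+\tLip f)$ (independent of $g$ and of the Vitali family), $\Hn(g(B_i\cap K))\ge\eta_1\Hn(F(B_i\cap K))-C^n\Hn(B_i\setminus K)$. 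Summing over $i\in I$ and bounding $\sum_i\Hn(B_i\setminus K)\le\Hn(\{x:\dist(x,K)\le\sup_i r_i\}\setminus K)$, which tends to $0$ as $\sup_i r_i\to0$ and was arranged tiny before fixing $I$, yields $\Hn(g(E))\ge\eta_1\sqrt\eta\,\Hn(F(K))-(\text{small})\ge\eta_1\sqrt\eta(\Hn(f(E))-(\tLip f)^n\varepsilon)-(\text{small})$. As $\eta_1\sqrt\eta>\eta$, choosing $\varepsilon$ and the above error small enough gives $\Hn(g(E))\ge\eta\Hn(f(E))$.

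The main obstacle is the globalisation: Corollary \ref{C:LA-lsc-area-areaformula} is purely local, and naively summing image measures over a covering of $K$ loses a dimensional constant, because the images need not be disjoint and $K$ may be, e.g., a positive-measure totally disconnected set. The $C^1$\nobreakdash-approximation is what resolves this: it upgrades $f|_K$ to a map $F$ that is biLipschitz on a whole neighbourhood of $K$, so a disjoint covering of $K$ automatically has disjoint $F$\nobreakdash-images; passing to finitely many of them and using separation in the target forces the $g$\nobreakdash-images to be disjoint too, restoring exact additivity. The only other delicate point is that the local comparison functions must live on full balls while $g$ is controlled only on $K$; this is bridged by McShane\nobreakdash-extending $g-f$ (not $g$), which is exactly where the hypothesis $g\in\tLip_L$ rather than mere continuity is used — it bounds the extension, hence the overcounted set $h_i(B_i\setminus K)$. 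Without such a bound the statement is false: a fat Cantor-type $E$ can be collapsed onto a finite set by a map arbitrarily $\ell^\infty$\nobreakdash-close to $f$.
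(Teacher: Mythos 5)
Your proof is correct, and its skeleton coincides with the paper's: localise via Corollary \ref{C:LA-lsc-area-areaformula}, cover by a Vitali family of balls centred in the set, force disjointness of the $g$-images of the pieces by exploiting injectivity of $f$ together with a target-side separation distance, bridge from $g$ (known only on the set) to a competitor on full balls by McShane-extending $g-f$, and absorb the overcounted part of the balls lying outside the set using the Lipschitz bound $L$. The genuine difference is your preprocessing: you first discard the zero-Jacobian part via the area formula and run a $C^1$-Lusin/Whitney approximation so that $F$ becomes biLipschitz on a whole neighbourhood $V$ of $K$, which makes the images $F(B_i)$ of the \emph{full} balls disjoint. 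The paper avoids this entirely: it only separates the sets $f(B_i\cap E)$, which are already pairwise disjoint compact sets because $f$ is an injection on the compact set $E$ and the balls are disjoint — no immersion structure or positivity of the Jacobian is needed. Likewise, where you bound $\sum_i\Hn(B_i\setminus K)$ by the measure of a thin neighbourhood of $K$ (requiring all radii uniformly small), the paper shrinks each $r_x$ individually at density points of $E$ so that $\Hn(h(B(x,r)\setminus E))\leq\tfrac{\varepsilon}{\Delta}r^n$ and then uses $\sum_i r_i^n\leq\Delta$. Your route buys a conceptually clean "disjoint covering has disjoint images" picture at the cost of invoking $C^1$ approximation and a local-injectivity compactness argument that the lemma does not actually require; the paper's route is more elementary and uses only the stated hypotheses. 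Your closing remark — that the bound $g\in\tLip_L$ is what controls the extension on $B_i\setminus K$ and that the statement fails for merely continuous perturbations — matches the remark the authors make immediately after the lemma.
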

\begin{proof}
    If $\Hn(f(E))=0$, the statement obviously holds, so we may assume $\Hn(f(E))>0$.
    Firstly, let $L_0$ denote the Lipschitz constant of $f$. Let $C_0=\sqrt{m}(L+2L_0)$. Find $\varepsilon>0$ such that
    \begin{equation}\label{E:lsc-comp-epsilon-choice}
        \sqrt{\eta} \Hn(f(E)) -\sqrt{\eta} \varepsilon L_0 - \varepsilon\geq \eta \Hn(f(E)).
    \end{equation}
    Using the McShane extension theorem, we find an extension of $f$, denoted again by $f$ such that $f$ is $\sqrt{m}L_0$-Lipschitz. 
    
    Let $S\subset E$ be the set of density points of $E$ and $\vol f'$. Then by the Lebesgue differentiation theorem, we have
    \begin{equation}\label{E:lsc-comp-LDT}
        \Hn(E\setminus S)=0.
    \end{equation}
    Let $x\in S$. Then by Corollary \ref{C:LA-lsc-area-areaformula}, there is some $1\geq r_x>0$ and $\delta_x>0$ such that for all $r\leq r_x$ and $h\in C(B(x,r),\reals^m)$ with $\lVert h-f\rVert_{\ell^\infty(B(x,r))}\leq \delta_x r$, it holds that
    \begin{equation}\label{E:lsc-comp-area-estimate-local}
        \Hn(h(B(x,r)))\geq \sqrt{\eta} \Hn(f(B(x,r))).
    \end{equation}
    Observe that since $B(S,1)$ is bounded, there exists some $\Delta>0$ such that for any countable sequence of disjoint balls $B_i$ with radii $r_i$ satisfying $B_i\subset B(S,1)$ for each $i$, we have
    \begin{equation}\label{E:lsc-comp-sum-estimate-delta}
        \sum_i r_i^n \leq \Delta.
    \end{equation}
    As $x$ is a density point of $E$, we have
    \begin{equation*}
        \lim_{r\to 0_+} \frac{\Hn(B(x,r)\setminus E)}{r^n}=0,
    \end{equation*}
    and so we may possibly reduce our $r_x>0$ so that we also get for $r\leq r_x$
    \begin{equation}\label{E:lsc-comp-density-estimate}
        \Hn(h(B(x,r)\setminus E))\leq \frac{\varepsilon}{\Delta}r^n\quad \text{for any $h\in \tLip_{C_0}(\reals^n,\reals^m)$.}
    \end{equation}
    Now the family of balls $\mathfrak{B}=\{B(x,r): x\in S, r\leq r_x\}$ forms a Vitali cover of $S$. Hence, using the Vitali covering theorem, and recalling \eqref{E:lsc-comp-LDT} there is a countable disjoint family of balls $B_i$ in $\mathfrak{B}$ such that
    \begin{equation*}
        \Hn(E\setminus \bigcup_{i} B_i)=0.
    \end{equation*}
    Using continuity of $\Hn$, there is some $i_0\in \nat$ such that even
    \begin{equation}\label{E:lsc-comp-measure-filling}
        \Hn(E\setminus \bigcup_{i=1}^{i_0} B_i)\leq \varepsilon.
    \end{equation}
    On denoting $B_i=B(x_i,r_i)$ and letting $\delta_i=\delta_{x_i}$, $i\in\{1,\dots, i_0\}$ and using \eqref{E:lsc-comp-area-estimate-local} and \eqref{E:lsc-comp-density-estimate}, we obtain for each $i\in\{1,\dots,i_0\}$
    \begin{equation}\label{E:lsc-comp-ball-measure-estimate}
        \Hn(h(B_i)) \geq \sqrt{\eta} \Hn(f(B_i)) \quad \text{for $h\in C(B_i, \reals^m)$ with $\lVert h-f \rVert_{\ell^\infty(B_i)}\leq \delta_i r_i$}
    \end{equation}
    and
    \begin{equation}\label{E:lsc-comp-density-estimate-2}
        \Hn(h(B_i\setminus E)) \leq \frac{\varepsilon}{\Delta} r^n_i \quad \text{for $h\in \tLip_{C_0}(\reals^n,\reals^m)$.}
    \end{equation}
    By \eqref{E:lsc-comp-density-estimate-2}, we now have
    \begin{equation}\label{E:lsc-comp-density-estimate-3}
        \begin{split}
            \Hn(h(B_i)\cap h(E))&=\Hn (h(B_i\setminus(B_i\setminus E)))
            \geq \Hn(h(B_i)\setminus h((B_i\setminus E)))\\
            &=\Hn(h(B_i))-\Hn(h(B_i\setminus E))
            \geq \Hn(h(B_i))- \frac{\varepsilon}{\Delta}r_i^n,
        \end{split}
    \end{equation}
    for $i\in \{1,\dots, i_0\}$ provided $h\in \tLip_{C_0}(\reals^n,\reals^m)$.
    
    Since $f(B_i \cap E)$ are pairwise disjoint compact sets (as $f$ is a continuous injection on the compact set $E$), there is some $\rho>0$ such that
    \begin{equation*}
        \dist(f(B_i\cap E), f(B_j\cap E))\geq \rho\quad \text{for all $i,j\in\{1,\dots i_0\}$ with $i\not=j$.}
    \end{equation*}
    observe that if $h\in C(\reals^n,\reals^m)$ satisfies $\lVert h - f \rVert_{\ell^\infty(\reals^n)}\leq \frac{\rho}{4}$, then
    \begin{equation*}
        \dist(h(B_i\cap E), h(B_j\cap E))\geq \frac{\rho}{2}\quad \text{for all $i,j\in\{1,\dots i_0\}$ with $i\not=j$.}
    \end{equation*}
    In particular, $h(B_i\cap E)$ are pairwise disjoint. Let 
    \begin{equation*}
        \delta=\min\{\tfrac{\rho}{4},\min_{i=1\dots i_0}\delta_i r_i\}.
    \end{equation*}

    Now suppose that $g\in \tLip_{C_0}(\reals^n,\reals^m)$ satisfies $\lVert g- f \rVert_{\ell^\infty(\reals^n)}\leq \delta$, then we have
    \begin{align*}
        \Hn(g(E))&\geq \Hn(g(E\cap \bigcup_{i=1}^{i_0} B_i))
        =\sum_{i=1}^{i_0} \Hn(g(B_i\cap E))
         \\
        \overset{\eqref{E:lsc-comp-density-estimate-3}}&{\geq}\sum_{i=1}^{i_0} \Hn(g(B_i)) - \frac{\varepsilon}{\Delta}\sum_{i=1}^{i_0} r_i^n
        \overset{\eqref{E:lsc-comp-ball-measure-estimate}, \eqref{E:lsc-comp-sum-estimate-delta}}{\geq} \sqrt{\eta}\sum_{i=1}^{i_0} \Hn(f(B_i)) - \varepsilon \\
        &=\sqrt{\eta} \Hn(f(\bigcup_{i=1}^{i_0}B_i))-\varepsilon
        \geq \sqrt{\eta} \Hn(f(E)) - \sqrt{\eta} \Hn(f(E\setminus\bigcup_{i=1}^{i_0}B_i))-\varepsilon\\
        \overset{\eqref{E:lsc-comp-measure-filling}}&{\geq} \sqrt{\eta} \Hn(f(E))-\sqrt{\eta} \varepsilon L_0-\varepsilon\overset{\eqref{E:lsc-comp-epsilon-choice}}{\geq} \eta \Hn(f(E)),
    \end{align*}
    where in the unlabelled equalities, we use disjoint additivity of measure and in the last unlabelled inequality, we use the inclusion
    \begin{equation*}
       f(E)\subset f(\bigcup_{i=1}^{i_0}B_i) \cup f(E\setminus \bigcup_{i=1}^{i_0}B_i).
    \end{equation*}
    
    Now let $g\in \tLip_L(E,\reals^m)$ satisfy
    \begin{equation*}
        \lVert g-f \rVert_{\ell^\infty(E)}\leq \frac{1}{\sqrt{m}} \delta.
    \end{equation*}
    Take $d=g-f_{|E}$ and, using McShane's extension theorem, find an extension thereof onto the entire $\reals^n$ such that 
    $d\in \tLip_{\sqrt{m}(L+L_0)}(\reals^n, \reals^m)$ and 
    $\lVert d \rVert_{\ell^\infty(\reals^n,\reals^m)}\leq \delta$. Then $\widetilde g =d+f$ is an extension of $g$ such that $\widetilde g \in \tLip_{C_0}(\reals^n,\reals^m)$ and 
    \begin{equation*}
        \lVert \widetilde g- f \rVert_{\ell^\infty(\reals^n)}\leq \delta.
    \end{equation*}
    Whence $\Hn(g(E))= \Hn(\widetilde g (E))\geq \eta \Hn(f(E))$ by the above calculation.
\end{proof}

Note that we only require $g\in \tLip_L(\reals^n,\reals^m)$ instead of simply $g\in\tLip(\reals^n,\reals^m)$, or even $g\in C(\reals^n,\reals^m)$, to obtain the estimate
\eqref{E:lsc-comp-density-estimate}. Therefore, in some cases, the assumption is superfluous. For example, if $E=\reals^n$ or, more generally, if $\Omega\subset \reals^n$ is open and $E=\overline{\Omega}$ or $E=\Omega$.

\begin{remark}
    Let $\Omega\subset \reals^n$ be open and bounded and let $f\colon \overline{\Omega}\to \reals^m$ be a Lipschitz injection. Then, for every $\eta\in(0,1)$ there exists $\delta>0$ such that if $g\in C(\Omega,\reals^m)$ satisfies $\lVert g - f \rVert_{\ell^\infty(\overline{\Omega})}\leq \delta$, then
    \begin{equation*}
        \Hn(g(\Omega))\geq \eta \Hn(f(\overline{\Omega})).
    \end{equation*}
\end{remark}

We follow up with a version of Lemma \ref{L:lsc-area-general-Euclidean-bilipschitz} for metric spaces which are biLipschitz images of Euclidean sets.

\begin{lemma}\label{L:lsc-area-metric-space-special}
    Let $K$ be a compact metric space for which there is a set $F\subset \reals^n$ and a biLipschitz bijection $I\colon F \to K$. Assume $f\colon K \to \reals^m$ is biLipschitz and let $L\in[0,\infty)$. Then for every $\eta\in(0,1)$, there exists $\delta>0$ such that if $g\in\tLip_L(K,\reals^m)$ satisfies $\lVert g-f \rVert_{\infty}\leq \delta$, then
    \begin{equation*}
        \Hn(g(K))\geq \eta \Hn(f(K)).
    \end{equation*}
\end{lemma}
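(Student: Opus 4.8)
The plan is to reduce everything to the Euclidean statement already proved in Lemma~\ref{L:lsc-area-general-Euclidean-bilipschitz} by pulling the maps back along the chart $I$.

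First I would record that $F$ is compact: since $I$ is a biLipschitz bijection onto $K$, the inverse $I^{-1}\colon K\to F$ is Lipschitz, hence continuous, so $F=I^{-1}(K)$ is the continuous image of the compact set $K$. Next, given a competitor $g$, I would introduce the pulled-back maps $\tilde f:=f\circ I\colon F\to\reals^m$ and $\tilde g:=g\circ I\colon F\to\reals^m$. As a composition of biLipschitz maps, $\tilde f$ is biLipschitz, in particular a Lipschitz injection on the compact set $F\subset\reals^n$, so Lemma~\ref{L:lsc-area-general-Euclidean-bilipschitz} is applicable to it. Setting $L':=L\,\tLip(I)$, every $g\in\tLip_L(K,\reals^m)$ gives $\tilde g\in\tLip_{L'}(F,\reals^m)$; moreover, because $I$ is a bijection onto $K$,
\[
\lVert \tilde g-\tilde f\rVert_{\ell^\infty(F)}
=\sup_{u\in F}\bigl|g(I(u))-f(I(u))\bigr|
=\sup_{x\in K}\bigl|g(x)-f(x)\bigr|
=\lVert g-f\rVert_{\ell^\infty(K)},
\]
and trivially $\tilde g(F)=g(K)$ and $\tilde f(F)=f(K)$.

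Now, for a fixed $\eta\in(0,1)$, I would apply Lemma~\ref{L:lsc-area-general-Euclidean-bilipschitz} to the compact set $F$, the Lipschitz injection $\tilde f$, the constant $L'$, and $\eta$, obtaining a $\delta>0$. For any $g\in\tLip_L(K,\reals^m)$ with $\lVert g-f\rVert_\infty\le\delta$, the displayed identity gives $\lVert\tilde g-\tilde f\rVert_{\ell^\infty(F)}\le\delta$ together with $\tilde g\in\tLip_{L'}(F,\reals^m)$, so the Euclidean lemma yields $\Hn(\tilde g(F))\ge\eta\,\Hn(\tilde f(F))$, which is exactly $\Hn(g(K))\ge\eta\,\Hn(f(K))$.

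There is essentially no obstacle in this argument: the only points requiring care are that $F$ is compact (needed for the Euclidean lemma to apply) and that precomposition by the bijection $I$ leaves the supremum distance unchanged while merely scaling the Lipschitz constant by $\tLip(I)$ — both of which are immediate. All the analytic content sits in Lemma~\ref{L:lsc-area-general-Euclidean-bilipschitz}.
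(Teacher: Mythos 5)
Your proof is correct and follows essentially the same route as the paper: pull $f$ and $g$ back along the chart $I$, apply Lemma \ref{L:lsc-area-general-Euclidean-bilipschitz} to the compact set $F$ with the rescaled Lipschitz constant, and transport the conclusion back via the bijectivity of $I$. Your observation that precomposition with the bijection $I$ preserves the supremum distance exactly even lets you take $\delta$ equal to the $\tilde\delta$ from the Euclidean lemma, whereas the paper conservatively divides by $\tLip(I)$; both choices work.
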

\begin{proof}
    Firstly, we observe, that for any $g\in \tLip_L(K,\reals^m)$, we have $g\circ I \in \tLip_{CL}(F,\reals^m)$, where $C$ is the Lipschitz constant of $I$. Let $\varphi=f\circ I\colon F \to \reals^m$. As $f$ and $I$ are biLipschitz, so is $\varphi$, hence Lemma \ref{L:lsc-area-general-Euclidean-bilipschitz} gives a $\tilde{\delta}>0$ such that if $\psi\in\tLip_{CL}(F,\reals^m)$ satisfies $\lVert \varphi - \psi\rVert_{\ell^\infty(F)}\leq \tilde{\delta}$, then
    \begin{equation}\label{E:lsc-area-metric-psi-to-phi}
        \Hn(\psi(F))\geq \eta \Hn(\varphi(F)).
    \end{equation}
    Let $\delta= \frac{\tilde{\delta}}{C}$. Assume $g\in\tLip_{L}(K,\reals^m)$ satisfies $\lVert f - g\rVert_{\ell^\infty(K)}\leq \delta$. Denote $\psi=g\circ I$. Then $\psi\in\tLip_{CL}(F,\reals^m)$ and $\lVert \varphi - \psi\rVert_{\ell^\infty(F)}\leq \tilde{\delta}$, so \eqref{E:lsc-area-metric-psi-to-phi} holds. From this, we have
    \begin{equation*}
        \Hn(g(K))=\Hn(\psi \circ I^{-1}(K))
        =\Hn(\psi(F))\geq \eta \Hn(\varphi(F))
        =\eta \Hn(f\circ I^{-1}(F))=\eta \Hn(f(K)).
    \end{equation*}
\end{proof}

We shall fix a complete metric space $X$ and an $n$-rectifiable subset $E$. Define a functional 
\begin{equation}\label{E:area-def}
    \mathcal{A}_E(g)=\Hn(g(E)),
\end{equation}
for any $g\colon X \to \reals^m$.
For $L\in[0,\infty)$, denote $\Lambda_L=(\tLip_L(X,\reals^m),\lVert \cdot \rVert_{\infty})$. 

\begin{theorem}\label{T:lsc-area-general}
    For any $L\in[0,\infty)$, the functional $\mathcal{A}_E$ is lower semi-continuous on $\Lambda_L$.
\end{theorem}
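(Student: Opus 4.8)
The plan is to reduce the global lower semi-continuity of $\mathcal{A}_E$ to the local/biLipschitz lemma already in hand, namely Lemma \ref{L:lsc-area-metric-space-special}. So let $f\in\Lambda_L$, let $\eta\in(0,1)$, and suppose we want to find $\delta>0$ so that $\mathcal{A}_E(g)\geq\eta\mathcal{A}_E(f)$ whenever $g\in\tLip_L(X,\reals^m)$ with $\lVert g-f\rVert_\infty\leq\delta$. If $\mathcal{A}_E(f)=\Hn(f(E))=0$ there is nothing to prove, so assume it is positive (possibly infinite, in which case replace the target $\eta\mathcal{A}_E(f)$ by an arbitrarily large finite value).

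First I would decompose $E$ using Kirchheim's decomposition (Lemma \ref{L:Kirch}), or rather Lemma \ref{L:prel-decomp-nonzero-Jacobian} applied to $f|_E$: write (up to an $\Hn$-null set) $\{x\in E: J_E f(x)>0\}=\bigcup_i E_i$ into countably many pairwise disjoint compact sets on each of which $f$ is injective, with each $E_i$ biLipschitz to a compact subset $F_i\subset\reals^n$ via some chart $I_i$ (shrinking the $E_i$ from Lemma \ref{L:Kirch} if necessary so they are compact, using inner regularity of $\Hn$). By the area formula (Theorem \ref{E:Prel-area-formula-metric-eucl}) the part of $E$ where $J_Ef=0$ contributes nothing to $\Hn(f(E))$ beyond what the $E_i$ already carry — more precisely $\Hn(f(E))=\sum_i\Hn(f(E_i))$, so given $\varepsilon>0$ I can fix a finite $i_0$ with $\sum_{i=1}^{i_0}\Hn(f(E_i))\geq\Hn(f(E))-\varepsilon$ (or $\geq 1/\varepsilon$ in the infinite case). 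The key point is that $f$ restricted to the compact set $E_i$ is a biLipschitz injection onto its image (biLipschitz because $J_Ef>0$ there and $E_i$ compact — one may need to shrink $E_i$ slightly so that $f|_{E_i}$ is genuinely biLipschitz, not merely injective with positive Jacobian).

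Next, since the finitely many compact sets $f(E_1),\dots,f(E_{i_0})$ are pairwise disjoint, there is $\rho>0$ with $\dist(f(E_i),f(E_j))\geq\rho$ for $i\neq j$; hence if $\lVert g-f\rVert_\infty<\rho/4$ the images $g(E_i)$ are still pairwise disjoint. For each $i\in\{1,\dots,i_0\}$ apply Lemma \ref{L:lsc-area-metric-space-special} to the compact metric space $E_i$ (biLipschitz to $F_i\subset\reals^n$) and the biLipschitz map $f|_{E_i}$, with the same $\eta$ and the Lipschitz bound $L$: this yields $\delta_i>0$ such that $\Hn(g(E_i))\geq\eta\,\Hn(f(E_i))$ whenever $g\in\tLip_L(E_i,\reals^m)$ with $\lVert g-f\rVert_{\ell^\infty(E_i)}\leq\delta_i$. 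Setting $\delta=\min\{\rho/4,\delta_1,\dots,\delta_{i_0}\}$ and summing over the disjoint images gives, for any $g\in\tLip_L(X,\reals^m)$ with $\lVert g-f\rVert_\infty\leq\delta$,
\begin{equation*}
    \mathcal{A}_E(g)=\Hn(g(E))\geq\sum_{i=1}^{i_0}\Hn(g(E_i))\geq\eta\sum_{i=1}^{i_0}\Hn(f(E_i))\geq\eta\bigl(\Hn(f(E))-\varepsilon\bigr).
\end{equation*}
Letting $\varepsilon\to0$ (and recalling $\eta<1$ was arbitrary) yields $\liminf_{g\to f}\mathcal{A}_E(g)\geq\mathcal{A}_E(f)$, i.e.\ lower semi-continuity on $\Lambda_L$.

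The main obstacle I anticipate is the bookkeeping at the chart level: Lemma \ref{L:lsc-area-metric-space-special} is stated for $g\in\tLip_L(K,\reals^m)$ with $K$ compact, so I must make sure the $E_i$ are genuinely compact (shrink via inner regularity) and that $f|_{E_i}$ is biLipschitz rather than merely injective — this is where positivity of $J_Ef$ on $E_i$ is used, together with a further decomposition if needed so that the lower biLipschitz bound is uniform on each piece (this is exactly the content of the Euclidean lemma \cite[3.2.2 Lemma]{Federer} invoked in Lemma \ref{L:prel-decomp-nonzero-Jacobian}). Everything else — the disjointness/$\rho$ argument and the finite truncation — is routine and mirrors the end of the proof of Lemma \ref{L:lsc-area-general-Euclidean-bilipschitz}.
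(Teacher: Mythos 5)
There is a genuine gap at the heart of your argument: you assert that the images $f(E_1),\dots,f(E_{i_0})$ are pairwise disjoint, but nothing in your construction guarantees this. Lemma \ref{L:prel-decomp-nonzero-Jacobian} produces sets $E_i$ that are pairwise disjoint \emph{in the domain}, with $f$ injective \emph{on each $E_i$ separately}; it says nothing about injectivity across different pieces, and indeed $\sum_i\Hn(f(E_i))=\int_E J_Ef\,\diff\Hn$ can exceed $\Hn(f(E))$ by an arbitrary factor precisely because the images overlap (your intermediate claim ``$\Hn(f(E))=\sum_i\Hn(f(E_i))$'' should already be only ``$\leq$''). Without disjointness of the images there is no separation constant $\rho$, and, more fatally, the first inequality in your final chain, $\Hn(g(E))\geq\sum_{i=1}^{i_0}\Hn(g(E_i))$, is simply false: for a map $f$ that folds $E$ onto its image with multiplicity $2$, taking $g=f$ would give $\Hn(f(E))\geq 2\Hn(f(E))$. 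So the argument breaks exactly where the non-injectivity of $f$ has to be confronted.

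The paper's proof resolves this by disjointifying \emph{in the image} rather than in the domain: it sets $M_1=f(E_1)$ and $M_i=f(E_i)\setminus(f(E_1)\cup\dots\cup f(E_{i-1}))$, so that the $M_i$ are pairwise disjoint and still exhaust $f(E)$ up to a null set, then pulls back to $\widetilde K_i=f^{-1}(M_i)$ and extracts compact sets $K_i\subset\widetilde K_i$ with $\Hn(\bigcup_{i\le i_0}f(K_i))>\Hn(f(E))-\varepsilon$. The $f(K_i)$ are then genuinely disjoint compact sets, the $\rho$-separation and the summation $\Hn(g(E))\geq\sum_i\Hn(g(K_i))$ go through, and Lemma \ref{L:lsc-area-metric-space-special} is applied to the $K_i$. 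Note the price, which the paper points out explicitly: if $f$ is highly non-injective the $K_i$ cover very little of $E$ — which is exactly why any attempt to run the argument with a domain-side decomposition covering most of $E$, as in your proposal, cannot work. The rest of your outline (finite truncation, separation of disjoint compact images, applying the biLipschitz lemma piecewise) does mirror the paper once this correction is made.
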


\begin{proof}
    Firstly, we shall assume $\Hn(E)<\infty$.
    We show that for $C>0$, the set
    \begin{equation}\label{E:lsc-area-set}
        \{f\in \Lambda_L: \Hn(f(E))>C\}
    \end{equation}
    is open. To that end, let $f$ be from the set.
    
    We split the proof into two parts. Firstly, we carefully decompose the space $f(E)$.
    Using Lemma \ref{L:Kirch}, we find a countable number of Borel sets $F_i\subset \reals^n$ and $E_i\subset E$ such that $\Hn(E\setminus \bigcup_{i}E_i)=0$ and each $F_i$ is biLipschitz to $E_i$.
    Let $M_1=f(E_1)$ and for $i>1$, let
    \begin{equation*}
        M_i=f(E_i)\setminus (f(E_1)\cup\dots\cup f(E_{i-1})).
    \end{equation*}
    Then we have
    \begin{equation*}
        \Hn (f(E)\setminus \bigcup_i M_i)=\Hn( E\setminus \bigcup_{i}E_i)=0.
    \end{equation*}
    Let now $\varepsilon>0$ and $\eta\in (0,1)$ be such that
    \begin{equation}\label{E:lsc-area-choice}
        \eta \mathcal{H}^n(f(E))-\eta\varepsilon>C.
    \end{equation}
    By continuity of measure, we find $i_0\in\nat$ such that
    \begin{equation}\label{E:lsc-area-estimate-in-image}
        \Hn(f(E)\setminus \bigcup_{i=1}^{i_0} M_i)< \varepsilon.
    \end{equation}
    
    % By inner regularity of $\Hn$, there are compact sets $M_i\subset \widetilde{M}_i$, $i\in \{1,\dots i_0\}$ such that
    % \begin{equation}\label{E:lsc-area-meas-epsilon-estimate}
    %     \Hn(f(E)\setminus \bigcup_{i=1}^{i_0} M_i)< \varepsilon.
    % \end{equation}
    
    Let $\widetilde{K}_i= f^{-1}(M_i)$. As the $M_i$ are disjoint, so are the $\widetilde{K}_i$. Since $f$ is Lipschitz and $\Hn$ is inner regular on each $\widetilde{K}_i$, the estimate \eqref{E:lsc-area-estimate-in-image} allows us to find, for each $i\in\{1,\dots, i_0\}$ a compact subset $K_i\subset \widetilde{K}_i$, such that
    \begin{equation}\label{E:lsc-area-meas-epsilon-estimate2}
        \Hn(\bigcup_{i=1}^{i_0} f(K_i))> \Hn(f(E)) - \varepsilon.
    \end{equation}
    % As a side note, it is good to realize that we do not require $K_i$'s to cover much of $E$, in fact if $f$ has ``many overlaps'' than $K_i$'s necessarily cover very little of $E$ as $f(K_i)$'s are disjoint.
    Note that we do not require the $K_i$ to cover much of $E$; Indeed, if $f$ is highly non-injective then the $K_i$ necessarily cover very little of $E$ as the $f(K_i)$ are disjoint.
    
    In the second part, we use the decomposition above and solve the problem on each $K_i$ separately via Lemma \ref{L:lsc-area-metric-space-special}.
    To this end, note that since the $f(K_i)$'s are disjoint compact sets, there is a $\rho>0$ such that for every $i,j\in\{1,\dots, i_0\}$, with $i\not=j$ we have
    \begin{equation*}
        \dist(f(K_i),f(K_j))>\rho.
    \end{equation*}
    Observe that if $g\in\Lambda_L$ with $\lVert g-f \rVert_{\infty}\leq \frac{\rho}{4}$ then
    \begin{equation*}
        \dist(g(K_i),g(K_j))>\frac{\rho}{2},
    \end{equation*}
    in particular the sets $g(K_i)$ are disjoint. By Lemma \ref{L:lsc-area-metric-space-special}, for each $i\in\{1,\dots, i_0\}$, there is a $\delta_i>0$ such that if $g\in \Lambda_L$ satisfies $\lVert g-f \rVert_{\ell^\infty}\leq \delta_i$, then
    \begin{equation}\label{E:lsc-area-eta-estimate}
        \Hn(g(K_i))\geq \eta \Hn(f(K_i)).
    \end{equation}
    Let now $\delta=\min\{\frac{\rho}{4}, \min_{i=1\dots i_0}\{\delta_i\}\}$ and suppose $g\in \Lambda_L$ satisfies $\lVert g-f \rVert_{\ell^\infty}\leq \delta$. Then by disjointness of $g(K_i)$'s, we get
    \begin{equation*}
        \begin{split}
            \Hn(g(E))&\geq \Hn(g(\bigcup_{i=1}^{i_0} K_i))=\sum_{i=1}^{i_0} \Hn(g(K_i))
            \overset{\eqref{E:lsc-area-eta-estimate}}{\geq} \eta \sum_{i=1}^{i_0} \Hn(f(K_i))\\
            &\geq \eta \Hn(\bigcup_{i=1}^{i_0} f( K_i))
            \overset{\eqref{E:lsc-area-meas-epsilon-estimate2}}{>} \eta \Hn(f(E))- \eta \varepsilon
            \overset{\eqref{E:lsc-area-choice}}{>} C.
        \end{split}
    \end{equation*}
    
    We have shown that to each $f$ in the set in \eqref{E:lsc-area-set} there is a $\delta$ such that the $\delta$-ball around $f$ in $\Lambda_L$ is in the set. Hence the set is open and we are done.
    
    The general case, i.e.~$\Hn(E)=\infty$, can be reduced to the finite case in the following way. Let $f\in \Lambda_L(E)$ be such that
    \begin{equation*}
        \Hn(f(E))>C.
    \end{equation*}
    Then, as $E$ is $n$-rectifiable, its $\Hn$-measure is $\sigma$-finite, whence there is some $\Hn$-measurable set $K\subset E$ with $\Hn(K)<\infty$ and such that
    \begin{equation*}
        \Hn(f(K))>C.
    \end{equation*}
    Now we simply use lower semi-continuity of $\mathcal{A}_K$ on $(\tLip_{L}(K,\reals^m), \lVert \cdot \rVert_{\infty})$.
\end{proof}

\begin{theorem}\label{T:lsc-area-formula-general}
    The functional
    \begin{equation*}
        f\mapsto \int_E J_E f \diff\Hn
    \end{equation*}
    is lower semi-continuous on $\Lambda_L$ for every $L\in[0,\infty)$.
\end{theorem}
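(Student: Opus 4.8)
The plan is to deduce this from the already-established lower semi-continuity of the pure area functional $\mathcal{A}_E$ (Theorem~\ref{T:lsc-area-general}), using the decomposition of Lemma~\ref{L:prel-decomp-nonzero-Jacobian} to compare $\int_E J_E f \diff \Hn$ with sums of areas $\sum_i \Hn(f(E_i))$ over pieces of $E$. Concretely, I would fix $L\in[0,\infty)$ and $C>0$ and show that the set $\{f\in\Lambda_L:\int_E J_E f \diff \Hn>C\}$ is open; since $C$ is arbitrary, this yields the claim. As in the proof of Theorem~\ref{T:lsc-area-general}, no separate treatment of the case $\Hn(E)=\infty$ is needed, because the decomposition lemma applies verbatim and produces finite-measure pieces.

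So let $f$ be in this set. By Lemma~\ref{L:prel-decomp-nonzero-Jacobian} there are pairwise disjoint compact sets $E_i\subset\{x\in E:J_E f(x)>0\}$ on each of which $f$ is injective and with $\sum_i\Hn(f(E_i))=\int_E J_E f \diff \Hn>C$; choose $i_0$ with $\sum_{i=1}^{i_0}\Hn(f(E_i))>C$. Each $E_i$ is a compact (hence, being a $(1+\theta)$-biLipschitz image of a compact subset of $\reals^n$, finite-measure) $\Hn$-measurable subset of the $n$-rectifiable set $E$, so is itself $n$-rectifiable; therefore Theorem~\ref{T:lsc-area-general} applies with $E_i$ in place of $E$ and shows that $g\mapsto\Hn(g(E_i))=\mathcal{A}_{E_i}(g)$ is lower semi-continuous on $\Lambda_L$. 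A finite sum of lower semi-continuous functions is lower semi-continuous, so there is $\delta>0$ such that $\sum_{i=1}^{i_0}\Hn(g(E_i))>C$ whenever $g\in\Lambda_L$ with $\lVert g-f\rVert_{\infty}<\delta$.

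For such a $g$ I would then bound the target functional from below: since the $E_i$ are pairwise disjoint and $J_E g\geq0$,
\begin{align*}
\int_E J_E g \diff \Hn
&\geq\sum_{i=1}^{i_0}\int_{E_i}J_E g \diff \Hn
=\sum_{i=1}^{i_0}\int_{E_i}J_{E_i}g \diff \Hn\\
&=\sum_{i=1}^{i_0}\int_{g(E_i)}\#(g|_{E_i})^{-1}(u)\diff\Hn(u)
\geq\sum_{i=1}^{i_0}\Hn(g(E_i))>C,
\end{align*}
where the first equality uses that $J_E g=J_{E_i}g$ holds $\Hn$-a.e.\ on $E_i$ (this follows directly from the definition of the metric Jacobian, since any biLipschitz chart witnessing a tangent norm of $E_i$ at one of its density points is simultaneously an admissible chart for computing $J_E g$ there; cf.\ Lemma~\ref{L:prel-ideal}), the second equality is the metric area formula (Theorem~\ref{E:Prel-area-formula-metric-eucl}) applied to $g|_{E_i}\colon E_i\to\reals^m$, and the last inequality uses $\#(g|_{E_i})^{-1}(u)\geq1$ on $g(E_i)$. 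Hence the $\delta$-ball around $f$ lies in the set, which is therefore open.

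I do not expect a genuine obstacle, since all the hard analysis is already contained in Theorem~\ref{T:lsc-area-general}; the one point needing care is the localisation $J_E g=J_{E_i}g$ $\Hn$-a.e.\ on $E_i$ (equivalently, that the Jacobian is unchanged by restriction to an $\Hn$-measurable subset), handled by Lemma~\ref{L:prel-ideal} or the chart remark above. Equivalently, and more conceptually, one may simply record the identity
\[\int_E J_E f \diff \Hn=\sup\Big\{\textstyle\sum_{i=1}^{i_0}\Hn(f(E_i)):i_0\in\nat,\ E_1,\dots,E_{i_0}\subset E\text{ pairwise disjoint compact}\Big\},\]
the inequality ``$\geq$'' being exactly the displayed computation above (taken with $g=f$; no injectivity is needed for it, only $\#(f|_{E_i})^{-1}\geq1$) and equality being attained along the decomposition of Lemma~\ref{L:prel-decomp-nonzero-Jacobian}. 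The target functional is then a pointwise supremum of the lower semi-continuous functionals $f\mapsto\sum_{i=1}^{i_0}\mathcal{A}_{E_i}(f)$, and a pointwise supremum of lower semi-continuous functions is lower semi-continuous.
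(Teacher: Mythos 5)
Your proof is correct and follows essentially the same route as the paper's: decompose $E$ via Lemma \ref{L:prel-decomp-nonzero-Jacobian}, apply Theorem \ref{T:lsc-area-general} to finitely many pieces $E_i$, and conclude with Lemma \ref{L:prel-ideal} and the area formula \eqref{E:area-formula}. The closing reformulation of $\int_E J_E f \diff\Hn$ as a pointwise supremum of lower semi-continuous functionals is just a tidy repackaging of that same argument.
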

\begin{proof}
    Suppose $C\in [0,\infty)$ and $f\in \Lambda_L$ is such that
    \begin{equation*}
        \int_E J_E f \diff \Hn >C.
    \end{equation*}
    Find the sets $E_i$ from Lemma \ref{L:prel-decomp-nonzero-Jacobian} so that by \eqref{E:prel-partial-area-formula} we have
    \begin{equation*}
        \sum_{i} \Hn(f(E_i))= \int_E J_E f \diff \Hn >C.
    \end{equation*}
    Now there is some $i_0\in\nat$ such that
    \begin{equation*}
        \sum_{i=1}^{i_0} \Hn(f(E_i))> C.
    \end{equation*}
    By Theorem \ref{T:lsc-area-general}, to each $i=1,\dots, i_0$, there is some $\delta_i>0$ so that if $g\in \Lambda_L$ satisfies
    \begin{equation*}
        \lVert f-g \rVert_{\ell^{\infty}(E_i)}\leq \delta_i \quad \text{for all $i\in\{1,\dots, i_0\}$},
    \end{equation*}
    then
    \begin{equation*}
        \sum_{i=1}^{i_0}\Hn(g(E_i))>C.
    \end{equation*}
    Let $\delta= \min_{i} \delta_i$. Then if $g\in \Lambda_L$ is such that $\lVert f-g \rVert_{\infty} \leq \delta$, we have by Lemma \ref{L:prel-ideal} together with the fact that $E_i$ are disjoint and the area formula \eqref{E:area-formula}
    \begin{equation*}
        \int_E J_E g \diff \Hn \geq \sum_{i=1}^{i_0} \int_{E_i} J_{E_i} g \diff \Hn
        \geq \sum_{i=1}^{i_0} \Hn(g(E_i)) >C.
    \end{equation*}
\end{proof}

%VISUAL LINE

\section{General density statements}\label{S:density-general}

The purpose of this section is introduce some of the density results, in the spaces $\tLip_L^{\textnormal{str}}(X,\reals^m)$ and $\tLip_L(X,\reals^m)$ for a general complete metric space $X$.

The adopted approach is the natural one arising from use of Lemma \ref{L:MSR-open-decomposition}. We construct a Lipschitz function on some pieces of the $n$-rectifiable space $E$ and then extend it onto $X$. However, this is highly non-trivial, as usually it is necessary to do this with no increase (or, in some sense, arbitrarily small increase) in the Lipschitz constant.

This is the purpose of the following lemma. The proof follows, almost to the word, the proof of \cite[Lemma 4.6]{B}. 

\begin{lemma}[Lipschitz extension lemma]\label{L:Lipschitz-extension-lemma}
    Let $X$ be a metric space and $(Y,\lVert \cdot \rVert)$ a normed linear space. Further, let $L\in [0,\infty)$, $N\in \nat$ and let $S_i\subset X$, $i=1,\dots,N$. Let $\delta>0$ and assume $\rho_i\in (0,1]$, $i=1,\dots, N$ are such that the sets $B(S_i,\rho_i)$ are disjoint. Assume that $f\colon X \to Y$ is an $L$-Lipschitz function and let $g_i\colon B(S_i,\rho_i)\to Y$ be $L$-Lipschitz functions such that
    \begin{equation*}
        \lVert g_i-f \rVert_{\ell^\infty(B(S_i,\rho_i))}\leq \delta \rho_i
    \end{equation*}
    for each $i=1,\dots, N$. Then, there exists $g\colon X \to Y$ an $(L+4\delta)$-Lipschitz map such that $g=g_i$ on each $S_i$,
    \begin{equation*}
        \lVert g - f \rVert_{\ell^\infty(X)}<\delta
    \end{equation*}
    and $g=f$ outside $\bigcup_i B(S_i,\rho_i)$.
\end{lemma}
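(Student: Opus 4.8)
The plan is to glue each $g_i$ to $f$ by a Lipschitz cutoff supported in $B(S_i,\rho_i)$. For $i=1,\dots,N$ I would set
\[
\psi_i(x)=\max\Bigl\{0,\,1-\tfrac{\dist(x,S_i)}{\rho_i}\Bigr\}\qquad(x\in X),
\]
so that $\psi_i\equiv 1$ on $S_i$, $\psi_i\equiv 0$ on $X\setminus B(S_i,\rho_i)$, and $\tLip(\psi_i)\le 1/\rho_i$ since $x\mapsto\dist(x,S_i)$ is $1$-Lipschitz. Because the balls $B(S_i,\rho_i)$ are pairwise disjoint, the $\psi_i$ have disjoint supports, so I can define $g\colon X\to Y$ by
\[
g(x)=f(x)+\sum_{i=1}^{N}\psi_i(x)\bigl(g_i(x)-f(x)\bigr),
\]
which is well defined (at each $x$ at most one summand is nonzero, and where $\psi_i(x)\ne 0$ the value $g_i(x)$ is defined since then $x\in B(S_i,\rho_i)$). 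Off $\bigcup_i B(S_i,\rho_i)$ the sum is empty, so $g=f$ there; on $S_i$ one has $\psi_i(x)=1$ and $\psi_j(x)=0$ for $j\ne i$, so $g=g_i$ on $S_i$; and since $\psi_i\le 1$ and $\rho_i\le 1$, at every $x$ (with the sum read as $0$ if $x$ lies in no ball)
\[
\lVert g(x)-f(x)\rVert\le\psi_i(x)\,\lVert g_i(x)-f(x)\rVert\le\delta\rho_i\le\delta,
\]
which gives the required $\ell^\infty$ estimate.

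The only substantive step is bounding $\tLip(g)$. I would fix $x,y\in X$ with $d(x,y)>0$ and split into three cases. If neither point lies in $\bigcup_i B(S_i,\rho_i)$, then $\lVert g(x)-g(y)\rVert=\lVert f(x)-f(y)\rVert\le L\,d(x,y)$. If $x,y\in B(S_i,\rho_i)$ for the same $i$, write $g=(1-\psi_i)f+\psi_i g_i$ there and use the identity
\[
g(x)-g(y)=(1-\psi_i(x))\bigl(f(x)-f(y)\bigr)+\psi_i(x)\bigl(g_i(x)-g_i(y)\bigr)+\bigl(\psi_i(x)-\psi_i(y)\bigr)\bigl(g_i(y)-f(y)\bigr).
\]
The first two terms contribute at most $(1-\psi_i(x))L\,d(x,y)+\psi_i(x)L\,d(x,y)=L\,d(x,y)$, a genuine convex combination of increments of the $L$-Lipschitz maps $f$ and $g_i$; the third term contributes at most $|\psi_i(x)-\psi_i(y)|\,\delta\rho_i\le(d(x,y)/\rho_i)\,\delta\rho_i=\delta\,d(x,y)$. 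Hence $\lVert g(x)-g(y)\rVert\le(L+\delta)d(x,y)$.

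In the last case, $x\in B(S_i,\rho_i)$ and $y\notin B(S_i,\rho_i)$ (so $y$ lies in some other ball $B(S_j,\rho_j)$ or in none), the point is that disjointness of the balls forces $\psi_i(y)=0$ and, when relevant, $\psi_j(x)=0$. Then
\[
\lVert g(x)-g(y)\rVert\le\lVert f(x)-f(y)\rVert+\psi_i(x)\,\lVert g_i(x)-f(x)\rVert+\psi_j(y)\,\lVert g_j(y)-f(y)\rVert,
\]
and since $\psi_i(x)=|\psi_i(x)-\psi_i(y)|\le d(x,y)/\rho_i$ (symmetrically $\psi_j(y)\le d(x,y)/\rho_j$), the last two terms are each $\le\delta\,d(x,y)$, giving $\lVert g(x)-g(y)\rVert\le(L+2\delta)d(x,y)$. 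In all cases $\tLip(g)\le L+2\delta\le L+4\delta$, and $g=f$ outside $\bigcup_iB(S_i,\rho_i)$, so $g$ has all the required properties.

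I expect the main obstacle to be exactly the interaction between the cutoff and the difference $g_i-f$, which is only $2L$-Lipschitz: multiplying it by $\psi_i$ naively would inflate the Lipschitz constant by $O(L)$ rather than $O(\delta)$. This is circumvented by the algebraic rearrangement in the same-ball case — so that the "$L$-part" is a genuine convex combination of $L$-Lipschitz increments while the cutoff acts only on the small quantity $g_i-f$, whose size $\delta\rho_i$ balances the $1/\rho_i$ variation of $\psi_i$ — together with the disjointness of the $B(S_i,\rho_i)$, which makes the cross terms in the last case vanish except for the already small pieces. (The constant $4$ in the statement is not sharp for this construction, which yields $L+2\delta$; only that the excess over $L$ be $O(\delta)$ is used later.)
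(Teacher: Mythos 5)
Your proof is correct and follows essentially the same cutoff-gluing construction as the paper, namely $g=f+\sum_i\psi_i(g_i-f)$ with the key cancellation between the $\ell^\infty$-bound $\delta\rho_i$ on $g_i-f$ and the $1/\rho_i$ Lipschitz constant of the cutoff. The only differences are cosmetic: the paper supports its cutoff in $B(S_i,\rho_i/2)$ (whence its constant $L+4\delta$, versus your $L+2\delta$) and cites \cite[Lemma 4.6]{B} for the same-ball case, which you instead handle explicitly via the convex-combination identity.
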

\begin{proof}
    On each $B(S_i,\rho_i)$ we may write
    \begin{equation*}
        g_i=f+E_i,
    \end{equation*}
    where $\lVert E_i \rVert_{\ell^\infty}<\delta \rho_i$. Define $\chi_i\colon X \to \reals$ by
    \begin{equation*}
        \chi_i(x)=\frac{\max\{\frac{1}{2}\rho_i-\dist(x,S_i), 0\}}{\frac{1}{2}\rho_i}.
    \end{equation*}
    Then $\chi_i$'s have disjoint supports contained in $B(S_i,\frac{1}{2}\rho_i)$. Hence, it is valid to define $g\colon X \to Y$ by
    \begin{equation*}
        g=f+\sum_{i=1}^N \chi_i E_i.
    \end{equation*}
    From this definition and the fact that each $\rho_i\leq 1$, all of the stated properties of $g$, except for the Lipschitz constant, immediately follow. 
    
    It remains to show that for any $x,y\in X$, we have
    \begin{equation}\label{E:L-ext-lemma-lipschitz}
        \lVert g(x)-g(y)\rVert \leq L d(x,y)+2\delta d(x,y).
    \end{equation}
    If there is an $i=1,\dots, N$ such that $\chi_i(x),\chi_i(y)>0$, then one can show \eqref{E:L-ext-lemma-lipschitz} mutatis mutandis as in \cite[Lemma 4.6]{B}. If there are $i\not= j$ such that $\chi_i(x)>0$ and $\chi_j(y)>0$, then one can show
    \begin{equation}\label{E:L-ext-lemma-estimate}
        \tfrac{1}{2}\rho_i - \dist(x,S_i)\leq d(x,y)\quad \text{and} \quad 
        \tfrac{1}{2}\rho_j - \dist(y,S_j)\leq d(x,y).
    \end{equation}
    From these inequalities, we obtain
    \begin{equation*}
        \begin{split}
            \lVert g(y)-g(x)\rVert&
            =\lVert f(y)-f(x) + \chi_i(x)E_i(x)-\chi_j(y)E_j(y)\rVert\\
            &\leq L d(x,y)+ |\chi_i (x)| \lVert E_i(x)\rVert + |\chi_j(y)| \lVert E_j(y) \rVert\\
            &\leq L d(x,y) + \delta \rho_i \frac{\frac{1}{2}\rho_i - \dist(x,S_i)}{\frac{1}{2}\rho_i}+\delta \rho_j \frac{\frac{1}{2}\rho_j - \dist(x,S_j)}{\frac{1}{2}\rho_j}\\
            &\leq L d(x,y)+ 4\delta d(x,y).
        \end{split}
    \end{equation*}
    What remains to show is the case when $\chi_i(x)>0$ holds but $\chi_j(y)=0$ for all $j=1,\dots, N$ (which implies the first half of \eqref{E:L-ext-lemma-estimate}). In this case we similarly have
    \begin{equation*}
        \begin{split}
            \lVert g(y)-g(x)\rVert&
            =\lVert f(y)-f(x) + \chi_i(x)E_i(x)\rVert\\
            &\leq L d(x,y)+ |\chi_i (x)| \lVert E_i(x)\rVert \\
            &\leq L d(x,y) + \delta \rho_i \frac{\frac{1}{2}\rho_i - \dist(x,S_i))}{\frac{1}{2}\rho_i}\\
            &\leq L d(x,y)+ 2\delta d(x,y).
        \end{split}
    \end{equation*}
\end{proof}

The following simple observation is the main idea behind how to avoid losing measure (of $f(E)$) by overlapping.

\begin{lemma}\label{L:intersection-lemma-general}
    Let $m,n\in \nat$ and $S\subset \reals^m$ be an $\Hn$-measurable set with $\Hn(S)<\infty$. Let $M$ be any set of $n$-dimensional affine subspaces of $\reals^m$. Then the set
    \begin{equation*}
        \{T\in M: \Hn(T\cap S)>0\}
    \end{equation*}
    is countable.
\end{lemma}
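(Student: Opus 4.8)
The plan is to argue by contradiction: if the set $\{T\in M: \Hn(T\cap S)>0\}$ were uncountable, we would produce infinitely many disjoint subsets of $S$ each of positive $\Hn$-measure bounded below by a fixed constant, contradicting $\Hn(S)<\infty$. First I would partition the putative uncountable family according to the size of the intersection: for $k\in\nat$ set
\begin{equation*}
    M_k=\{T\in M: \Hn(T\cap S)> \tfrac{1}{k}\}.
\end{equation*}
Since $\{T\in M:\Hn(T\cap S)>0\}=\bigcup_{k\in\nat} M_k$, if the left-hand side is uncountable then some $M_k$ is uncountable (in fact infinite suffices). Fix such a $k$ and pick countably infinitely many distinct subspaces $T_1,T_2,\dots\in M_k$.

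The key geometric point is that two distinct $n$-dimensional affine subspaces $T\neq T'$ of $\reals^m$ intersect in an affine subspace of dimension at most $n-1$, hence $\Hn(T\cap T')=0$. Therefore the sets $T_j\cap S$ are "almost disjoint": $\Hn(T_i\cap T_j\cap S)=0$ for $i\neq j$. From the first $N$ of them I would form genuinely disjoint sets $A_j = (T_j\cap S)\setminus\bigcup_{i<j}(T_i\cap S)$ for $j=1,\dots,N$; by the almost-disjointness just noted, $\Hn(A_j)=\Hn(T_j\cap S)>\tfrac1k$. Since the $A_j$ are disjoint and contained in $S$,
\begin{equation*}
    \Hn(S)\geq \sum_{j=1}^N \Hn(A_j) > \frac{N}{k}.
\end{equation*}
Letting $N\to\infty$ forces $\Hn(S)=\infty$, contradicting the hypothesis. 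Hence each $M_k$ is (at most) countable and so is their union, which is the claimed set.

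I do not expect a genuine obstacle here; the only point requiring a line of care is the elementary linear-algebra fact that distinct affine $n$-planes meet in dimension $\leq n-1$ (so their intersection is $\Hn$-null), together with the observation that $\Hn$ restricted to $T_j$ agrees with the $n$-dimensional Lebesgue-type measure on that affine plane, so that "$\Hn(T\cap T')=0$" is literally the statement that a lower-dimensional affine subspace is null. Everything else is countable additivity and the pigeonhole decomposition into the levels $M_k$.
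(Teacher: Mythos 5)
Your proof is correct and follows essentially the same route as the paper's: extract infinitely many planes whose intersections with $S$ have measure bounded below by a fixed $\varepsilon=1/k$, note that distinct affine $n$-planes meet in an $\Hn$-null set, and contradict $\Hn(S)<\infty$ by (almost-)disjoint additivity. Your explicit disjointification via the sets $A_j$ is just a slightly more spelled-out version of the paper's direct appeal to countable additivity over the almost-disjoint family.
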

\begin{proof}
     Suppose the statement fails. Then there exists some $\varepsilon>0$ such that the set
     \begin{equation*}
         \{T\in M: \Hn(T\cap S)>\varepsilon\}
     \end{equation*}
     is infinite (even uncountable). Therefore, we may find a countable family $T_i$, $i\in \nat$ of distinct elements of the aforementioned infinite set. 
     As $\Hn(T_i \cap T_j)=0$ whenever $j\not=i$ and
     as $S$ is $\Hn$-measurable and so is every $T_i$, we have
     \begin{equation*}
         \begin{split}
             \Hn(S)\geq \Hn(S\cap \bigcup_{i\in\nat} T_i)= \sum_{i\in \nat} \Hn(S\cap T_i)\geq \sum_{i\in\nat} \varepsilon = \infty,
         \end{split}
     \end{equation*}
     a contradiction.
\end{proof}

The idea of the following sequence of statements is to show that a Lipschitz function may be approximated in the strong distance $\lVert \cdot \rVert_{\infty}+\tLip(\cdot)$ by Lipschitz functions $g$ which simultaneously have very little overlap and ``almost'' satisfy $g_\#\Hn_{|E}\ll \Hn$. This is firstly done assuming $E$ is a normed set.

Within the proof of these results we will repeatedly make use of the following fact.
If $f\colon X \to Y$ is a $L$-Lipschitz function on a \emph{compact} metric space then the functions $\lambda f$, for $0<\lambda <1$, approximate $f$ in the strong distance and have Lipschitz constant strictly less than $L$.
This gives us a small amount of space in which to construct a modification of $\lambda f$ without leaving the space $\tLip_L^{\textnormal{str}}(X,Y)$ and whilst also approximating $f$.

\begin{lemma}\label{L:density-construction-vol-and-overlap-normed}
    Let $E\subset \reals^n$ be a compact set and let $S\subset \reals^m$ be $\Hn$-measurable with $\Hn(S)<\infty$.
    Let $|\cdot|_a$ and $|\cdot|_b$ be norms on $\reals^n$ and $\reals^m$ respectively.
    Let $C>0$, $L\in[0,\infty)$ and let $f\colon E_a\to \reals_b^m$ be an $L$-Lipschitz function. Then for every $\varepsilon>0$, there is a function $g\colon E\to \reals^m$ and $\Hn$-measurable sets $F\subset E$, $N\subset \reals^m$ such that
    \begin{enumerate}
        \item\label{Enum:NL1} $g$ is Lipschitz as a map $g\colon E_a \to \reals^m_b$, with Lipschitz constant $\tLip (g)< L$,
        \item\label{Enum:NL2} $\lVert g-f \rVert_{\infty}< \varepsilon$,
        \item\label{Enum:NL3} $\tLip_{E_a \to \reals^m_b}(g-f)<\varepsilon$,
        \item\label{Enum:NL4} $\Hn_a(E\setminus F)<\frac{1}{L}C$,
        \item\label{Enum:NL5} $\Hn(N)=0$ and $\{u\in\reals^m: \#g^{-1}(u)>1\}\subset g(E\setminus F) \cup N$,
        \item\label{Enum:NL6} the set $F$ admits a decomposition $F=\bigcup_{i=1}^{i_0}F_i $, for some $i_0\in\nat$, such that for each $i=1,\dots,i_0$, $F_i$ is $\Hn$-measurable and $g_{|F_i}$ is a restriction of an affine map with positive volume,
        \item\label{Enum:NL7} $\Hn(g(F)\cap S)=0$.
    \end{enumerate}
\end{lemma}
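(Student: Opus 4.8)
The plan is to replace $f$, off a set of small $\Hn_a$-measure, by a genuinely piecewise affine map whose affine pieces have full rank and mutually ``generic'' images; a dyadic decomposition of $\reals^n$ together with the Lipschitz extension lemma (Lemma~\ref{L:Lipschitz-extension-lemma}) is used to keep the Lipschitz constant below $L$, and Lemma~\ref{L:intersection-lemma-general} is used to make the images avoid $S$ and each other. First I would gain room to manoeuvre: we may assume $L>0$ (otherwise $f$ is constant and $F=\emptyset$, $g=f$ works), and since $E$ is compact, for $\lambda\in(0,1)$ close to $1$ the map $h:=\lambda f$ is $\lambda L$-Lipschitz from $E_a$ to $\reals^m_b$ and is arbitrarily close to $f$ in $\lVert\cdot\rVert_\infty$ and in $\tLip_{E_a\to\reals^m_b}(\cdot)$. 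Fix such a $\lambda$; the whole construction will be a perturbation of $h$ by an amount much smaller than the budget $(1-\lambda)L$ and than $\varepsilon$, so that \ref{Enum:NL1}--\ref{Enum:NL3} hold at the end. Fix also, a priori, a small parameter $\delta'>0$; exactly how small (in terms of $C,L,n,\Hn_a(E)$ and the two norms) will emerge in the next step, but it will \emph{not} depend on the decomposition.

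\textbf{Local models and the piecewise affine map.} By Rademacher's theorem and the remarks in Section~\ref{s:rectifiability}, at $\Hn$-a.e.\ $x\in E$ the approximate Fr\'echet differential $h'(x)$ exists with $\lVert h'(x)\rVert_{a\to b}\le\lambda L$; choosing a full-rank matrix $A_x$ with $\lVert A_x-h'(x)\rVert_{a\to b}<\delta'$ (so $\lVert A_x\rVert_{a\to b}<\lambda L+\delta'<L$ and $\vol A_x>0$) and a radius $r_x>0$, the affine map $\ell_x(y):=h(x)+A_x(y-x)$ satisfies $|h(v)-\ell_x(v)|_b\le\delta'|v-x|_a$ on $E\cap B_a(x,r_x)$; hence the sets $E_{\delta',r}$ where such an estimate holds at radius $r$ satisfy $\Hn_a(E\setminus E_{\delta',r})\to0$ as $r\to0$. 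Now take a grid of half-open cubes of side $s$, with $s$ small enough that (i) $\Hn_a(E\setminus E_{\delta',\rho})$ is as small as we like for the relevant $\rho$ comparable to $s$, and (ii) the collar width $\eta\asymp\delta' s/\delta$ (for a small $\delta>0$ to be fixed) obeys $\eta<s\le1$. Call a cube $Q$ meeting $E$ \emph{good} if it contains a point $x_Q\in E_{\delta',\rho}$, put $\ell_Q:=\ell_{x_Q}$; the bad cubes meet $E$ only inside $E\setminus E_{\delta',\rho}$, of small measure, and there are finitely many cubes since $E$ is compact. Colour the cubes with $3^n$ colours so that same-coloured cubes do not touch, and for each colour in turn apply Lemma~\ref{L:Lipschitz-extension-lemma} on $E$ with cores $S_i:=E\cap(Q_i\text{ shrunk by }\eta)$, neighbourhood radius $\eta$, and target maps $\ell_{Q_i}\!\restriction_{S_i}$ — these are $(\lambda L+\delta')$-Lipschitz, and since the current $g$ equals $h$ on each such cube they differ from it by at most $\delta'\cdot\operatorname{diam}_aQ_i\le\delta\eta$. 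After all $3^n$ rounds we obtain $g\colon E\to\reals^m$, affine on $F:=\bigcup_iS_i$, equal to $h$ off the neighbourhoods, with $\tLip_{E_a\to\reals^m_b}(g)\le\lambda L+\delta'+4\cdot3^n\delta<L$ and $\lVert g-h\rVert_\infty<3^n\delta$, giving \ref{Enum:NL1}--\ref{Enum:NL3} and, with $F_i:=S_i$ (affine of positive volume), \ref{Enum:NL6}. Moreover $\Hn_a(E\setminus F)$ is bounded by the measure of the bad cubes plus $\sum_i\Hn_a(\text{collar of }Q_i)\lesssim_n(\eta/s)\Hn_a(E)\asymp(\delta'/\delta)\Hn_a(E)$, so choosing $\delta'$ small enough a priori yields \ref{Enum:NL4}.

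\textbf{Genericity for \ref{Enum:NL5} and \ref{Enum:NL7}.} Here I would use the freedom in choosing each $A_{x_{Q_i}}$ inside its $\delta'$-ball, \emph{after} the finitely many good cubes $Q_1,\dots,Q_{i_0}$ are fixed. Write $P_i:=\ell_{Q_i}(\reals^n)$, an affine $n$-plane containing $g(F_i)$. By Lemma~\ref{L:intersection-lemma-general} only countably many $n$-planes meet $S$ in positive $\Hn$-measure, so a generic choice of the $A_{x_{Q_i}}$ (possible as soon as $m>n$, since then the full-rank matrices whose associated planes avoid this countable family and are pairwise distinct form a co-null subset of each $\delta'$-ball; for $m=n$ the single plane $\reals^n$ would have to avoid $S$, i.e.\ $\Hn(S)=0$) makes the $P_i$ pairwise distinct with each $P_i$ disjoint from $S$ up to an $\Hn$-null set. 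Then $g(F)\cap S\subset\bigcup_i(P_i\cap S)$ is $\Hn$-null, which is \ref{Enum:NL7}. For \ref{Enum:NL5}: $g$ is affine and injective on each $F_i$, so if $\#g^{-1}(u)>1$ then either some preimage of $u$ lies in $E\setminus F$, whence $u\in g(E\setminus F)$, or $u\in g(F_i)\cap g(F_j)$ for some $i\ne j$, whence $u\in P_i\cap P_j$, a plane of dimension $<n$ and hence $\Hn$-null; take $N$ to be the union of these finitely many null sets (together with the $P_i\cap S$ if convenient).

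\textbf{Main obstacle.} The delicate point is the coupling of parameters in the second step. The extension lemma costs an additive $4\delta$ per colour, so the total loss $4\cdot3^n\delta$ must stay inside $(1-\lambda)L$; the affine error on a cube of side $s$ is only of order $\delta's$, which the extension lemma requires below $\delta\eta$ with $\eta$ the collar width; and the collars take up a fixed fraction $\asymp\eta/s\asymp\delta'/\delta$ of the volume, which must be $<C/(L\Hn_a(E))$-small. Naively these requirements are circular (the admissible $\delta'$ seems to depend on the decomposition through the geometry of the pieces). The resolution is exactly the order used above: fix $\delta$ small relative to $(1-\lambda)L$ and $\varepsilon$, then fix $\delta'$ — hence the ratio $\eta/s$ — small depending only on $\delta,C,L,n,\Hn_a(E)$ and the norms, and only then choose the grid scale $s$ small enough for the ``good cube'' covering and for $\eta\le s$; no quantity chosen after the decomposition is fed back into $\delta'$. (This is also why a dyadic grid with a finite colouring is preferable to a Vitali family of balls here: with balls one cannot coordinate the buffer widths against the, a posteriori, gaps between the chosen balls.) A secondary subtlety, already flagged, is that the genericity in the third step needs room in the Grassmannian of affine $n$-planes and so is effective precisely when $m>n$.
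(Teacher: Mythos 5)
Your proposal is correct and follows the same overall strategy as the paper: replace $f$, off a set of small measure, by a piecewise affine map whose pieces are full-rank and generically positioned, using Lemma~\ref{L:intersection-lemma-general} to make the image planes avoid $S$ and each other, and paying for the gluing with a small additive loss in the Lipschitz constant absorbed by first rescaling $f$. The implementation differs in two places. For the decomposition and gluing, the paper uses a Vitali family of disjoint balls $B_i=B(x_i,r_i)$ shrunk by a factor $\sigma$, and instead of invoking Lemma~\ref{L:Lipschitz-extension-lemma} it shows directly that the difference $d=f-g$ restricted to $\bigcup_i \sigma B_i\cap\widetilde E$ is globally $\delta$-Lipschitz (the separation $\dist(\sigma B_i,\sigma B_j)\geq(1-\sigma)(r_i+r_j)$ plays the role of your collars) and then McShane-extends $d$ once; your dyadic grid with a $3^n$-colouring and $3^n$ applications of Lemma~\ref{L:Lipschitz-extension-lemma} achieves the same thing, at the cost of the bookkeeping you describe but with the advantage that the buffer widths are coordinated by construction --- your remark about why balls are awkward here is exactly the difficulty the paper sidesteps with the $(1-\sigma)(r_i+r_j)$ separation estimate. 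For the genericity, the paper chooses the affine maps \emph{inductively}, each new plane avoiding (up to null sets) the union $S_k$ of $S$ with all previously constructed images $g(B_j)$, and takes $N=\bigcup_i S_i\cap g(F_i)$; you choose them simultaneously with pairwise distinct directions and use that two distinct affine $n$-planes meet in a set of dimension at most $n-1$. Both yield (v) and (vii). Finally, the caveat you flag for $m=n$ --- that the genericity step needs uncountably many available $n$-planes --- applies equally to the paper's own proof (its appeal to Lemma~\ref{L:intersection-lemma-general} presupposes $m>n$ at that step); this is harmless because downstream only properties (iv) and (vi) are used when $m=n$, the overlap and $S$-avoidance conclusions being invoked only under $n<m$.
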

\begin{proof}
 First, as $B(E,1)$ is bounded, we can find $\Delta<\infty$ so that for any countable disjoint system of balls $B_i=B(x_i,r_i)\subset B(E,1)$, we have
    \begin{equation*}
        \sum_i r_i^n \leq \Delta.
    \end{equation*}
    Let $\sigma\in(0,1)$ be such that 
    \begin{equation}\label{E:DC-sigma-choice}
        \Delta(1-\sigma^n)< \frac{1}{2}\frac{1}{L}C.
    \end{equation}
    We may assume that $L_0=\tLip (f) < L$.
    Let $\delta\in (0,\tfrac{1}{\sqrt{m}}\varepsilon)$ be such that $L_0+3\delta<L$
    
    We use Lusin's theorem to find an $\Hn$-measurable set $\widetilde{E}\subset E$ with
    \begin{equation}\label{E:additional-meas-est}
        \Hn(E\setminus \widetilde E) < \frac{1}{2}\frac{1}{L}C,
    \end{equation}
    such that $x\mapsto f'(x)$ is uniformly continuous on $\widetilde E$. This implies that $x\mapsto \vol f'(x)$ is also uniformly continuous and it also allows us to obtain uniform approximations with derivatives in the following way.
    To each $\delta>0$, there is $r_0>0$ such that for every $r\leq r_0$ and every $x\in \widetilde E$ a density point of $\widetilde E$, we have
    \begin{equation*}
        |f(y)-f(z)-f'(x)(y-z)|_b<\frac{1}{2}\delta(1-\sigma)|y-z|_a\quad\text{for all $y,z\in B(x,r)\cap \widetilde E$.}
    \end{equation*}
    Moreover, the operator norm $\lVert f'(x)\rVert_{a\to b}$ is no larger than $L_0$.
    
    Let now $x$ and $r$ be as above and consider the function $d\colon B(x,r)\cap \widetilde E \to \reals^m$ given by
    \begin{equation*}
        d(y)=f(y)-f(x)-f'(x)(y-x) \quad \text{for $B(x,r)\cap \widetilde E$.}
    \end{equation*}
    Then for $y, z\in B(x,r)\cap \widetilde E \to \reals^m$ we may estimate
    \begin{equation*}
    \begin{split}
        |d(y)-d(z)|_b
        &\leq|f(y)-f(x)+f'(x)(y-x)-f(z)+f(x)-f'(x)(z-x)|_b\\
        &\leq |f(y)-f(z)-f'(x)(y-z)|< \frac{1}{2}\delta(1-\sigma)|y-z|_a,
    \end{split}
    \end{equation*}
    which means that $d$ is $(\frac{1}{2}\delta(1-\sigma))$-Lipschitz on the relevant domain.
    Hence, there is an open neighbourhood $\widetilde{U}_x$ of $f'(x)$ in $\mathcal{L}(\reals_a^n,\reals_b^m)$, the space of linear operators between $\reals_a^n$ and $\reals_b^m$, such that for each $A\in \widetilde{U}_x$ it holds that
    \begin{equation*}
        \lVert A \rVert_{a\to b} < L_0+\delta,
    \end{equation*}
    \begin{equation*}
        |f(y)-f(x)-A(y-x)|_b< \delta(1-\sigma) r \quad \text{for all $y\in B(x,r)\cap E$}
    \end{equation*}
    and
    \begin{equation*}
        \tLip_{B(x,r)\cap \widetilde E}(y\mapsto f(y)-f(x)+A(y-x))< \delta (1-\sigma).
    \end{equation*}
    Observe also that $U_x=\{A\in \widetilde{U}_x: \vol A>0\}$ is open and not empty.
    Using Vitali's covering theorem, we find a countable number of disjoint balls $B_i=B(x_i,r_i)$ and open, non-empty sets $U_i\in \mathcal{L}(\reals_a^n,\reals_b^m)$ such that
    \begin{equation}\label{E:DC-f-A-estimate}
        |f(y)-f(x_i)-A(y-x_i)|_b\leq \delta(1-\sigma) r_i\quad\text{for all $y\in B_i\cap E$ and $A\in U_i$,}
    \end{equation}
    \begin{equation}\label{E:DC-f-A-estimate-lip}
        \tLip_{B_i\cap \widetilde E}(y\mapsto f(y)-f(x_i)+A(y-x_i))< \delta (1-\sigma)\quad\text{for any $A\in U_i$,}
    \end{equation}
    \begin{equation*}
        \Hn(\widetilde E\setminus \bigcup_i B_i)=0,
    \end{equation*}
    and $\vol A>0$ for any $A\in U_i$.
    Observe that now
    \begin{equation*}
        \Hn(\widetilde E\setminus \bigcup \sigma B_i)
        \leq \Delta(1-\sigma^n).
    \end{equation*}
    Whence, by \eqref{E:DC-sigma-choice} and $\eqref{E:additional-meas-est}$, there is an index $i_0\in \nat$ such that
    \begin{equation}\label{E:DC-measure-estimate}
        \Hn(E\setminus \bigcup_{i=1}^{i_0} \sigma B_i)< \frac{1}{L}C.
    \end{equation}
    
    Suppose now that $d \colon \bigcup_{i=1}^{i_0} \sigma B_i \cap \widetilde E \to \reals^m$ satisfies for each $i=1,\dots, i_0$ 
    \begin{equation}\label{E:DC-d-sat}
        d(y)=f(y)-f(x_i)+A_i(y-x_i)\quad \text{for some $A_i\in U_i$ and all $y\in \sigma B_i\cap \widetilde E_i$.}
    \end{equation}
    Then by \eqref{E:DC-f-A-estimate-lip}, $d$ is $(\delta (1-\sigma))$-Lipschitz on each $\sigma B_i\cap \widetilde E$. If $i\not=j$ and $y\in \sigma B_i\cap \widetilde E$, $z\in \sigma B_j\cap \widetilde E$, then
    \begin{equation*}
    \begin{split}
        |d(y)-d(z)|_b
        &\leq |d(y)-d(x_i)|_b + |d(x_i)-d(x_j)|_b + |d(x_j)-d(z)|_b\\
        &\overset{\eqref{E:DC-f-A-estimate}}{\leq} \delta(1-\sigma) r_i + \delta(1-\sigma) r_j.
    \end{split}
    \end{equation*}
    On the other hand
    \begin{equation*}
        |y-z|_a \geq (1-\sigma)r_i+(1-\sigma)r_j
    \end{equation*}
    as $B_i \cap B_j = \emptyset$.
    This means that $d$ is $\delta$-Lipschitz on the set $\bigcup_{i=1}^{i_0} \sigma B_i \cap \widetilde E$
    
    We construct $g_i$ inductively on the sets $B_i$ for $i=1,\dots, i_0$. Let $S_0=S$. By Lemma \ref{L:intersection-lemma-general}, there is some $A_1\in U_1$ such that $\Hn([f(x_1)+A_1(\reals^n)]\cap S_0)=0$. Define
    \begin{equation*}
        g_1(y)=f(x_1)+A_1(y-x_1) \quad \text{for $y\in B_1$.}
    \end{equation*}
    Assume we have defined $g_i$ for $i=1,\dots, k-1$, $k\leq i_0$. Let $S_k=\bigcup_{i=1}^{k-1} g(B_i) \cup S_0$. As $\Hn(S_k)<\infty$, using Lemma \ref{L:intersection-lemma-general}, there is an $A_k\in U_k$ such that $\Hn([f(x_k)+A_k(\reals^n)]\cap S_k)=0$. Let
    \begin{equation*}
        g_k(y)=f(x_k)+A_k(y-x_k)\quad \text{for $y\in B_k$.}
    \end{equation*}
    Thus, we have constructed $(L_0+\delta)$-Lipschitz functions $g_i\colon B_i \to \reals^m$ satisfying
    \begin{equation*}
        \lVert g_i-f \rVert_{\ell^\infty(B_i\cap E)}\leq \delta(1-\sigma) r_i.
    \end{equation*}
    Now we let
    \begin{equation*}
        d=f-g_i \quad\text{on $B_i\cap \widetilde E$, $i=1,\dots,i_0$.}
    \end{equation*}
    By construction, $d$ satisfies \eqref{E:DC-d-sat}. Whence, by our choice of $\delta$, $d$ admits an extension (denoted again by $d$) onto entire $\reals^n$ such that $\tLip(d) < \varepsilon$ and $\lVert d \rVert_{\infty}< \varepsilon$.
    Indeed, here we have just used McShane extension together with the estimate $\delta < \frac{1}{\sqrt{m}} \varepsilon$.
    We let $g=f-d$ on $E$. This implies immediately that \ref{Enum:NL1}, \ref{Enum:NL2}, and \ref{Enum:NL3} are satisfied.
    
    We let $F_i=\sigma B_i\cap E$ and
    \begin{equation*}
        N=\bigcup_{i=1}^{i_0} S_i \cap g(F_i).
    \end{equation*}
    Then by construction $\Hn(N)=0$. Let $F=\bigcup_{i=1}^{i_0} F_i$. From the construction \ref{Enum:NL6} and \ref{Enum:NL7} follow immediately and \ref{Enum:NL4} holds due to \eqref{E:DC-measure-estimate}.
    
    Finally, to show \ref{Enum:NL5}, suppose $u\in \reals^m$ has two preimages under $g$ neither of which lies in $E\setminus F$, i.e.~there are some $x,y\in F$ such that $g(x)=g(y)=u$.
    Then, there are some $i,j \in \{1,\dots, i_0\}$ such that $x\in F_i$, $y\in F_j$ and, without loss of generality, $j\leq i$.
    As $g$ is by construction injective on each $F_i$, we have $j<i$.
    Therefore by definition of $S_i$, necessarily $g(y)\in S_i$. On the other hand, $x \in F_i$ implies $g(x)\in g(F_i)$.
    Altogether $u\in S_i\cap g(F_i)$ and so $u\in N$.
\end{proof}

Now we may use Lemma \ref{L:Kirch} to push the results from normed sets into general metric spaces.

\begin{theorem}\label{T:density-construction-vol-and-overlap-metric}
 Let $X$ be a complete metric space and let $E\subset X$ be an $n$-rectifiable subset with $\Hn(E)<\infty$, let $L\in[0,\infty)$ and let $|\cdot|_b$ be a norm on $\reals^m$. Suppose $f \colon X\to \reals^m_b$ is an $L$-Lipschitz function. Then to each $\varepsilon>0$ and $C>0$, there is a function $g\colon X \to \reals^m$ and $\Hn$-measurable sets $F\subset E$, $N\subset \reals^m$ such that
 \begin{enumerate}
     \item\label{Enum:MT1} $g\colon X \to \reals^m_b$ is Lipschitz with constant $\tLip(g)<L$,
     \item\label{Enum:MT2} $\lVert g-f \rVert_{\infty}<\varepsilon$,
     \item\label{Enum:MT3} $\tLip(g-f)< \varepsilon$
     \item\label{Enum:MT4} $\Hn(E \setminus F)< \frac{1}{L} C$,
     \item\label{Enum:MT5} $\Hn(N)=0$ and $\{u\in g(E): \#g^{-1}(u)>1\}\subset g(E\setminus F)\cup N$,
     \item\label{Enum:MT6} $J_E g>0$ $\Hn$-a.e.~on $F$.
 \end{enumerate}
\end{theorem}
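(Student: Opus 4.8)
The plan is to pull $f$ back to finitely many normed charts that cover most of $E$, modify it on each chart via Lemma \ref{L:density-construction-vol-and-overlap-normed}, and reassemble the modifications into a single map with Lemma \ref{L:Lipschitz-extension-lemma}. Exactly as in Lemma \ref{L:density-construction-vol-and-overlap-normed}, one first reduces to the case $L_0:=\tLip(f)<L$: since $f$ is bounded, replacing it by $\lambda f$ with $\lambda<1$ close to $1$ changes $f$ arbitrarily little in both $\lVert\cdot\rVert_\infty$ and $\tLip(\cdot)$, so any $g$ close to $\lambda f$ that meets the six conclusions also meets them relative to $f$. Fix $\theta>0$ with $(1+\theta)^2L_0<L$. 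Applying Kirchheim's Lemma \ref{L:Kirch}, disjointifying the resulting subsets of $E$, and then passing to compact subsets (by inner regularity of $\Hn$), one obtains \emph{pairwise disjoint compact} sets $K_1,\dots,K_N\subset E$, compact $F_i^\circ\subset\reals^n$, norms $|\cdot|_{a_i}$ on $\reals^n$, and $(1+\theta)$-biLipschitz bijections $I_i\colon(F_i^\circ,|\cdot|_{a_i})\to K_i$ with $\Hn\bigl(E\setminus\bigcup_iK_i\bigr)<\tfrac{C}{2L}$. Compactness and disjointness yield a single radius $\rho\in(0,1]$ for which the neighbourhoods $B(K_1,\rho),\dots,B(K_N,\rho)$ are pairwise disjoint.

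The construction then proceeds by induction on $i=1,\dots,N$. Having built $h_1,\dots,h_{i-1}$, set $S_i:=\bigcup_{j<i}h_j(K_j)$; this has finite $\Hn$-measure since each $h_j$ is a Lipschitz image of a bounded subset of $\reals^n$. Apply Lemma \ref{L:density-construction-vol-and-overlap-normed} to the compact normed set $(F_i^\circ,|\cdot|_{a_i})$, the map $\varphi_i:=f\circ I_i$ (Lipschitz with constant at most $(1+\theta)L_0$), the set $S=S_i$, some Lipschitz bound in the interval $\bigl((1+\theta)L_0,\,L/(1+\theta)\bigr)$, and small parameters $C_i,\varepsilon_i>0$. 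Writing $g_i,\widehat F_i,N_i'$ for the outputs and setting $h_i:=g_i\circ I_i^{-1}$, $F_i:=I_i(\widehat F_i)$, the $(1+\theta)$-biLipschitz distortion of $I_i$ turns the seven conclusions of that lemma into: $\tLip_{K_i\to\reals^m_b}(h_i)<L$; $\lVert h_i-f\rVert_{\ell^\infty(K_i)}<\varepsilon_i$ and $\tLip_{K_i\to\reals^m_b}(h_i-f)<(1+\theta)\varepsilon_i$; $\Hn(K_i\setminus F_i)\le(1+\theta)^{n-1}C_i/L_0$ (using \eqref{E:vol-of-norm}); in the chart $I_i$, $h_i$ restricted to each of finitely many pieces of $F_i$ is an affine map of positive volume; and both the set where $h_i$ fails to be injective on $F_i$ and the set $h_i(F_i)\cap S_i$ are $\Hn$-null. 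Choosing the $C_i$ so that $\sum_i(1+\theta)^{n-1}C_i/L_0<\tfrac{C}{2L}$ and putting $F:=\bigcup_iF_i$, one gets $\Hn(E\setminus F)<\tfrac{C}{L}$, which is \ref{Enum:MT4}.

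To glue, note that $h_i-f$ is an $\reals^m_b$-valued Lipschitz map on $K_i$ whose Lipschitz constant and sup-norm are both $O(\varepsilon_i)$; extending it coordinatewise by McShane and truncating gives $d_i\colon X\to\reals^m$ with $\tLip_{X\to\reals^m_b}(d_i)$ and $\lVert d_i\rVert_{\ell^\infty(X)}$ at most $\kappa_{m,b}\varepsilon_i$, where $\kappa_{m,b}$ depends only on $m$ and $|\cdot|_b$. Then $f+d_i$ is $(L_0+\kappa_{m,b}\varepsilon_i)$-Lipschitz, coincides with $h_i$ on $K_i$, and is within $\kappa_{m,b}\varepsilon_i$ of $f$ on $B(K_i,\rho)$. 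Fixing $\delta>0$ with $L_0+5\delta<L$ and $\delta<\varepsilon$, and then the $\varepsilon_i$ small enough that $\kappa_{m,b}\varepsilon_i\le\delta\rho$, Lemma \ref{L:Lipschitz-extension-lemma} (applied with base map $f$, local maps $f+d_i$ on $B(K_i,\rho)$, radii $\rho$, and Lipschitz bound $L_0+\delta$) produces $g\colon X\to\reals^m$ with $\tLip(g)\le L_0+5\delta<L$, $\lVert g-f\rVert_\infty<\delta<\varepsilon$, $g=h_i$ on each $K_i$, and $g=f$ off $\bigcup_iB(K_i,\rho)$; this gives \ref{Enum:MT1} and \ref{Enum:MT2}, and the explicit formula $g=f+\sum_i\chi_id_i$ appearing in that lemma's proof, together with the smallness of $\tLip(d_i)$ and $\lVert d_i\rVert_\infty/\rho$, yields \ref{Enum:MT3} (shrinking the $\varepsilon_i$ once more if necessary). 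Finally set $N:=\bigcup_iN_i'\cup\bigcup_i\bigl(h_i(F_i)\cap S_i\bigr)$, an $\Hn$-null set. Because $F\subset\bigcup_iK_i$ and $g=h_i$ on $K_i$: any $u$ with two distinct $g$-preimages in $E$ either has a preimage in $E\setminus F$, so $u\in g(E\setminus F)$, or has both preimages in $F$; two preimages in the same $F_i$ force $u\in h_i(K_i\setminus F_i)\cup N_i'\subset g(E\setminus F)\cup N$ by the normed lemma, while preimages in $F_i$ and $F_j$ with $i<j$ force $u\in h_j(F_j)\cap S_j\subset N$ — this is \ref{Enum:MT5}. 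And at $\Hn$-a.e.\ $x\in F_i$ the point $I_i^{-1}(x)$ is a density point of one of the affine pieces of $\widehat F_i$ at which the metric differential $|I_i'|$ exists and is a norm, so $(g\circ I_i)'=g_i'$ there is an affine map of positive volume and $\vol(|I_i'|)>0$, whence $J_Eg(x)>0$ by Lemma \ref{L:prel-ideal} and the definition of the metric Jacobian — this is \ref{Enum:MT6}.

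The main obstacle is the gluing step: the local modifications must be merged into one map whose Lipschitz constant is \emph{strictly} below $L$ and which is close to $f$ in the strong $\lVert\cdot\rVert_\infty+\tLip(\cdot)$ sense, all without creating new coincidences of values. The choice of $\theta$ (via $(1+\theta)^2L_0<L$) is what keeps every local Lipschitz constant below a fixed $L_1<L$, leaving the $4\delta$ of slack in Lemma \ref{L:Lipschitz-extension-lemma} available to absorb all the $O(\varepsilon_i)$ errors — in particular the unavoidable dimensional/normwise loss $\kappa_{m,b}$ incurred when extending $\reals^m_b$-valued maps to $X$ — and the interpolation zones $B(K_i,\rho)\setminus K_i$ created by that lemma lie outside $F$, so they cannot disturb \ref{Enum:MT5} or \ref{Enum:MT6}.
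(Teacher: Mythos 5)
Your proposal is correct and follows essentially the same route as the paper: a Kirchheim decomposition into finitely many pairwise disjoint compact charts covering all but $\tfrac{C}{2L}$ of $E$, an inductive application of Lemma \ref{L:density-construction-vol-and-overlap-normed} with the running union $S_i$ of previous images to kill inter-chart overlaps, gluing via McShane extensions of the differences $h_i-f$ supported in disjoint $\rho$-neighbourhoods, and the same case analysis for \ref{Enum:MT5} and \ref{Enum:MT6}. The only cosmetic difference is that you route the gluing through Lemma \ref{L:Lipschitz-extension-lemma} whereas the paper writes out the cutoff construction directly; the constants (e.g.\ the $(1+\theta)^{n-1}$ distortion factor) differ harmlessly from the paper's since the $C_i$ and $\varepsilon_i$ are chosen afterwards.
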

\begin{proof}
    We may assume that $\tLip(f)=L_0<L$. Let $0< \theta < \infty$. Let $C_i>0$, $i\in\nat$ be a sequence for which 
    \begin{equation}\label{E:DC-M-Ci}
        \frac{1}{2L}C+ (1+\theta)L \sum_i C_i< \frac{1}{L}C.
    \end{equation}
    By Lemma \ref{L:Kirch}, we find compact sets $E_i\subset E$, $K_i\subset \reals^n$, norms $|\cdot|_{a_i}$ on $\reals^n$ and $(1+\theta)$-biLipschitz bijections $I_i\colon (K_i,|\cdot|_{a_i})\to E_i$ such that
    \begin{equation*}
        \Hn(E\setminus \bigcup_i E_i)=0.
    \end{equation*}
    Find $i_0\in\nat$ such that 
     \begin{equation}\label{E:DC-M-measure-est}
         \Hn(E\setminus \bigcup_{i=1}^{i_0} E_i)< \frac{1}{2L}C.
     \end{equation}
     As $E_i$'s are compact, there is some $\rho>0$ such that $B(E_i, \rho)\cap B(E_j,\rho)=\emptyset$ provided $i,j\in\{1,\dots, i_0\}$, $i\not= j$.
     Let now $0\leq C^0_i\leq 1$, $i=1,\dots, i_0$ be such that
     \begin{equation}\label{E:DC-M-Cnaught-con}
         \sum_{i=1}^{i_0} \sqrt{m}\varepsilon C^0_i \max\{(1+\theta), \tfrac{1}{\rho}\}\leq \delta
     \end{equation}
     For each $i=1,\dots, i_0$ we let $\widetilde{f}_i=f\circ I_i\colon (K_i, |\cdot|_{a_i}) \to \reals^m_b$, a $((1+\theta)L_0)$-Lipschitz function. Denoting $S_1=\emptyset$,
     we use Lemma \ref{L:density-construction-vol-and-overlap-normed} to find $\widetilde{g_1}\colon (K_1, |\cdot|_{a_1})\to \reals^m_b$, a $(1+\theta)L_0$-Lipschitz function and sets $\widetilde{F}_1\subset K_1$, $N_1 \subset \reals^m$ such that
     \begin{enumerate}[label={(\roman*)$^{1}$}]
         \item $\lVert \widetilde{g}_1-\widetilde{f}_1 \rVert_{\ell^\infty(K_1)} < \varepsilon C^0_1$,
         \item $\tLip_{(K_1, |\cdot|_{a_1})\to \reals^m_b}(\widetilde{g}_1-\widetilde{f}_1)  < \varepsilon C^0_1$
         \item $\H^n_{a_1}(K_1\setminus \widetilde{F}_1)< \frac{1}{L} C_1$,
         \item $\Hn(N_1)=0$,
         \item $\{u\in \widetilde{g}_1(K_1): \#\widetilde{g}_1^{-1}(u)>1\}\subset \widetilde{g}_1(K_1\setminus \widetilde{F}_1)\cup N_1$
         \item $\vol \widetilde{g}_1'>0\;$ $\Hn$-a.e.~on $\widetilde{F}_1$.
         \item $\Hn(S_1 \cap \widetilde{g}_1(\widetilde{F}_1))=0$
     \end{enumerate}
     Suppose now that $\widetilde{g}_i$ have been constructed for $i=1,\dots, k-1$ where $k\in\{1, \dots, i_0\}$. We set
     \begin{equation*}
         S_k=\bigcup_{i=1}^{k-1} \widetilde{g}_i(K_i).
     \end{equation*}
     Using Lemma \ref{L:density-construction-vol-and-overlap-normed} once again, we find $\widetilde{g_k}\colon (K_k, |\cdot|_{a_k})\to \reals^m_b$, a $(1+\theta)L_0$-Lipschitz function and sets $\widetilde{F}_k\subset K_k$, $N_k \subset \reals^m$ such that
     \begin{enumerate}[label={(\roman*)$^{k}$}]
         \item \label{Enum:bddness} $\lVert \widetilde{g}_k-\widetilde{f}_k \rVert_{\ell^\infty(K_k)} < \varepsilon C^0_k$,
         \item \label{Enum:lipsch}$\tLip_{(K_k, |\cdot|_{a_k})\to \reals^m_b}(\widetilde{g}_k-\widetilde{f}_k)  < \varepsilon C^0_k$
         \item $\H^n_{a_k}(K_k\setminus \widetilde{F}_k)< \frac{1}{L} C_k$,
         \item $\Hn(N_k)=0$,
         \item $\{u\in \widetilde{g}_k(K_k): \#\widetilde{g}_k^{-1}(u)>1\}\subset \widetilde{g}_k(K_k\setminus \widetilde{F}_k)\cup N_k$
         \item $\vol \widetilde{g}_k'>0\;$ $\Hn$-a.e.~on $\widetilde{F}_k$.
         \item \label{Enum:overlap} $\Hn(S_k \cap \widetilde{g}_k(\widetilde{F}_k))=0$.
     \end{enumerate}
     
    %  We stop here to make a remark. One should notice that the above construction could have been done in a more concise manner.
    %  More precisely, the sole reason for using induction as opposed to just constructing all $\widetilde{g}_i$'s at once are the ``overlaps'', i.e.~the conditions \ref{Enum:overlap}. 
    %  Therefore one can construct the induction step right after defining $S_1=\emptyset$ and the base case of $\widetilde{g}_1$ may be skipped. We do not do this as the approach we chose should be easier to read and understand.
     
     For each $i=1, \dots, i_0$, we let $F_i=I_i^{-1}(\widetilde{{F_i}})$ and $g_i=\widetilde{g}_i\circ I^{-1}_i$.
     Moreover, we define
     \begin{equation*}
         d_i(x)=\begin{cases}
				g_i(x)-f(x)
					& \text{if $x\in E_i$},
					\\
				0
					& \text{if $x\in X\setminus B(E_i,\rho)$}.
			\end{cases}
     \end{equation*}
     Using the fact that $B(E_i,\rho)$'s are pairwise disjoint together with the properties \ref{Enum:bddness} and \ref{Enum:lipsch}, we observe that $\lVert d_i \rVert_{\infty}\leq \varepsilon C^0_i$ and $\tLip(d_i)\leq \max\{(1+\theta) \varepsilon C^0_i, \frac{1}{\rho}\varepsilon C^0_i\}$ on the relevant domains. 
     Using McShane extension, we extend each $d_i$ onto the entire $X$, denoting the extensions again by $d_i$, thereby obtaining functions satisfying
     \begin{enumerate}[label={(\alph*)$^{i}$}]
         \item $\lVert d_i \rVert_{\ell^\infty(X)}< \varepsilon C^0_i$,
         \item \label{Enum:d-lip} $\tLip_{X \to \reals^m_b} d_i \leq \sqrt{m} \varepsilon C^0_1 \max\{(1+\theta, \frac{1}{\rho})\}$,
         \item \label{Enum:d-null} $d_i=0$ on $E_j$ for each $j\not= i$.
     \end{enumerate}
     We let
     \begin{equation}
         d=\sum_{i=1}^{i_0} d_i \quad \text{and} \quad g=f-d.
     \end{equation}
     As the function $\tLip(\cdot)$ is sub-additive on $\tLip(X,\reals^m_b)$, using \ref{Enum:d-lip} and \eqref{E:DC-M-Cnaught-con} we have
     \begin{equation*}
         \tLip(d) \leq \sum_{i=1}^{i_0} \sqrt{m} \varepsilon C_i^0 \max\{(1+\theta), \tfrac{1}{\rho}\}\leq \delta. 
     \end{equation*}
     Which, as $\delta< \varepsilon$, implies \ref{Enum:MT3} and, as $L_0+\delta<L$, implies \ref{Enum:MT1}.
     Similarly, since $C^0_i\leq 1$, we also obtain \ref{Enum:MT2}.
     
     By \ref{Enum:d-null}, $g=g_i$ on each $E_i$.
     Therefore, using the fact that $I_i's$ are $(1+\theta)$-biLipschitz together with the properties \ref{Enum:bddness} - \ref{Enum:overlap} we observe the properties
     \begin{enumerate}[label={(\arabic*)}]
         \item \label{Enum:area-Fi} $\Hn(E_i\setminus F_i)\leq (1+\theta) \Hn(K_i\setminus \widetilde{F}_i)\leq (1+\theta)\frac{1}{L}C_i$,
         \item $N_i\subset F_i$ with $\Hn(N_i)=0$ and it holds that $\{u\in g_i(E_i): \#g^{-1}(u)>1\}\subset g_i((E_i\setminus F_i)\cup N_i)$,
         \item \label{Enum:vol-nonzero} $J_E g>0$ $\Hn$-a.e.~on each $F_i$.
     \end{enumerate}
     Let 
     \begin{equation*}
         F=\bigcup_{i=1}^{i_0} F_i.
     \end{equation*}
     Now \ref{Enum:MT6} holds by \ref{Enum:vol-nonzero}. Moreover, we may estimate
     \begin{equation*}
         \begin{split}
             \Hn (E\setminus F)
             = \Hn(\bigcup_{i=1}^{i_0} E_i \setminus F_i)+ \Hn(E\setminus \bigcup_{i=1}^{i_0} E_i),
         \end{split}
     \end{equation*}
     where, by the property \ref{Enum:area-Fi} and the estimate \eqref{E:DC-M-measure-est}, the last expression is estimated from above by 
     \begin{equation*}
         (1+\theta)\frac{1}{L}\sum_{i=1}^{i_0} C_i + \frac{1}{2L}C.
     \end{equation*}
     Whence \ref{Enum:MT4} follows from the choice of $C_i$'s \eqref{E:DC-M-Ci}.
     
     It remains to find $N$ and show \ref{Enum:MT5}. To that end, we simply let
     \begin{equation*}
         N=\left(\bigcup_{i=1}^{i_0}N_i\right)\cup \bigcup_{i=1}^{i_0} S_i \cap g(F_i). 
     \end{equation*}
     As $g(F_i)=\widetilde{g}_i(\widetilde{F}_i)$, using \ref{Enum:overlap}, we see that $\Hn(N)=0$.
     To show \ref{Enum:MT5}, suppose $u\in g(E)$ and $g^{-1}(u)\cap (E \setminus F)=\emptyset$ and assume $u$ has two distinct preimages under $g$.
     It suffices to show that $u\in N$.
     There are $x,y\in F$ such that $g(x)=g(y)=u$. 
     By definition of $F$, we may assume that there are $i,j\in \{1,\dots, i_0\}$, $j\leq i$ such that $x\in F_i$, $y\in F_j$.
     Firstly, assume that $j=i$. Then $\#\widetilde{g}_i^{-1}(u)>1$ and so $u\in \widetilde{g}_i(K_i \setminus \widetilde{F}_i)\cap N_i$.
     However, $u\not\in \widetilde{g}_i(K_i \setminus \widetilde{F}_i)= g_i(E_i\setminus F_i)$ as $g^{-1}(u)\cap (E \setminus F)=\emptyset$.
     Therefore it is necessary that $u \in N_i \subset N$.
     Secondly, assume that $j<i$.
     Then $g(y)\in S_i$ by the definition of $S_i$ and $g(x)\in g(F_i)$ simply because $x\in F_i$.
     Therefore $u\in S_i \cap g(F_i) \subset N$.
     Either way $u\in N$ and we are done.
\end{proof}
    
\begin{corollary}\label{C:no-overlaps}
    Let $n<m$, let $X$ be a complete metric space and let $E\subset X$ be an $n$-rectifiable subset.
    Let $L\in [0,\infty)$. Then
    the set 
    \begin{equation*}
        \{f\in \tLip_L^{\textnormal{str}}(X,\reals^m): \int_E J_E f \diff \Hn = \Hn(f(E))\},
    \end{equation*}
    is residual in $\tLip_L^{\textnormal{str}}(X,\reals^m)$.
\end{corollary}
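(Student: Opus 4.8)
The plan is as follows. By the metric area formula (Theorem~\ref{E:Prel-area-formula-metric-eucl}), $\int_E J_E f\diff\Hn=\int_{f(E)}\#f^{-1}(u)\diff\Hn(u)\ge\Hn(f(E))$ for every $f\in\tLip_L(X,\reals^m)$, so the quantity $\vartheta(f):=\int_E J_E f\diff\Hn-\Hn(f(E))$ is nonnegative and the set in question is $G=\{f:\vartheta(f)=0\}=\bigcap_{k\in\nat}\{f:\vartheta(f)<1/k\}$. One may assume $L>0$ (if $L=0$ every $f$ is constant and $G$ is the whole space) and, after exhausting $E$ by an increasing sequence of $n$-rectifiable sets of finite $\Hn$-measure and noting that $f$ being essentially injective on each of them forces essential injectivity on $E$, one may also reduce to the case $\Hn(E)<\infty$. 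The two things to prove are then that each $\{f:\vartheta(f)<1/k\}$ is open and that it is dense in $\tLip_L^{\textnormal{str}}(X,\reals^m)$; Baire's theorem finishes the argument.

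For openness I would first show that $f\mapsto\int_E J_E f\diff\Hn$ is \emph{continuous} on $\tLip_L^{\textnormal{str}}(X,\reals^m)$ --- this is the point where the stronger norm is indispensable, since on $(\tLip_L(X,\reals^m),\lVert\cdot\rVert_\infty)$ the functional is only lower semi-continuous (Theorem~\ref{T:lsc-area-formula-general}). Given $f_j\to f$ strongly, fix a Kirchheim decomposition $E=\bigcup_i E_i$ into (up to a null set) pieces carried by $(1+\theta)$-biLipschitz charts $I_i\colon(K_i,|\cdot|_{a_i})\to E_i$ (Lemma~\ref{L:Kirch}). Since $\tLip((f_j-f)\circ I_i)\le(1+\theta)\tLip(f_j-f)\to0$, the tangential differentials $(f_j\circ I_i)'$ converge to $(f\circ I_i)'$ in operator norm uniformly on $K_i$, hence their volumes converge, and dividing by the fixed positive $\vol|I_i'|$ gives $J_E f_j\to J_E f$ $\Hn$-a.e.\ on $E$ (recall the definition \eqref{d:jacobian}). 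As $\tLip(f_j)\le L$ bounds $J_E f_j$ by a constant $M=M(n,L)$ (via \eqref{E:vol-of-norm} and $\Hn(B_2)\le2^n$), and $\Hn(E)<\infty$, dominated convergence gives convergence of the integrals. Combining this continuity with lower semi-continuity of $f\mapsto\Hn(f(E))$ (Theorem~\ref{T:lsc-area-general}, which passes to the finer topology) shows $\vartheta$ is upper semi-continuous, so $\{f:\vartheta(f)<1/k\}$ is open.

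For density, fix $k$, $f\in\tLip_L^{\textnormal{str}}(X,\reals^m)$ and $\eta>0$. Since $f$ is bounded, replacing $f$ by $\lambda f$ for $\lambda<1$ close to $1$ moves it an arbitrarily small distance in the strong norm while making $\tLip(\lambda f)<L$, so I may assume $\tLip(f)<L$. I would then apply Theorem~\ref{T:density-construction-vol-and-overlap-metric} (with $|\cdot|_b=|\cdot|_2$), with $\varepsilon$ sufficiently small and $C:=L/(2Mk)$, to obtain $g$, $F\subset E$ and a null set $N$ with $\tLip(g)<L$, $\lVert g-f\rVert_\infty+\tLip(g-f)$ as small as we wish (so $g$ is within $\eta$ of the original function), $\Hn(E\setminus F)<C/L$, $J_E g>0$ a.e.\ on $F$, and $g$ essentially injective on $F$ --- the construction behind Lemma~\ref{L:density-construction-vol-and-overlap-normed} writes $g|_F$ as a finite union of affine pieces whose images pairwise meet in $\Hn$-null sets, so $\#(g|_F)^{-1}=1$ $\Hn$-a.e.\ on $g(F)$. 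Using $J_E=J_F$ a.e.\ on $F$ and $J_E=J_{E\setminus F}$ a.e.\ on $E\setminus F$ (Lemma~\ref{L:prel-ideal}), the area formula for $g|_F$, essential injectivity of $g|_F$, and $g(F)\subset g(E)$, one computes
\begin{equation*}
\vartheta(g)=\Hn(g(F))+\int_{E\setminus F}J_E g\diff\Hn-\Hn(g(E))\le\int_{E\setminus F}J_E g\diff\Hn\le M\,\Hn(E\setminus F)<\tfrac{MC}{L}=\tfrac{1}{2k}.
\end{equation*}
Thus $g$ lies in $\{f:\vartheta(f)<1/k\}$, so this set is dense.

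Putting these together, $G$ is a dense $G_\delta$ in the complete metric space $\tLip_L^{\textnormal{str}}(X,\reals^m)$, hence residual (and in the remaining case $\Hn(E)=\infty$ it contains a countable intersection of residual sets). The main obstacle I expect is the continuity of the area-formula functional on the strong space in the openness step: on the weak topology it jumps upward when high-frequency folds are created near a limit, and it is precisely the control on the Lipschitz constant of the perturbation provided by the strong norm that rules this out and forces the tangential Jacobians (hence their integrals) to converge.
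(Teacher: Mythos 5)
Your proposal is correct and follows essentially the same route as the paper: reduce to $\Hn(E)<\infty$, write the set as a countable intersection, get openness from upper semi-continuity of $\vartheta$ (continuity of the area-formula functional on the strong space plus lower semi-continuity of $f\mapsto\Hn(f(E))$ from Theorem \ref{T:lsc-area-general}), and get density from Theorem \ref{T:density-construction-vol-and-overlap-metric} with $C$ small. You merely spell out two steps the paper leaves terse --- the chart-by-chart argument for continuity of $f\mapsto\int_E J_E f\diff\Hn$ in the strong norm, and the quantitative estimate $\vartheta(g)\le\int_{E\setminus F}J_E g\diff\Hn$ via essential injectivity of $g|_F$ --- both of which are sound.
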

\begin{proof}
    Firstly, we may write $E=\bigcup_{i=1}^{\infty} E_i$, where $E_i$ form an increasing sequence of $\Hn$-measurable sets with $\Hn(E_i)<\infty$.
    Now if a function $f\in \tLip^{\textnormal{str}}_{L}(X,\reals^m)$ satisfies
    \begin{equation*}
        \int_{E_i} J_{E_i} f \diff \Hn=\Hn(f(E_i))
    \end{equation*}
    on each $E_i$, then it obviously also satisfies the assertion.
    As a countable intersection of residual sets is residual,
    we may assume, without loss of generality, that $\Hn(E)<\infty$.
     By the area formula \eqref{E:area-formula}, it suffices to show that the sets
     \begin{equation*}
         \{f\in \tLip_L^{\textnormal{str}}(X,\reals^m): \int_E J_E f \diff \Hn - \Hn(f(E))< \frac{1}{i}\}
     \end{equation*}
     are open and dense in $\tLip_L^{\textnormal{str}}(X,\reals^m)$ for every $i\in \nat$.
     Density follows immediately from Theorem \ref{T:density-construction-vol-and-overlap-metric} by taking $C$ sufficiently small (depending on $i$).
     For openness, it is sufficient to observe that the functional
     \begin{equation}\label{E:usc-functional}
         f\mapsto \int_E J_E f \diff \Hn - \Hn(f(E))
     \end{equation}
     is upper semi-continuous on $\tLip_L^{\textnormal{str}}(X,\reals^m)$.
     The functional 
     \begin{equation*}
          f\mapsto \int_E J_E f \diff \Hn
     \end{equation*}
     is easily seen to be continuous on $\tLip_L^{\textnormal{str}}(X,\reals^m)$.
     Moreover, the functional $f\mapsto \Hn(f(E))$ is lower semi-continuous on $\tLip_L(X,\reals^m)$, therefore it is also lower semi-continuous on $\tLip_L^{\textnormal{str}}(X,\reals^m)$.
     Therefore the functional $f\mapsto -\Hn(f(E))$ is upper semi-continuous on $\tLip_L^{\textnormal{str}}(X,\reals^m)$.
     Altogether, we have shown that the functional in \eqref{E:usc-functional} is a sum of upper semi-continuous functionals and as such it is upper semi-continuous.
\end{proof}

The proof of the following corollary is now trivial upon recalling Baire's theorem and the preceding corollary, together with the fact that sets which are dense in $\tLip_L^{\textnormal{str}}(X,\reals^m)$ are also dense in $\tLip_L(X,\reals^m)$.

\begin{corollary}\label{C:no-overlap-density}
     Let $n<m$, let $X$ be a complete metric space and let $E\subset X$ be an $n$-rectifiable subset.
    Let $L\in [0,\infty)$. Then
    the set
    \begin{equation*}
        \{f\in \tLip_L(X,\reals^m): \int_E J_E f \diff \Hn = \Hn(f(E))\},
    \end{equation*}
    is dense in $\tLip_L(X,\reals^m)$.
\end{corollary}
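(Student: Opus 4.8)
The plan is to obtain this as an immediate consequence of Corollary~\ref{C:no-overlaps}, by transferring the density conclusion from the strong space to the supremum-norm space. First I would recall that $\reals^m$ is complete, so $\tLip_L^{\textnormal{str}}(X,\reals^m)$ is a complete metric space; hence, by Baire's theorem, every residual subset of it is dense. Corollary~\ref{C:no-overlaps} asserts precisely that
\begin{equation*}
    S := \Bigl\{f\in \tLip_L(X,\reals^m): \int_E J_E f \diff \Hn = \Hn(f(E))\Bigr\}
\end{equation*}
is residual, and therefore dense, in $\tLip_L^{\textnormal{str}}(X,\reals^m)$.

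Next I would invoke the elementary observation (already recorded in the preliminaries) that the supremum metric on $\tLip_L(X,\reals^m)$ is dominated by the strong metric, since $\lVert \cdot \rVert_{\ell^\infty} \leq \lVert \cdot \rVert_{\ell^\infty} + \tLip(\cdot)$. Consequently, a subset of the common underlying set $\tLip_L(X,\reals^m)$ that is dense in the finer (strong) topology is automatically dense in the coarser (supremum-norm) topology: given $f\in \tLip_L(X,\reals^m)$ and $\varepsilon>0$, an element of $S$ lying within strong-distance $\varepsilon$ of $f$ lies in particular within supremum-distance $\varepsilon$ of $f$. Hence $S$ is dense in $\tLip_L(X,\reals^m)$, which is exactly the claim.

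I do not anticipate any genuine obstacle here: the whole substance of the argument — simultaneously controlling the Lipschitz constant, the supremum distance, the strong distance, the amount of overlap, and the non-degeneracy of the Jacobian — was already carried out in Theorem~\ref{T:density-construction-vol-and-overlap-metric} and packaged into Corollary~\ref{C:no-overlaps}. The only point worth flagging is that this argument yields merely \emph{density} of $S$ in $\tLip_L(X,\reals^m)$, not residuality there; residuality of the ``no-overlap plus full area formula'' property in the supremum-norm space would require openness of the relevant sets in that space, which the preceding sections do not provide.
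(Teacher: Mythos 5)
Your argument is correct and is exactly the paper's proof: apply Corollary~\ref{C:no-overlaps} to get residuality, hence density, in $\tLip_L^{\textnormal{str}}(X,\reals^m)$ by Baire's theorem, and then observe that density in the finer strong topology implies density in the supremum-norm topology. Your closing caveat that this yields only density, not residuality, in $\tLip_L(X,\reals^m)$ is also the correct reading of why the statement is phrased as a density result.
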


\begin{corollary}\label{C:pushforward}
    Let $n\leq m$, let $X$ be a complete metric space and let $E\subset X$ be an $n$-rectifiable subset.
    Let $L\in [0,\infty)$. Then the set of the $L$-Lipschitz functions $f\colon X \to E$ such that
    \begin{equation}\label{E:pushforward}
        f_\# \Hn_{|E} \ll \Hn_{\reals^m}
    \end{equation}
    is residual in $\tLip_L^{\textnormal{str}}(X,\reals^m)$.
\end{corollary}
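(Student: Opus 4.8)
The plan is to reduce the statement to a pointwise condition on the tangential Jacobian and then verify the corresponding openness and density in the strong space. Since $E$ is $n$-rectifiable, $\Hn_{|E}$ is $\sigma$-finite, so writing $E=\bigcup_jE_j$ with $E_j$ $\Hn$-measurable, increasing and of finite $\Hn$-measure, one has $f_\#\Hn_{|E}\ll\Hn$ whenever $f_\#\Hn_{|E_j}\ll\Hn$ for all $j$; as residual sets are stable under countable intersection it suffices to treat the case $\Hn(E)<\infty$ (we may also assume $L>0$). I would then prove the implication
\[
  \Hn\big(\{x\in E:J_Ef(x)=0\}\big)=0\ \Longrightarrow\ f_\#\Hn_{|E}\ll\Hn_{\reals^m}.
\]
Indeed, for Borel $A\subset\reals^m$ with $\Hn(A)=0$ set $E'=\{x\in E:J_Ef(x)>0\}\cap f^{-1}(A)$, an $\Hn$-measurable and hence $n$-rectifiable subset of $E$. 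By Lemma~\ref{L:prel-ideal}, $J_{E'}f=J_Ef$ $\Hn$-a.e.\ on $E'$, which is strictly positive there, while $f(E')\subset A$ forces $\Hn(f(E'))=0$; the area formula \eqref{E:area-formula} for $E'$ then gives $\int_{E'}J_{E'}f\,\diff\Hn=0$, so $\Hn(E')=0$. Together with the hypothesis this yields $\Hn(E\cap f^{-1}(A))=0$, i.e.\ $f_\#\Hn_{|E}(A)=0$. (The converse also holds, since $\Hn(f(\{J_Ef=0\}))=0$ by the same formula, but only this direction is needed.)

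It therefore suffices to show that
\[
  \mathcal G:=\Big\{f\in\tLip_L^{\textnormal{str}}(X,\reals^m):\Hn\big(\{x\in E:J_Ef(x)=0\}\big)=0\Big\}
\]
is residual, and I would write $\mathcal G=\bigcap_{i\in\nat}\mathcal G_i$ with $\mathcal G_i=\{f:\Hn(\{J_Ef=0\}\cap E)<1/i\}$. Density of each $\mathcal G_i$ is immediate from Theorem~\ref{T:density-construction-vol-and-overlap-metric}: given $f$ and $\varepsilon>0$, applying it with $C$ so small that $\tfrac1LC<\tfrac1i$ produces $g$ with $\tLip(g)\le L$, $\lVert g-f\rVert_\infty+\tLip(g-f)<2\varepsilon$, $\Hn(E\setminus F)<\tfrac1i$ and $J_Eg>0$ $\Hn$-a.e.\ on $F$, whence $\Hn(\{J_Eg=0\}\cap E)\le\Hn(E\setminus F)<\tfrac1i$ and $g\in\mathcal G_i$.

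The crux, and the reason the result is stated in the strong space, is the openness of $\mathcal G_i$, i.e.\ the lower semi-continuity of $f\mapsto\Hn(\{J_Ef>0\}\cap E)$ on $\tLip_L^{\textnormal{str}}(X,\reals^m)$ (using $\Hn(E)<\infty$ to pass to complements). I would fix a Kirchheim decomposition as in Lemma~\ref{L:Kirch}: compact $F_j\subset\reals^n$, norms $|\cdot|_{a_j}$ and biLipschitz bijections $I_j\colon(F_j,|\cdot|_{a_j})\to E_j\subset E$ with $\Hn(E\setminus\bigcup_jE_j)=0$. If $f_k\to f$ in $\tLip_L^{\textnormal{str}}(X,\reals^m)$ then $\tLip(f_k-f)\to0$, so for each fixed $j$ the map $(f_k-f)\circ I_j$ is $\tLip(I_j)\tLip(f_k-f)$-Lipschitz on $(F_j,|\cdot|_{a_j})$; hence its approximate differential $(f_k\circ I_j)'(u)-(f\circ I_j)'(u)$ has operator norm at most $\tLip(I_j)\tLip(f_k-f)$, and since $\reals^{n\times m}$ is finite dimensional, $(f_k\circ I_j)'(u)\to(f\circ I_j)'(u)$ for $\Hn$-a.e.\ $u\in F_j$. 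For a.e.\ such $u$ the metric differential of $I_j$ is a norm, so by \eqref{d:jacobian} one has $J_Ef(I_j(u))>0$ iff $(f\circ I_j)'(u)$ has rank $n$, and likewise for each $f_k$; as the rank-$n$ matrices form an open subset of $\reals^{n\times m}$, it follows that at $\Hn$-a.e.\ $x\in E$ with $J_Ef(x)>0$ one has $J_Ef_k(x)>0$ for all large $k$. Thus $\chi_{\{J_Ef>0\}\cap E}\le\liminf_k\chi_{\{J_Ef_k>0\}\cap E}$ $\Hn$-a.e., and Fatou's lemma applied directly on $(E,\Hn)$ gives $\liminf_k\Hn(\{J_Ef_k>0\}\cap E)\ge\Hn(\{J_Ef>0\}\cap E)$. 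Note that the biLipschitz constants of the $I_j$ never enter the final estimate because Fatou is taken on $E$ itself; and this lower semi-continuity genuinely fails in the weaker topology of $\tLip_L(X,\reals^m)$ — one may track the identity on $[-1,1]^n$ by an ever finer sawtooth map without changing the sup-distance while halving $\Hn(\{J_E(\cdot)>0\})$ — which is precisely why residuality is only claimed in the strong space. With each $\mathcal G_i$ open and dense, Baire's theorem in the complete space $\tLip_L^{\textnormal{str}}(X,\reals^m)$ completes the proof.

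The main obstacle is thus this lower semi-continuity step: it rests on the fact that in the strong topology the control of $\tLip(f_k-f)$ upgrades uniform convergence $f_k\to f$ to $\Hn$-a.e.\ convergence of the tangential differentials $(f_k\circ I_j)'\to(f\circ I_j)'$, after which openness of the full-rank locus and Fatou's lemma do the rest; the reduction via the area formula and the density via Theorem~\ref{T:density-construction-vol-and-overlap-metric} are comparatively routine.
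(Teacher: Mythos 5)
Your proof is correct, and its skeleton coincides with the paper's: reduce to $\Hn(E)<\infty$, observe via the area formula that \eqref{E:pushforward} is equivalent to $J_Ef>0$ $\Hn$-a.e.\ on $E$ (you prove the one implication actually needed, which the paper only asserts as ``easily seen''), write the target set as $\bigcap_i\mathcal G_i$, and get density of each $\mathcal G_i$ from Theorem~\ref{T:density-construction-vol-and-overlap-metric} with $C$ small, exactly as in the paper. Where you genuinely diverge is the openness of $\mathcal G_i$. The paper first claims that $f_k\to f$ in $\tLip_L^{\textnormal{str}}$ forces $J_Ef_k\to J_Ef$ in $L^\infty(E)$ and then runs a non-increasing rearrangement argument: if $\Hn(\{J_Ef_k=0\})\ge 1/i$ then $(J_Ef_k)^*$ vanishes on $(\Hn(E)-\tfrac1i,\Hn(E)]$, and this passes to the $L^\infty$ limit, so the complement of $\mathcal G_i$ is closed. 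You instead upgrade $\tLip(f_k-f)\to0$ to $\Hn$-a.e.\ convergence of the chart differentials $(f_k\circ I_j)'\to(f\circ I_j)'$, invoke openness of the full-rank locus in $\reals^{n\times m}$ to get $\chi_{\{J_Ef>0\}}\le\liminf_k\chi_{\{J_Ef_k>0\}}$ a.e., and finish with Fatou on $(E,\Hn)$. The two routes prove slightly different semicontinuity statements (the paper: closedness of $\{\Hn(\{J_Ef=0\})\ge 1/i\}$ via rearrangements; you: lower semicontinuity of $f\mapsto\Hn(\{J_Ef>0\})$), but both yield openness of $\mathcal G_i$. Your version buys some robustness: it needs only pointwise a.e.\ convergence of the differentials, so you never have to control the modulus of continuity of $\vol$ uniformly across the Kirchheim charts (whose norms $|\cdot|_{a_j}$ vary), a point the paper's $L^\infty$ claim quietly relies on; the paper's rearrangement argument, in exchange, is shorter once that claim is granted. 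The only cosmetic caveat is the degenerate case $L=0$ with $\Hn(E)>0$, which both you and the paper wave away; it does not affect the substance of your argument.
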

\begin{proof}
    As in the proof of Corollary \ref{C:no-overlaps}, we may assume $\Hn(E)<\infty$.
     Recalling the area formula \eqref{E:area-formula} it is easily seen that for a Lipschitz function $f\colon X \to \reals^m$ one has \eqref{E:pushforward} if and only if $J_E f > 0$ $\Hn$-a.e. in $E$. We can write
     \begin{equation}\label{E:pushforward-intersection}
     \begin{split}
         &\{f\in \tLip_L^{\textnormal{str}}(X,\reals^m): J_E f > 0 \; \text{$\Hn$-a.e.~on $E$}\}\\
         =&\bigcap_{i\in\nat}\{f\in \tLip_L^{\textnormal{str}}(X,\reals^m): \Hn(\{x\in E: J_E f(x)=0\})< \tfrac{1}{i}\}.
     \end{split}
     \end{equation}
     It is therefore enough to show that the sets on the right hand side of \eqref{E:pushforward-intersection} are open and dense.
     Density follows immediately from Theorem \ref{T:density-construction-vol-and-overlap-metric} by choosing $C>0$ sufficiently small (we use particularly the properties \ref{Enum:MT4} and \ref{Enum:MT6} therein). 
     
     Openness follows by a rearrangement argument. Assume $\varphi_k\in L^\infty(E)= L^{\infty}(E, \Hn_{|E})$ is a sequence which converges in $L^\infty(E)$ to some function $\varphi \in L^\infty(E)$. Then their non-increasing rearrangements satisfy $\varphi_k^* \to \varphi^*$ in $L^\infty([0,\Hn(E)], \mathcal{L}^1)$. Here the non-increasing rearrangement is defined as
     \begin{equation*}
         \varphi^*(t)= \inf\{\lambda\ge 0: \Hn(\{x\in E: {\varphi(x)>\lambda}\}) \le t\}
		\quad\text{for $t\in[0,\Hn(E)]$.}
     \end{equation*}
     Suppose further that $\Hn(\{x\in E:\varphi_k(x)=0\})\geq \tfrac{1}{i}$. Then $\varphi_k^*(t)=0$ for all $t\in (\Hn(E)-\frac{1}{i}, \Hn(E)]$.
     Then we also have $\varphi^*(t)=0$ for all $t\in (\Hn(E)-\frac{1}{i}, \Hn(E)]$.
     We apply this to Jacobians of the relevant functions to show that the complements of the sets on the right hand side of \eqref{E:pushforward-intersection} are closed. 
     To that end, let $i\in \nat$ be fixed and let $f_k, f\in \tLip_L^{\textnormal{str}}(X,\reals^m)$ be such that $f_k \to f$ in $\tLip_L^{\textnormal{str}}(X,\reals^m)$ and
     \begin{equation*}
         \Hn(\{x\in E: J_E f_k(x)=0\})\geq \frac{1}{i} \quad \text{for all $k\in\nat$.}
     \end{equation*}
     By our assumption on convergence of $f_k$'s, we have $J_E f_k \to J_E f$ in $L^{\infty}(E)$.
     Therefore, by the argument above $(J_E f)^*(t)=0$ for all $t\in (\Hn(E)-\frac{1}{i}, \Hn(E)]$.
     By the definition of the non-increasing rearrangement, this implies $\Hn(\{x\in E: J_E f(x)=0\})\geq \frac{1}{i}$.
     Therefore the set
     \begin{equation*}
         \{f\in \tLip_L^{\textnormal{str}}(X,\reals^m): \Hn(\{x\in E: J_E f(x)=0\})\geq \tfrac{1}{i}\}
     \end{equation*}
     is closed and we are done.
\end{proof}

\section{Residuality of functions with images of large measure}\label{S:residuality-positive}

The purpose of this section is to provide residuality results in the positive direction. Firstly, using the tools developed in previous sections \ref{S:openness} and \ref{S:density-general} we provide a useful characterisation of the residuality of the sets
\begin{equation*}
    \{f\in\tLip_1(X,\reals^m_b): \Hn(f(E))\geq \numberdelta\}=\bigcap_{i\in\nat}  \{f\in\tLip_1(X,\reals^m_b): \Hn(f(E))> \numberdelta-\tfrac{1}{i}\}.
\end{equation*}
This is the subject of the first subsection.

In the second subsection, we use this characterisation to prove the relevant residuality results assuming $X$ is a normed set and $E$ its subset. This only works under particular assumptions on the norm (and also the norm $|\cdot|_b$ in the target space). We do not discuss sharpness of these conditions.

Finally, in the third subsection, we concentrate on the particular instance of Euclidean norms. In this case we are able not only to get the best possible $\numberdelta=\Hn(E)$ but also to push these results into the setting of $n$-rectifiable subsets of $\reals^k$ (as opposed to mere subsets of $\reals^n$). It is there that we prove our first main result Theorem \ref{T:main-Euclidean}.

\subsection{Characterisations of residuality}

\begin{lemma}\label{L:lsc-with-target-norm}
    Let $n\leq m$, let $X$ be a complete metric space and $E\subset X$ an $n$-rectifiable set.
    Let $|\cdot|_b$ be a norm on $\reals^m$.
    The functionals
    \begin{equation*}
        f\mapsto \Hn_{|\cdot|_2}(f(E))
    \end{equation*}
    and
    \begin{equation*}
        f\mapsto \int_{E} J_E f \diff \Hn
    \end{equation*}
    are lower semi-continuous on $\tLip_L(X,\reals^m_b)$ for any $L\in[0,\infty)$.
\end{lemma}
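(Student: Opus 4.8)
The plan is to deduce both statements directly from their Euclidean-target counterparts, Theorem \ref{T:lsc-area-general} and Theorem \ref{T:lsc-area-formula-general}, using only that all norms on the finite-dimensional space $\reals^m$ are equivalent.

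First I would choose constants $0<c\le C<\infty$ with $c|x|_2\le |x|_b\le C|x|_2$ for all $x\in\reals^m$, which exist by finite-dimensionality. If $f\in\tLip_L(X,\reals^m_b)$, then for all $x,y\in X$ one has $|f(x)-f(y)|_2\le \tfrac1c|f(x)-f(y)|_b\le \tfrac Lc\, d_X(x,y)$, and $f$ is bounded with respect to $|\cdot|_2$ as well; hence $\tLip_L(X,\reals^m_b)\subseteq \Lambda_{L/c}$ as sets of functions. Moreover the two sup-metrics $d_b(f,g)=\sup_x|f(x)-g(x)|_b$ and $d_2(f,g)=\sup_x|f(x)-g(x)|_2$ obey $c\,d_2\le d_b\le C\,d_2$, so convergence in $\tLip_L(X,\reals^m_b)$ is exactly convergence in the metric of $\Lambda_{L/c}$ restricted to $\tLip_L(X,\reals^m_b)$; equivalently, the topology of $\tLip_L(X,\reals^m_b)$ coincides with the subspace topology inherited from $\Lambda_{L/c}$.

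Next I would note that neither functional depends on the choice of norm on the target beyond the underlying set $\reals^m$: the Euclidean Hausdorff measure $\Hn_{|\cdot|_2}$ and the metric Jacobian $J_Ef$ (defined through the Euclidean $\vol$) are computed from $f$ regarded merely as a map into $\reals^m$. Thus $f\mapsto \Hn_{|\cdot|_2}(f(E))$ is precisely the restriction of $\mathcal{A}_E$ from $\Lambda_{L/c}$, and $f\mapsto\int_E J_Ef\diff\Hn$ is precisely the restriction of the area-formula functional from $\Lambda_{L/c}$. Since, by Theorems \ref{T:lsc-area-general} and \ref{T:lsc-area-formula-general}, both are lower semi-continuous on $\Lambda_{L/c}$, and since lower semi-continuity passes to any subset carrying the subspace topology (the superlevel set $\{f\in\tLip_L(X,\reals^m_b):\Phi(f)>c'\}$ is the intersection of $\tLip_L(X,\reals^m_b)$ with an open subset of $\Lambda_{L/c}$), both functionals are lower semi-continuous on $\tLip_L(X,\reals^m_b)$, as claimed.

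There is no real obstacle; the only point requiring a moment's care is that $\tLip_L(X,\reals^m_b)$ is \emph{not} a subset of $\tLip_L(X,\reals^m)=\Lambda_L$ (both the Lipschitz condition and the metric change with the norm), so one must first pass to the larger space $\Lambda_{L/c}$ and check that the relevant topologies agree before invoking the earlier results.
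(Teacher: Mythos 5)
Your proposal is correct and takes the same route as the paper, which simply cites Theorems \ref{T:lsc-area-general} and \ref{T:lsc-area-formula-general}; you have merely made explicit the norm-equivalence argument ($\tLip_L(X,\reals^m_b)\subseteq\Lambda_{L/c}$ with equivalent sup-metrics, and the functionals being norm-independent on the target) that the paper leaves implicit.
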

\begin{proof}
    This follows from theorems \ref{T:lsc-area-general} and \ref{T:lsc-area-formula-general} respectively.
\end{proof}

\begin{theorem}\label{T:char-area-area-formula}
    Let $n\leq m$, let $X$ be a complete metric space and let $E\subset X$ be an $n$-rectifiable subset.
    Let $|\cdot|_b$ be a norm on $\reals^m$ and suppose that $L\in[0,\infty)$ and $C>0$. Consider the following statements.
    \begin{enumerate}
        \item The set $\mathcal{A}_{\geq C}=\{f\in \tLip_L(X,\reals^m_b): \Hn(f(E))\geq C\}$ is residual in $\tLip_L(X,\reals^m_b)$.
        \item The sets $\mathcal{A}_{> \widetilde{C}}=\{f\in \tLip_L(X,\reals^m_b): \Hn(f(E))> \widetilde{C}\}$ are dense in $\tLip_L(X,\reals^m_b)$ for all $\widetilde{C}<C$.
        \item The set $\mathcal{V}_{\geq C}=\{f\in \tLip_L(X,\reals^m_b): \int_E J_E f \diff \Hn\geq C\}$ is residual in $\tLip_L(X,\reals^m_b)$.
        \item The sets $\mathcal{V}_{> \widetilde{C}}=\{f\in \tLip_L(X,\reals^m_b): \int_E J_E f\diff \Hn> \widetilde{C}\}$ are dense in $\tLip_L(X,\reals^m_b)$ for all $\widetilde{C}<C$.
    \end{enumerate}
    If $n< m$ then all of the statements are mutually equivalent. If $n=m$ then (i) and (ii) are false and (iii) and (iv) are equivalent.
\end{theorem}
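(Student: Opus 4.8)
The plan is to arrange the four statements into a cycle of implications, almost all of which hold for every $n\le m$, and to pin down the single step that genuinely uses $n<m$ (and that collapses when $n=m$).

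\textbf{Step 1: openness and the $G_\delta$ bookkeeping.} By Lemma \ref{L:lsc-with-target-norm}, both $f\mapsto\Hn(f(E))$ and $f\mapsto\int_E J_E f\diff\Hn$ are lower semi-continuous on $\tLip_L(X,\reals^m_b)$, so every set $\mathcal{A}_{>\widetilde C}$ and $\mathcal{V}_{>\widetilde C}$ is open. Since $\mathcal{A}_{\ge C}=\bigcap_{i\in\nat}\mathcal{A}_{>C-1/i}$ is a countable intersection of open sets and $\widetilde C\mapsto\mathcal{A}_{>\widetilde C}$ is inclusion-decreasing, Baire's theorem gives immediately that $\mathcal{A}_{\ge C}$ is residual $\iff$ each $\mathcal{A}_{>C-1/i}$ is dense $\iff$ each $\mathcal{A}_{>\widetilde C}$ with $\widetilde C<C$ is dense; that is, (i)$\Leftrightarrow$(ii). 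The same argument applied to $\mathcal{V}$ gives (iii)$\Leftrightarrow$(iv). Neither uses $n<m$, so in particular (iii)$\Leftrightarrow$(iv) is proved in full generality.

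\textbf{Step 2: the easy inclusion and closing the cycle when $n<m$.} The area formula \eqref{E:area-formula} gives $\int_E J_E f\diff\Hn=\int_{f(E)}\#f^{-1}(u)\diff\Hn(u)\ge\Hn(f(E))$ for every Lipschitz $f$, hence $\mathcal{A}_{\ge C}\subset\mathcal{V}_{\ge C}$ and so (i)$\Rightarrow$(iii) (a superset of a residual set is residual). To complete the cycle when $n<m$ it suffices to prove (iv)$\Rightarrow$(ii). Fix $\widetilde C<C$, $f\in\tLip_L(X,\reals^m_b)$ and $\varepsilon>0$, and pick $C'\in(\widetilde C,C)$. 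By (iv) choose $g\in\mathcal{V}_{>C'}$ with $\lVert g-f\rVert_\infty<\varepsilon/2$; since $g$ lies in the open set $\mathcal{V}_{>\widetilde C}$, choose $r\in(0,\varepsilon/2)$ such that $\lVert h-g\rVert_\infty<r$ implies $h\in\mathcal{V}_{>\widetilde C}$. Now invoke the no-overlap density result: Corollary \ref{C:no-overlap-density}, whose proof runs through Theorem \ref{T:density-construction-vol-and-overlap-metric} and hence applies verbatim with the target norm $|\cdot|_b$, shows that $\{h\in\tLip_L(X,\reals^m_b):\int_E J_E h\diff\Hn=\Hn(h(E))\}$ is dense in $\tLip_L(X,\reals^m_b)$. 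Taking such an $h$ with $\lVert h-g\rVert_\infty<r$ gives $h\in\mathcal{V}_{>\widetilde C}$, hence $\Hn(h(E))=\int_E J_E h\diff\Hn>\widetilde C$, i.e.\ $h\in\mathcal{A}_{>\widetilde C}$, while $\lVert h-f\rVert_\infty<\varepsilon$. Thus $\mathcal{A}_{>\widetilde C}$ is dense, and the cycle (i)$\Rightarrow$(iii)$\Rightarrow$(iv)$\Rightarrow$(ii)$\Rightarrow$(i) closes: all four statements are equivalent when $n<m$.

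\textbf{Step 3: failure when $n=m$.} Here (iii)$\Leftrightarrow$(iv) is exactly Step 1, so it remains to see that (ii), hence (i), fails. Take $f\equiv0$ and any $\widetilde C\in(0,C)$ (possible since $C>0$). If $g\in\tLip_L(X,\reals^m_b)$ satisfies $\lVert g\rVert_\infty<\varepsilon$, then $g(E)\subset\varepsilon B_b$, so $\Hn(g(E))\le\varepsilon^n\Hn(B_b)$, and $\Hn(B_b)<\infty$ because $B_b$ is a bounded subset of $\reals^m$ and $n=m$; for small $\varepsilon$ this is $<\widetilde C$. Hence no function within $\varepsilon$ of $0$ lies in $\mathcal{A}_{>\widetilde C}$, so $\mathcal{A}_{>\widetilde C}$ is not dense: (ii) fails, and by Step 1 so does (i).

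The only genuinely delicate ingredient is the no-overlap density invoked in Step 2, and all of its difficulty is packaged inside Theorem \ref{T:density-construction-vol-and-overlap-metric}: it is there that the extra codimension $m-n\ge1$ is spent to slide the affine image-pieces apart (via Lemma \ref{L:intersection-lemma-general}) and kill multiplicity without enlarging the Lipschitz constant beyond control. Everything else --- the $G_\delta$/Baire bookkeeping, the multiplicity inequality from the area formula, and the crude scaling estimate in the $n=m$ case --- is routine.
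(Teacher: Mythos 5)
Your proof is correct and follows essentially the same route as the paper's: lower semi-continuity (Lemma \ref{L:lsc-with-target-norm}) for the residual/dense bookkeeping, the area formula for the easy implication (the paper phrases it as (ii)$\Rightarrow$(iv) rather than your (i)$\Rightarrow$(iii), which is the same inequality), and Corollary \ref{C:no-overlap-density} to pass back from Jacobian integrals to image measures, with the same scaling argument for the failure of (i), (ii) when $n=m$.
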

\begin{proof}
    If $n=m$, (i) and (ii) are false as, in fact, any sequence $f_k \colon X \to \reals^m_b$ with $f_k \to 0$  uniformly satisfies $\Hn(f_k(E))\to 0$. 
    If $n<m$ then equivalence of (i) and (ii) follows from Lemma \ref{L:lsc-with-target-norm} as we can write
    \begin{equation*}
        \mathcal{A}_{\geq C}= \bigcap_{i\in\nat} \mathcal{A}_{>(C-\frac{1}{i})}.
    \end{equation*}
    Similarly, equivalence of (iii) and (iv) (for any $n\leq m$) is also obtained from Lemma \ref{L:lsc-with-target-norm}.
    
    Now let $n<m$.
    The fact that (ii) implies (iv) is obviously true due to the area formula \eqref{E:area-formula}.
    It remains to show that (iv) implies (ii). To that end, fix $\widetilde{C}<C$ and $f\in \tLip_L(X,\reals^m_b)$. From (iv) we can find, for each $\varepsilon>0$, some $g\in \tLip_L(X,\reals^m_b)$ with $\lVert g-f \rVert_{\infty}<\varepsilon$ satisfying
    \begin{equation*}
        \int_E J_E g \diff \Hn> \widetilde{C}.
    \end{equation*}
    As the functional $g\mapsto \int_E \vol J_E g \diff \Hn$ is lower semi-continuous, there is some $\delta>0$ such that for any $h\in\tLip_L(X,\reals^m_b)$ satisfying $\lVert g-h \rVert_{\infty}< \delta$ we still have
    \begin{equation}\label{E:h-estimate}
        \int_E J_E h \diff \Hn>\widetilde{C}.
    \end{equation}
    From Corollary \ref{C:no-overlap-density}, we see that the set of functions $h$ satisfying the improved area formula
    \begin{equation}\label{E:improved-area-formula}
        \int_{E} J_E h \diff \Hn = \Hn(h(E))
    \end{equation}
    is dense in $\tLip_L(X,\reals^m)$.
    Therefore, there exists some $h\in \tLip_L(X,\reals^m_b)$ satisfying $\lVert g-h \rVert_{\infty}< \delta$ and \eqref{E:improved-area-formula}.
    By \eqref{E:h-estimate}, such $h$ satisfies
    \begin{equation*}
        \Hn(h(E)) =\int_{E} J_E h \diff \Hn > \widetilde{C}.
    \end{equation*}
    As $\delta>0$ may be reduced to an arbitrarily small number, we have shown that $\mathcal{A}_{\widetilde C}$ is dense, which is (ii).
\end{proof}

\subsection{Positive results in normed sets}

Recalling Lemma \ref{L:MSR-open-decomposition} it would seem that a good starting point to tackling $n$-rectifiable metric spaces, is the study of metric spaces which are merely subsets of $\reals^n$ equipped with a distance induced by a particular norm $|\cdot|_a$.
The object of this section is to provide a sufficient condition (on $|\cdot|_a$) so that there exists some $\lambda>0$ so that the set
\begin{equation*}
    \{f\in \tLip(E_a, \reals^m): \Hn(f(E))> \lambda \Hn(E)\}
\end{equation*}
is residual in $\tLip(E_a, \reals^m)$ for all $m>n$ and all $E\subset \reals^n$ bounded $\Hn$-measurable sets.

We shall work in a slightly more general setting and allow a general norm (which we denote exclusively by $|\cdot|_b$) on the target space $\reals^m$ as well.

It should be noted, that right now, both on the domain and on the target side we are working either with the whole Euclidean space or a subset thereof. We do equip it with a different metric (norm), but the metric is always equivalent to the Euclidean one. This implies that the induced Hausdorff measures (of any dimension) are always equivalent. However, available area formulas are far more conveniently used if the Hausdorff measures considered on either side are induced by the Euclidean distance. This is, up to a constant, without loss of generality. In other words, up to a constant, whenever we write $\Hn$ one may replace it with $\Hn_a$ (on the domain) or $\Hn_b$ (on the target).

There is, however,  a small caveat to what is said above. If we want to prove an estimate holding for an entire \emph{family} of norms then constants matter. This is just a small technical detail, however, to avoid confusion, we will state some of our results in a ``duplicate'' form. One dealing with Euclidean Hausdorff measure and one dealing with the Hausdorff measure induced by the particular norm.

Let us fix some notation for the entirety of this section. We shall assume that $n,m\in\nat$ satisfy $n\leq m$, and that $|\cdot|_a$ and $|\cdot|_b$ are norms on $\reals^n$ and $\reals^m$ respectively.

\begin{observation}\label{Obs:good-curve}
    Let $A\colon \reals\to \reals^m$ be a linear map. Let $u\in \reals^m$ and if $A\not=0$, assume also that $u= \kappa A(1)$ for some $\kappa\in\reals$ with $|\kappa|\geq 1$. Then for every $\varepsilon>0$, there exists a Lipschitz curve $\gamma\colon \reals\to A(\reals)$ such that
    \begin{enumerate}
        \item $\lVert \gamma - A \rVert_{\infty}<\varepsilon$,
        \item $\gamma'$ exists everywhere in $\reals$ up to a discrete set of points,
        \item if $x\in \reals$ is such that $\gamma'(x)$ exists, then $\gamma'(x)=\pm u$.
    \end{enumerate}
\end{observation}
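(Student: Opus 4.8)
The plan is to reduce the statement to a one-dimensional zigzag (sawtooth) construction on the line $A(\reals)$. First I would dispose of the trivial case $A=0$, in which $A(\reals)=\{0\}$ forces $\gamma\equiv 0$ and there is nothing to do. So assume $A\neq 0$ and write $e=A(1)\neq 0$, so that $A(x)=xe$ and $A(\reals)=\reals e$; by hypothesis $u=\kappa e$ with $|\kappa|\geq 1$, and since replacing $\kappa$ by $|\kappa|$ does not change the set $\{u,-u\}$, I may assume $\kappa\geq 1$. Every curve valued in $A(\reals)$ has the form $\gamma(x)=\phi(x)e$ for a scalar function $\phi\colon\reals\to\reals$, and then $\gamma'(x)=\phi'(x)e$ wherever it exists. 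Thus the three requirements translate, respectively, into: $\phi$ is Lipschitz; $\phi'$ exists off a discrete set; $\phi'(x)\in\{\kappa,-\kappa\}$ wherever it exists; and $\sup_{x\in\reals}|\phi(x)-x|<\varepsilon/|e|=:\delta$.

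When $\kappa=1$ one simply takes $\phi=\mathrm{id}$, i.e.\ $\gamma=A$, for which $\gamma'\equiv u$ and $\lVert\gamma-A\rVert_\infty=0$. When $\kappa>1$, I would build $\phi=\mathrm{id}+\psi$, where $\psi$ is a continuous, periodic, piecewise-linear sawtooth: on each period $\psi$ rises with slope $\kappa-1$ for an $x$-length $p$ and then falls with slope $-(\kappa+1)$ for an $x$-length $q$, where $p$ is chosen small and $q$ is determined by $p(\kappa-1)=q(\kappa+1)$ so that $\psi$ is indeed periodic with period $p+q$. Centring $\psi$ so that its range is $[-p(\kappa-1)/2,\,p(\kappa-1)/2]$ and choosing $p<2\delta/(\kappa-1)$ guarantees $\lVert\psi\rVert_\infty<\delta$. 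Then $\phi'=1+\psi'$ equals $\kappa$ on the rising arcs and $-\kappa$ on the falling arcs, exists except at the breakpoints (which are equally spaced within each period, hence form a discrete, locally finite set), and $\phi$ is Lipschitz with constant $\kappa$. Setting $\gamma(x)=\phi(x)e$ then gives a curve into $A(\reals)$ with $\lVert\gamma-A\rVert_\infty=|e|\,\lVert\psi\rVert_\infty<\varepsilon$, with $\gamma'$ defined off a discrete set, and with $\gamma'(x)=\phi'(x)e\in\{\kappa e,-\kappa e\}=\{u,-u\}$ there, as required.

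There is no serious obstacle here; the one point worth isolating is why the hypothesis $|\kappa|\geq 1$ is exactly what makes the sawtooth work. Writing $\psi=\phi-\mathrm{id}$, the admissible slopes of $\psi$ are $\kappa-1$ and $-\kappa-1$; when $\kappa>1$ one is strictly positive and the other strictly negative, so $\psi$ can be forced to oscillate in a band of arbitrarily small width (by shortening the rising segments), which is precisely what keeps $\lVert\gamma-A\rVert_\infty$ small. If instead $|\kappa|<1$ both admissible slopes of $\phi$ would have absolute value strictly less than $1$, forcing $\phi(x)-x$ to drift to $\mp\infty$ as $x\to\pm\infty$, so $\gamma-A$ could not stay bounded and the assumption cannot be dropped. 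The remaining checks — Lipschitz continuity, discreteness of the breakpoint set, and the identity $\lVert\gamma-A\rVert_\infty=|e|\,\lVert\psi\rVert_\infty$ — are routine.
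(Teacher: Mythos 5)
Your proposal is correct and follows essentially the same route as the paper: both construct a piecewise-affine zigzag along the line $A(\reals)$ whose derivative alternates between $u$ and $-u$ on a partition fine enough to keep $\gamma$ within $\varepsilon$ of $A$, with $|\kappa|\geq 1$ being exactly what prevents $A$ from drifting away from $\gamma$. Your version is merely more explicit about the segment lengths $p$ and $q$ and the resulting amplitude bound.
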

\begin{proof}
    If $A=0$ or $\kappa=1$ the proof is obvious. Assume $\kappa>1$ and $A\not=0$.
    There is a partition of $\reals$ into intervals $[a_i,b_i]$, $i\in\mathbb{Z}$ with $a_i=b_{i-1}$ such that on each $[a_i,b_i]$ we can define $\gamma$ to be an affine curve with $\gamma'=\pm u$ (sign depends on parity of $i$), $\gamma(b_i)=\gamma(a_{i+1})$ for each $i$ and such that $\lVert \gamma - A\rVert_{\ell^\infty([a_i,b_i])}$ is comparable to $|b_i-a_i|$.
    Here we needed to use the assumption that $\kappa> 1$ as otherwise $A$ would ``run away'' from $\gamma$.
    
    For any $\eta>0$, the partition can be made such that $|a_i-b_i|\leq \eta$, for all $i\in \mathbb{Z}$, while still having some $\delta>0$ such that $|b_i-a_i|\geq \delta$ for all $i\in \mathbb{Z}$. The derivative $\gamma'$ then exists everywhere except the endpoints of the intervals, which form a discrete set of points. By making the partition fine enough, that is, taking $\eta>0$ small enough and recalling that $|b_i-a_i|$ is comparable to $\lVert \gamma - A\rVert_{\ell^\infty([a_i,b_i])}$, we can make it so that (i) holds.
\end{proof}

Given a linear space $Y$ of dimension $n$ and a map $I\colon Y \to Y$ which is diagonalizable, i.e.~there are eigenvectors $(u_1,\dots,u_n)$ of $I$, which form a basis of $Y$, we say that $\widetilde{I}$ is a \emph{sign permutation of $I$}, if there are $j(i)\in \{0,1\}$, $i=1,\dots,n$ such that
\begin{equation*}
    \widetilde{I}(u_i)=(-1)^{j(i)}I(u_i).
\end{equation*}
We denote by $\sp I$ the set of all sign permutations of $I$. Observe that the set $\sp I$ is independent of the particular choice of eigenvector basis $(u_1,\dots u_n)$ and indeed $\sp I$ \emph{only} depends on $I$ (and $Y$).
We shall further adopt the name \emph{non-shrinking} to refer to diagonalizable linear maps, whose eigenvalues have absolute values greater than or equal to $1$.

The following definition is a useful way of describing the possibility of approximating a linear map with a piecewise affine map, which has large volume almost everywhere.

\begin{definition}\label{D:inflation}
    Let $A\in B_{a\to b}$ be of full rank and let $\lambda>0$. We say that $A$ \emph{admits a $\lambda$-inflation} if there exists a diagonalizable map $I\colon A(\reals^n) \to A(\reals^n)$ satisfying the following properties. The absolute value of every eigenvalue of $A$ is no smaller than $1$ (i.e.~$I$ is non-shrinking) and for every $\widetilde{I}\in\sp I$ we have
    \begin{enumerate}
        \item\label{Enum:infl-1} $\lVert \widetilde{I}\circ A\rVert_{a\to b}\leq 1$.
        \item\label{Enum:infl-2} $\vol (\widetilde{I}\circ A)\geq \lambda$.
    \end{enumerate}
\end{definition}

\begin{remark}
    In the case that $|\cdot|_b$ is Euclidean (and hence invariant under linear reflections), it is enough to verify $\lVert I \circ A \rVert_{a\to b}\leq 1$ and one does not need to consider sign permutations of $I$.
\end{remark}

\begin{definition}
    The pair of norms $|\cdot|_a$ on $\reals^n$ and $|\cdot|_{b}$ on $\reals^m$ is said to form a \emph{$\lambda$-inflating pair} for a $\lambda>0$ if, for every linear map $A\in B_{a\to b}$ of full rank, $A$ admits a $\lambda$-inflation. We also write that $\infl{a}{b}$ forms a $\lambda$-inflating pair.
\end{definition}

Some geometric intuition for the preceding definitions is in order. If $A$ admits a $\lambda$-inflation, this means that the convex set $A(B_a)\subset A(\reals^n)$ (having a non-empty interior in $A(\reals^n)$) can be inflated in such a way that $A(B_a)\subset B_b\cap A(\reals^n)$ and that the $n$-dimensional (Euclidean) Hausdorff measure of $A(B_a)$ is sufficiently large (this depends on the norms and $\lambda$). Moreover, this inflation must be achieved using a map, which admits a diagonal form with respect to some basis of $A(\reals^n)$ and this map may not shrink in any direction. It is useful to note that this basis corresponds via $A^{-1}\colon A(\reals^n) \to \reals^n$ with a basis in $\reals^n$.

The requirement that the inflation be diagonal will become clear once we prove the principal result. The reason we require that $I$ does not shrink in any direction (a condition on its eigenvalues) is so that we can use Observation \ref{Obs:good-curve} (this corresponds to the condition on $\kappa$ therein).

\begin{example}\label{Ex:Euclidean-inflation}
    Let $|\cdot |_a=|\cdot|_2$ and $|\cdot |_b=|\cdot|_2$. Then every linear map $A\in B_{2\to 2}$ of full rank admits a $1$-inflation. Indeed, any such map $A$ is of the form $A=S\circ D \circ R$, where $R\in \mathrm{O}(n)$, $S\in \mathrm{O}(m)$ and $D$ is diagonal with diagonal values less than or equal to one in magnitude. Moreover, as $A$ is of full rank, the diagonal values of $D$ are non zero and so $D$ admits the inverse $D^{-1}\colon D(\reals^n) \to \reals^n$. Let $E\colon \reals^n\to \reals^m$ be the diagonal matrix with only $1$'s on the diagonal and let $\overline{A}=S\circ E \circ R$. It is an easy exercise to now show that $\overline{A}=IA$ for
    \begin{equation}\label{E:I-formula}
        I=S\circ E \circ D^{-1} \circ S^{-1}\colon A(\reals^n) \to A(\reals^n).    
    \end{equation}
    From \eqref{E:I-formula}, as $E\circ D^{-1}$ is diagonal, $I$ is diagonalizable, and both properties \ref{Enum:infl-1} and \ref{Enum:infl-2} from the definition of $1$-inflation are satisfied.
\end{example}

\begin{proposition}\label{P:PR-inflation-linear}
    Let $A\colon \reals^n \to \reals^m$ be an affine map of the form $A=A_{\textnormal{lin}}+u$, where $A_{\textnormal{lin}}$ is a linear map and $u\in \reals^m$. Assume that $A_{\textnormal{lin}}$ is of full rank, $L_0=\lVert A_{\textnormal{lin}}\rVert_{a\to b}\leq 1$ and $A_{\textnormal{lin}}$ admits a $\lambda$-inflation for some $\lambda>0$.
    Let $E\subset\reals^n$ be a bounded $\Hn$-measurable set. Then, for every $\varepsilon>0$, there is $g\in \tLip_{L_0}(E_a,\reals^m_b)$ such that
    \begin{enumerate}
        \item\label{Enum:PR1-1} $\lVert g-A \rVert_{\infty}<\varepsilon$ and
        \item\label{Enum:PR1-2} $\vol g' \geq L_0\lambda$ $\Hn$-a.e.~in $E$.
    \end{enumerate}
\end{proposition}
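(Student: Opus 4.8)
The plan is to produce $g$ as a constant plus a finite sum of one-dimensional sawtooth perturbations, one along each eigendirection of an inflation of $A_{\textnormal{lin}}$, and to read off $g'$ pointwise from Observation \ref{Obs:good-curve}. The sawteeth are arranged so that, along each eigendirection, the slope of $g$ is a fixed multiple of the slope of $A_{\textnormal{lin}}$; then $g'$ is, $\Hn$-a.e., a sign permutation of a single rescaled ``inflated'' linear map composed with $A_{\textnormal{lin}}$, and both the volume lower bound and the Lipschitz bound fall out of the definition of a $\lambda$-inflation.

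In detail, first I would unpack the hypothesis. Fix a diagonalizable non-shrinking $I\colon A_{\textnormal{lin}}(\reals^n)\to A_{\textnormal{lin}}(\reals^n)$ realizing the $\lambda$-inflation of $A_{\textnormal{lin}}$, an eigenbasis $u_1,\dots,u_n$ of $A_{\textnormal{lin}}(\reals^n)$, and eigenvalues $\kappa_1,\dots,\kappa_n$ with $|\kappa_i|\geq 1$, so that every $\widetilde I\in\sp I$ satisfies $\lVert\widetilde I\circ A_{\textnormal{lin}}\rVert_{a\to b}\leq 1$ and $\vol(\widetilde I\circ A_{\textnormal{lin}})\geq\lambda$. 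Since $A_{\textnormal{lin}}$ has full rank it is a linear isomorphism onto $A_{\textnormal{lin}}(\reals^n)$, so the vectors $v_i=A_{\textnormal{lin}}^{-1}(u_i)$ form a basis of $\reals^n$; let $\ell_1,\dots,\ell_n$ be the dual functionals, so $A_{\textnormal{lin}}(x)=\sum_i\ell_i(x)\,u_i$. I would then replace $I$ by a rescaled diagonalizable ``target derivative'' $J$, with the same eigenvectors and eigenvalues $\mu_1,\dots,\mu_n$, still satisfying $|\mu_i|\geq 1$, but chosen so that every $\widetilde J\in\sp J$ has $\lVert\widetilde J\circ A_{\textnormal{lin}}\rVert_{a\to b}\leq L_0$ while $\vol(J\circ A_{\textnormal{lin}})\geq L_0\lambda$; arranging such a $J$, using $L_0=\lVert A_{\textnormal{lin}}\rVert_{a\to b}$ and the non-shrinking property, is the non-formal step of the argument.

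With this data fixed and $\varepsilon>0$ given, for each $i$ apply Observation \ref{Obs:good-curve} to the nonzero linear map $t\mapsto t\,u_i$ with $\kappa=\mu_i$ (legitimate since $|\mu_i|\geq 1$) to obtain a Lipschitz curve $\gamma_i\colon\reals\to\reals\,u_i$ with $\lVert\gamma_i-(t\mapsto t\,u_i)\rVert_\infty<\varepsilon/n$, differentiable off a discrete set $D_i\subset\reals$, and with $\gamma_i'=\pm\mu_i u_i$ wherever it exists. Set
\begin{equation*}
    g(x)=u+\sum_{i=1}^{n}\gamma_i(\ell_i(x)),\qquad x\in\reals^n,
\end{equation*}
and restrict $g$ to $E$. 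The uniform estimate gives $\lVert g-A\rVert_{\ell^\infty(E)}\leq\sum_i\lVert\gamma_i-(t\mapsto t\,u_i)\rVert_\infty<\varepsilon$, which is \ref{Enum:PR1-1}, and boundedness of $E$ guarantees that $g|_E$ is bounded, hence an element of $\tLip(E_a,\reals^m_b)$. The set where some $\ell_i(x)\in D_i$ is a countable union of affine hyperplanes, so $\Hn$-null; off it $g$ is differentiable and $g'(x)=\widetilde J_x\circ A_{\textnormal{lin}}$, where $\widetilde J_x\in\sp J$ records, for each $i$, which affine tooth of $\gamma_i$ contains $\ell_i(x)$. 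Hence $\lVert g'(x)\rVert_{a\to b}\leq L_0$ for $\Hn$-a.e.\ $x\in\reals^n$, so by (the argument of) Lemma \ref{L:prel-Lip} on the convex set $\reals^n$ we get $\tLip_{\reals^n_a\to\reals^m_b}(g)\leq L_0$ and therefore $g\in\tLip_{L_0}(E_a,\reals^m_b)$; and $\vol g'(x)=\vol(\widetilde J_x\circ A_{\textnormal{lin}})\geq L_0\lambda$ for $\Hn$-a.e.\ $x\in E$, which is \ref{Enum:PR1-2}.

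The hyperplane measure bookkeeping, the coordinatewise additive gluing of the sawteeth, and the reduction of the Lipschitz estimate to the convex case are all routine. The genuine obstacle is the construction of $J$ flagged above: one must rescale the given inflation so that the operator norm of $\widetilde J\circ A_{\textnormal{lin}}$ is at most $L_0$ for \emph{every} sign permutation simultaneously (this is exactly where the quantifier over $\sp I$ in Definition \ref{D:inflation} is indispensable, since the teeth of the sawteeth force $\widetilde J_x$ to vary with $x$), while keeping each $|\mu_i|\geq 1$ so that Observation \ref{Obs:good-curve} still applies (the role of the non-shrinking condition) and not dropping below $L_0\lambda$ in volume. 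Reconciling these competing requirements with the asserted constants is the heart of the matter; everything else is assembly.
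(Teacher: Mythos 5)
Your construction is the paper's own, essentially step for step: pass to the eigenbasis $u_i=A_{\textnormal{lin}}(v_i)$ of the inflation, build a sawtooth $\gamma_i$ along each eigendirection via Observation \ref{Obs:good-curve}, sum them in the dual coordinates $\ell_i$, observe that $g'$ is $\Hn$-a.e.\ equal to $\widetilde{J}_x\circ A_{\textnormal{lin}}$ for some sign permutation $\widetilde{J}_x$, and conclude the Lipschitz bound from Lemma \ref{L:prel-Lip} and the volume bound from Definition \ref{D:inflation} (your $\varepsilon/n$ bookkeeping and the explicit hyperplane null set are, if anything, more careful than the paper's). The only divergence is the step you flag. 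The paper does not construct any rescaled inflation: it opens with ``by a standard scaling argument, we may assume $L_0=1$'', and in that normalized case your $J$ is simply $I$ itself --- Definition \ref{D:inflation} hands you $\lVert \widetilde{I}\circ A_{\textnormal{lin}}\rVert_{a\to b}\leq 1=L_0$ and $\vol(\widetilde{I}\circ A_{\textnormal{lin}})\geq\lambda=L_0\lambda$ for every sign permutation, and there is nothing left to arrange.

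Your suspicion that the competing requirements on $J$ cannot be reconciled when $L_0<1$ is correct, so do not keep looking for the missing rescaling: the object you want cannot exist in general, because its existence would yield an $L_0$-Lipschitz $g$ with $\vol g'\geq L_0\lambda$ a.e., and that conclusion already fails. Take $n=m=2$, both norms Euclidean, $A_{\textnormal{lin}}=L_0\,\mathrm{id}$ with $L_0<1$; then $I=L_0^{-1}\mathrm{id}$ is a non-shrinking $1$-inflation, so $\lambda=1$, yet every $g\in\tLip_{L_0}(\reals^2_2,\reals^2_2)$ has both singular values of $g'$ bounded by $L_0$ and hence $\vol g'\leq L_0^{2}<L_0=L_0\lambda$ wherever $g'$ exists. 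The natural candidate $J=L_0I$ keeps the operator norm at $L_0$ but only delivers $\vol\geq L_0^{n}\lambda$ and may destroy the non-shrinking property needed for Observation \ref{Obs:good-curve}. The clean outputs of your construction are therefore: for $L_0=1$, the statement exactly as claimed; and for general $L_0\leq 1$, running your argument with $J=I$ gives a \emph{$1$-Lipschitz} $g$ with $\lVert g-A\rVert_\infty<\varepsilon$ and $\vol g'\geq\lambda$ a.e. Write the proof in the normalized case and treat the factor $L_0$ in the general statement as a normalization to be justified (or weakened) separately --- in the downstream application (Proposition \ref{P:PR-inflation-cube}) $L_0$ is taken arbitrarily close to $1$, so nothing is lost.
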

\begin{proof}
By a standard scaling argument, we may assume $L_0=1$.
    We may assume, without loss of generality, that $A$ is linear. We first construct $g\colon \reals^n \to \reals^m$ and then restrict to $E$. By our assumption on $A$, there is a diagonalizable map 
    $I\colon A(\reals^n)\to A(\reals^n)$ whose eigenvectors $(u_1,\dots,u_n)=(A(x_1),\dots,A(x_n))$ form a basis of $A(\reals^n)$ and which satisfies the properties from Definition \ref{D:inflation}. 
    Note that $(x_1,\dots,x_n)$ form a basis of $\reals^n$. 
    
    Fix $i\in\{1,\dots, n\}$. Recalling Observation \ref{Obs:good-curve}, there is a Lipschitz curve $\gamma_i\colon \reals \to \tspan\{u_i\}$ such that
    \begin{equation*}
        |\gamma_i(t)-A(tx_i)|<\varepsilon\quad \text{for all $t\in \reals$,}
    \end{equation*}
    and
    \begin{equation}\label{E:gamma-diff}
        \gamma_i'(t)=\pm I(A(x_i))\quad \text{for all $t\in\reals$ up to a discrete set.}
    \end{equation}
    Denote by $t\colon \reals^n \to \reals^n$ the coordinate function with respect to the basis $(x_1,\dots, x_n)$, i.e.~we have $x=\sum_{i=1}^n t_i(x)x_i$ for each $x\in \reals^n$.
    We now simply let
    \begin{equation*}
        g(x) = \sum_{i=1}^n \gamma_i(t_i(x)),\quad \text{for $x\in \reals^n$.}
    \end{equation*}
    As each $\gamma_i$ is Lipschitz, $g$ is also Lipschitz.
    Fix an $i\in\{1, \dots, n\}$.
    Given an arbitrary $x\in\reals^n$ such that $\gamma_i'(t_i(x))$ exists, since $t$ is a linear map, we have $t'(x)=t$. Therefore, using \eqref{E:gamma-diff}, we have for any $\alpha_j \in \reals$, $j=1,\dots, n$
    \begin{equation*}
    \begin{split}
        (\gamma_i \circ t_i)'(x)(\alpha_1 x_1 + \dots + \alpha_n x_n)&=\gamma_i'(t_i(x))(t_i'(x))(\alpha_1 x_1 + \dots + \alpha_n x_n)\\
        &=\gamma_i'(t_i(x))(\alpha_i)= \pm \alpha_i I(A(x_i)).
    \end{split}
    \end{equation*}
    Therefore, by definition of $g$, we have
    \begin{equation*}
        g'(x)(\alpha_1 x_1+ \dots + \alpha_n x_n)= \sum_{j=1}^{n} \pm \alpha_j I(A(x_j)).
    \end{equation*}
    This means that for every $x\in \reals^n$, up to a set of $\Hn$-measure $0$, $g'(x)=\widetilde I A$ for some $\widetilde I \in \sp I$.
    Therefore, by \ref{Enum:infl-1} in Definition \ref{D:inflation}, $g$ is a Lipschitz function on the entire $\reals^n$ satisfying $\lVert g'(x) \rVert_{a\to b}\leq 1$ for every such $x\in \reals^n$.
    This by Lemma \ref{L:prel-Lip} implies that $g$ is $1$-Lipschitz between $\reals^n_a$ and $\reals^m_b$.
    The restriction $g_{|E}$ therefore satisfies $g_{|E}\in \tLip_1(E_a, \reals^m_b)$ and \ref{Enum:PR1-1}.
    By the property \ref{Enum:infl-2} in Definition \ref{D:inflation}, using once again the fact that $g'(x) = \widetilde I A$ for $\Hn$-a.e.~$x\in E$, \ref{Enum:PR1-2} is satisfied as well.
\end{proof}

\begin{proposition}\label{P:PR-inflation-cube}
    Let $E\subset \reals^n$ be a bounded $\Hn$-measurable set and suppose $E\subset D \subset \reals^k$. Let $f\colon D \to \reals^m$ and suppose $f\in \tLip_1(D_a,\reals^m_b)$. Assume that $\infl{a}{b}$ forms a $\lambda$-inflating pair for some $\lambda>0$. Then to each $\varepsilon>0$ and $\eta\in(0,1)$ there exists $g\colon D \to \reals^m$ such that
    \begin{enumerate}
        \item\label{Enum:PR2-1} $g\in \tLip_1(D_a,\reals^m_b)$,
        \item\label{Enum:PR2-2} $\lVert g-f \rVert_{\infty}<\varepsilon$,
        \item\label{Enum:PR2-3} $\int_E \vol g'\diff \Hn \geq \eta\lambda \Hn(E)$.
    \end{enumerate}
\end{proposition}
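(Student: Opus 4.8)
The plan is to modify $f$ on most of $E$, ball by ball, replacing it locally by a rapidly oscillating map whose (slightly perturbed) tangent has been \emph{inflated} all the way up — using the $\lambda$-inflating hypothesis — and then to patch these local modifications back into $f$ on all of $D$ via the Lipschitz extension lemma (Lemma~\ref{L:Lipschitz-extension-lemma}), with a small final rescaling. We may assume $\Hn(E)>0$; since $E\subset\reals^n$ is bounded, $\Hn(E)<\infty$. By Rademacher's theorem $f'$ exists $\Hn$-a.e.\ on $E$, and by Lusin's and Egorov's theorems we may fix a compact $\widetilde E\subset E$ with $\Hn(E\setminus\widetilde E)$ as small as we wish, on which $x\mapsto f'(x)$ is continuous and $f$ is \emph{uniformly} differentiable, i.e.
\[
\lVert f(y)-f(x)-f'(x)(y-x)\rVert_b\le\psi(|y-x|_a)\,|y-x|_a\qquad\text{for all }x\in\widetilde E,
\]
with a single modulus $\psi$, $\psi(0^+)=0$.

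\emph{Local step.} Fix a density point $x\in\widetilde E$ and, once and for all, a full-rank linear map $P\colon\reals^n\to\reals^m$ (available since $n\le m$). The polynomial $t\mapsto\det\!\left((f'(x)+tP)^{T}(f'(x)+tP)\right)$ has leading coefficient $\det(P^{T}P)\neq0$, so it has finitely many zeros and we may choose $t_x$ with $|t_x|$ as small as we like for which $A_x:=f'(x)+t_xP$ has full rank; for $|t_x|$ small, $\lVert A_x\rVert_{a\to b}\le\lVert f'(x)\rVert_{a\to b}+|t_x|\,\lVert P\rVert_{a\to b}\le1$, so $A_x\in B_{a\to b}$. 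As $\infl{a}{b}$ is a $\lambda$-inflating pair, $A_x$ admits a $\lambda$-inflation $I_x$, which in particular is non-shrinking. Running the curve construction from the proof of Proposition~\ref{P:PR-inflation-linear} with this $I_x$ — building, for each eigenvector $u_j=A_x(x_j)$, an oscillating curve $\gamma_j$ by Observation~\ref{Obs:good-curve} (whose hypothesis $|\kappa|\ge1$ holds precisely because $I_x$ is non-shrinking), and setting $g^{r}_{x}(y)=f(x)+\sum_j\gamma_j\!\left(t_j(y-x)\right)$ — produces, on any Euclidean ball $B(x,r)$, a map $g^{r}_{x}$ with $(g^{r}_{x})'=\widetilde I_x\circ A_x$ $\Hn$-a.e.\ for sign permutations $\widetilde I_x\in\sp I_x$; hence by Definition~\ref{D:inflation} and the argument of Lemma~\ref{L:prel-Lip}, $g^{r}_{x}\in\tLip_1(B(x,r)\cap D,\reals^m_b)$ and $\vol(g^{r}_{x})'\ge\lambda$ $\Hn$-a.e. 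Combining the curve estimate with uniform differentiability and the smallness of $|t_x|$, we also obtain $\lVert g^{r}_{x}-f\rVert_{\ell^\infty(B(x,r)\cap D)}\le\psi_1(r)\,r$ for a modulus $\psi_1$ uniform over $x\in\widetilde E$ with $\psi_1(0^+)=0$.

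\emph{Patching.} Fix small $\delta>0$ and $\sigma\in(0,1)$ close to $1$. The balls $B(x,r)$, $x\in\widetilde E$, with $r\le r_x$ chosen so small that $\psi_1(r)\le\delta(1-\sigma)$ and $\sigma B(x,r)$ carries almost full density of $\widetilde E$, form a Vitali cover of $\widetilde E$; extract finitely many disjoint ones $B(x_1,r_1),\dots,B(x_N,r_N)$ whose portions $S_i:=\sigma B(x_i,r_i)\cap\widetilde E$ cover $\widetilde E$ up to arbitrarily small measure (exactly as in Lemma~\ref{L:density-construction-vol-and-overlap-normed}). Put $g_i:=g^{r_i}_{x_i}$ and pick $\rho_i\in(0,1]$ small so that the $D$-neighbourhoods $B_D(S_i,\rho_i)$ are pairwise disjoint and contained in $B(x_i,r_i)\cap D$; then each $g_i$ is $1$-Lipschitz on $B_D(S_i,\rho_i)$ with $\lVert g_i-f\rVert_{\ell^\infty(B_D(S_i,\rho_i))}\le\delta\rho_i$. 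Lemma~\ref{L:Lipschitz-extension-lemma} with $L=1$ yields $g\colon D\to\reals^m$ with $g=g_i$ on each $S_i$, $\tLip_{D_a\to\reals^m_b}(g)\le1+4\delta$ and $\lVert g-f\rVert_{\ell^\infty(D)}<\delta$. Finally $\bar g:=(1+4\delta)^{-1}g\in\tLip_1(D_a,\reals^m_b)$ gives (i), satisfies $\lVert\bar g-f\rVert_\infty<\varepsilon$ for $\delta$ small (using that $f$ is bounded), which is (ii), and $\vol\bar g'=(1+4\delta)^{-n}\vol g'\ge(1+4\delta)^{-n}\lambda$ $\Hn$-a.e.\ on $\bigcup_iS_i$ — here one uses, as in Lemma~\ref{L:density-construction-vol-and-overlap-normed}, that $g=g_i$ on $S_i$ forces $g'=g_i'$ $\Hn$-a.e.\ on $S_i$; since $\bigcup_iS_i$ exhausts $E$ up to measure $\varepsilon'$, choosing $\delta,\varepsilon'$ small in terms of $\eta$ and $\Hn(E)$ gives $\int_E\vol\bar g'\ge(1+4\delta)^{-n}\lambda(\Hn(E)-\varepsilon')\ge\eta\lambda\,\Hn(E)$, which is (iii).

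The hard part is the local step. The issue is to keep the oscillating replacement within $o(r)$ of $f$ in sup norm while pushing its Jacobian up to the \emph{full} $\lambda$ (not merely to $\lVert f'(x)\rVert\lambda$, which is all Proposition~\ref{P:PR-inflation-linear} literally provides and which is useless near points where $f$ nearly collapses) and keeping it $1$-Lipschitz. This works only because (a) after an arbitrarily small perturbation making the tangent full rank, the $\lambda$-inflating hypothesis inflates it to volume $\ge\lambda$ with operator norm still $\le1$, and (b) the inflation is non-shrinking, which is exactly what Observation~\ref{Obs:good-curve} needs to keep the oscillation amplitude negligible. Globalising with only an $\varepsilon$-loss in measure and an arbitrarily small increase of the Lipschitz constant is the routine part, handled by Lemma~\ref{L:Lipschitz-extension-lemma} and the rescaling, using the slack $\eta<1$. (If $D\subset\reals^k$ with $k>n$ is genuinely intended, one treats $\reals^n$ as a coordinate subspace of $\reals^k$, extends each $g_i$ trivially in the extra directions, and argues as above.)
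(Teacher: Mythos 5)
Your overall strategy is the same as the paper's: approximate $f$ by full-rank affine maps on a finite disjoint Vitali family of balls covering most of $E$, inflate each linear part using the $\lambda$-inflating hypothesis via the oscillating-curve construction of Proposition \ref{P:PR-inflation-linear} and Observation \ref{Obs:good-curve}, and patch with Lemma \ref{L:Lipschitz-extension-lemma}. The only structural difference is bookkeeping of the Lipschitz budget: the paper first replaces $f$ by a map with $\tLip_{a\to b}(f)\leq L_0<1$ (with $L_0\geq\frac{1+\eta}{2}$) so that the $+4\delta$ from the gluing stays below $1$ and the factor $L_0$ lost in the Jacobian is absorbed by $\eta$, whereas you keep $f$ as is and rescale by $(1+4\delta)^{-1}$ at the end, losing $(1+4\delta)^{-n}$ instead. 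Both accountings work.

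There is, however, one genuine gap in your local step. You perturb $f'(x)$ to $A_x=f'(x)+t_xP$ and assert $\lVert A_x\rVert_{a\to b}\leq\lVert f'(x)\rVert_{a\to b}+|t_x|\,\lVert P\rVert_{a\to b}\leq 1$ for $|t_x|$ small. This fails precisely when $\lVert f'(x)\rVert_{a\to b}=1$, which is generic for a $1$-Lipschitz $f$, and then $A_x\notin B_{a\to b}$, so the $\lambda$-inflating hypothesis does not apply to it. One cannot simply renormalize $A_x$ afterwards: dividing the inflation of $A_x/\lVert A_x\rVert_{a\to b}$ by $\lVert A_x\rVert_{a\to b}>1$ may make it shrinking, which destroys the hypothesis $|\kappa|\geq 1$ of Observation \ref{Obs:good-curve} on which the amplitude control of the oscillation rests. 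The fix is exactly the paper's opening move: since $f$ is bounded, replace $f$ by $(1-\delta')f$ at the outset so that $\tLip_{a\to b}(f)<1$ strictly; then all the derivatives live in the interior of $B_{a\to b}$, the full-rank perturbation stays inside $B_{a\to b}$, and the rest of your argument goes through. With that one-line amendment the proof is correct.
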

\begin{proof}
    If $\Hn(E)=0$, the statement obviously holds, so we may assume $\Hn(E)>0$.
    We may assume, without loss of generality, that the Lipschitz constant of $f$ is strictly less than $1$. We therefore have $L_0=\max\{\tLip_{a\to b}(f),\tfrac{1+\eta}{2}\}<1$.
    Find $C>0$ such that
    \begin{equation}\label{E:PR-IC-C-choice}
        \lambda L_0 \Hn(E)-\lambda L_0 2C \geq \eta\lambda \Hn(E).
    \end{equation}
    Find $\Delta>0$ such that for any sequence $B_i=B_a(x_i,r_i)\subset B_a(E,1)$, $x_i\in E$ of disjoint balls we have
    \begin{equation*}
        \sum_{i} r_i^n \leq \Delta.
    \end{equation*}
    Let $\sigma\in(0,1)$ be such that
    \begin{equation}\label{E:PR-IC-sigma-choice}
        \Delta(1-\sigma^n)<C.
    \end{equation}
    
     Find $\delta\in (0,\frac{1}{2}\varepsilon)$ such that $L_0+4\delta<1$. Then, for any $x\in E$ a density point of $E$ such that $f'(x)$ exists, we find $r_x\in(0,1]$ such that
    \begin{equation*}
        |f(y)-f(x)-f'(x)(y-x)|_b\leq \delta (1-\sigma)|y-x|_a\quad \text{for all $y\in B_a(x,r_x)\cap D$.}
    \end{equation*}
    In a standard way, using Vitali covering and continuity of measure, we thereby obtain a finite sequence of disjoint balls $B_i=B_a(x_i,r_i)\cap D$, $x_i\in E$, $i=1,\dots,i_0$ in $D_a$ such that
    \begin{equation*}
        \Hn(E\setminus\bigcup_{i=1}^{i_0}B_i)<C
    \end{equation*}
    and for each $i\in \{1,\dots, i_0\}$ we have an affine map $A_i\colon \reals^n \to \reals^m$ with
    \begin{equation*}
        |f(y)-A_i(y)|_b< \delta(1-\sigma) r_i, \quad\text{for all $y\in B_i$.}
    \end{equation*}
    As $f$ is $L_0$-Lipschitz, we may also assume that the linear part of each $A_i$ lies in $L_0 B_{a\to b}$.
    From the density of maps of full rank in $B_{a\to b}$, we may assume that each $A_i$ is of full rank. Recalling Proposition \ref{P:PR-inflation-linear}, we find $g_i\colon B_i \to \reals^n$ such that $g_i\in \tLip_{L_0}((B_i)_a,\reals^m_b)$ and
    \begin{equation*}
        \vol g' \geq L_0\lambda \quad \text{$\Hn$-a.e.~in $B_i$,}
    \end{equation*}
    and 
    \begin{equation*}
        \lVert g_i - A_i \rVert_{\ell^\infty(B_i)}\leq \delta(1-\sigma) r_i.
    \end{equation*}
    Using the extension Lemma \ref{L:Lipschitz-extension-lemma} (for $X=D$), there is a function $g\colon D \to \reals^m$ such that
    \begin{enumerate}[label={(\alph*)}]
        \item\label{Enum:PR2-a} $\tLip_{a\to b} (g)\leq L_0+4\delta$,
        \item\label{Enum:PR2-b} $g=g_i$ on $\sigma B_i$ for each $i\in\{1,\dots, n\}$,
        \item\label{Enum:PR2-c} $\lVert g-f \rVert_{l^{\infty}(D)}\leq 2\delta(1-\sigma)\leq \varepsilon$.
    \end{enumerate}
    By our choice of $\delta$, we have $g\in\tLip_1(D_a,\reals^m_b)$ which is \ref{Enum:PR2-1} and from \ref{Enum:PR2-c} we we get \ref{Enum:PR2-2}. It remains to show \ref{Enum:PR2-3}.
    
    Firstly, by \eqref{E:PR-IC-sigma-choice}, we have
    \begin{equation*}
        \Hn(E\setminus \bigcup_{i=1}^{i_0}\sigma B_i)\leq \Hn(E\setminus \bigcup_{i=1}^{i_0} B_i)+\Hn(\bigcup_{i=1}^{i_0} B_i\setminus \sigma B_i)
        \leq C+ \Delta(1-\sigma^n)<2C.
    \end{equation*}
    Moreover, by \ref{Enum:PR2-b}, $\vol g'\geq L_0\lambda$ in each $B_i$ and using also \eqref{E:PR-IC-C-choice} it follows that
    \begin{equation*}
        \begin{split}
            \int_E \vol g' \diff \Hn &\geq \sum_{i=1}^{i_0} \int_{\sigma B_i\cap E} \vol g' \diff \Hn
            \geq \lambda L_0 \sum_{i=1}^{i_0} \Hn(\sigma B_i\cap E)\\
            &\geq \lambda L_0 \left(\Hn(E)-\Hn(E\setminus \bigcup_{i=1}^{i_0} \sigma B_i)\right)
            \geq \lambda L_0 \Hn(E)-\lambda L_0 2C\\
            &\geq \eta \lambda \Hn(E).
        \end{split}
    \end{equation*}
\end{proof}

We are now ready to prove the main result of this section.

\begin{theorem}\label{T:positive-result-cubes}
Suppose $n\leq m$ and let $|\cdot |_a$ be a norm on $\reals^n$ and $|\cdot |_b$ a norm on $\reals^m$. Let $E\subset \reals^n$ be bounded and $\Hn$-measurable and let $E\subset D \subset \reals^n$. Let $\lambda>0$ and assume $\infl{a}{b}$ forms a $\lambda$-inflating pair. The set
\begin{equation*}
    \{f\in \tLip(D_a,\reals^m_b): \int_E \vol f' \diff \Hn \geq \lambda \Hn(E)\}
\end{equation*}
is residual in $\tLip(D_a,\reals^m_b)$.
Suppose $n<m$. Then the set
\begin{equation*}
    \{f\in \tLip(D_a,\reals^m_b): \Hn(f(E)) \geq \lambda \Hn(E)\}
\end{equation*}
is residual in $\tLip(D_a,\reals^m_b)$.
\end{theorem}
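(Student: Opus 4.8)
\section*{Proof proposal for Theorem \ref{T:positive-result-cubes}}

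The plan is to realise each of the two sets as a countable intersection of open dense subsets of $\tLip_1(D_a,\reals^m_b)$ — a complete metric space, since $\reals^m_b$ is complete — and then invoke Baire's theorem. First a few harmless simplifications. If $\Hn(E)=0$ both sets equal the whole space, so I assume $\Hn(E)>0$. Every bounded $1$-Lipschitz map $D_a\to\reals^m_b$ extends uniquely to $\overline D_a$, and $f\mapsto\overline f$ is an isometric bijection $\tLip_1(D_a,\reals^m_b)\to\tLip_1(\overline D_a,\reals^m_b)$ leaving $\Hn(f(E))$ and $\int_E\vol f'\diff\Hn$ unchanged (recall $E\subset D$); so I may assume $D$ is closed, whence $D_a$ is a complete metric space and the results of Sections \ref{S:openness} and \ref{S:density-general} apply with $X=D_a$. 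Finally, since $E\subset\reals^n$ one has $J_Ef=\vol f'$ $\Hn$-a.e.\ on $E$, so $\int_E\vol f'\diff\Hn=\int_EJ_Ef\diff\Hn$ throughout.

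For the first assertion, the functional $f\mapsto\int_EJ_Ef\diff\Hn$ is lower semi-continuous on $\tLip_1(D_a,\reals^m_b)$ by Lemma \ref{L:lsc-with-target-norm}, so $\mathcal V_{>t}:=\{f:\int_E\vol f'\diff\Hn>t\}$ is open for every $t$, and
\begin{equation*}
    \Big\{f\in\tLip_1(D_a,\reals^m_b):\int_E\vol f'\diff\Hn\ge\lambda\Hn(E)\Big\}=\bigcap_{i\in\nat}\mathcal V_{>\lambda\Hn(E)-1/i}.
\end{equation*}
Hence it suffices to show each $\mathcal V_{>\lambda\Hn(E)-1/i}$ is dense. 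Given $f\in\tLip_1(D_a,\reals^m_b)$, $\varepsilon>0$ and $i\in\nat$, choose $\eta\in(0,1)$ with $\eta\lambda\Hn(E)>\lambda\Hn(E)-1/i$; since $\infl{a}{b}$ is a $\lambda$-inflating pair, Proposition \ref{P:PR-inflation-cube} provides $g\in\tLip_1(D_a,\reals^m_b)$ with $\lVert g-f\rVert_\infty<\varepsilon$ and $\int_E\vol g'\diff\Hn\ge\eta\lambda\Hn(E)>\lambda\Hn(E)-1/i$, i.e.\ $g\in\mathcal V_{>\lambda\Hn(E)-1/i}$. Density follows, and Baire's theorem gives residuality.

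For the second assertion assume $n<m$ and apply Theorem \ref{T:char-area-area-formula} with $X=D_a$, $L=1$ and $C=\lambda\Hn(E)$. The first assertion is precisely statement (iii) of that theorem; since $n<m$, statements (i)--(iv) are equivalent, so statement (i) holds, which is exactly the residuality of $\{f\in\tLip_1(D_a,\reals^m_b):\Hn(f(E))\ge\lambda\Hn(E)\}$. Equivalently, one can argue directly: this set equals $\bigcap_i\{f:\Hn(f(E))>\lambda\Hn(E)-1/i\}$, each factor is open by lower semi-continuity of $f\mapsto\Hn(f(E))$ (Lemma \ref{L:lsc-with-target-norm}), and it is dense because it contains the dense set $\mathcal V_{>\lambda\Hn(E)-1/i}\cap\{f:\int_EJ_Ef\diff\Hn=\Hn(f(E))\}$, an intersection of an open dense set with the dense set supplied by Corollary \ref{C:no-overlap-density}.

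The substance of the argument has already been carried out in Proposition \ref{P:PR-inflation-cube} and the semi-continuity results of Section \ref{S:openness}; in this assembly the only delicate points are the reduction that makes the domain complete (so the results cited from Sections \ref{S:openness} and \ref{S:density-general} apply) and the passage from control of $\int_E\vol f'$ to control of $\Hn(f(E))$, which rests on the non-overlapping approximation of Corollary \ref{C:no-overlap-density} and is exactly where the hypothesis $n<m$ enters.
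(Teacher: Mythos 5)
Your proposal is correct and follows essentially the same route as the paper: residuality is reduced, via the lower semi-continuity results and Theorem \ref{T:char-area-area-formula}, to density of the sets $\{f:\int_E\vol f'\diff\Hn>\widetilde\lambda\Hn(E)\}$, which is exactly Proposition \ref{P:PR-inflation-cube}, with Corollary \ref{C:no-overlap-density} supplying the passage to $\Hn(f(E))$ when $n<m$. Your explicit reduction to a closed domain $D$ (so that the cited results for complete metric spaces apply) is a point the paper leaves implicit, as is its one-line appeal to Haar's theorem to reconcile $\Hn$ with $\Hn_a$, but neither changes the substance of the argument.
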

\begin{proof}
    Firstly, we realize that by Haar's theorem, $\H^n$ is a constant multiple of $\Hn_a$.
    Therefore, we may use the general result of Theorem \ref{T:char-area-area-formula} and it suffices to show that
    \begin{equation*}
        \{f\in \tLip(D_a,\reals^m_b): \int_E \vol f' \diff \Hn > \widetilde{\lambda} \Hn(E)\}
    \end{equation*}
    is dense for every $\widetilde\lambda < \lambda$ provided $n\leq m$. This follows from Proposition \ref{P:PR-inflation-cube}.
    
    % To show the first statement, we denote by $M$ the set in \eqref{E:PRC-the-set1}. We have
    % \begin{equation*}
    %     M=\bigcup_i \mathcal{G}_i,
    % \end{equation*}
    % where for $i\in \nat$, we define
    % \begin{equation*}
    %     \mathcal{G}_i=\{f\in \tLip(D_a,\reals^m_b): \int_E \vol f' \diff \Hn > (1-\tfrac{1}{i}) \lambda \Hn(E)\}.
    % \end{equation*}
    % By Proposition \ref{P:PR-inflation-cube}, the sets $\mathcal{G}_i$ are dense. By Theorem \ref{T:lsc-area-formula-general} (and recalling once again that $\Hn$ is a constant multiple of $\Hn_a$) the sets $\mathcal{G}_i$ are open. Whence $M$ is residual and we are done.
\end{proof}

By virtue of Haar's theorem, we can obtain the relevant result with ``correct'' Hausdorff measure on the domain side.

\begin{corollary}
    Suppose $n\leq m$, let $|\cdot |_a$ be a norm on $\reals^n$ and let $|\cdot|_b$ be a norm on $\reals^m$. Let $E\subset \reals^n$ be bounded and $\Hn$-measurable and let $E\subset D \subset \reals^n$. Let $\lambda>0$ and assume $\infl{a}{b}$ forms a $(\vol(|\cdot|_a)\lambda)$-inflating pair.
    The set
\begin{equation*}
    \{f\in \tLip_1 (D_a,\reals_b^m): \int_E \vol f' \diff \Hn \geq \lambda \Hn_a(E)\}
\end{equation*}
is residual in $\tLip_1 (D_a,\reals_b^m)$.
Suppose $n<m$. Then the set
\begin{equation*}
    \{f\in \tLip_1 (D_a,\reals^m_b): \Hn(f(E)) \geq\lambda \Hn_a(E)\}
\end{equation*}
is residual in $\tLip_1 (D_a,\reals^m_b)$.
\end{corollary}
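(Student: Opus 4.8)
\section*{Proof proposal}

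The plan is to obtain this immediately from Theorem~\ref{T:positive-result-cubes} by rescaling the constant and invoking Haar's theorem to pass between the Euclidean Hausdorff measure $\Hn$ and the measure $\Hn_a$ induced on $\reals^n$ by the norm $|\cdot|_a$. Recall from \eqref{E:vol-of-norm} and the discussion around it that every $\Hn$-measurable $E\subset\reals^n$ satisfies $\Hn_a(E)=\vol(|\cdot|_a)\,\Hn(E)$, with $\vol(|\cdot|_a)=2^n/\Hn(B_a)\in(0,\infty)$. Thus, up to this fixed positive multiplicative constant, the measures $\Hn$ and $\Hn_a$ coincide on subsets of $\reals^n$, and the integral $\int_E\vol f'\diff\Hn$ appearing in the statement equals $\int_E J_E f\diff\Hn$ by the remark that $J_E f=\vol f'$ whenever $E\subset\reals^n$.

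Concretely, I would set $\lambda'=\vol(|\cdot|_a)\,\lambda>0$. By hypothesis $\infl{a}{b}$ forms a $\lambda'$-inflating pair, so Theorem~\ref{T:positive-result-cubes} applies with $\lambda'$ in place of $\lambda$ and gives that
\[
    \Bigl\{f\in\tLip_1(D_a,\reals^m_b):\ \int_E \vol f' \diff\Hn \ge \lambda'\,\Hn(E)\Bigr\}
\]
is residual in $\tLip_1(D_a,\reals^m_b)$. Since $\lambda'\,\Hn(E)=\vol(|\cdot|_a)\,\lambda\,\Hn(E)=\lambda\,\Hn_a(E)$, this set coincides with $\{f\in\tLip_1(D_a,\reals^m_b):\int_E \vol f'\diff\Hn\ge\lambda\,\Hn_a(E)\}$, which is the first assertion. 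When $n<m$, Theorem~\ref{T:positive-result-cubes} likewise yields residuality of $\{f\in\tLip_1(D_a,\reals^m_b):\Hn(f(E))\ge\lambda'\,\Hn(E)\}$, and the same identity $\lambda'\,\Hn(E)=\lambda\,\Hn_a(E)$ converts this into the second assertion.

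There is essentially no substantive obstacle here; the corollary is a bookkeeping restatement of Theorem~\ref{T:positive-result-cubes}. The only point that needs care is to keep the two Hausdorff measures separate: the area-formula machinery underlying Theorem~\ref{T:positive-result-cubes} is set up with the Euclidean $\Hn$ on the domain, whereas the inequalities in the corollary are phrased with $\Hn_a$, and the discrepancy is precisely the factor $\vol(|\cdot|_a)$ that has been absorbed into the definition of a $(\vol(|\cdot|_a)\lambda)$-inflating pair. Once this substitution is made explicit, the proof is complete.
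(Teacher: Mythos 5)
Your proposal is correct and matches the paper's own argument: the paper likewise invokes Haar's theorem to get $\Hn_a=\vol(|\cdot|_a)\,\Hn$ and then applies Theorem \ref{T:positive-result-cubes} with the rescaled constant $\vol(|\cdot|_a)\lambda$. Your version simply makes the substitution $\lambda'=\vol(|\cdot|_a)\lambda$ explicit, which is fine.
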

\begin{proof}
    As $\Hn_a$ is a Haar measure on $\reals^n$ and $\Hn_a(B_a)=2^n$ we have $\Hn_a = \vol |\cdot|_a \Hn$ by Haar's theorem. The rest follows from Theorem \ref{T:positive-result-cubes}.
\end{proof}

Recalling Example \ref{Ex:Euclidean-inflation}, we immediately obtain the following Euclidean result.

\begin{corollary}\label{C:Euclidean-strong}
    Suppose $m\leq n$. Let $E\subset \reals^n$ be bounded and $\Hn$-measurable and let $E\subset D \subset \reals^n$. The set
    \begin{equation*}
        \{f\in \tLip_1 (D,\reals^m): \int_E \vol f' \diff \Hn = \Hn(E)\}
    \end{equation*}
    is residual in $\tLip_1 (D,\reals^m)$.
    Suppose $n<m$. Then the set
    \begin{equation*}
        \{f\in \tLip_1 (D,\reals^m): \Hn(f(E)) = \Hn(E)\}
    \end{equation*}
    is residual in $\tLip_1 (D,\reals^m)$.
\end{corollary}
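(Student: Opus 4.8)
The plan is to read this off directly from Theorem~\ref{T:positive-result-cubes} and Example~\ref{Ex:Euclidean-inflation}, the only extra ingredient being that the reverse inequalities are automatic for $1$-Lipschitz maps. First I would take $|\cdot|_a$ to be the Euclidean norm on $\reals^n$ and $|\cdot|_b$ the Euclidean norm on $\reals^m$, so that $D_a=D$ and $\reals^m_b=\reals^m$ as metric spaces. By Example~\ref{Ex:Euclidean-inflation}, every full-rank $A\in B_{2\to 2}$ admits a $1$-inflation, i.e.\ $\infl{2}{2}$ forms a $1$-inflating pair. Applying Theorem~\ref{T:positive-result-cubes} with $\lambda=1$ then shows that the set $\{f\in\tLip_1(D,\reals^m):\int_E\vol f'\diff\Hn\geq\Hn(E)\}$ is residual in $\tLip_1(D,\reals^m)$, and, when $n<m$, that $\{f\in\tLip_1(D,\reals^m):\Hn(f(E))\geq\Hn(E)\}$ is residual.

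It then remains to upgrade ``$\geq$'' to ``$=$''. For any $f\in\tLip_1(D,\reals^m)$ and $\Hn$-a.e.\ $x\in E$ the Fr\'echet differential $f'(x)$ exists with $\lVert f'(x)\rVert_{2\to 2}\leq 1$ (as recorded in Section~\ref{s:rectifiability}); since $\vol f'(x)=\sqrt{\det f'(x)^T f'(x)}$ is the product of the singular values of $f'(x)$, each bounded by $\lVert f'(x)\rVert_{2\to 2}\leq 1$, we get $\vol f'(x)\leq 1$ and hence $\int_E\vol f'\diff\Hn\leq\Hn(E)$ for every such $f$. Thus the set in the first assertion coincides with the residual set obtained above. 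For the image assertion, the metric area formula (Theorem~\ref{E:Prel-area-formula-metric-eucl}, using $J_Ef=\vol f'$ for $E\subset\reals^n$) yields $\Hn(f(E))=\int_{f(E)}1\diff\Hn\leq\int_{f(E)}\#f^{-1}(u)\diff\Hn(u)=\int_E\vol f'\diff\Hn\leq\Hn(E)$, so again $\{f:\Hn(f(E))=\Hn(E)\}$ equals $\{f:\Hn(f(E))\geq\Hn(E)\}$, which is residual.

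There is essentially no obstacle here: all of the analytic content lives in Theorem~\ref{T:positive-result-cubes} and Example~\ref{Ex:Euclidean-inflation}, and the only thing verified by hand is the trivial direction, namely the operator-norm bound $\vol f'\leq 1$ together with the area-formula inequality $\Hn(f(E))\leq\Hn(E)$. The hypothesis $n\leq m$ is what makes the first (area-formula) claim meaningful, while the strict inequality $n<m$ enters only for the image claim, exactly as in the passage from (iv) to (ii) in Theorem~\ref{T:char-area-area-formula}; for $n=m$ the image statement is of course false, since uniformly small perturbations of $f$ towards a constant map drive $\Hn(f(E))$ to $0$.
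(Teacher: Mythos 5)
Your proposal is correct and follows exactly the paper's route: the corollary is obtained by combining Example \ref{Ex:Euclidean-inflation} (Euclidean norms form a $1$-inflating pair) with Theorem \ref{T:positive-result-cubes} for $\lambda=1$, and the upgrade from ``$\geq$'' to ``$=$'' is the trivial reverse inequality you verify ($\vol f'\leq 1$ a.e.\ for $1$-Lipschitz $f$, and $\Hn(f(E))\leq\Hn(E)$ via the area formula), which the paper leaves implicit. Your side remark that the first hypothesis should read $n\leq m$ (the printed ``$m\leq n$'' is a typo) is also consistent with the paper's usage.
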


\subsection{Strongest possible results in Euclidean spaces.}

While the results of the last subsection were ``local'' in the sense that they required the $n$-rectifiable set to be in fact a normed piece of $\reals^n$, in the following, we push some of the results to general \emph{Euclidean} $n$-rectifiable sets, obtaining a proof of Theorem \ref{T:main-Euclidean}.

\begin{theorem}\label{T:Eucl-strongest-possible}
    Let $n\leq k$, $n\leq m$ and suppose $E\subset \reals^k$ is $n$-rectifiable and satisfying $\Hn(E)<\infty$. Let $E\subset D \subset\reals^k$. Let $f\in \tLip_1(D,\reals^m)$. Then for every $\varepsilon>0$ and $\eta\in(0,1)$, there is a $g\in \tLip_1(D,\reals^m)$ such that
    \begin{enumerate}
        \item\label{Enum:SPR-1} $\lVert g-f \rVert_{\infty}\leq \varepsilon$,
        \item\label{Enum:SPR-2} $\int_E J_E g\diff \Hn>\eta \Hn(E)$.
    \end{enumerate}
\end{theorem}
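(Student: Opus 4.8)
The plan is to localise $E$ to finitely many biLipschitz pieces with Euclidean tangents, to run the already-established same-dimension Euclidean construction on each flat model — Corollary~\ref{C:Euclidean-strong}, equivalently Proposition~\ref{P:PR-inflation-cube} with the $1$-inflating pair of Example~\ref{Ex:Euclidean-inflation} — and then to reassemble the pieces over $D$ by means of the Lipschitz extension Lemma~\ref{L:Lipschitz-extension-lemma}. One may assume $\Hn(E)>0$ and $M:=\sup_D|f|_2>0$. Fix a small parameter $\theta>0$, to be quantified at the end in terms of $n,\eta,\varepsilon$ and $M$, and set $f_0:=(1-3\theta)f$, so that $\tLip(f_0)\le 1-3\theta$ and $\|f_0-f\|_\infty\le 3\theta M<\varepsilon/2$; the point of shrinking $f$ is to create Lipschitz slack. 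Since $E$ lies in $\reals^k$ with the Euclidean metric, the metric differential of a Lipschitz chart $A\subset\reals^n\to\reals^k$ at a point where it is a norm has the form $v\mapsto|Lv|_2$ with $L$ injective linear, hence is isometrically isomorphic to $|\cdot|_2$; thus $T(E,x)=[|\cdot|_2]$ for $\Hn$-a.e.\ $x\in E$, and $E$ is strongly $|\cdot|_2$-rectifiable by Lemma~\ref{l:strong-rect}. Using this, inner regularity of $\Hn$, and disjointification of the charts, one obtains pairwise disjoint compact $E_i\subset E$, compact $F_i\subset\reals^n$ and $(1+\theta)$-biLipschitz maps $I_i\colon(F_i,|\cdot|_2)\to E_i$ with $\Hn(E\setminus\bigcup_iE_i)=0$; fix $i_0\in\nat$ with $\Hn(E\setminus\bigcup_{i=1}^{i_0}E_i)<\delta_0$, where $\delta_0>0$ is as small as the final estimate requires. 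The sets $E_1,\dots,E_{i_0}$ being compact and disjoint, there is $\rho\in(0,1]$ with the neighbourhoods $B_D(E_i,\rho)$, $i\le i_0$, pairwise disjoint.

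On the $i$-th piece, $\varphi_i:=f_0\circ I_i\colon(F_i,|\cdot|_2)\to\reals^m$ has Lipschitz constant at most $(1-3\theta)(1+\theta)<1$; extend it to $\reals^n$ by Kirszbraun without increasing the constant. Applying Proposition~\ref{P:PR-inflation-cube} with $\lambda=1$ and with a parameter $\eta'\in(\eta,1)$ close to $1$ gives $\psi_i\colon\reals^n\to\reals^m$ with $\|\psi_i-\varphi_i\|_\infty$ as small as we please, $\int_{F_i}\vol\psi_i'\diff\Hn\ge\eta'\Hn(F_i)$, and — reading its proof, whose output has Lipschitz constant $\max\{\tLip(\varphi_i),(1+\eta')/2\}+4\delta$ — Lipschitz constant at most $1-\theta$ once $\theta$ is small and the construction parameter $\delta$ is chosen accordingly. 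Transferring back, $g_i:=\psi_i\circ I_i^{-1}\colon E_i\to\reals^m$ has $\tLip(g_i)\le(1-\theta)(1+\theta)=1-\theta^2<1$ and is uniformly close to $f_0$ on $E_i$. Once the final $g$ is arranged to coincide with $g_i$ on $E_i$, the metric area formula applied to $I_i$ (the factors $\vol|I_i'|$ cancelling against the definition of $J_E g$) gives
\[
\int_{E_i}J_E g\diff\Hn=\int_{F_i}\vol(g\circ I_i)'\diff\Hn=\int_{F_i}\vol\psi_i'\diff\Hn\ge\eta'\Hn(F_i)\ge(1+\theta)^{-n}\eta'\Hn(E_i),
\]
using $\Hn(E_i)\le(1+\theta)^n\Hn(F_i)$. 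Summing over $i\le i_0$ yields $\int_E J_E g\diff\Hn\ge(1+\theta)^{-n}\eta'\big(\Hn(E)-\delta_0\big)$, which exceeds $\eta\Hn(E)$ once $\theta$, $\eta'-\eta$ and $\delta_0$ are chosen suitably; this is \ref{Enum:SPR-2}.

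It remains to assemble the $g_i$ into one $g\in\tLip_1(D,\reals^m)$ with $\|g-f\|_\infty\le\varepsilon$, and this is the delicate step. Because $D\subset\reals^k$ may have dimension larger than that of $E$, each $g_i$ must be extended from $E_i$ to a genuine $k$-dimensional neighbourhood inside $D$ without raising its Lipschitz constant above $1-\theta^2$ and while staying within $\delta\rho'$ of $f_0$ on that neighbourhood of radius $\rho'$, as demanded by Lemma~\ref{L:Lipschitz-extension-lemma}. One does this at the scale of the fine cover of $F_i$ furnished by the proof of Proposition~\ref{P:PR-inflation-cube}: on each small ball of that cover the inflated map differs from the local affine approximation of $\varphi_i$, hence from $f_0$, by only a small multiple of the ball radius, and extending the transferred piece transversally to $E$ — which leaves the operator norm unchanged — keeps it that close to $f_0$ on a surrounding ball in $\reals^k$ of comparable radius. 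Feeding these pieces, with background $f_0$ and a parameter $\delta$ with $(1-\theta^2)+4\delta\le1$, into Lemma~\ref{L:Lipschitz-extension-lemma} produces $g$ with $g=g_i$ on each $E_i$, $\tLip(g)\le1$, $g=f_0$ off $\bigcup_{i\le i_0}B_D(E_i,\rho)$, and $\|g-f_0\|_\infty<\delta$, whence $\|g-f\|_\infty<\delta+3\theta M<\varepsilon$; this is \ref{Enum:SPR-1}. The main obstacle is precisely this last extension step — reconciling the large tangential stretching of the $g_i$ with the $1$-Lipschitz bound in all $k$ ambient directions and with uniform proximity to $f$; everything else is bookkeeping with the parameters $\theta,\eta',\delta_0$ and the scale of the fine cover.
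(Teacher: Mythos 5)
Your overall strategy --- decompose $E$ into finitely many compact pieces $E_i$ that are $(1+\theta)$-biLipschitz to subsets of $(\reals^n,|\cdot|_2)$, run the Euclidean inflation (Corollary \ref{C:Euclidean-strong} / Proposition \ref{P:PR-inflation-cube} with Example \ref{Ex:Euclidean-inflation}) on each flat model, and transfer back --- is exactly the paper's, and the Jacobian bookkeeping is fine. The gap is in the reassembly step, which you yourself flag as ``the main obstacle'' but do not actually close. Two things go wrong with the route you sketch. First, ``extending the transferred piece transversally to $E$'' is not a defined operation: the pieces of $E_i$ are curved rectifiable subsets of $\reals^k$ with no tubular neighbourhood structure, so there is no canonical normal extension, and the claim that such an extension ``leaves the operator norm unchanged'' has no justification. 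Second, and more fundamentally, Lemma \ref{L:Lipschitz-extension-lemma} cannot be fed what you want to feed it: it requires each $g_i$ to be defined and $L$-Lipschitz on the full neighbourhood $B(S_i,\rho_i)$ \emph{and} to satisfy $\lVert g_i-f\rVert_{\ell^\infty(B(S_i,\rho_i))}\le\delta\rho_i$ with $\delta$ so small that $L+4\delta\le 1$. Any extension of $g_i$ from the piece with Lipschitz constant close to $1$ drifts away from $f_0$ at rate roughly $\tLip(g_i)+\tLip(f_0)\approx 2$ per unit distance, so at distance $\rho_i$ the discrepancy is of order $2\rho_i$, forcing $\delta\ge 2$ and destroying the $1$-Lipschitz bound. (In Proposition \ref{P:PR-inflation-cube} this is avoided because both $f$ and $g_i$ are within $\delta(1-\sigma)r_i$ of the \emph{same} affine map on the whole ball $B_i$; no analogous common approximant is available here on a $k$-dimensional neighbourhood of a curved piece of $E_i$.)

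The paper resolves the extension entirely differently, and more simply: it defines a partial map $c$ equal to $g_i$ on $S=\bigcup_{i\le i_0}E_i$ and equal to $f$ on $D\setminus B(S,r)$, where $r<\varepsilon/3$ separates the compact pieces, and verifies directly that $c$ is $1$-Lipschitz on this partial domain using the choice $\varepsilon_0/r+L_0<1$ (the slack $1-L_0$ absorbs the jump $\varepsilon_0$ across the gap of width $r$). A single application of Kirszbraun's theorem --- available precisely because domain and target are both Euclidean --- then extends $c$ to a $1$-Lipschitz map on all of $D$, and the bound $\lVert g-f\rVert_\infty\le\varepsilon$ on the remaining region $B(S,r)\setminus S$ is automatic from the triangle inequality ($|g(x)-f(x)|\le r+\varepsilon_0+r<\varepsilon$). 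If you replace your partition-of-unity reassembly with this ``define on $S$ and on the complement of a thin collar, then Kirszbraun once'' argument, your proof goes through.
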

\begin{proof}
    If $\Hn(E)=0$, the statement obviously holds, so we may assume $\Hn(E)>0$.
    Without loss of generality, $L_0=\tLip(f)<1$. Find $\theta>0$ such that $(1+\theta)^2 L_0<1$. Find $\eta_0\in (0,1)$ and $C>0$ such that
    \begin{equation}\label{E:SPRE-estimate1}
        \frac{1}{(1+\theta)^2}\eta_0(\Hn(E)-C)> \eta \Hn(E).
    \end{equation}
    Using \cite[Lemma 3.2.2]{Federer} (which is the Euclidean version of Lemma \ref{L:Kirch}), we find countably many $E_i\subset E$ Borel and disjoint, $F_i\subset \reals^n$ and $(1+\theta)$-biLipschitz maps $I_i\colon F_i \to E_i$ such that
    \begin{equation*}
        \Hn(E\setminus \bigcup_i E_i)=0.
    \end{equation*}
    Using continuity and inner regularity of $\Hn$, there is some $i_0\in\nat$ such that
    \begin{equation}\label{E:SPRE-measure-estimate}
        \Hn(E\setminus \bigcup_{i=1}^{i_0}E_i)<C
    \end{equation}
    and we may assume that $E_i$'s are compact. There is some $r\in (0, \frac{1}{3}\varepsilon)$ such that for every $i,j\in\{1,\dots, i_0\}$ with $i\not=j$ we have
    \begin{equation}\label{E:SPRE-dist-estimate}
        \dist(E_i,E_j)\geq r.
    \end{equation}
    Let $\varepsilon_0\in (0,\frac{1}{3}\varepsilon)$ be such that
    \begin{equation}\label{E:SPRE-eps0-choice}
        \frac{\varepsilon_0}{r}+L_0<1.
    \end{equation}
    
    Fix $i=1,\dots, i_0$ and let $\varphi_i=f\circ I_i\colon F_i \to \reals^m$. Then $\varphi_i$ is $L_0(1+\theta)$-Lipschitz. Whence, by Corollary \ref{C:Euclidean-strong} (we are using only density), there is $\psi_i\colon F_i \to \reals^m$ such that
    \begin{enumerate}[label={(\alph*)}]
        \item\label{Enum:SPR-a} $\psi_i\in \tLip_{L_0(1+\theta)}(F_i,\reals^m)$,
        \item\label{Enum:SPR-b} $\lVert \psi_i - \varphi_i \rVert_{\ell^{\infty}(F_i)}\leq \varepsilon_0$,
        \item\label{Enum:SPR-c} $\int_{F_i} \vol \psi_i' \diff \Hn \geq \eta_0\Hn(F_i)$.
    \end{enumerate}
    
    We let $g_i\colon E_i \to \reals^m$ be given by $g_i=\psi_i\circ I_i^{-1}$. We define $S= \bigcup_{i=1}^{i_0} E_i$ and let $g\colon S \to \reals^m$ be given by
    \begin{equation*}
        g(x)=g_i(x)\quad \text{for $x\in E_i$, $i=1,\dots, i_0$}.    
    \end{equation*}
    By \ref{Enum:SPR-a}, \ref{Enum:SPR-b}, \eqref{E:SPRE-dist-estimate} and \eqref{E:SPRE-eps0-choice}, $g$ is $1$-Lipschitz. By \ref{Enum:SPR-b}, we have
    \begin{equation}\label{E:SPRE-estimate-on-S}
        \lVert g-f \rVert_{\ell^\infty(S)}\leq \varepsilon_0
    \end{equation}
     and by \ref{Enum:SPR-c}, \eqref{E:SPRE-measure-estimate} and \eqref{E:SPRE-estimate1}, we have
     \begin{equation}\label{E:SPRE-area-estimate}
         \int_S J_E g \diff \Hn
         \geq \frac{1}{(1+\theta)^2}\eta_0 \Hn(S)
         \geq \frac{1}{(1+\theta)^2}\eta_0 (\Hn(E)-C)>\eta \Hn(E).
     \end{equation}
     It remains to extend $g$. To that end, let
     \begin{equation*}
         c(x)=\begin{cases}\displaystyle
				g(x)
					& \text{if $x\in S$},
					\\
				f(x)
					& \text{if $x\in D\setminus B(S,r)$}.
			\end{cases}
     \end{equation*}
     From \eqref{E:SPRE-eps0-choice} and \eqref{E:SPRE-estimate-on-S} it follows that $c$ is $1$-Lipschitz. Using Kirszbraun's extension theorem, we find a $1$-Lipschitz extension of $c$ onto $D$. This extension also extends $g$ on $S$ and we will denote it by $g$. By \eqref{E:SPRE-area-estimate}, we have \ref{Enum:SPR-1} and so it remains to show \ref{Enum:SPR-2}.
     
     Let $x\in B(S,r)\setminus S$ and find $y\in S$ with $|x-y|=r$. Then
     \begin{equation*}
     \begin{split}
         |g(x)-f(x)|&\leq |g(x)-g(y)|+|g(y)-f(y)|+|f(y)-f(x)|\\ 
         &\leq |x-y|+\varepsilon_0+|x-y|\leq r+\varepsilon_0+r<\varepsilon.
     \end{split}
     \end{equation*}
     This shows that \ref{Enum:SPR-2} holds and we are done.
\end{proof}

\begin{theorem}[Restatement of Theorem \ref{T:main-Euclidean}]\label{T:main-Euclidean-rest}
    Let $n\leq k$, $n\leq m$ and suppose $E\subset \reals^k$ is $n$-rectifiable. Let $E\subset D \subset\reals^k$.
    Then the set
    \begin{equation*}
        \{f\in\tLip_1(D, \reals^m): \int_E J_E f \diff \Hn = \Hn(E)\}
    \end{equation*}
    is residual in $\tLip_1(D, \reals^m)$.
    Moreover, if $n<m$, then the set
    \begin{equation*}
        \{f\in\tLip_1(D, \reals^m): \Hn(f(E)) = \Hn(E)\}
    \end{equation*}
    is residual in $\tLip_1(D, \reals^m)$.
\end{theorem}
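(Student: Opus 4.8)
The plan is to deduce the theorem directly from the density and semi-continuity results already at our disposal; the substantive input is Theorem \ref{T:Eucl-strongest-possible}. First I would reduce to the case $0<\Hn(E)<\infty$. If $\Hn(E)=0$ then every $f\in\tLip_1(D,\reals^m)$ lies in both sets, since a $1$-Lipschitz map does not increase $\Hn$-measure. If $\Hn(E)=\infty$, write $E=\bigcup_i E_i$ as an increasing union of $\Hn$-measurable subsets of finite measure, apply the finite case to each $E_i$ (which is again $n$-rectifiable and contained in $D$), and intersect the resulting countably many residual sets: using Lemma \ref{L:prel-ideal} one has, for $f$ in this intersection, $\int_E J_E f\diff\Hn\geq\int_{E_i}J_{E_i}f\diff\Hn=\Hn(E_i)\to\infty$ and $\Hn(f(E))\geq\Hn(f(E_i))=\Hn(E_i)\to\infty$, so both equalities hold with common value $\infty$. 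Finally, since every bounded $1$-Lipschitz map on $D$ extends uniquely to a bounded $1$-Lipschitz map on $\overline D$ without changing $J_E(\cdot)$ or the image of $E$, I may assume $D=\overline D$ is a complete metric space, so that Lemma \ref{L:lsc-with-target-norm} and Theorem \ref{T:char-area-area-formula} apply.

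For the first assertion, the key pointwise bound is that $J_E f\leq 1$ holds $\Hn$-a.e.\ on $E$ whenever $f\in\tLip_1(D,\reals^m)$: at $\Hn$-a.e.\ $x\in E$ the approximate tangential differential of $f$ along $E$ is a linear map from the approximate tangent plane to $\reals^m$ with operator norm at most $1$, hence has all singular values at most $1$, so its $\vol$ is at most $1$. Consequently $\int_E J_E f\diff\Hn\leq\Hn(E)$ for all such $f$, and therefore
\begin{equation*}
    \Big\{f\in\tLip_1(D,\reals^m):\int_E J_E f\diff\Hn=\Hn(E)\Big\}=\bigcap_{i\in\nat}\Big\{f\in\tLip_1(D,\reals^m):\int_E J_E f\diff\Hn>\Hn(E)-\tfrac1i\Big\}.
\end{equation*}
Each set on the right is open, by lower semi-continuity of $f\mapsto\int_E J_E f\diff\Hn$ (Lemma \ref{L:lsc-with-target-norm}, which rests on Theorem \ref{T:lsc-area-formula-general}), and dense, by Theorem \ref{T:Eucl-strongest-possible} applied with the given $\varepsilon$ and with $\eta=\eta_i\in(0,1)$ chosen so that $\eta_i\Hn(E)>\Hn(E)-\tfrac1i$. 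A countable intersection of open dense sets is residual, which proves the first assertion.

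For the second assertion, suppose $n<m$. Since $f$ is $1$-Lipschitz, $\Hn(f(E))\leq\Hn(E)$, so the set in question coincides with $\{f\in\tLip_1(D,\reals^m):\Hn(f(E))\geq\Hn(E)\}$. I would then invoke Theorem \ref{T:char-area-area-formula} with $X=D$ (complete by the reduction above), $L=1$, $|\cdot|_b=|\cdot|_2$, and $C=\Hn(E)\in(0,\infty)$: the first assertion, already proved, is exactly statement (iii) of that theorem (residuality of $\mathcal{V}_{\geq C}$), and for $n<m$ this is equivalent to statement (i), namely residuality of $\mathcal{A}_{\geq C}=\{f:\Hn(f(E))\geq\Hn(E)\}$. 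This completes the proof.

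I do not anticipate a genuine obstacle in carrying this out: the real work has been done in Theorem \ref{T:Eucl-strongest-possible} (whose proof in turn uses Corollary \ref{C:Euclidean-strong}, the Kirchheim-type decomposition, and the Lipschitz extension lemma) and, via Theorem \ref{T:char-area-area-formula}, in the no-overlap density statement Corollary \ref{C:no-overlap-density}. The only care required is the $\sigma$-finite exhaustion and the observation that the trivial bounds $J_E f\leq 1$ and $\Hn(f(E))\leq\Hn(E)$ upgrade the residual inequality ``$\geq$'' to the desired equality ``$=$''.
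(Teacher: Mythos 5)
Your proposal is correct and follows essentially the same route as the paper: reduce to $\Hn(E)<\infty$ by $\sigma$-finiteness, invoke the characterisation of residuality in Theorem \ref{T:char-area-area-formula}, and obtain the required density of the strict-inequality sets from Theorem \ref{T:Eucl-strongest-possible}. The extra details you supply (the upper bounds $J_E f\leq 1$ and $\Hn(f(E))\leq\Hn(E)$ upgrading ``$\geq$'' to ``$=$'', and passing to $\overline{D}$ for completeness) are points the paper leaves implicit, and they are handled correctly.
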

\begin{proof}
    We may reduce to the case $\Hn(E)<\infty$ in the standard way as $\Hn_{|E}$ is $\sigma$-finite.
     Recalling Theorem \ref{T:char-area-area-formula},
     it suffices to show that the set
     \begin{equation*}
     \begin{split}
         \{f\in\tLip_1(D_a, \reals^m): \int_E J_E f \diff \Hn > \eta \Hn(E)\}.
     \end{split}
     \end{equation*}
     is dense for every $\eta\in (0,1)$.
     This follows from Theorem  \ref{T:Eucl-strongest-possible}.
\end{proof}

The result of Theorem \ref{T:main-Euclidean-rest} is, as many other residuality results, strange in the sense that constructing any specific examples of Lipschitz maps that satisfy the required properties is highly non-trivial. Even considering $E=\mathbb{S}^1$, the unit circle embedded in $\reals^2$, it is not clear at all how to construct a $1$-Lipschitz map into $\reals$ having the tangential Jacobian equal to $\pm 1$ $\H^1$-a.e.
If we were allowed to have $\tLip(f)=1+\varepsilon$, this would be easy (one may, for example, consider a parametrization of $E$ of speed $1$ and locally invert it on small intervals), but there is no natural way of sending $\varepsilon \to 0$.
If, in this case, we consider maps into $\reals^2$ it is once again difficult to construct any $f\colon \reals^2 \to \reals^2$ which is $1$-Lipschitz and satisfies $\H^1(f(E))=\H^1(E)$ which is \emph{not} a linear isometry. 

\section{Negative results in normed sets}\label{S:residuality-negative}

While in the Euclidean space very strong results hold, this fails in more general spaces.
In fact it is enough to consider different (finite dimensional) normed spaces for some of the results to fail completely.
The purpose of this section is to provide conditions on norms $|\cdot|_a$ and $|\cdot|_b$ so that sets of the form
\begin{equation*}
    \{f\in \tLip(\Omega_a, \reals^m_b): \Hn(f(\Omega))\geq \numberdelta\}
\end{equation*}
are \emph{not} dense in $\tLip(\Omega_a, \reals^m_b)$. Here the most general instance of $\Omega$ is a bounded open set - further generality is possible but not of much interest to us. The sets being open makes several technical steps significantly easier.

Recall the definition of a strongly extremal point from Definition \ref{extremal-point}.
We begin with a simple observation about strongly extremal points. Recall that a linear map $P\colon \reals^n \to \reals^n$ is called a \emph{linear projection} if $P\circ P = P$.

\begin{proposition}\label{P:str-ext-char}
    Suppose $|\cdot|_a$ is a norm on $\reals^n$. If $u\in \partial B_a$ is strongly extremal, then it is extremal. Moreover, for $u\in \partial B_a$, the following statements are equivalent:
    \begin{enumerate}
        \item\label{Enum:SE-1} $u$ is a strongly extremal point of $B_a$,
        \item\label{Enum:SE-2} there exists a linear projection $P\colon \reals^n \to \operatorname{span}\{u\}$ such that $P^{-1}(u)\cap B_a = \{u\}$,
        \item\label{Enum:SE-3} there exists a linear projection $P\colon \reals^n \to \operatorname{span}\{u\}$ such that if $u_n\in B_a$ is a sequence with $P(u_n)\to u$, then $u_n \to u$.
    \end{enumerate}
\end{proposition}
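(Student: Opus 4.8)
The plan is to verify the preliminary assertion first and then close the cycle of implications $(i)\Rightarrow(ii)\Rightarrow(iii)\Rightarrow(i)$. For the preliminary claim, suppose $u\in\partial B_a$ is strongly extremal with witnessing functional $x^*$, so $x^*(u)=1$, $B_a\subseteq\{y:x^*(y)\le 1\}$, and $x^*(y)=1$ with $y\in B_a$ forces $y=u$. If $u+v\in B_a$ and $u-v\in B_a$, then $x^*(u+v)\le 1$ and $x^*(u-v)\le 1$, and since the sum of these two numbers is $2x^*(u)=2$, both inequalities are equalities; hence $u+v=u$ and $u-v=u$, so $v=0$ and $u$ is extremal.

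For $(i)\Rightarrow(ii)$ I would simply set $P(y)=x^*(y)\,u$. This is linear with image $\operatorname{span}\{u\}$, and $P(P(y))=x^*(y)x^*(u)u=P(y)$, so $P$ is a linear projection onto $\operatorname{span}\{u\}$; moreover $P^{-1}(u)=\{y:x^*(y)=1\}$, so strong extremality gives exactly $P^{-1}(u)\cap B_a=\{u\}$. For $(ii)\Rightarrow(iii)$ I would use the very same $P$ together with finite-dimensional compactness: if $u_n\in B_a$ with $P(u_n)\to u$, then, $B_a$ being compact, every subsequence of $(u_n)$ has a further subsequence converging to some $w\in B_a$; continuity of $P$ forces $P(w)=u$, so $w\in P^{-1}(u)\cap B_a=\{u\}$. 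Thus every subsequence has a sub-subsequence converging to $u$, whence $u_n\to u$.

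For $(iii)\Rightarrow(i)$, write $P(y)=\ell(y)\,u$ with $\ell$ a linear functional; since a projection onto $\operatorname{span}\{u\}$ fixes $u$ (being in its image), $\ell(u)=1$, and in particular $\ell\ne 0$. Applying $(iii)$ to a constant sequence shows that $y\in B_a$ and $\ell(y)=1$ already imply $y=u$. The key point is then that $\ell\le 1$ on $B_a$: if some $w\in B_a$ had $c:=\ell(w)>1$, then for $\beta\in(0,1/c]$ the point $y_\beta=(1-\beta c)u+\beta w$ lies in $B_a$ (since $|y_\beta|_a\le(1-\beta c)+\beta=1-\beta(c-1)<1$) and satisfies $\ell(y_\beta)=1$, hence $y_\beta=u$ by the previous sentence; this forces $w=cu$ and $|w|_a=c>1$, a contradiction. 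Therefore $x^*:=\ell$ satisfies $x^*(u)=1$, $B_a\subseteq\{x^*\le 1\}$, and $x^*(y)=1$ on $B_a$ only at $y=u$, i.e.\ $u$ is strongly extremal, closing the cycle.

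The only step requiring a genuine idea is the claim $\ell\le 1$ on $B_a$ in $(iii)\Rightarrow(i)$: one must manufacture, from a hypothetical $w$ with $\ell(w)>1$, an honest point of $B_a$ lying on the affine hyperplane $\{\ell=1\}$, which the convex combination $y_\beta$ provides. Everything else is routine — the preliminary claim and $(i)\Rightarrow(ii)$ are purely algebraic, and $(ii)\Rightarrow(iii)$ is the standard "every subsequence has a convergent sub-subsequence with the forced limit" argument, which is the one place where finite-dimensionality of $\reals^n$ (compactness of $B_a$) enters.
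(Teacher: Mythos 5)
Your proof is correct and follows essentially the same route as the paper: both identify the supporting hyperplane witnessing strong extremality with the fiber $P^{-1}(u)$ of the projection $P(y)=x^*(y)\,u$. The paper's argument is a three-line sketch that silently uses the fact that $P^{-1}(u)\cap B_a=\{u\}$ forces $B_a\subseteq\{y: \ell(y)\le 1\}$; your convex-combination argument for this step, together with the compactness argument for (ii)$\Rightarrow$(iii), supplies exactly the details the paper omits.
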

\begin{proof}
    If \ref{Enum:SE-1} holds, then there is an affine tangent $T$ to $B_a$ at $u$ with $T\cap B_a=\{u\}$. Taking $P\colon \reals^n \to \tspan{u}$ to be the linear map satisfying $P^{-1}(u)= T$, we see that both \ref{Enum:SE-2} and \ref{Enum:SE-3} hold. On the other hand, if \ref{Enum:SE-2} or \ref{Enum:SE-3} hold, we can take $T=P^{-1}(u)$ and in either case, we get $T\cap B_a =\{u\}$, which means that \ref{Enum:SE-1} holds.
\end{proof}

Given $u\in \reals^m$ and $V=(v_2,\dots, v_n)\in (\reals^m)^{n-1}$, we use the notation $(u|V)$ for the linear map from $\reals^n \to \reals^m$ satisfying $(u|V)(e_1)=u$ and $(u|V)(e_j)=v_j$ for $j\in\{2,\dots, n\}$.

\begin{definition}
    Suppose $n,m\in\nat$, $2\leq n\leq m$.
    Let $|\cdot |_a$ be a norm on $\reals^n$ and $|\cdot|_b$ a norm on $\reals^m$.
    For any $u\in \reals^m$ satisfying $\lVert (u|0) \rVert_{a\to b}\leq 1$, we define the \emph{maximal volume} of $u$ as the quantity $\mv u= \mv_{a\to b} u\in [0,\infty)$ given by
    \begin{equation*}
        \mv u = \sup \{\vol (u| V): V\in (\reals^m)^{n-1},\, \lVert (u| V) \rVert_{a\to b}\leq 1\}.
    \end{equation*}
\end{definition}

The following observation, stating essentially that $\mv$ is upper semi-continuous, is an immediate consequence the fact that $\vol$ is $\frac{1}{2}$-H\"older (it is even Lipschitz, but that does require a proof) and that $\lVert \cdot \rVert_{a\to b}$ is Lipschitz.

\begin{observation}\label{O:mv-semicont}
    Suppose $n,m\in\nat$, $2\leq n\leq m$.
    Let $|\cdot |_a$ be a norm on $\reals^n$ and $|\cdot|_b$ a norm on $\reals^m$.
    Let $u \in \reals^m$ be such that $\lVert (u|0) \rVert_{a\to b}\leq 1$.
    Then, to each $\delta>0$, there is $\varepsilon>0$ such that for every $\widetilde u \in B_a(u, \varepsilon)$ the following holds.
    If $V\in (\reals^m)^{n-1}$ is such that $\lVert(\widetilde u| V)\rVert_{a\to b}\leq 1$, then
    \begin{equation*}
        \vol (\widetilde{u}|V)\leq \mv u + \delta.
    \end{equation*}
\end{observation}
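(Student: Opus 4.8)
The plan is to prove this by a short compactness argument; this is exactly where the continuity of $\vol$ (a consequence of its $\tfrac12$-H\"older property) and the continuity of $\lVert\cdot\rVert_{a\to b}$ (a consequence of its Lipschitz property) get used. First I would record a uniform bound on the competitors in the definition of $\mv$: whenever $w\in\reals^m$ and $V=(v_2,\dots,v_n)\in(\reals^m)^{n-1}$ satisfy $\lVert(w|V)\rVert_{a\to b}\le 1$, then for each $j$ one has $|v_j|_b=|(w|V)(e_j)|_b\le\lVert(w|V)\rVert_{a\to b}\,|e_j|_a\le|e_j|_a$, so $V$ lies in a fixed bounded (hence, being closed, compact) subset $\mathcal K\subset(\reals^m)^{n-1}$ depending only on $|\cdot|_a$. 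Applying this with $w=u$, the set $\{(u|V):\lVert(u|V)\rVert_{a\to b}\le1\}$ is a closed bounded subset of $\reals^{n\times m}$, non-empty since $\lVert(u|0)\rVert_{a\to b}\le1$, and on it the continuous function $\vol$ is finite; hence $\mv u<\infty$ and the supremum defining $\mv u$ is attained.

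Now I would argue by contradiction. If the conclusion fails for some $\delta>0$, there are sequences $\widetilde u_k\to u$ in $\reals^m$ and $V_k\in\mathcal K$ with
\begin{equation*}
    \lVert(\widetilde u_k|V_k)\rVert_{a\to b}\le 1
    \qquad\text{and}\qquad
    \vol(\widetilde u_k|V_k)>\mv u+\delta\quad\text{for all }k .
\end{equation*}
By compactness of $\mathcal K$, after passing to a subsequence we may assume $V_k\to V\in(\reals^m)^{n-1}$, so the matrices $(\widetilde u_k|V_k)$ converge to $(u|V)$ in $\reals^{n\times m}$. Continuity of $\lVert\cdot\rVert_{a\to b}$ gives $\lVert(u|V)\rVert_{a\to b}\le1$, and continuity of $\vol$ gives $\vol(u|V)\ge\mv u+\delta$. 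But $\lVert(u|V)\rVert_{a\to b}\le1$ forces $\vol(u|V)\le\mv u$ straight from the definition of $\mv u$, whence $\mv u\ge\mv u+\delta$; since $\mv u<\infty$ by the first step, this is absurd. Therefore the required $\varepsilon>0$ exists.

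There is no serious obstacle here; the content is simply packaging the continuity of the two functionals into a compactness argument. The one point that must not be skipped is the finiteness of $\mv u$ — it is exactly what makes $\mv u\ge\mv u+\delta$ a contradiction — and this, together with the uniform compactness of the admissible tuples $V$, is supplied by the boundedness computation above. (A direct quantitative estimate is tempting, bounding $\vol(\widetilde u|V)$ by $\vol(u|V)$ via the H\"older modulus of $\vol$ on that fixed bounded set and absorbing the error $\lVert(\widetilde u-u\,|\,0)\rVert_{a\to b}\le c_a|\widetilde u-u|_b$; but replacing the first column $\widetilde u$ by $u$ slightly violates the constraint $\lVert\cdot\rVert_{a\to b}\le1$, and since the first column is not free to rescale, the cleanest remedy is the compactness argument above, so I would present that.)
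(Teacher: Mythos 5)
Your proof is correct and complete. The paper in fact offers no proof of this observation at all: it is dismissed in the surrounding text as "an immediate consequence" of the $\tfrac12$-H\"older continuity of $\vol$ and the Lipschitz continuity of $\lVert\cdot\rVert_{a\to b}$, i.e.\ the intended route is a direct quantitative estimate. Your compactness packaging is a legitimate alternative and, as you yourself observe in the parenthetical, it neatly sidesteps the one genuine irritation in the quantitative route, namely that replacing the first column $\widetilde u$ by $u$ can push the operator norm slightly above $1$, so one would have to rescale the competitor before comparing with $\mv u$. The two ingredients you supply that the paper leaves implicit --- the uniform bound $|v_j|_b\le|e_j|_a$ forcing all admissible tuples $V$ into a fixed compact set, and the resulting finiteness of $\mv u$ --- are exactly what make either argument close, so your write-up is if anything more careful than the source. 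No gaps.
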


We shall also require a particular property of integral averages, which is the subject of the following lemma. As this holds in an arbitrary finite measure space, we state it in full generality.

\begin{lemma}\label{L:balls}
Let $(R,\mu)$ be a measure space with $0<\mu(R)<\infty$ and let $\psi\colon R \to \reals$ be $\Hn$-measurable. Let $K>0$, $\delta>0$ and $N\in\nat$ be given. Then there is $\varepsilon>0$ such that if
\begin{equation}\label{E:con-L1-to-L1infty}
    \begin{split}
        &\psi\leq K \quad \text{a.e. in $R$}\quad \text{and}\\
        &K(1-\varepsilon)\leq \frac{1}{\mu(R)}\int_{R}\psi\diff\mu,
    \end{split}
\end{equation}
then $\mu(\{\psi\geq K - \delta\})\geq \mu(R)(1-\tfrac{1}{N})$.
\end{lemma}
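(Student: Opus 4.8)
The statement is a quantitative "reverse Markov/Chebyshev" inequality: if a bounded function has average close to its essential supremum, then it must be close to the supremum on most of the space. I would prove this by contradiction via a direct estimate of the integral. Suppose for some $K>0$, $\delta>0$, $N\in\nat$ no such $\varepsilon$ works. Then for each $k\in\nat$ there is a measurable $\psi_k\colon R\to\reals$ with $\psi_k\leq K$ a.e., $K(1-\tfrac1k)\leq \tfrac{1}{\mu(R)}\int_R \psi_k\diff\mu$, yet $\mu(\{\psi_k\geq K-\delta\})< \mu(R)(1-\tfrac1N)$, i.e. $\mu(\{\psi_k< K-\delta\})\geq \tfrac{\mu(R)}{N}$.

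The key step is the following splitting of the integral. Write $A_k=\{\psi_k\geq K-\delta\}$ and $B_k=\{\psi_k<K-\delta\}=R\setminus A_k$. Using $\psi_k\leq K$ on $A_k$ and $\psi_k< K-\delta$ on $B_k$ (and we may assume $\psi_k\geq -M$ is not available, so instead bound crudely: actually one only needs the upper bound $\psi_k\le K$ everywhere, which gives $\psi_k\le K-\delta$ on $B_k$ and hence)
\begin{equation*}
    \int_R \psi_k \diff\mu = \int_{A_k}\psi_k\diff\mu + \int_{B_k}\psi_k\diff\mu \leq K\,\mu(A_k) + (K-\delta)\,\mu(B_k) = K\,\mu(R) - \delta\,\mu(B_k).
\end{equation*}
Since $\mu(B_k)\geq \mu(R)/N$, this yields $\tfrac{1}{\mu(R)}\int_R\psi_k\diff\mu \leq K - \tfrac{\delta}{N}$, contradicting $K(1-\tfrac1k)\leq \tfrac{1}{\mu(R)}\int_R\psi_k\diff\mu$ once $k$ is large enough that $\tfrac{K}{k}<\tfrac{\delta}{N}$. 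In fact this shows directly, without contradiction, that one may simply take $\varepsilon = \tfrac{\delta}{KN}$ (any smaller positive number works), since \eqref{E:con-L1-to-L1infty} together with $\mu(\{\psi<K-\delta\})\ge\mu(R)/N$ would force $K(1-\varepsilon)\le K-\tfrac{\delta}{N}$, i.e. $\varepsilon\ge\tfrac{\delta}{KN}$, a contradiction. I would present the clean direct version: set $\varepsilon=\delta/(KN)$, assume $\mu(\{\psi<K-\delta\})\ge\mu(R)/N$ toward a contradiction, run the one-line integral estimate above, and conclude.

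There is essentially no obstacle here — the only things to be slightly careful about are: the measure space has finite positive total mass so dividing by $\mu(R)$ is legitimate; one uses only the a.e. upper bound $\psi\le K$, never a lower bound; and the conclusion $\mu(\{\psi\ge K-\delta\})\ge\mu(R)(1-\tfrac1N)$ is the complement of the event one bounds. If one wants $\varepsilon$ independent of $K$ as well (the statement allows dependence on $K$), no change is needed since $K$ is given. I expect the author's proof to be this same two-line computation.
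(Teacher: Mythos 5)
Your argument is correct and is essentially identical to the paper's: both split $\int_R\psi$ over $\{\psi\ge K-\delta\}$ and its complement, bound the integrand by $K$ and $K-\delta$ respectively, and rearrange to see that any $\varepsilon\le\delta/(KN)$ works. No issues.
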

\begin{proof}
    Denote $\lambda=\tfrac{1}{\mu(R)}\mu(\{\psi\geq K - \delta\})$. Then for every $\varepsilon>0$, assuming \eqref{E:con-L1-to-L1infty} holds, one has
    \begin{equation*}
        \begin{split}
            K(1-\varepsilon)&\leq \frac{1}{\mu(R)}\int_{R}\psi\diff\mu
            =\frac{1}{\mu(R)}\left(\int_{\{\psi\geq K-\delta\}}\psi\diff\mu + \int_{\{\psi< K-\delta\}}\psi\diff\mu\right)\\
            &\leq \frac{1}{\mu(R)}(\mu(R)\lambda K + (1-\lambda)\mu(R)(K-\delta))
            =\lambda K +(1-\lambda)(K-\delta).
        \end{split}
    \end{equation*}
    The above inequality is equivalent to
    \begin{equation}\label{E:1}
        \frac{-K\varepsilon+\delta}{\delta}\leq \lambda.
    \end{equation}
    We may find an $\varepsilon>0$ so that the left hand side of \eqref{E:1} is greater than or equal to $1-\tfrac{1}{N}$. This, however, implies that $\lambda\geq 1-\tfrac{1}{N}$ and the statement follows.
\end{proof}

\begin{theorem}\label{T:mv-negative}
    Suppose $n,m\in\nat$, $2\leq n\leq m$ and denote $Q=[-1,1]^n$.
    Let $|\cdot |_a$ be a norm on $\reals^n$ and $|\cdot|_b$ a norm on $\reals^m$. Let $u\in \reals^m$ be a strongly extremal point of $B_b$ such that $\lVert (u|0)\rVert_{a\to b}=1$.
    Then for any $r>0$ and any sequence $g_i\in \tLip_1(Q_a,\reals^m_b)$ such that $g_i \to (u|0)$ it holds that
    \begin{equation*}
        \lim_{i\to\infty} \Hn(\{x\in Q: \vol g_i' (x)\geq\mv u +r\})=0.
    \end{equation*}
\end{theorem}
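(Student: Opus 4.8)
The idea is to exploit that $u$ is strongly extremal together with the fact that $g_i \to (u|0)$ uniformly forces the first "column" of $g_i'$ to be close to $u$ on a set of nearly full measure, and then to invoke the upper semi-continuity of the maximal volume functional $\mv$ from Observation \ref{O:mv-semicont}. First I would reduce to controlling the partial derivative $\partial_{e_1} g_i$ in the direction of $e_1$. The strongly extremal hypothesis means, by Proposition \ref{P:str-ext-char}, that there is a linear projection $P\colon \reals^m \to \operatorname{span}\{u\}$ with the property that if $u_k \in B_b$ and $P u_k \to u$ then $u_k \to u$; in fact I want the quantitative version: for every $\delta>0$ there is $\varepsilon>0$ so that $v\in B_b$ and $|Pv - u|_b < \varepsilon$ imply $|v-u|_b<\delta$ (this follows from compactness of $B_b$). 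Composing $g_i$ with $P$ and using that $\lVert(u|0)\rVert_{a\to b}=1$ so that the segment $t\mapsto (u|0)(te_1) = tu$ traverses $\partial B_b$ at unit speed, I argue that $P\circ g_i$ restricted to $e_1$-fibres is, after the uniform convergence, forced to have $e_1$-derivative close to $Pu = u$ in an $L^1$-averaged sense along almost every fibre.

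The technical heart is a fibering/Fubini argument. Write $Q = [-1,1]_{e_1} \times Q'$ where $Q' = [-1,1]^{n-1}$. For $x'\in Q'$ let $\gamma_i^{x'}(t) = g_i(t,x')$, a $1$-Lipschitz curve into $\reals^m_b$ (after identifying the $e_1$-fibre metrically, up to the norm-equivalence constants which only affect constants). Since $g_i \to (u|0)$ uniformly, $\gamma_i^{x'}(t) \to tu$ uniformly, and in particular $\gamma_i^{x'}(1) - \gamma_i^{x'}(-1) \to 2u$. Because $|u|_b = 1$ (as $u\in\partial B_b$) and $\gamma_i^{x'}$ is $1$-Lipschitz, the total length of $\gamma_i^{x'}$ is at most $2$, while the chord has length tending to $2$; hence $\int_{-1}^1 |(\gamma_i^{x'})'(t)|_b \diff t \to 2$ and, more importantly, $\int_{-1}^1 |(\gamma_i^{x'})'(t) - u|_b \diff t \to 0$ for a.e. $x'$ — one derives this from the fact that in any normed space a unit-speed-bounded curve whose chord nearly achieves the length bound must have derivative nearly constant equal to the chord direction; here one uses that $u$ is an extremal point of $B_b$ so that $\tfrac12(v+w) \in \partial B_b$ with $v,w\in B_b$ forces $v=w$, upgraded quantitatively via strong extremality. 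Applying Lemma \ref{L:balls} to $\psi(t) = 1 - |(\gamma_i^{x'})'(t)-u|_b$ (roughly) on each fibre and then integrating over $x'\in Q'$ with Fubini, I conclude: for every $\delta>0$,
\begin{equation*}
    \Hn\big(\{x\in Q : |\partial_{e_1}g_i(x) - u|_b \geq \delta\}\big) \to 0 \quad\text{as } i\to\infty.
\end{equation*}

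Finally I combine this with Observation \ref{O:mv-semicont}. Fix $r>0$ and let $\delta = \delta(r)>0$ be the radius from that observation so that $\lVert(\widetilde u|V)\rVert_{a\to b}\leq 1$ and $|\widetilde u - u|_b<\delta$ imply $\vol(\widetilde u|V) \leq \mv u + r$. At $\Hn$-a.e. $x\in Q$ the differential $g_i'(x)$ exists; its first column is $\partial_{e_1} g_i(x)$ and, since $g_i\in\tLip_1(Q_a,\reals^m_b)$, Lemma \ref{L:prel-Lip}'s pointwise companion gives $\lVert g_i'(x)\rVert_{a\to b}\leq 1$, i.e. writing $g_i'(x) = (\widetilde u \mid V)$ with $\widetilde u = \partial_{e_1}g_i(x)$ we have $\lVert(\widetilde u\mid V)\rVert_{a\to b}\leq 1$. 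Hence on the set where $|\partial_{e_1}g_i(x) - u|_b < \delta$ we get $\vol g_i'(x) \leq \mv u + r$. Therefore
\begin{equation*}
    \{x\in Q : \vol g_i'(x) \geq \mv u + r\} \subset \{x\in Q : |\partial_{e_1}g_i(x) - u|_b \geq \delta\}
\end{equation*}
up to an $\Hn$-null set, and the right-hand side has measure tending to $0$ by the fibering argument. This gives the claim.

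The main obstacle is the quantitative "rigidity" step: extracting from $g_i\to(u|0)$, $1$-Lipschitz, and $u$ strongly extremal, that $\partial_{e_1}g_i \to u$ in measure. The qualitative extremality gives that a chord of length nearly $2$ in $B_b$ scaled by $\tfrac12$ forces the two unit endpoints together, but one needs a uniform modulus; strong extremality supplies exactly this via the supporting functional $x^*$ (with $x^*(v)=1$ iff $v=u$ on $B_b$), since then $x^*(\gamma_i^{x'}(1)) - x^*(\gamma_i^{x'}(-1)) \to 2$ forces $x^*((\gamma_i^{x'})') \to 1$ in $L^1$, and a compactness argument on $\{v\in B_b : x^*(v)\geq 1-\eta\}$ shrinking to $\{u\}$ converts this into $L^1$-convergence of $(\gamma_i^{x'})'$ to $u$. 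Getting the Fubini bookkeeping and the "for a.e. fibre" quantifiers to line up uniformly in $i$ (so that Lemma \ref{L:balls} can be applied with $i$-independent parameters, then integrated) is the part requiring care.
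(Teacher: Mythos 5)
Your proposal is correct and follows essentially the same route as the paper: reduce the statement to showing that $\partial_{e_1} g_i \to u$ in measure via a fibre-wise fundamental theorem of calculus argument combined with Lemma \ref{L:balls} and Fubini, and then conclude with the upper semi-continuity of $\mv$ from Observation \ref{O:mv-semicont} together with $\lVert g_i'\rVert_{a\to b}\le 1$ a.e. The only cosmetic difference is that the paper runs the quantitative rigidity step through the linear projection $P$ supplied by Proposition \ref{P:str-ext-char} rather than the supporting functional $x^*$, which amounts to the same thing.
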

\begin{proof}
    We claim that for every $N\in\nat$ and $\sigma>0$
    there exists $\varepsilon>0$ such that if $g\in\tLip_1(Q_a,\reals^m_b)$ satisfies $\lVert g-(u|0) \rVert_{\ell^\infty(Q_a,\reals^m_b)}\leq \varepsilon$, then
\begin{equation}\label{E:non-zero-entries-estimate}
        \lVert \tfrac{\partial g}{\partial e_1}-u \rVert_b\leq \sigma\quad\text{on a Borel set $M$ of $\mathcal{H}^n$-measure at least $2^n-\frac{2^n}{N}$.}
    \end{equation}

    Suppose for a moment that the claim holds true. Let $N\in\nat$, by taking $\sigma>0$ small enough and using Observation \ref{O:mv-semicont} together with the fact that $\lVert g_i \rVert_{a\to b}\leq 1$ $\Hn$-a.e., there exists some $i_N\in\nat$ (depending only on $N$) such that
    \begin{equation*}
        \vol g_i' \leq \mv u +\tfrac{r}{2} \quad \text{$\Hn$-a.e. in $M$, for $i\geq i_N$.}
    \end{equation*}
    It follows that
    \begin{equation*}
        \Hn(\{x\in Q: \vol g_i'(x)\geq \mv u + r\})\leq \Hn(Q\setminus M)\leq \frac{2^n}{N}\quad\text{for $i\geq i_N$}
    \end{equation*}
    and so the statement of the theorem follows by sending $N\to \infty$.

It remains to show the claim.
As $u$ is strongly extremal, by Proposition \ref{P:str-ext-char}, we find a linear projection $P\colon \reals^m \to \tspan\{u\}$ with the following property. Whenever $w^\alpha\in B_b$ satisfy $P(w^\alpha)\to u$ as $\alpha \to \infty$, we have $w^\alpha \to u$. Hence, we may find $\delta>0$ such that for every $w\in B_b$,
    \begin{equation}\label{E:nr-delta}
        |P(w)-u|_b\leq \delta \quad\text{implies}\quad |w-u|\leq\sigma.
    \end{equation}
    Finally, fixing $K=1$, $R=[-1,1]$ and $\mu=\mathcal{L}^1$, find $\varepsilon>0$ from Lemma \ref{L:balls}. 
    
Fix now $t_2,\dots, t_n\in [-1,1]$ and consider the Lipschitz curve
\begin{equation*}
    \varphi(t)=g(t,t_2,\dots,t_n).
\end{equation*}
Since $\lVert(u|0)\rVert_{a\to b}\leq 1$ and $u\in \partial B_b$, we infer that $(1,0,\dots,0)^T\in \partial B_a$, which means that the restriction of $|\cdot|_a$ to $\tspan\{(1,0,\dots,0)^T\}$ is the Euclidean distance.
Whence, as $\varphi$ is $1$-Lipschitz, we have
\begin{equation*}
    |\varphi'(t)|_b\leq 1 \quad\text{for a.e.~$t\in[-1,1]$.}
\end{equation*}

Applying the fundamental theorem of calculus, we obtain
    \begin{equation*}
        \int_{-1}^1\varphi'(s)\diff s=\varphi(1)-\varphi(-1)\in B_b(2u,2\varepsilon),
    \end{equation*}
    i.e.
    \begin{equation*}
        |\int_{-1}^1\varphi'(s)\diff s - 2u|_b\leq 2\varepsilon,
    \end{equation*}
    hence, recalling that $P$ has operator norm $1$,
    \begin{equation*}
        |\int_{-1}^1 P(\varphi'(s))\diff s - u|_b\leq 2\varepsilon.
    \end{equation*}
    If we now identify $\tspan\{u^i\}$ with $\reals$ by assigning $\lambda\in\reals$ to $\lambda u_i$, we may use the reverse triangle inequality and obtain
    \begin{equation*}
        \int_{-1}^1 P(\varphi'(s))\diff s \geq 2-2\varepsilon,
    \end{equation*}
    in the sense of the described identification.
    By the choice of $\varepsilon$, we find a Borel set $M(t_2,\dots,t_n)\subset [-1,1]$ with
    \begin{equation*}
        \H^1 (M(t_2,\dots,t_n)) \geq 2-\tfrac{2}{N}
    \end{equation*}
    such that
    \begin{equation*}
        |P(\varphi')-u|_b\leq\delta \quad \text{on $M(t_2,\dots, t_n)$}.
    \end{equation*}
    By definition, whenever $\varphi'$ exists, one has $\varphi'=\tfrac{\partial g}{\partial e_1}$. Whence, the \emph{Borel} set
    \begin{equation*}
        M=\{x\in Q: |P(\tfrac{\partial g}{\partial e_1}(x))-u|_b\leq \delta\}
    \end{equation*}
    has the following property. For any choice of $t_2,\dots t_n\in [-1,1]$ the one-dimensional projection of $M$ satisfies
    \begin{equation*}
        \{x\in M: x_2=t_2,\dots,x_n=t_n\}\supset M(t_2,\dots, t_n).
    \end{equation*}
    Whence Fubini's theorem gives
    \begin{equation*}
        \Hn(M)\geq 2^n-\tfrac{2^n}{N}.
    \end{equation*}
    By the definition of $M$ and the choice of $\delta$ \eqref{E:nr-delta}, we have
    \begin{equation*}
        |\tfrac{\partial g}{\partial e_1}-u|_b\leq \sigma\quad \text{on $M$},
    \end{equation*}
    which is \eqref{E:non-zero-entries-estimate} as we wanted.
\end{proof}

\begin{corollary}\label{C:mv-negative}
    Suppose $n,m\in\nat$, $2\leq n\leq m$. Let $\Omega\subset \reals^n$ be bounded and open.
    Let $|\cdot |_a$ be a norm on $\reals^n$ and $|\cdot|_b$ a norm on $\reals^m$. Let $u\in \reals^m$ be a strongly extremal point of $B_b$ such that $\lVert (u|0)\rVert_{a\to b}=1$.
    Then for any $r>0$ and any sequence $g_i\in \tLip_1(\Omega_a,\reals^m_b)$ such that $g_i \to (u|0)$ it holds that
    \begin{equation*}
        \lim_{i\to\infty} \Hn(\{x\in \Omega: \vol g_i' (x)\geq \mv u+r\})=0.
    \end{equation*}
    In particular, if $\mv u = 0$, then the sets
    \begin{equation*}
        \{f\in \tLip_1(\Omega_a,\reals^m_b): \int_\Omega \vol f' \diff\Hn \geq \numberdelta\}
    \end{equation*}
    and
    \begin{equation*}
        \{f\in \tLip_1(\Omega_a,\reals^m_b): \Hn(f(\Omega))\geq \numberdelta\}
    \end{equation*}
    are dense in $\tLip_1(\Omega_a,\reals^m_b)$ if and only if $\numberdelta=0$.
\end{corollary}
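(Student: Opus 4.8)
The plan is to deduce the corollary from Theorem~\ref{T:mv-negative} in two moves: first transport the displayed pointwise estimate from $Q=[-1,1]^n$ to a bounded open set, then convert it into statements about $\int_\Omega\vol f'\diff\Hn$ and about $\Hn(f(\Omega))$. For the displayed assertion, fix $r>0$ and a sequence $g_i\to(u|0)$ in $\tLip_1(\Omega_a,\reals^m_b)$, and set $A_i=\{x\in\Omega:\vol g_i'(x)\ge\mv u+r\}$. Since $\Omega$ is open in $\reals^n$, I would decompose it into countably many interior-disjoint closed cubes $Q^{(j)}=x_j+\rho_j[-1,1]^n\subset\Omega$ exhausting $\Omega$ up to an $\Hn$-null set, and, given $\varepsilon>0$, keep finitely many $Q^{(1)},\dots,Q^{(K)}$ covering all but $\varepsilon$ of $\Omega$. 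On each $Q^{(j)}$ I rescale, setting $h_i^{(j)}(y):=\rho_j^{-1}g_i(x_j+\rho_j y)$ for $y\in Q$. Because $|\cdot|_a$ is homogeneous, the affine map $y\mapsto x_j+\rho_j y$ multiplies $|\cdot|_a$-distances by exactly $\rho_j$, so $h_i^{(j)}\in\tLip_1(Q_a,\reals^m_b)$; the chain rule gives $(h_i^{(j)})'(y)=g_i'(x_j+\rho_j y)$; and $g_i\to(u|0)$ uniformly forces $h_i^{(j)}\to(u|0)+c_j$ uniformly, where $c_j=\rho_j^{-1}(u|0)(x_j)$ is a constant. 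Applying Theorem~\ref{T:mv-negative} to $h_i^{(j)}-c_j$ (same derivative, limit $(u|0)$) and undoing the change of variables, which scales $\Hn$ by $\rho_j^n$, gives $\Hn(A_i\cap Q^{(j)})\to 0$ for each fixed $j\le K$. Summing over $j$ and adding the leftover piece yields $\limsup_i\Hn(A_i)\le\varepsilon$, and $\varepsilon\to 0$ proves $\Hn(A_i)\to 0$.

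\textbf{Step 2 (the ``in particular'' clause).} If $\numberdelta=0$ both sets equal $\tLip_1(\Omega_a,\reals^m_b)$, so density is trivial. For the converse I would show that when $\numberdelta>0$ the map $(u|0)$ — which lies in $\tLip_1(\Omega_a,\reals^m_b)$ because $\lVert(u|0)\rVert_{a\to b}=1$ and $\Omega$ is bounded — has a sup-norm neighbourhood disjoint from both sets. If not, there is $g_i\to(u|0)$ with, for each $i$, either $\int_\Omega\vol g_i'\diff\Hn\ge\numberdelta$ or $\Hn(g_i(\Omega))\ge\numberdelta$. I use two facts: (i) each $g_i$ being $1$-Lipschitz $\reals^n_a\to\reals^m_b$ forces $\lVert g_i'\rVert_{a\to b}\le 1$ a.e., hence $\vol g_i'\le C$ a.e.\ for a constant $C$ depending only on $n$ and the two norms (equivalence of norms in finite dimension); (ii) the metric area formula (Theorem~\ref{E:Prel-area-formula-metric-eucl}, with $J_\Omega g_i=\vol g_i'$) gives $\Hn(g_i(\Omega))\le\int_\Omega\vol g_i'\diff\Hn$. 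Then for any $\tau>0$,
\begin{equation*}
    \int_\Omega\vol g_i'\diff\Hn \;\le\; \tau\,\Hn(\Omega) + C\,\Hn\big(\{x\in\Omega:\vol g_i'(x)\ge\tau\}\big),
\end{equation*}
and by Step~1 (with $\mv u=0$ and $r=\tau$) the last term tends to $0$; letting $\tau\to 0$ gives $\int_\Omega\vol g_i'\diff\Hn\to 0$, hence also $\Hn(g_i(\Omega))\to 0$. Either alternative contradicts $\numberdelta>0$, so neither set is dense.

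\textbf{Main obstacle.} None of this is genuinely deep — all the real content is already in Theorem~\ref{T:mv-negative}. The only points requiring care are the bookkeeping in Step~1 (verifying that the rescaled maps stay $1$-Lipschitz for the norm $|\cdot|_a$ and that the additive constant $c_j$, irrelevant for the derivative, is discarded before invoking Theorem~\ref{T:mv-negative}) and, in Step~2, the uniform bound $\vol g_i'\le C$, which is exactly what makes the tail term $C\,\Hn(\{\vol g_i'\ge\tau\})$ negligible. Once these are in place, the statement for $\{\Hn(f(\Omega))\ge\numberdelta\}$ follows from the one for $\{\int_\Omega\vol f'\diff\Hn\ge\numberdelta\}$ via the area formula with no further work.
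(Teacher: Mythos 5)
Your proposal is correct and follows essentially the same route as the paper: the paper likewise fills $\Omega$ up to measure $\varepsilon$ with finitely many non-overlapping cubes, applies Theorem \ref{T:mv-negative} on each (after the rescaling you describe), and then deduces non-density of the first set from the pointwise statement and of the second set via the area formula. Your write-up just makes explicit the rescaling bookkeeping and the uniform bound $\vol g_i'\leq C$ that the paper leaves implicit.
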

\begin{proof}
Any open bounded set $\Omega \subset \reals^n$ may be arbitrarily well (with respect to measure) filled with a finite set of non-overlapping squares, thus the first statement easily follows from Theorem \ref{T:mv-negative}.
    The non-density of the first set then follows immediately from the first statement. The statement about the non-density of the second set follows from the area formula and the non-density of the first set.
\end{proof}

\begin{example}
    Let $n,m\in\nat$, $m\geq n$ and denote $u=(1,0,\dots,0)^T\in\reals^m$.
    It is an easy observation that $\mv_{\infty \to 2} u = 0$.
    Indeed, one may even show that for $V\in (\reals^m)^{n-1}$ one has $\lVert(u|V) \rVert_{\infty \to 2}\leq 1$ if and only if $V=0$.
    This in particular shows that for any open bounded set $\Omega\subset \reals^m$, the set
    \begin{equation*}
        \{f\in \tLip_1(\Omega_\infty,\reals^m_2): \Hn(f(\Omega))\geq \numberdelta\}
    \end{equation*}
    is dense in $\tLip_1(\Omega_\infty,\reals^m_2)$ if and only if $\numberdelta=0$.
\end{example}

In fact, the idea of the example above can be easily used to show a far stronger statement.

\begin{theorem}[Restatement of Theorem \ref{T:main-general-failure}]
Let $n,m\in\nat$, $m\geq n$. Suppose $|\cdot|_a$ is a norm on $\reals^n$ such that $\partial B_a$ contains a non-extremal point of $B_a$. Suppose further that $|\cdot|_b$ is an arbitrary norm on $\reals^m$. Then, for any open bounded set $\Omega\subset \reals^n$, the sets
\begin{equation*}
    \{f\in \tLip_1(\Omega_a, \reals^m_b): \int_\Omega \vol f' \diff \Hn \geq \numberdelta\}
\end{equation*}
and
\begin{equation*}
    \{f\in \tLip_1(\Omega_a, \reals^m_b): \Hn(f(\Omega)) \geq \numberdelta\}
\end{equation*}
are dense in $\tLip_1(\Omega_a, \reals^m_b)$ if and only if $\numberdelta=0$.
\end{theorem}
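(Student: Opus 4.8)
The plan is to perform a linear change of coordinates on $\reals^n$ after which the hypothesis on $B_a$ becomes the hypothesis of Corollary \ref{C:mv-negative}; that is, after which $B_b$ has a strongly extremal point $u$ with $\lVert(u|0)\rVert_{a\to b}=1$ and $\mv_{a\to b}u=0$. Corollary \ref{C:mv-negative} then delivers exactly the desired conclusion.

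First note $n\geq 2$: when $n=1$ both boundary points of $B_a$ are extremal, so the hypothesis is vacuous. Let $w\in\partial B_a$ be non-extremal, so there is $v\neq 0$ with $w+v,w-v\in B_a$; by convexity $w+sv\in B_a$ for all $s\in[-1,1]$, and $w,v$ are linearly independent (if $v=\lambda w$ with $\lambda\neq 0$, then $|1+\lambda|\leq 1$ and $|1-\lambda|\leq 1$ force $\lambda=0$). By Hahn--Banach pick $x^*\in(\reals^n)^*$ with $x^*(w)=1$ and $x^*(y)\leq 1$ for all $y\in B_a$; adding $x^*(w+v)\leq 1$ and $x^*(w-v)\leq 1$ and using $x^*(w)=1$ gives $x^*(v)=0$, while symmetry of $B_a$ gives $\max_{y\in B_a}|x^*(y)|=1$. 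Since $x^*(w)=1=e_1^*(e_1)$, $x^*(v)=0=e_1^*(e_2)$ and $w,v$ are independent, there is an invertible linear $L\colon\reals^n\to\reals^n$ with $Lw=e_1$, $Lv=e_2$ and $e_1^*\circ L=x^*$ (extend $\{w,v\}$ to a basis and prescribe $L$ on it). The map $f\mapsto f\circ L^{-1}$ is an isometric bijection of $\tLip_1(\Omega_a,\reals^m_b)$ onto the analogous space with the norm $|\cdot|_{L(a)}$ on $L\Omega$, and, using $\vol(AL^{-1})=|\det L|^{-1}\vol A$ together with $\Hn(L(S))=|\det L|\,\Hn(S)$, one checks it preserves the values of $\int_\Omega\vol f'\diff\Hn$ and of $\Hn(f(\Omega))$. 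Hence we may assume from now on that
\begin{equation*}
    e_1\in\partial B_a,\qquad \max_{x\in B_a}|x_1|=1,\qquad \{\,e_1+se_2 : s\in[-1,1]\,\}\subset B_a.
\end{equation*}

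Now fix any strongly extremal point $u$ of $B_b$ (these exist in every finite-dimensional normed space, as recorded after Definition \ref{extremal-point}), so $|u|_b=1$. Then $\lVert(u|0)\rVert_{a\to b}=\bigl(\sup_{x\in B_a}|x_1|\bigr)|u|_b=1$. If $V=(v_2,\dots,v_n)\in(\reals^m)^{n-1}$ satisfies $\lVert(u|V)\rVert_{a\to b}\leq 1$, then evaluating at $e_1\pm e_2\in B_a$ yields $u\pm v_2\in B_b$, so $v_2=0$ since $u$ is extremal; therefore $(u|V)$ is not of full rank and $\vol(u|V)=0$. This shows $\mv_{a\to b}u=0$, and Corollary \ref{C:mv-negative} applies verbatim: both displayed sets are dense in $\tLip_1(\Omega_a,\reals^m_b)$ if and only if $\numberdelta=0$. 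Undoing the coordinate change $L$ gives the statement for the original norm $|\cdot|_a$ and set $\Omega$.

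The conceptual heart is the third paragraph: a flat segment on $\partial B_a$ in a direction $e_2$ forces the $e_2$-column of \emph{every} linear map of $\lVert\cdot\rVert_{a\to b}$-norm at most $1$ that sends $e_1$ to an extremal point of $B_b$ to vanish, so its volume is zero — this is precisely what converts a flatness hypothesis on the \emph{domain} ball into the condition $\mv u=0$ needed on the \emph{target} side. The only laborious point is the bookkeeping of the change of coordinates $L$: checking that the three normalizations ($e_1\in\partial B_a$, $\max_{B_a}|x_1|=1$, flatness in direction $e_2$) are realizable by a single linear map and that the two functionals are left invariant. Neither step presents a genuine obstacle once Corollary \ref{C:mv-negative} is available.
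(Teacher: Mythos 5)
Your proof is correct and follows essentially the same route as the paper's: normalize by a linear isomorphism so that the non-extremal point becomes $e_1$ with a flat segment in direction $e_2$, pick a strongly extremal point $u$ of $B_b$, show $\mv_{a\to b}u=0$ from extremality of $u$, and invoke Corollary \ref{C:mv-negative}. The only (immaterial) differences are that the paper normalizes via an affine tangent hyperplane and tests $(u|V)$ on a generic segment centred at $e_1$, whereas you use the Hahn--Banach functional and evaluate at $e_1\pm e_2$; your bookkeeping of the change of variables is in fact more explicit than the paper's.
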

\begin{proof}
    Let us denote by $e_1, \dots, e_n$ the canonical vectors in $\reals^n$.
    By our assumptions, there is a point $x\in \partial B_a$, which is non-extremal in $B_a$. There is a linear invertible map $A\colon \reals^n \to \reals^n$ such that $A(x)=e_1$ and $e_1+\tspan\{e_2,\dots, e_n\}$ is an affine tangent to $A(B_a)$. As $A\colon (\reals^n, |\cdot|_a) \to (\reals^n, |\cdot|_{A(a)})$ is an isometry, and the statement we are proving is invariant under isometries, we may assume that $x=e_1$ and $e_1+\tspan\{e_2,\dots, e_n\}$ is an affine tangent to $B_a$ at $x=e_1$. 
    
     As $\partial B_b$ is compact, there exists $u\in\partial B_b$ maximizing the quantity $|u|_2$.
     By taking the unique supporting hyperplane to the Euclidean ball of radius $|u|_2$ at $u$, one can easily observe that $u$ is a strongly extremal point of $B_b$. Therefore, in particular, it is also extremal.
     
     Clearly $\lVert (u|0) \rVert_{a\to b}=1$ as $(u|0)(B_a)=\{tu: t\in[-1,1]\}$ and so $\mv u$ is well defined.
     It is enough to show that $\mv u = 0$ and recall Corollary \ref{C:mv-negative}. 
     
     Suppose that $V\in (\reals^m)^{n-1}$ is such that $\vol (u|V)>0$. It suffices to prove that $\lVert (u|V) \rVert_{a\to b}>1$.
     To that end, let $l\subset B_a$ be any non-degenerate line segment having $e_1$ as its midpoint. As $(u|V)$ is of full rank, $(u|V) l$ is a non-degenerate line having $u$ as its midpoint.
     As $u$ is extremal in $B_b$, this implies that $(u|V)l\not\subset B_b$. Which, as $l\subset B_a$, implies 
     $\lVert (u|V) \rVert_{a\to b}>1$ and we are done.
\end{proof}

\section{Results in metric spaces}\label{S:results-metric}

The goal of this final section is to present results in general metric spaces. Of course, the ``generality'' here is fairly limited by the fact that even in case of normed spaces, the relevant results simply need not be true. Therefore, our positive results concentrate on $n$-rectifiable metric spaces whose $\Hn$-a.a.~approximate tangents are $\lambda$-inflating.

\begin{definition}\label{D:inflating-space}
    Given $n\in\nat$, we denote by $\mathcal{N}(n)$ the set of all norms on $\reals^n$ and by $\sim$ we understand the equivalence relation on $\mathcal{N}(n)$ given by 
        $|\cdot|_{a_1}\sim |\cdot|_{a_2}$ if and only if there is an invertible linear map $A\colon \reals^n \to \reals^n$ such that $A(B_{a_1})=B_{a_2}$.
    The space $\mathcal{N}(n)/_\sim$ is called the \emph{Banach-Mazur compactum}.
    Given $\lambda>0$, $n\leq m\in\nat$ and a norm $|\cdot|_b$ on $\reals^m$, we let
    \begin{equation*}
        \mathcal{N}^{b}_{\textnormal{infl}(\lambda)}(n)=\{[|\cdot|_{a}]\in\mathcal{N}(n)/_\sim: \text{$\infl{a}{b}$ forms a $(\vol(|\cdot|_a)\lambda)$-inflating pair}\}.
    \end{equation*}
\end{definition}

\begin{remark}
    The notion of forming $(\vol(|\cdot|_a)\lambda)$-inflating pair descends to quotient, i.e.~one has that $\infl{a}{b}$ forms a  $(\vol(|\cdot|_a)\lambda)$-inflating pair if and only if for every $|\cdot|_{a'}\in [|\cdot|_a]$, $(|\cdot|_{a'},|\cdot|_b)$ forms a $(\vol(|\cdot|_{a'})\lambda)$-inflating pair. This means that the particular representative chosen when dealing with the family $ \mathcal{N}^{b}_{\textnormal{infl}(\lambda)}(n)$ is irrelevant.
\end{remark}

\begin{theorem}\label{T:MSR-density}
    Suppose that $n,m\in \nat$, $n\leq m$, $X$ is a complete metric space and $E\subset X$ an $n$-rectifiable subset with $\Hn(E)<\infty$. Suppose $|\cdot|_b$ is a norm on $\reals^m$.
    Let $\lambda>0$ and assume that for $\Hn$-a.e.~$\xi\in E$, one has
    \begin{equation*}
        T(E,\xi)\in \mathcal{N}^b_{\textnormal{infl}(\lambda)}(n).
    \end{equation*}
    Then for each $\varepsilon>0$, there is a set $\widetilde{E}\subset E$ with $\Hn(E\setminus \widetilde{E})<\varepsilon$ which, moreover, satisfies the following.
    For every $\widetilde{E}\subset X' \subset X$, $f\in \tLip_1(X',\reals^m_b)$, $\delta>0$ and $\eta \in (0,1)$, there exists $g\in \tLip_1(\widetilde{E}, \reals^m_b)$ with $\lVert g-f \rVert_{\ell^{\infty}(\widetilde{E})}\leq \delta$ and
    \begin{equation}\label{E:MSR-measure-est-main}
        \int_{\widetilde E} J_{\widetilde E} g \diff \Hn\geq \eta\lambda\Hn_X(\widetilde{E}).
    \end{equation}
\end{theorem}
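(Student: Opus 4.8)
The plan is: first to choose $\widetilde E$ so that it carries, for every $\theta>0$, a finite partition into relatively open --- hence, since $\widetilde E$ is compact, clopen and pairwise at positive distance --- pieces, each $(1+\theta)$-biLipschitz to a \emph{normed} subset of $\reals^n$ whose norm falls under the inflation machinery of Section~\ref{S:residuality-positive}; then, given $f,\delta,\eta$, to build $g$ separately on each piece via Proposition~\ref{P:PR-inflation-cube}, push it forward, and glue, the positive distance between pieces substituting for the Lipschitz extension theorem with normed target that we lack.

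\emph{Choice of $\widetilde E$.} Write $E=\bigcup_i f_i(F_i)$ modulo an $\Hn$-null set, with $f_i\colon F_i\subset\reals^n\to X$ Lipschitz, and apply Lemma~\ref{L:MSR-open-decomposition} successively to $f_i$ restricted to $F_i\setminus f_i^{-1}(K^1\cup\dots\cup K^{i-1})$ (an $\Hn$-measurable set), obtaining pairwise disjoint compact sets $K^i\subset E$ with $\Hn(E\setminus\bigcup_i K^i)$ as small as desired; a finite union $\widetilde E=K^1\sqcup\dots\sqcup K^p$ gives $\Hn(E\setminus\widetilde E)<\varepsilon$. The $K^i$ are clopen in $\widetilde E$, so stringing together the decompositions of Lemma~\ref{L:MSR-open-decomposition}(ii) for each $K^i$ yields, for any $\theta>0$, a partition $\widetilde E=\bigsqcup_{j=1}^{j_0}G_j$ into clopen compact pieces together with $(1+\theta)$-biLipschitz bijections $I_j\colon(F_j,|\cdot|_{a_j})\to G_j$, $F_j\subset\reals^n$ compact. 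Using Lemma~\ref{L:prel-ideal} (so $T(\widetilde E,\cdot)=T(E,\cdot)$ $\Hn$-a.e.) together with the hypothesis --- and, the one delicate point, the fact that the model norm $|\cdot|_{a_j}$ of $G_j$ is a tangent norm of $E$ at a point of $G_j$ which one may take in the full-measure set $\{T(E,\cdot)\in\mathcal{N}^b_{\textnormal{infl}(\lambda)}(n)\}$ (discard an $\Hn$-null subset of $E$ before decomposing, and choose the base point to be a density point of $G_j$) --- we may assume that $\infl{a_j}{b}$ forms a $(\vol(|\cdot|_{a_j})\lambda)$-inflating pair for every $j$. Here $\widetilde E$ depends only on $E$ and $\varepsilon$; the ambient $X'$ is irrelevant, as only $f|_{\widetilde E}\in\tLip_1(\widetilde E,\reals^m_b)$ is used.

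\emph{Construction and gluing.} Given $f,\delta,\eta$, fix $\eta_0\in(\eta,1)$, then $\theta>0$ with $(1+\theta)^{2n}<\eta_0/\eta$ and $\tfrac{2\theta}{1+\theta}\lVert f\rVert_{\ell^\infty(\widetilde E)}<\tfrac{\delta}{2}$; form the partition $\widetilde E=\bigsqcup_j G_j$ for this $\theta$, set $\rho_0=\min_{j\ne k}\dist(G_j,G_k)>0$, and fix $0<\varepsilon_0<\min\{\tfrac{\delta}{4},\tfrac{\theta\rho_0}{2(1+\theta)}\}$. On each $F_j$ the map $\varphi_j=\tfrac{1}{1+\theta}(f\circ I_j)$ lies in $\tLip_1((F_j)_{a_j},\reals^m_b)$, so Proposition~\ref{P:PR-inflation-cube} (inflating constant $\vol(|\cdot|_{a_j})\lambda$, parameters $\varepsilon_0,\eta_0$, $E=D=F_j$) supplies $\psi_j\in\tLip_1((F_j)_{a_j},\reals^m_b)$ with $\lVert\psi_j-\varphi_j\rVert_{\ell^\infty(F_j)}<\varepsilon_0$ and $\int_{F_j}\vol\psi_j'\diff\Hn\ge\eta_0\vol(|\cdot|_{a_j})\lambda\,\Hn(F_j)=\eta_0\lambda\,\Hn_{a_j}(F_j)$. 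Put $g_j=\psi_j\circ I_j^{-1}$ on $G_j$ and $g=g_j$ there. Each $g_j$ is $(1+\theta)$-Lipschitz and $\lVert g_j-\tfrac{1}{1+\theta}f\rVert_{\ell^\infty(G_j)}<\varepsilon_0$, so for $x\in G_j$, $y\in G_k$, $j\ne k$, $|g(x)-g(y)|_b\le 2\varepsilon_0+\tfrac{1}{1+\theta}d(x,y)\le d(x,y)$ using $d(x,y)\ge\rho_0$; hence $g$ is $(1+\theta)$-Lipschitz on $\widetilde E$, and $\hat g=\tfrac{1}{1+\theta}g\in\tLip_1(\widetilde E,\reals^m_b)$ is the candidate.

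\emph{Verification and the main difficulty.} Since $G_j$ is relatively open in $\widetilde E$, Lemma~\ref{L:prel-ideal} gives $J_{\widetilde E}g=J_{G_j}g_j$ on $G_j$, and the (weighted) area formula of Theorem~\ref{E:Prel-area-formula-metric-eucl} for the biLipschitz $I_j$ turns $\int_{G_j}J_{\widetilde E}g\diff\Hn$ into $\int_{F_j}\vol\psi_j'\diff\Hn\ge\eta_0\lambda\,\Hn_{a_j}(F_j)\ge\eta_0\lambda(1+\theta)^{-n}\Hn(G_j)$; summing over $j$ and dividing by $(1+\theta)^n$ for the rescaling, $\int_{\widetilde E}J_{\widetilde E}\hat g\diff\Hn\ge\eta_0\lambda(1+\theta)^{-2n}\Hn(\widetilde E)>\eta\lambda\,\Hn_X(\widetilde E)$, which is \eqref{E:MSR-measure-est-main}; the sup bound follows from $\lVert\hat g-f\rVert_{\ell^\infty(\widetilde E)}\le\tfrac{\theta}{1+\theta}\lVert g\rVert_{\ell^\infty(\widetilde E)}+\lVert g-f\rVert_{\ell^\infty(\widetilde E)}\le\tfrac{2\theta}{1+\theta}\lVert f\rVert_{\ell^\infty(\widetilde E)}+2\varepsilon_0<\delta$. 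The crux is the gluing: with $(\reals^m,|\cdot|_b)$ non-Euclidean there is no Kirszbraun extension and a general $1$-Lipschitz extension inflates the constant by $\sqrt m$, which is fatal here; routing the decomposition through Lemma~\ref{L:MSR-open-decomposition} rather than Lemma~\ref{L:Kirch} is precisely what makes the pieces clopen and mutually distant, so that gluing maps which are barely more than $1$-Lipschitz costs nothing provided they track $f$ closely in sup norm, after which one global rescaling by $(1+\theta)^{-1}$ restores the constant $1$ at negligible cost to the volume. A secondary, purely bookkeeping, difficulty is the matching of each decomposition piece's model norm with the $\Hn$-a.e.\ prescribed tangent class, needed to invoke Proposition~\ref{P:PR-inflation-cube} legitimately on it.
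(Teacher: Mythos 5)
Your proof is correct and follows the paper's strategy: the same choice of $\widetilde E$ via Lemma \ref{L:MSR-open-decomposition}, producing for each $\theta>0$ a finite partition into relatively open pieces $(1+\theta)$-biLipschitz to normed sets whose norms lie in $\mathcal{N}^b_{\textnormal{infl}(\lambda)}(n)$, the same application of the density result for normed sets (Proposition \ref{P:PR-inflation-cube} / Theorem \ref{T:positive-result-cubes}) on each piece, and the same rescaling trick to recover Lipschitz constant $1$. The only divergence is the gluing: the paper shrinks each piece to $G_j^\sigma$ and invokes the cutoff-based extension Lemma \ref{L:Lipschitz-extension-lemma} (budgeting for the lost measure of $G_j\setminus G_j^\sigma$ via $\varepsilon_0$ and reducing to $\tLip(f)<1$ at the outset), whereas you observe that finitely many pairwise disjoint relatively open pieces covering a compact set are clopen, hence compact and at positive mutual distance $\rho_0$, so that sup-norm closeness of each $g_j$ to $\tfrac{1}{1+\theta}f$ alone controls the cross-piece Lipschitz constant and one terminal division by $1+\theta$ suffices --- a slightly cleaner variant with no measure loss from shrinking, which works equally well.
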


\begin{proof}
    If $\Hn(E)=0$, the statement obviously holds, so we may assume $\Hn(E)>0$.
    There is some $K_b\in (0,\infty)$ such that for every $1$-Lipschitz function $g$ defined on any $\Hn$-measurable subset of $E$, one has $J_E g \leq K_b$ on the set where the left hand side is well defined.
    After discarding a set of measure zero, we may assume that $T(E,\xi)\in \mathcal{N}^b_{\textnormal{infl}(\lambda)}(n)$ for every $\xi\in E$.
    
    Let $\varepsilon>0$.
    Firstly, by Lemma \ref{L:Kirch}, there exists a finite collection of disjoint compact sets $E_i\subset E$ each of which is biLipschitz to some subset of $\reals^n$ and such that $\Hn(E\setminus \bigcup_i E_i)<\varepsilon$. This allows us to use Lemma \ref{L:MSR-open-decomposition} on each $E_i$ separately, thus obtaining for each $i$ a compact set $\widetilde E_i \subset E$ such that still $\Hn(E\setminus \bigcup_{i} \widetilde E_i)<\varepsilon$ and, moreover, the property described in Lemma \ref{L:MSR-open-decomposition} holds on each $\widetilde E_i$. That is, for each $i$ and any $\theta>0$, there exists a finite number of pairwise disjoint open sets $G_i^j\subset \widetilde E_i$ such that
    \begin{enumerate}
        \item\label{Enum:MSR-1} $\widetilde E_i = \bigcup_j G_i^j$,
        \item\label{Enum:MSR-2} for each $j$, there is some $x_i^j\in \widetilde E_i$, $F^j_i\subset\reals^n$, $|\cdot|\in T(\widetilde E_i, x_i^j)$ such that $G_i^j$ is $(1+\theta)$-biLipschitz to $(F_i^j, |\cdot|)$.
    \end{enumerate}
    We let
    \begin{equation*}
        \widetilde E = \bigcup_i \widetilde E_i.
    \end{equation*}
    
    Now let $f\in \tLip_1(X,\reals^m_b)$, $\delta>0$ and $\eta \in (0,1)$ be given. We may assume, without loss of generality, that $L_0 = \tLip(f)<1$. Let $\theta>0$ and $\varepsilon_0>0$ be such that $(1+\theta)^2 L_0 <1$ and
    \begin{equation}\label{E:MSR-choice}
        \frac{\lambda}{1+\theta} \Hn_X(\widetilde{E})-K_b\varepsilon_0\geq \lambda \eta \Hn_X(\widetilde E).
    \end{equation}
    
    Recall that the sets $\widetilde E_i$ are pairwise disjoint and compact.
    Using \ref{Enum:MSR-1}, \ref{Enum:MSR-2} and Lemma \ref{L:prel-ideal}, we may suitably re-index so as to obtain a finite family of non-empty sets $G_j \subset \widetilde E$, $j\in\{1,\dots, j_0\}$ such that
    \begin{enumerate}[label={\rm(\alph*)}]
        \item \label{Enum:open-decom-a2} each $G_j$ is open in $\widetilde E$,
        \item \label{Enum:open-decom-b2} $\widetilde E= \bigcup_{j=1}^{j_0}G_j$,
        \item \label{Enum:open-decom-c2} $\overline{G}_j^{\widetilde E}$, the closure in $\widetilde E$ of the $G_j$, are pairwise disjoint,
        \item \label{Enum:open-decom-d2} for each $j\in\{1,\dots, j_0\}$, there is a norm $|\cdot|_{a_j}$ such that $[|\cdot|_{a_j}]\in \mathcal{N}^b_{\textnormal{infl}(\lambda)}(n)$, a set $F_j\subset F$ and a map $I_j\colon (F_j, |\cdot|_{a_j})\to G_j$ which is a $(1+\theta)$-biLipschitz bijection.
    \end{enumerate}
    % Note that to obtain relative openness of $G_j$ in $\widetilde E$ as well as disjointness of $\overline{G_j}^{\widetilde E}$'s, it was crucial to assume disjointness of the sets $\overline{E_i}^X$ in $X$.
    
    For an open non-empty set $G\subset \widetilde E$ and $\sigma>0$, we let
    \begin{equation*}
        G^\sigma =\{\xi \in G: d_X(\xi, \widetilde E \setminus G)\geq \sigma\}.
    \end{equation*}
    We shall need two properties of this construction.
    Firstly, by continuity of measure, it holds that
    \begin{equation}\label{E:MSR-sigma-limit}
        \lim_{\sigma \to 0} \Hn(G\setminus G^{\sigma})=0.
    \end{equation}
    Secondly, one has $B_{\widetilde E}(G^\sigma, \sigma)\subset \overline{G}^{\widetilde E}$.
    Here, for convenience, we define $B_{\widetilde E}(\emptyset, \sigma)=\emptyset$.
    Therefore, by \ref{Enum:open-decom-c2}, the sets $B_{\widetilde E}(G_j^\sigma, \sigma)$ are disjoint.
    As there is a finite number of sets $G_j$, using \eqref{E:MSR-sigma-limit} and \ref{Enum:open-decom-b2}, there is some $\sigma>0$ such that $B_{\widetilde E}(G_j^\sigma, \sigma)$ are disjoint and
    \begin{equation}\label{E:MSR-sigma-measure-est}
        \Hn(\widetilde E \setminus \bigcup_{j=1}^{j_0} G_j^\sigma)<\varepsilon_0.
    \end{equation}
    % We may assume, without loss of generality, that $L_0 = \tLip(f)<1$. Let $\theta>0$ and $\varepsilon_0>0$ be such that $(1+\theta)^2 L_0 <1$ and
    % \begin{equation*}
    %     \frac{\lambda}{1+\theta} (\Hn_X(\widetilde{E})-\varepsilon_0)-K_b\varepsilon_0\geq \lambda \eta \Hn_X(\widetilde E).
    % \end{equation*}
    % Let $G_j$'s be as above and find $\sigma$ so that \eqref{E:MSR-sigma-measure-est} holds.
     Let $0<\delta_0\leq \delta$ be such that $(1+\theta^2)L_0+4\frac{\delta_0}{\sigma}\leq 1$.
    
    Fix $j\in\{1,\dots, j_0\}$ and let $\widetilde{f}_j \colon F_j \to \reals^m_b$ be given by $\widetilde{f}_j= f \circ I_j$.
    Now
    \begin{equation*}
        \widetilde{f}_j\in \tLip_{(1+\theta)L_0}((F_j, |\cdot|_{a_j}), \reals^m_b),
    \end{equation*}
    whence we may use Theorem \ref{T:positive-result-cubes} 
    (we require only density) to find  $\widetilde{g}_j\in \tLip_{(1+\theta)L_0}((F_j, |\cdot|_{a_j}), \reals^m_b)$ such that $\lVert \widetilde{f}_j - \widetilde{g}_j\rVert_{\ell^{\infty}(F_i)}\leq \delta_0$ and
    \begin{equation*}
        \int_{F_j} J_{F_j} g_j \diff\Hn
        \geq \lambda \vol(|\cdot|_{a_j}) \Hn_{|\cdot|_2}(F_j)= \lambda\Hn_{a_j}(F_j).
    \end{equation*}
    
    Let $g_j\colon G_j \to \reals^m_b$ be given by $g_j=\widetilde{g}_j \circ I_j^{-1}$.
    Then $g_j \in \tLip_{(1+\theta)^2L_0}(G_j, \reals^m_b)$ and, by the area formula, it satisfies
    \begin{equation}\label{E:MSR-G_j-measure-est}
    \begin{split}
        \int_{G_j} J_{G_j} g_j \diff\Hn
        &=\int_{g_j(G_j)} \#g_j^{-1}(u)  \diff\Hn(u)
        =\int_{\widetilde{g}_j(F_j)} \#\widetilde{g}_j^{-1}(u)  \diff\Hn(u)\\
        &=\int_{F_j} J_{F_j} g_j \diff\Hn
        \geq \lambda \Hn_{a_j}(F_j) \geq \frac{\lambda}{1+\theta}\Hn_X(G_j).
    \end{split}
    \end{equation}
    
    Now we may use Lemma \ref{L:Lipschitz-extension-lemma} to find a function $g\colon \widetilde{E}\to \reals^m_b$ such that
    $\lVert g - f \rVert_{\ell^{\infty}(\widetilde{E})}\leq \delta$ and $g=g_j$ on each $G_j^\sigma$. Moreover, we may require $\tLip(g)\leq (1+\theta)^2L_0 + 4\frac{\delta_0}{\sigma}\leq 1$. It remains to show that \eqref{E:MSR-measure-est-main} holds.
    Using disjointness of $G_j$'s we may estimate
    \begin{equation*}
    \begin{split}
        \int_{\widetilde E} J_{\widetilde E} g \diff \Hn
        &\geq \sum_{j=1}^{j_0} \int_{G_j} J_{G_j} g \diff \Hn
        \geq \sum_{j=1}^{j_0} \int_{G^\sigma_j} J_{G_j} g_j \diff \Hn\\
        &=\sum_{j=1}^{j_0} \int_{G_j} J_{G_j} g_j \diff \Hn
        -\sum_{j=1}^{j_0} \int_{G_j\setminus G_j^\sigma} J_{G_j} g_j \diff \Hn\\
        \overset{\eqref{E:MSR-G_j-measure-est}}&{\geq} \sum_{j=1}^{j_0}\frac{\lambda}{1+\theta}\Hn_X(G_j)
        -(\esssup_{E} J_E g) \Hn(\widetilde{E}\setminus \bigcup_{j=1}^{j_0} G_j^{\sigma})\\
        \overset{\eqref{E:MSR-sigma-measure-est}}&{>} \sum_{j=1}^{j_0} \frac{\lambda}{1+\theta} \Hn_X(G_j)
        -K_b \varepsilon_0
        \overset{}{\geq} \frac{\lambda}{1+\theta} \Hn_X(\widetilde{E})-K_b\varepsilon_0\\
        \overset{\eqref{E:MSR-choice}}&{\geq} \eta\lambda\Hn_X(\widetilde{E}).
    \end{split}
    \end{equation*}
\end{proof}

% \begin{remark}\label{R:MSR-arbitrary-domain}
%     The set $\widetilde E$ from the previous theorem satisfies even the following. For each $\widetilde{E} \subset X'\subset X$, every $f\in \tLip_1(X',\reals^m_b)$, $\delta>0$ and $\eta \in (0,1)$, there exists $g\in \tLip_1(\widetilde{E}, \reals^m_b)$ with $\lVert g-f \rVert_{\ell^{\infty}(\widetilde{E})}\leq \delta$ and
%     \begin{equation*}
%         \int_{\widetilde E} J_{\widetilde E} g \diff \Hn\geq \eta\lambda\Hn_X(\widetilde{E}).
%     \end{equation*}
%     The difference here is that domain of $f$ is completely irrelevant as long as it contains $\widetilde E$.
%     This is obviously true from the proof of the previous theorem.
% \end{remark}

\begin{theorem}\label{T:MSR-residuality}
    Suppose that $n,m\in \nat$, $n\leq m$, $X$ is a complete metric space and $E\subset X$ is an $n$-rectifiable subset. Suppose $|\cdot|_b$ is a norm on $\reals^m$.
    Let $\lambda>0$ and assume that for $\Hn$-a.e.~$\xi\in E$, one has
    \begin{equation*}
        T(E,\xi)\in \mathcal{N}^b_{\textnormal{infl}(\lambda)}(n).
    \end{equation*}
    Then for each $\varepsilon>0$, there is a set $\widetilde{E}\subset E$ with $\Hn(E\setminus \widetilde{E})<\varepsilon$ and such that the set
    \begin{equation*}
        \{f\in\tLip_1(\widetilde E, \reals^m_b): \int_{\widetilde E} J_{\widetilde E} f \diff \Hn \geq \lambda \Hn(\widetilde E)\}
    \end{equation*}
    is residual in $\tLip_1(\widetilde E, \reals^m_b)$.
    Moreover, if $m>n$, then the set
    \begin{equation*}
        \{f\in\tLip_1(\widetilde E, \reals^m_b): \Hn(f(\widetilde E)) \geq \lambda \Hn(\widetilde E)\}
    \end{equation*}
    is residual in $\tLip_1(\widetilde E, \reals^m_b)$.
\end{theorem}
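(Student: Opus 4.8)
The plan is to obtain this statement almost formally from three ingredients that are already available: the density statement of Theorem \ref{T:MSR-density}, the lower semi-continuity results of Section \ref{S:openness} (in the packaged form of Lemma \ref{L:lsc-with-target-norm}), and the general equivalence of Theorem \ref{T:char-area-area-formula}. The geometric content is entirely contained in Theorem \ref{T:MSR-density}, which manufactures, for a given $\varepsilon>0$, a good subset $\widetilde E\subset E$ together with the crucial uniform approximation property; once $\widetilde E$ is fixed, residuality follows by a soft argument.

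Assume first $\Hn(E)<\infty$ and fix $\varepsilon>0$; apply Theorem \ref{T:MSR-density} to obtain $\widetilde E\subset E$ with $\Hn(E\setminus\widetilde E)<\varepsilon$ and the stated property. From its proof $\widetilde E$ is a finite union of compact sets, hence compact and in particular a complete metric space; being an $\Hn$-measurable subset of the $n$-rectifiable set $E$, it is itself $n$-rectifiable, with $\Hn(\widetilde E)<\infty$. I would then invoke Theorem \ref{T:char-area-area-formula} with the ambient complete metric space taken to be $\widetilde E$ \emph{itself}, with $\widetilde E$ also playing the role of the $n$-rectifiable subset, with Lipschitz bound $L=1$, target norm $|\cdot|_b$, and constant $C=\lambda\Hn(\widetilde E)$. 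Specialising Theorem \ref{T:MSR-density} to $X'=\widetilde E$ reads exactly: for every $f\in\tLip_1(\widetilde E,\reals^m_b)$, every $\delta>0$ and every $\eta\in(0,1)$ there is $g\in\tLip_1(\widetilde E,\reals^m_b)$ with $\lVert g-f\rVert_{\ell^\infty(\widetilde E)}\le\delta$ and $\int_{\widetilde E}J_{\widetilde E}g\diff\Hn\ge\eta\lambda\Hn(\widetilde E)$. Hence, given any $\widetilde C<\lambda\Hn(\widetilde E)$, picking $\eta$ close enough to $1$ that $\eta\lambda\Hn(\widetilde E)>\widetilde C$ shows that the set $\mathcal V_{>\widetilde C}$ of Theorem \ref{T:char-area-area-formula} is dense; that is, statement (iv) of that theorem holds. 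Since $n\le m$, (iv) is equivalent to (iii), which is precisely the residuality of $\{f\in\tLip_1(\widetilde E,\reals^m_b):\int_{\widetilde E}J_{\widetilde E}f\diff\Hn\ge\lambda\Hn(\widetilde E)\}$; and if $m>n$ then (i)--(iv) are all equivalent, so (i) holds, giving the residuality of $\{f\in\tLip_1(\widetilde E,\reals^m_b):\Hn(f(\widetilde E))\ge\lambda\Hn(\widetilde E)\}$.

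It remains to treat the $\sigma$-finite case $\Hn(E)=\infty$, and this is the only step where I expect genuine (though routine) work. Writing $E=\bigcup_k E_k$ with $E_k$ increasing and $\Hn(E_k)<\infty$, each $E_k$ inherits the tangent hypothesis by Lemma \ref{L:prel-ideal}; applying the finite case with tolerance $\varepsilon 2^{-k}$ yields $\widetilde{E_k}\subset E_k$, and $\widetilde E:=\bigcup_k\widetilde{E_k}$ satisfies $\Hn(E\setminus\widetilde E)<\varepsilon$. Since now $\lambda\Hn(\widetilde E)=\infty$, the target sets become $\bigcap_{N\in\nat}\{f:\int_{\widetilde E}J_{\widetilde E}f\diff\Hn>N\}$ (respectively with $\Hn(f(\widetilde E))>N$), each of which is open by Lemma \ref{L:lsc-with-target-norm}, so only density needs checking. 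The subtlety is that Theorem \ref{T:MSR-density} was stated for finite-measure sets and that, for a general target norm $|\cdot|_b$, a $1$-Lipschitz perturbation defined on a compact subpiece cannot simply be extended to all of $\widetilde E$ without increasing the Lipschitz constant — this is the extension obstruction flagged in the introduction. The remedy is to re-run the proof of Theorem \ref{T:MSR-density} directly on $\widetilde E$ using a \emph{countable}, uniformly separated decomposition into relatively open pieces (the surplus room available when $\Hn(E)=\infty$ makes such a decomposition attainable), perturbing on each piece and interpolating via the extension Lemma \ref{L:Lipschitz-extension-lemma} exactly as there; selecting finitely many pieces of combined measure exceeding $N/\lambda$ then produces, for each $f$ and $\delta$, a $\delta$-close $g$ with $\int_{\widetilde E}J_{\widetilde E}g\diff\Hn>N$, which is the required density. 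Modulo this bookkeeping, the theorem is immediate from Theorems \ref{T:MSR-density} and \ref{T:char-area-area-formula}.
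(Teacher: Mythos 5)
Your proposal is correct and follows essentially the same route as the paper: reduce to $\Hn(E)<\infty$, take $\widetilde E$ from Theorem \ref{T:MSR-density}, and combine its density conclusion (with $\eta\to 1$) with the equivalences of Theorem \ref{T:char-area-area-formula} to pass from density of $\mathcal{V}_{>\widetilde C}$ to residuality of $\mathcal{V}_{\geq \lambda\Hn(\widetilde E)}$ and, for $m>n$, of $\mathcal{A}_{\geq\lambda\Hn(\widetilde E)}$. The only difference is that the paper disposes of the $\sigma$-finite case in one sentence, whereas you correctly flag and sketch the extra gluing argument it actually requires; that added care is sound and does not change the method.
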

\begin{proof}
Once again, we may reduce to the case $\Hn(E)<\infty$ as $\Hn_{|E}$ is $\sigma$-finite.
    Due to Theorem \ref{T:char-area-area-formula}, it is sufficient to show density of
    \begin{equation*}
    \begin{split}
         \{f\in\tLip_1(\widetilde E, \reals^m_b): \int_{\widetilde E} J_{\widetilde E} f \diff \Hn > \lambda(1-\frac{1}{i}) \Hn(\widetilde E)\},
    \end{split}
    \end{equation*}
    for each $i\in \nat$. However, this is Theorem \ref{T:MSR-density}.
\end{proof}

Recall the definition of strongly $n$-rectifiable sets, Definition \ref{d:strong-rect} and the subsequent characterisation, Lemma \ref{l:strong-rect}.
For these spaces we have the following result in the spirit of Theorem \ref{T:Eucl-strongest-possible}. In relation to this, note in particular that any $1$-rectifiable metric subset of a complete metric space in also strongly $1$-rectifiable since $\mathcal{N}(1)=[|\cdot|_2]$ where $|\cdot|_2$ is the Euclidean norm (absolute value) on $\reals$.

\begin{corollary}\label{C:strongly-Euclidean}
    Suppose $n\in \nat$ and assume that $E$ is strongly $n$-rectifiable metric space. Then, to each $\varepsilon>0$, there is a set $\widetilde E \subset E$ satisfying $\Hn(E\setminus \widetilde{E})<\varepsilon$ such that for every $m\geq n$, the set
    \begin{equation*}
        \{f\in \tLip(\widetilde{E}, \reals^m): \int_{\widetilde{E}} J_{\widetilde E} f \diff \Hn = \Hn(\widetilde E)\}
    \end{equation*}
    is residual in $\tLip(\widetilde{E}, \reals^m)$.
    Moreover, for any $m>n$, the set
    \begin{equation*}
        \{f\in \tLip(\widetilde{E}, \reals^m): \Hn(f(\widetilde E)) = \Hn(\widetilde E)\}
    \end{equation*}
    is residual in $\tLip(\widetilde{E}, \reals^m)$. In particular for any $m\geq n$ a typical $f\in \tLip(\widetilde{E}, \reals^m)$ satisfies $J_{\widetilde E} f=1$ $\Hn$-a.e.~in $\widetilde E$.
\end{corollary}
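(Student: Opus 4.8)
The plan is to obtain this as a direct consequence of Theorem \ref{T:MSR-residuality}, applied with $|\cdot|_b$ the Euclidean norm $|\cdot|_2$ on $\reals^m$ and with $\lambda=1$. The only things that then require attention are (a) verifying that the tangent hypothesis of that theorem is met for every $m\ge n$ by a single set $\widetilde E$, and (b) upgrading the resulting ``$\ge$'' residuality statements to the ``$=$'' (and ``$J_{\widetilde E}f=1$ a.e.'') statements claimed here.

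First I would do the tangent bookkeeping. Passing to the completion of $E$ (which is again complete and separable), $E$ is an $n$-rectifiable subset of a complete metric space, and since $E$ is strongly $n$-rectifiable Lemma \ref{l:strong-rect} gives $T(E,x)=[|\cdot|_2]$ for $\Hn$-a.e.\ $x\in E$. Next I would check $[|\cdot|_2]\in\mathcal{N}^{2}_{\textnormal{infl}(1)}(n)$ for every $m\ge n$: by Example \ref{Ex:Euclidean-inflation} the pair $\infl{2}{2}$ is $1$-inflating, and since a $1$-inflation is a fortiori a $c$-inflation for any $c\le 1$, it is a $c$-inflating pair for all $c\le 1$; on the other hand, with the present normalisation $\Hn(B_2)=2^n$, so $\vol(|\cdot|_2)=2^n/\Hn(B_2)=1$ by \eqref{E:vol-of-norm}, whence $\infl{2}{2}$ is a $(\vol(|\cdot|_2)\cdot 1)$-inflating pair, which is exactly the membership condition in Definition \ref{D:inflating-space}. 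Inspecting the proof of Theorem \ref{T:MSR-residuality} (and of Theorem \ref{T:MSR-density} and Lemma \ref{L:MSR-open-decomposition} behind it), the set $\widetilde E$ is constructed from $E$ and $\varepsilon$ alone, with no reference to $m$ or to the target norm, so a single $\widetilde E\subset E$ with $\Hn(E\setminus\widetilde E)<\varepsilon$ works simultaneously for all $m\ge n$. Theorem \ref{T:MSR-residuality} then gives that $\{f\in\tLip_1(\widetilde E,\reals^m):\int_{\widetilde E}J_{\widetilde E}f\diff\Hn\ge\Hn(\widetilde E)\}$ is residual, and, when $m>n$, that $\{f\in\tLip_1(\widetilde E,\reals^m):\Hn(f(\widetilde E))\ge\Hn(\widetilde E)\}$ is residual. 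To upgrade these I would record the a priori bound $J_{\widetilde E}f\le 1$ $\Hn$-a.e.\ for every $f\in\tLip_1(\widetilde E,\reals^m)$: in a biLipschitz chart $I$ onto a piece of $\widetilde E$ (as in Lemma \ref{L:MSR-open-decomposition}), the operator-norm bound $\lVert(f\circ I)'(u)\rVert_{|I'|(u)\to|\cdot|_2}\le 1$ forces $(f\circ I)'(u)(B_{|I'|(u)})\subset B_2$, hence $\vol(f\circ I)'(u)\le\vol|I'|(u)$ by \eqref{E:vol-geom-meaning} and \eqref{E:vol-of-norm}, i.e.\ $J_{\widetilde E}f(x)\le 1$. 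Consequently $\int_{\widetilde E}J_{\widetilde E}f\diff\Hn\ge\Hn(\widetilde E)$ forces $\int_{\widetilde E}J_{\widetilde E}f\diff\Hn=\Hn(\widetilde E)$, and since then the non-negative function $1-J_{\widetilde E}f$ has vanishing integral, $J_{\widetilde E}f=1$ $\Hn$-a.e.; this gives the first residual set and the ``in particular'' clause. For $m>n$, the area formula \eqref{E:area-formula} gives $\Hn(f(\widetilde E))\le\int_{\widetilde E}J_{\widetilde E}f\diff\Hn\le\Hn(\widetilde E)$ for every $f$, so the residual set $\{\Hn(f(\widetilde E))\ge\Hn(\widetilde E)\}$ coincides with $\{\Hn(f(\widetilde E))=\Hn(\widetilde E)\}$.

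I do not expect a genuine obstacle here: the argument is essentially bookkeeping on top of Theorem \ref{T:MSR-residuality}. The two load-bearing observations are the normalisation $\vol(|\cdot|_2)=1$ (which is precisely what makes $\lambda=1$, rather than a dimensional constant, admissible for Euclidean tangents), and the elementary bound $J_{\widetilde E}f\le 1$ (which converts the ``$\ge$'' residuality statements into the asserted equalities); the mildly delicate point is noticing that the $\widetilde E$ produced by the theorem can be taken independent of $m$.
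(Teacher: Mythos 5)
Your proof is correct and follows the same route as the paper: the paper's own proof is exactly the two-line observation that $\infl{2}{2}$ is a $1$-inflating pair (Example \ref{Ex:Euclidean-inflation}) combined with Theorem \ref{T:MSR-residuality}. The additional details you supply (the normalisation $\vol(|\cdot|_2)=1$, the a priori bound $J_{\widetilde E}f\le 1$ upgrading ``$\ge$'' to ``$=$'', and the independence of $\widetilde E$ from $m$) are precisely the points the paper leaves implicit, and your handling of them is sound.
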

\begin{proof}
    Recalling Example \ref{Ex:Euclidean-inflation}, we see that $(\reals^n_2, \reals^m_2)$ form a $1$-inflating pair for every $m\geq n$.
    Therefore the statement follows from Theorem \ref{T:MSR-residuality}.
\end{proof}

In the one-dimensional case, if one assumes also the target dimension $m$ to be equal to $1$, it is possible to use McShane's extension to obtain the following.

\begin{theorem}\label{1-rect}
     Suppose $X$ is a complete metric space and $E$ a $1$-rectifiable subset with $\H^1(E)<\infty$. Then
     \begin{equation*}
         \{f\in \tLip_1(X,\reals): \int_E J_E f \diff \H^1 = \H^1(E)\}
     \end{equation*}
     is residual in $\tLip_1(X,\reals)$. In particular, a typical $f\in \tLip_1(X,\reals)$ satisfies $J_E f=1$ $\H^1$-a.e.~in $E$.
\end{theorem}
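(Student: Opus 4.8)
The plan is to reduce the statement to a density assertion via the already-established characterisation, and then to build the required competitor by transporting the one-dimensional Euclidean result onto pieces of $E$ and gluing with McShane's extension theorem.

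First I would record an a priori bound: for any $f\in\tLip_1(X,\reals)$ one has $J_E f\le 1$ $\H^1$-a.e.\ in $E$ — indeed, in the chart language of \eqref{d:jacobian}, $f$ being $1$-Lipschitz forces the metric differential of $f\circ I$ to be dominated by that of $I$, which in dimension one gives $\vol(f\circ I)'\le\vol|I'|$ — so $\int_E J_E f\diff\H^1\le\H^1(E)$ for every such $f$, and hence $\{\int_E J_E f\diff\H^1=\H^1(E)\}=\{\int_E J_E f\diff\H^1\ge\H^1(E)\}$. Applying Theorem~\ref{T:char-area-area-formula} with $n=m=1$, $L=1$, $|\cdot|_b=|\cdot|_2$ and $C=\H^1(E)$ — the case $n=m$, in which statements (iii) and (iv) are equivalent — it then suffices to show that for every $\eta\in(0,1)$ the set $\{f\in\tLip_1(X,\reals):\int_E J_E f\diff\H^1>\eta\,\H^1(E)\}$ is dense in $\tLip_1(X,\reals)$.

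For the density, fix $f\in\tLip_1(X,\reals)$ and a target sup-norm $\varepsilon>0$; I may assume $L_0:=\tLip(f)<1$ since $(1-\delta)f\to f$ uniformly with Lipschitz constants $<1$. I would pick $\theta,C>0$ small with $(1+\theta)^2L_0<1$ and $(1+\theta)^{-2}(\H^1(E)-C)>\eta\,\H^1(E)$, then apply Lemma~\ref{L:Kirch} — rescaling each chart so its domain carries the Euclidean metric, which is legitimate since $\mathcal N(1)=[|\cdot|_2]$ — to obtain pairwise disjoint compact $E_i\subset E$, compact $F_i\subset\reals$ and $(1+\theta)$-biLipschitz bijections $I_i\colon F_i\to E_i$ covering $E$ up to an $\H^1$-null set. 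Choosing $i_0$ with $\H^1(E\setminus S)<C$ for $S:=\bigcup_{i=1}^{i_0}E_i$, the $E_i$, $i\le i_0$, are finitely many disjoint compacta, so $r:=\min_{i\ne j}\dist(E_i,E_j)>0$; I shrink $r$ to also have $r\le\varepsilon/3$ and fix $\varepsilon_0\in(0,\varepsilon/3]$ with $2\varepsilon_0\le(1-L_0)r$. On each piece I transport $f$: $\varphi_i:=f\circ I_i$ is $(1+\theta)L_0$-Lipschitz, hence $(1+\theta)\varphi_i\in\tLip_1(F_i,\reals)$ because $(1+\theta)L_0<(1+\theta)^{-1}$; by Corollary~\ref{C:Euclidean-strong} (density part, with $D=E=F_i$) there is $h_i\in\tLip_1(F_i,\reals)$ with $\int_{F_i}|h_i'|\diff\H^1=\H^1(F_i)$ and $\lVert h_i-(1+\theta)\varphi_i\rVert_\infty<(1+\theta)\varepsilon_0$. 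Rescaling, $\psi_i:=(1+\theta)^{-1}h_i$ is $(1+\theta)^{-1}$-Lipschitz and $\varepsilon_0$-close to $\varphi_i$, so $g_i:=\psi_i\circ I_i^{-1}\colon E_i\to\reals$ is $1$-Lipschitz, is $\varepsilon_0$-close to $f$ on $E_i$, and, by the area formula \eqref{E:area-formula}, the change of variables $I_i$, and $\H^1(F_i)\ge(1+\theta)^{-1}\H^1(E_i)$, satisfies
\[
\int_{E_i}J_{E_i}g_i\diff\H^1=\int_{F_i}|\psi_i'|\diff\H^1=\frac{\H^1(F_i)}{1+\theta}\ge\frac{\H^1(E_i)}{(1+\theta)^2}.
\]
I then glue: define $c=g_i$ on $E_i$ and $c=f$ on $X\setminus B(S,r)$; a routine four-case check (using $\tLip(g_i)\le1$, $\tLip(f)=L_0$, the $\varepsilon_0$-closeness, $\dist(E_i,E_j)\ge r$ and $\dist(\,\cdot\,,S)>r$ off $B(S,r)$, together with $2\varepsilon_0\le(1-L_0)r$) shows $c$ is $1$-Lipschitz. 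By McShane's theorem (with truncation to keep $c$ bounded) I extend $c$ to $g\in\tLip_1(X,\reals)$; a one-line estimate on $B(S,r)\setminus S$ gives $\lVert g-f\rVert_\infty<2r+\varepsilon_0<\varepsilon$. Finally, since $g=g_i$ on each $E_i$, the $E_i$ are disjoint, $J_E g=J_{E_i}g=J_{E_i}g_i$ $\H^1$-a.e.\ on $E_i$ by Lemma~\ref{L:prel-ideal}, and $J_E g\ge0$, summing the local estimates yields $\int_E J_E g\diff\H^1\ge(1+\theta)^{-2}\H^1(S)\ge(1+\theta)^{-2}(\H^1(E)-C)>\eta\,\H^1(E)$, which proves density.

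The only genuine obstacle is keeping the Lipschitz constant exactly $1$ at every step — with a larger constant the construction is routine. This is precisely where $m=1$ enters: McShane's theorem extends $\reals$-valued $1$-Lipschitz maps with no increase of the constant, whereas a $1$-Lipschitz map into $\reals^m$ extends only to a $\sqrt m$-Lipschitz one, which is why the analogue on all of $X$ is unavailable for $m\ge2$. The positive separation $r$ between the compact pieces $E_i$, together with the harmless reduction $\tLip(f)<1$, supplies exactly the slack that lets the local modifications $g_i$ — themselves forced to be $1$-Lipschitz by the rescaling of $\psi_i$, possible because $\tLip(f\circ I_i)=(1+\theta)L_0<(1+\theta)^{-1}$ — be patched back to $f$ without ever exceeding Lipschitz constant $1$.
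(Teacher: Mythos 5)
Your proof is correct and follows essentially the same route as the paper: reduce to a density statement via Theorem~\ref{T:char-area-area-formula}, decompose $E$ by Lemma~\ref{L:Kirch} into $(1+\theta)$-biLipschitz images of Euclidean subsets of $\reals$ (every norm on $\reals$ being Euclidean up to scaling), apply the Euclidean density result on each piece, and glue with McShane using the slack from $\tLip(f)<1$ and the positive separation of the compact pieces. The only difference is presentational: the paper packages the decomposition and local step into Corollary~\ref{C:strongly-Euclidean} (via strong $1$-rectifiability) and then invokes McShane, whereas you unpack that corollary and carry out the gluing by hand along the lines of Theorem~\ref{T:Eucl-strongest-possible}.
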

\begin{proof}
    Every $1$-rectifiable metric subspace of a complete metric space is strongly $1$-rectifiable. Therefore Corollary \ref{C:strongly-Euclidean} together with McShane's extension implies density of the set
    \begin{equation*}
        \{f\in \tLip_1(X,\reals): \int_E J_E f \diff \H^1 = \H^1(E)\}.    
    \end{equation*}
    Whence Theorem \ref{T:char-area-area-formula} yields the result.
\end{proof}

\begin{theorem}
    Suppose $X$ is a complete metric space and $E$ is its strongly $n$-rectifiable subspace. Denote by $E^*$ the set of points of $E$, where the approximate tangent to $E$ exists and is Euclidean.
    Let $k\in\nat$, $k\leq n$ and suppose $K\subset E^*$ is $k$-rectifiable in $X$ (or equivalently in $E$ or $E^*$). Then $K$ is strongly $k$-Euclidean.
\end{theorem}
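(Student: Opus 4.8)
By Lemma~\ref{l:strong-rect} (applied to $K$ with the Euclidean norm on $\reals^k$), and since $K$ is already assumed $k$-rectifiable, it suffices to prove that $T(K,x)=[\,|\cdot|_2\,]$ for $\H^k$-a.e.\ $x\in K$. To produce good points, I would first invoke Lemma~\ref{L:Kirch} for $K$: fix $\theta>0$ and obtain compact $F_i\subset\reals^k$, $K_i\subset K$, norms $|\cdot|_{c_i}$ and $(1+\theta)$-biLipschitz bijections $I_i\colon(F_i,|\cdot|_{c_i})\to K_i$ with $\H^k(K\setminus\bigcup_iK_i)=0$. For $\H^k$-a.e.\ $u\in F_i$ the point $u$ is an $\H^k$-density point of $F_i$, the metric differential $|I_i'|(u)$ exists and is a norm, and then $T(K,I_i(u))=[\,|I_i'|(u)\,]$. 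Since each $I_i$ is biLipschitz it maps $\H^k$-null sets to $\H^k$-null sets, so $\H^k$-a.e.\ $x\in K$ is of the form $x=I_i(u)$ for such a ``good'' $u$, and moreover $x\in K\subset E^*$. It therefore remains to fix one such $x=I_i(u)$, write $|\cdot|_c:=|I_i'|(u)$, and show $|\cdot|_c$ is Euclidean.

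The idea is that the tangent of the subset $K$ at $x$ sits isometrically inside the tangent of $E$ at $x$, which is Euclidean because $x\in E^*$. Concretely: since $x\in E^*$, by the remark following Definition~\ref{tangent-norm} the space $(\reals^n,|\cdot|_2,0)$ is a (metric-measure) tangent of $(E,x)$ in the sense of \cite{Bate2}, so along a suitable sequence $r_j\to0$ the rescalings $(E,x,d_X/r_j)$ converge, in the pointed Gromov--Hausdorff sense, to $(\reals^n,|\cdot|_2)$. Along this same sequence, using that $u$ is a density point of $F_i$, the Lebesgue density theorem, and the existence of the metric differential $|\cdot|_c=|I_i'|(u)$, the rescalings $(K_i,x,d_X/r_j)$ converge (after passing to a subsequence) to a pointed space $Z$ that isometrically contains $(\reals^k,|\cdot|_c)$; and since $K_i\subset K\subset E$, restricting the Gromov--Hausdorff approximations of $E$ to $K_i$ shows $Z$ embeds isometrically into $(\reals^n,|\cdot|_2)$. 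Composing, $(\reals^k,|\cdot|_c)$ embeds isometrically, as a metric space, into a Euclidean space. It then remains to invoke the classical fact that a finite-dimensional normed space admitting an isometric embedding into a Hilbert space has a norm given by an inner product (equivalently, $2$-homogeneity plus conditional negative definiteness of $|\cdot|_c^2$ forces the parallelogram law). Hence $|\cdot|_c$ is Euclidean, $T(K,x)=[\,|\cdot|_2\,]$ for $\H^k$-a.e.\ $x\in K$, and Lemma~\ref{l:strong-rect} completes the proof. (In the special case $k=n$ one can bypass the tangent-cone argument: after the usual reduction to $\H^n(E)<\infty$ by $\sigma$-finiteness, Lemma~\ref{L:prel-ideal} gives $T(K,x)=T(E,x)$ for $\H^n$-a.e.\ $x\in K$, and the latter is Euclidean since $x\in E^*$.)

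The step I expect to be the main obstacle is the middle one: rigorously showing that a Gromov--Hausdorff tangent of the \emph{subset} $K$ at $x$ is isometric to a sub-metric-space of a tangent of $E$ at $x$, and that this can be arranged \emph{simultaneously} with the tangent of $E$ being the Euclidean $\reals^n$. The delicacy is that when $k<n$ the set $K$ is $\H^n$-negligible in $E$, so the \emph{approximate} tangent of $E$ at $x$ — which by Definition~\ref{tangent-norm} only controls a subset of full $\H^n$-density — carries no information about $K$ by itself; one genuinely has to use the coincidence of the approximate tangent norm with the metric(-measure) tangent of \cite{Bate2}, whose defining Gromov--Hausdorff approximations are defined on all of $E$ near $x$ and hence also control the rescalings of the subset $K_i$. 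Once this compatibility of tangents is pinned down, the remaining points (convergence of the rescaled charts $I_i$ to the tangent norm at good points, stability of the inclusion $K_i\subset E$ under the limit, and the normed-into-Hilbert rigidity statement) are routine.
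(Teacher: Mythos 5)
Your reduction to showing $T(K,x)=[\,|\cdot|_2\,]$ for $\H^k$-a.e.\ $x\in K$ via Lemma~\ref{l:strong-rect}, the Kirchheim chart setup identifying $T(K,I_i(u))$ with $[\,|I_i'|(u)\,]$ at good points, and the separate treatment of $k=n$ via Lemma~\ref{L:prel-ideal} are all correct, and your endgame (a finite-dimensional normed space isometrically embedded in a Hilbert space is an inner product space) is a legitimate alternative to the paper's. However, the middle step --- the one you yourself flag as the main obstacle --- contains a genuine gap, and your proposed resolution does not close it. You pass from ``$x\in E^*$'' to ``$(\reals^n,|\cdot|_2,0)$ is a Gromov--Hausdorff tangent of $(E,x)$ in the sense of \cite{Bate2}, whose approximations are defined on all of $E$ near $x$''. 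But the equivalence between approximate tangent norms and the tangents of \cite{Bate2} recorded after Definition~\ref{tangent-norm} (and in Remark~\ref{strong-rect-rmk}) is only an $\Hn$-almost-everywhere statement. When $k<n$ the set $K$ is $\Hn$-null in $E$, so it may lie entirely inside the exceptional set of that equivalence. Membership in $E^*$ only provides a subset $\widetilde E\subset E$ of full $\Hn$-density at $x$ carrying $(1+\theta)$-biLipschitz Euclidean charts; this subset can a priori miss $K$ completely near $x$, and genuine pointed Gromov--Hausdorff control of \emph{all} of $E\cap B(x,r)$ --- which is what you need in order to restrict the approximations to $K_i$ --- is strictly stronger and does not follow from $x\in E^*$ alone. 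Consequently the isometric embedding of the blow-up of $K$ into $(\reals^n,|\cdot|_2)$ is not established.

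For comparison, the paper stays entirely within the approximate-tangent framework: for a good point $x$ it produces $(1+\theta)$-biLipschitz charts $I_r\colon (H_r,|\cdot|_a)\to\widetilde K\cap B(x,r)$ and $J_r\colon (F_r,|\cdot|_2)\to\widetilde E\cap B(x,r)$, arranged so that $\widetilde K\subset\widetilde E$; it then composes them to obtain a $(1+\theta)^2$-biLipschitz map $\iota$ between subsets of $(\reals^k,|\cdot|_a)$ and $(\reals^n,|\cdot|_2)$, differentiates $\iota$ at a point where both $\iota$ and $\iota^{-1}$ are differentiable to get a linear $(1+\theta)^2$-biLipschitz embedding of $\reals^k_a$ into $\reals^n_2$, and sends $\theta\to 0$ (using that $k$-dimensional subspaces of $\reals^n_2$ are Euclidean), with no recourse to Gromov--Hausdorff tangents. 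The crux in either approach is the compatibility of the two tangent structures at the $\Hn$-null set $K$ (the containment $\widetilde K\subset\widetilde E$ in the paper, the simultaneous GH convergence in your plan); your write-up does not supply an argument for it, and the route through the a.e.\ identification with \cite{Bate2} cannot supply one at points of an $\Hn$-null set.
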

\begin{proof}
    Suppose $x\in K$ is such that $T(K,x)$ exists. We show that $T(K,x)=[|\cdot|_{\reals^k_2}]$.
    Let $|\cdot|_a \in T(K,x)$.
    To each $\theta>0$, we find $r>0$, Borel sets $\widetilde K \subset K$, $\widetilde E \subset E^*$, $H_r\subset \reals^k$, $F_r \subset \reals^n$ and maps $I_r\colon (H_r, |\cdot|_a) \to \widetilde{K}\cap B(x,r)$, $J_r\colon (F_r, |\cdot|_2) \to \widetilde{E}\cap B(x,r)$ such that
    \begin{enumerate}
        \item $x$ is an $\H^k$-density point of $\widetilde K$,
        \item $x$ is an $\Hn$-density point of $E^*$,
        \item both $I_r$ and $J_r$ are $(1+\theta)$-biLipschitz.
    \end{enumerate}
    Moreover, this may be done in such a way that $\widetilde K\subset \widetilde E$.
    Let now $\theta>0$ be fixed and find the $r>0$ from above. Let $\iota= J_r \circ I^{-1}_r\colon H_r \to F_r$ and observe that $\iota$ is a well defined $(1+\theta)^2$-biLipschitz map. There exists a density point $y$ of $H_r$ such that $\iota(y)$ is a density point of $F_r$ and both $\iota'(y)$ and $(\iota^{-1})'(\iota(y))$ exist. In that case, it is necessary that $\iota'(y)\colon \reals^{k} \to \reals^n$ is a linear map and $\lVert \iota'(y) \rVert_{a\to 2}\leq (1+\theta)^2$. Moreover, $(\iota^{-1})'(\iota(y)) = \frac{1}{\iota'(y)}$ and so
    \begin{equation*}
        \lVert (\iota'(y))^{-1}\rVert_{(\iota(\reals^k)) \to \reals^k_a}\leq (1+\theta)^2.
    \end{equation*}
    All in all, $\iota'(y)$ is a $(1+\theta)^2$-biLipschitz linear map form $\reals^k_a$ onto a $k$-dimensional linear subspace of $\reals^n$. By sending $\theta\to 0$, and observing that all $k$-dimensional subspaces of $\reals^n$ are linearly isometric to $(\reals^k,|\cdot|_2)$, we obtain that $(\reals^k, |\cdot|_a)$ is linearly isometric to $(\reals^k, |\cdot|_2)$ as we wanted.
\end{proof}

The preceding theorem asserts that if our ambient metric space is strongly $n$-rectifiable, then all of its $k$-rectifiable subsets are strongly $k
$-rectifiable, which, in combination with Corollary \ref{C:strongly-Euclidean} yields the following corollary.

\begin{corollary}\label{C:k-Euclidean-subspace}
    Suppose $n\in \nat$ and let $E$ be a strongly $n$-rectifiable subspace of a complete metric space $X$ with. Suppose $k\in \nat$ , $k\leq n$. Then, for any $k$-rectifiable subset $K$ of
    \begin{equation*}
        E^*=\{x\in E: T(E,x)\;\text{exists and is Euclidean}\}
    \end{equation*}
    with $\H^k(K)<\infty$,
     we have the following. To each $\varepsilon>0$, there is a set $\widetilde K \subset K$ satisfying $\Hn(K\setminus \widetilde{K})<\varepsilon$ such that for every $m\geq k$, the set
    \begin{equation*}
        \{f\in \tLip(\widetilde{K}, \reals^m): \int_{\widetilde{K}} J_{\widetilde K} f \diff \H^k = \H^k(\widetilde K)\}
    \end{equation*}
    is residual in $\tLip(\widetilde{K}, \reals^m)$.
    Moreover, for any $m>n$, the set
    \begin{equation*}
        \{f\in \tLip(\widetilde{K}, \reals^m): \H^k(f(\widetilde K)) = \H^k(\widetilde K)\}
    \end{equation*}
    is residual in $\tLip(\widetilde{K}, \reals^m)$. In particular for any $m\geq k$ a typical $f\in \tLip(\widetilde{K}, \reals^m)$ satisfies $J_{\widetilde K} f=1$ $\H^k$-a.e.~in $\widetilde K$.
\end{corollary}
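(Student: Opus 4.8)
The plan is to obtain this corollary as an immediate consequence of the theorem just proved together with Corollary \ref{C:strongly-Euclidean}, so that no genuinely new argument is required. The theorem immediately preceding this statement says that any $k$-rectifiable subset $K$ of $E^*$ is strongly $k$-rectifiable; here one uses that $K$ is $k$-rectifiable \emph{in $X$} if and only if it is $k$-rectifiable as a metric space with the induced metric, since a covering family $f_i\colon F_i\to X$ restricts to Lipschitz maps $f_i|_{f_i^{-1}(K)}\colon f_i^{-1}(K)\to K$ whose images still exhaust $\H^k$-almost all of $K$ (this is exactly the parenthetical remark in that theorem). Unwinding the terminology via Lemma \ref{l:strong-rect}, ``strongly $k$-Euclidean'' means precisely strongly $|\cdot|_2$-rectifiable, which is the hypothesis ``strongly $k$-rectifiable'' in the sense of Definition \ref{d:strong-rect} (with $|\cdot|_a=|\cdot|_2$) required by Corollary \ref{C:strongly-Euclidean}. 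So the first step is simply to record that $K$ is strongly $k$-rectifiable.

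The second step is a small completeness reduction followed by a direct appeal to Corollary \ref{C:strongly-Euclidean} with $n$ replaced by $k$. Since $X$ is complete and $\H^k(K)<\infty$, the measure $\H^k_{|K}$ is inner regular by compact sets, so given $\varepsilon>0$ I would first pick a compact $K_0\subset K$ with $\H^k(K\setminus K_0)<\varepsilon/2$. A compact subset of a strongly $k$-rectifiable set is again strongly $k$-rectifiable (restrict the covering maps), and $K_0$ is a complete metric space with $\H^k(K_0)<\infty$, so Corollary \ref{C:strongly-Euclidean} applies to $K_0$ with parameter $\varepsilon/2$. It yields a set $\widetilde K\subset K_0\subset K$ with $\H^k(K_0\setminus\widetilde K)<\varepsilon/2$, hence $\H^k(K\setminus\widetilde K)<\varepsilon$, and such that on $\widetilde K$ all the asserted statements hold verbatim: residuality of $\{f\in\tLip(\widetilde K,\reals^m):\int_{\widetilde K}J_{\widetilde K}f\diff\H^k=\H^k(\widetilde K)\}$ for every $m\geq k$, residuality of $\{f\in\tLip(\widetilde K,\reals^m):\H^k(f(\widetilde K))=\H^k(\widetilde K)\}$ for $m>k$, and the pointwise consequence that a typical $f$ has $J_{\widetilde K}f=1$ $\H^k$-a.e. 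This is exactly the conclusion claimed (with the harmless understanding that the displayed range ``$m>n$'' in the statement should read ``$m>k$''); the case $\H^k(K)=0$ is trivial.

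I do not expect a real obstacle in this corollary: all of the hard analysis --- localising the Euclidean-tangent condition under passing to lower-dimensional rectifiable subsets, and producing measure-preserving typical $1$-Lipschitz maps out of Euclidean tangents --- is already carried out in the preceding theorem and in Corollary \ref{C:strongly-Euclidean}, the latter resting on Theorems \ref{T:MSR-residuality}, \ref{T:MSR-density} and \ref{T:positive-result-cubes} and on Kirchheim's decomposition. The only points that need a moment's attention are the equivalence of $k$-rectifiability ``in $X$'' and ``as a metric space'', the inner-regularity step that permits the completeness reduction while discarding only a set of $\H^k$-measure below $\varepsilon$, and the verification that ``strongly $k$-Euclidean'' coincides with the hypothesis demanded by Corollary \ref{C:strongly-Euclidean}; none of these is more than routine bookkeeping.
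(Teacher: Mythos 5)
Your proposal is correct and follows exactly the paper's route: the sentence preceding the corollary in the paper states that it follows by combining the immediately preceding theorem (every $k$-rectifiable subset of $E^*$ is strongly $k$-rectifiable) with Corollary \ref{C:strongly-Euclidean} applied with $n$ replaced by $k$. Your additional remarks (the compactness reduction via inner regularity, the equivalence of rectifiability in $X$ versus as a metric space, and the observation that ``$m>n$'' should read ``$m>k$'') are harmless bookkeeping that the paper leaves implicit.
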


In case $k=n$, it suffices to assume $K\subset E$ instead of $K\subset E^*$ as the exceptional set is $\H^k$-null.

\begin{remark}\label{RCD}
We bring to attention a particular important example of a strongly $n$-rectifiable metric space.
The so-called RCD spaces (see \cite{Amb} for relevant definitions) are metric measure spaces $(X,\mu)$ such that $X$ is strongly $n$-rectifiable for some $n\in\nat$ (this follows by the combination of \cite[Theorem 1.3]{MonNa} and \cite[Theorem 0.1]{BruEli}) and $\mu\ll \Hn$ \cite[Theorem 8.1]{Amb} (see also \cite{KellMon, GiPas}). The fact that $\mu\ll \Hn$ is particularly useful in connection with the moreover part of Corollary \ref{C:k-Euclidean-subspace} as one then obtains
\begin{equation*}
    \int_{\widetilde K} J_{\widetilde K} f(x) F(x) \diff \Hn(x) = \mu(\widetilde K)
\end{equation*}
for a typical $1$-Lipschitz $f$. Here $F$ is the Radon-Nikod\'ym derivative of $\mu$ with respect to $\Hn$.
\end{remark}

\bibliographystyle{abbrv} 

\bibliography{bibliography}
\end{document}